\numberwithin{equation}{section}
\theoremstyle{plain}
\newtheorem*{theorem*}{Theorem}
\newtheorem*{lemma*}{Lemma}
\newtheorem{theorem}{Theorem}
\newtheorem{lemma}{Lemma}[section]
\newtheorem{corollary}[lemma]{Corollary}
\newtheorem{question}[lemma]{Question}
\newtheorem{proposition}[lemma]{Proposition}
\theoremstyle{definition}
\newtheorem{definition}[lemma]{Definition}
\newtheorem{remark}[lemma]{Remark}
\newcommand{\be}{\begin{equation}}
\newcommand{\ee}{\end{equation}}
\newcommand{\ba}{\begin{array}{l}}
\newcommand{\ea}{\end{array}}
\newcommand{\R}{\mathcal{R}}
\renewcommand{\div}{{\mbox{div}\,}}
\begin{document}

\title{Finite-Time Singularity Formation for $C^{1,\alpha}$ Solutions to the Incompressible Euler Equations on $\mathbb{R}^3$}
\author{Tarek M. Elgindi\footnote{Department of Mathematics, UC San Diego. E-mail: telgindi@ucsd.edu.}}

\date{\today}
\maketitle

\begin{abstract}
It has been known since work of Lichtenstein \cite{Lich25} and Gunther \cite{Gunther} in the 1920's that the $3D$ incompressible Euler equation is locally well-posed in the class of velocity fields with H\"older continuous gradient and suitable decay at infinity. It is shown here that these local solutions can develop singularities in finite time, even for some of the simplest three-dimensional flows. 
\end{abstract}
\tableofcontents
\section{Introduction}

The question of global regularity for solutions to the incompressible Euler equation has been studied by many authors over the years and is considered a major open problem in the study of partial differential equations. The purpose of this work is to solve one case of this problem and, additionally, to bring to light some methods which might prove useful for further studies of the general regularity problem. Our approach is relatively straightforward: we analyze the various terms of the Euler equation and identify regimes where some terms become negligible. It turns out that for solutions satisfying certain symmetries at regularity $C^{1,\alpha}$ with $\alpha>0$ small, it is possible to isolate a simple non-linear equation which encodes the leading order dynamics of the solution to the Euler equation. This simple non-linear equation is exactly solvable and possesses families of explicit solutions which become singular in finite time in a very regular way. In fact, after passing to self-similar variables, they satisfy a time-independent equation. We then search for solutions to the Euler equation which are also self-similar and are close to those found for the model. It turns out to be possible to deduce the existence of such solutions to the Euler equation itself using energy and compactness methods as well as basic modulation techniques since the self-similar solutions to the model equation are \emph{stable} in a very precise sense. 
\addtocontents{toc}{\protect\setcounter{tocdepth}{2}}

\subsection{The Euler equation}

Recall the incompressible Euler equation governing the motion of an ideal fluid on $\mathbb{R}^3$:
\begin{equation}\label{E}\partial_t u+u\cdot\nabla u+\nabla p=f, \end{equation} 
\begin{equation}\label{incompressibility} \div(u)=0, \end{equation}
\begin{equation}\label{IC} u|_{t=0}=u_0. \end{equation}
$u:\mathbb{R}^3\times [0,\infty) \rightarrow\mathbb{R}^3$ is the velocity field of the fluid. $p$ is the force of internal pressure which acts to enforce the incompressibility constraint \eqref{incompressibility}. $f$ is an external force. Incompressibility is a natural property of the fluid: the velocity field is not allowed to squeeze or expand the volume of a portion of fluid. This makes it difficult to imagine the formation of singularities in an ideal fluid since any attempt of squeezing the fluid in a certain direction is met with an expansion in another direction. The incompressibility condition also ensures that smooth and localized solutions to \eqref{E}-\eqref{incompressibility} on $\mathbb{R}^3\times[0,T)$ satisfy:
\begin{equation}\label{EnergyBalance}\frac{d}{dt}\int_{\mathbb{R}^3} |u(x,t)|^2dx=2\int u(x,t)\cdot f(x,t)\end{equation} for all $t\in [0,T)$. This is another reason one might believe that singularities are unlikely. The difficulty is that, as far as our current knowledge goes, to prevent a solution of \eqref{E}-\eqref{incompressibility} from forming a singularity as $t\rightarrow T$, we essentially need to know that $\int_0^t\sup_{x}|\nabla u(x,s)|ds$ is uniformly bounded as $t\rightarrow T$. This follows from viewing \eqref{E}-\eqref{incompressibility} as an ordinary differential equation, in some sense. This substantial gap between what we know and what we need to know about solutions to the Euler equation is what is behind the well-known global regularity problem for the incompressible Euler equation:

\begin{question}\label{question}
Given a solution $u\in C^\infty(\mathbb{R}^3\times [0,T))$ to \eqref{E}-\eqref{incompressibility} satisfying \eqref{EnergyBalance} and external force $f\in  (C^\infty\cap L^2)(\mathbb{R}^3\times [0,T])$, is it possible that $\lim_{t\rightarrow T}\sup_x |\nabla u(x,t)|=+\infty?$
\end{question}

This problem remains, before and after this work, a major open problem in the theory of partial differential equations. The goal of this work is to explore the case of "classical solutions," when $C^\infty$ in Question \ref{question} is replaced by $C^{1,\alpha}$ for some $\alpha>0$. This is the context within which the classical well-posedness theory of the Euler equation has been considered starting with the works of Lichtenstein \cite{Lich25} and Gunther \cite{Gunther}. 

\subsection{The vorticity equation}
An important quantity to consider when studying ideal fluids is the vorticity vector field \[\omega:= \nabla\times u.\] It satisfies the vorticity equation:
\begin{equation}\label{Vorticity} \partial_t\omega+(u\cdot\nabla)\omega=(\omega\cdot \nabla) u+\nabla\times f.\end{equation}
Since $\div(u)=0$ we have that $\nabla \times (\nabla \times u)=-\Delta u.$ Thus, $u$ can be recovered from $\omega$ by the so-called Biot-Savart law:
\begin{equation}\label{BSLaw} u=(-\Delta)^{-1}(\nabla\times \omega).\end{equation} For classical solutions (with $u\in C^{1,\alpha}$ or, equivalently, $\omega\in C^\alpha$ for some $\alpha>0$), solving \eqref{E}-\eqref{incompressibility} is equivalent to solving \eqref{Vorticity}-\eqref{BSLaw} (so long as the vorticity is taken to be divergence-free when solving \eqref{Vorticity}-\eqref{BSLaw}).

To the author's knowledge, the first works on the local well-posedness theory of the 3D Euler equation were completed by Lichtenstein \cite{Lich25} and Gunther \cite{Gunther} in the 1920's and early 1930's. They showed that if $u_0\in C^{1,\alpha}(\mathbb{R}^3)$ for some\footnote{See Subsection \ref{Notation} for the definition of these spaces.} $0<\alpha<1$ and the initial vorticity decays sufficiently rapidly, then there is a time $T>0$ and a unique solution $u\in C^{1,\alpha}(\mathbb{R}^3\times [0,T))$ to \eqref{E}-\eqref{IC}. We call the solutions constructed by Lichtenstein \cite{Lich25} and Gunther \cite{Gunther}  "Classical Solutions." Later, Kato \cite{Kato86} and Kato and Ponce \cite{KatoPonce} established similar results in the scale of Sobolev spaces. 

A well-known result of Beale, Kato, and Majda \cite{BKM} tells us that a classical solution to \eqref{Vorticity}-\eqref{BSLaw} loses its regularity as $t\rightarrow T$ if and only if \[\lim_{t\rightarrow T} \int_0^t \sup_x| \omega(x,s)|ds=+\infty.\] In the special case where we consider two-dimensional solutions, where $\omega_3\equiv 0$ and $\omega_1, \omega_2$ are dependent only on $x_1$ and $x_2$, we have that $\omega\cdot\nabla u\equiv 0$ so that \[\sup_x|\omega(x,t)|\leq \int_0^t \sup_{x} |\nabla\times f(x,s)|ds.\] Consequently, two-dimensional classical solutions to \eqref{Vorticity}-\eqref{BSLaw} cannot develop a singularity in finite time. 

For fully three-dimensional solutions such bounds are not available and, in fact, are known to be false in general \cite{EM1}. We will show here that this lack of bounds was actually a sign of a more alarming fact: that the classical local theory for solutions to the 3D Euler equation \emph{cannot} be made into a global one.

\subsection{Statement of the Main Theorem}

\begin{definition}\label{odd}
A velocity vector field $u:\mathbb{R}^3\rightarrow\mathbb{R}^3$ will be called {\bf odd} if $u_i$ is odd in $x_i$ and even in the other two variables for each $1\leq i\leq 3$. 
\end{definition}
The following theorem is the main result of the present work.
\begin{theorem}\label{MainTheorem}
There exists an $\alpha>0$ and a divergence-free and odd $u_0\in C^{1,\alpha}(\mathbb{R}^3)$ with initial vorticity $|\omega_0(x)|\leq\frac{C}{|x|^\alpha+1}$ for some constant $C>0$ so that the unique local odd solution to \eqref{E}-\eqref{IC} (with $f\equiv 0$) belonging to the class $C^{1,\alpha}_{x,t}([0,1)\times\mathbb{R}^3)$ satisfies \[\lim_{t\rightarrow 1}\int_0^t|\omega(s)|_{L^\infty}ds=+\infty.\]
\end{theorem}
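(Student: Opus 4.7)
The plan is to restrict to axisymmetric flows without swirl within the odd symmetry class of Definition \ref{odd}. For such flows, writing the vorticity in cylindrical form, the scalar $\Omega := \omega_\theta/r$ satisfies the pure transport equation $\partial_t \Omega + u \cdot \nabla \Omega = 0$, while $u$ is recovered from $\Omega$ by solving the axisymmetric Biot--Savart system. The odd symmetry, combined with axisymmetry, forces $\Omega$ to be odd in $z$ and to vanish on $r=0$, so the analysis takes place in the quarter-plane $\{(r,z): r\geq 0, z\geq 0\}$. Proving the theorem then reduces to producing an initial $\Omega_0 \in C^\alpha$ with the stated decay for which $\int_0^1 \|r\Omega(t)\|_{L^\infty}\,dt = +\infty$, which forces loss of regularity by Beale--Kato--Majda.

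\textbf{Derivation and analysis of a fundamental model.} I would then introduce polar coordinates $(R,\theta)$ in the $(r,z)$ half-plane and a logarithmic radial variable $s = -\alpha\log R$, so that parabolic scaling acts by translation in $s$. The main technical input is an expansion of the axisymmetric Biot--Savart kernel for $\Omega \in C^\alpha$ with $\alpha$ small: to leading order, the transport velocity reduces to a purely angular integral operator $L(\Omega)(\theta)$ times $\partial_s$, with the remainder controllably small in suitable weighted norms. Isolating this leading part yields the \emph{fundamental model}
\[
\partial_t \Omega + \frac{2}{\alpha}\, L(\Omega)(\theta)\, \partial_s \Omega = 0,
\]
which can be solved explicitly along its characteristics. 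It admits an explicit family of self-similar blowup solutions, schematically of the form $\Omega_\star(t,s,\theta) = F_\star(\theta)/(1-t + e^{-\beta s})$, which saturate the Beale--Kato--Majda integral as $t \to 1$ and whose profile in self-similar variables satisfies a time-independent equation.

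\textbf{Self-similar variables and spectral stability.} Next I would pass to self-similar variables $\tau = -\log(1-t)$ and $S = e^\tau s$, in which the model blowup profile becomes a stationary solution $\bar\Omega(S,\theta)$, and linearize the self-similar equation for the \emph{full} Euler flow around $\bar\Omega$. The key point is that the discrepancy between the true Biot--Savart operator and its angular leading part $L$ behaves as an $O(\alpha)$ perturbation in the relevant weighted $C^\alpha$ norm. One then needs to prove a spectral gap for the resulting linearized operator $\mathcal{L}_\alpha$ after quotienting out the two neutral directions generated by the symmetries of the self-similar equation (rescaling in $S$ and time translation). I expect this step to be the main obstacle: one must exhibit a norm that is simultaneously coercive for $\mathcal{L}_\alpha$, consistent with the $C^\alpha$ regularity of $\Omega$ at the origin, and strong enough to control the nonlinearity, uniformly for small $\alpha$.

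\textbf{Perturbative construction of the singular solution.} Granted the spectral gap, I would construct an exact Euler solution in self-similar variables of the form $\bar\Omega_{\lambda(\tau),\mu(\tau)} + \tilde\Omega$, using two modulation parameters $\lambda,\mu$ to keep $\tilde\Omega$ orthogonal to the neutral directions, and close a weighted energy estimate showing that $\tilde\Omega$ remains small for all $\tau \in [0,\infty)$. Compactness arguments then upgrade this bootstrap estimate into an actual solution, and undoing the self-similar change of variables produces a divergence-free, odd initial datum $u_0 \in C^{1,\alpha}$ whose initial vorticity obeys the claimed pointwise bound and whose unique local solution becomes singular precisely at $t=1$ in the $L^1_t L^\infty_x$ vorticity norm, which gives the conclusion of the theorem.
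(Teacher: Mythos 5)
The high-level architecture you outline — reduce to axisymmetric no-swirl flows, pass to polar-type coordinates, isolate a solvable leading-order model when $\alpha$ is small, find explicit self-similar profiles, prove spectral coercivity of the linearization in a weighted norm, then close via modulation — matches the paper's strategy. But the fundamental model you derive is the wrong one, and this is not a detail: it kills the argument at its core.

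You take $\Omega := \omega_\theta/r$ as the unknown. This quantity is indeed purely transported, and that is exactly why the paper does \emph{not} work with it. Two consequences. First, transport preserves $L^\infty$, so if $\Omega_0 = \omega_0/r \in C^\alpha \subset L^\infty$, then $\|\omega_\theta/r\|_{L^\infty}$ stays bounded, and since the vorticity decays the amplification factor $r$ is bounded on the support of $\omega$, forcing $\|\omega_\theta\|_{L^\infty}$ to stay bounded. Indeed $\omega_0/r \in L^\infty$ puts you squarely in the Ukhovskii--Yudovich global regularity class, which the paper explicitly identifies as the regime to be avoided (Section 2). The blow-up solutions of Theorem \ref{MainTheorem} have $\omega_0 \in C^\alpha$ but $\omega_0/r \sim \rho^{\alpha-1}$ unbounded near the axis, so your requirement ``$\Omega_0 \in C^\alpha$'' is self-contradictory. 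Second, and independently, your proposed self-similar profile $\Omega_\star(t,s,\theta)=F_\star(\theta)/(1-t+e^{-\beta s})$ has $\|\Omega_\star(t)\|_{L^\infty} = \|F_\star\|_\infty/(1-t) \to \infty$; a pure nonlocal transport equation cannot produce growth of the $L^\infty$ norm, so this ansatz cannot solve your own model.

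The paper works instead with $\Omega := \omega_\theta$ (in $(R,\theta)$ coordinates, $R=\rho^\alpha$), whose evolution \eqref{OmegaEvolution} retains the vortex-stretching term $\propto \frac{1}{\alpha}L_{12}(\Omega)\Omega$. After showing via the elliptic estimates of Section \ref{EllipticEstimates} that $\Psi \approx \frac{1}{4\alpha}\sin(2\theta)L_{12}(\Omega)$, the paper observes that for profiles with angular factor $\Gamma(\theta)=(\sin\theta\cos^2\theta)^{\alpha/3}$ the two transport terms nearly cancel, and \emph{drops} them, keeping the stretching. The resulting fundamental model $\partial_t \Omega = \frac{1}{\alpha}L_{12}(\Omega)\Omega$ is a Constantin--Lax--Majda-type equation, solvable explicitly, with the self-similar profile $F_{*,rad}(z) = \frac{2z}{(1+z)^2}$ (Lemma \ref{FMSelfSimilar}). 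This is structurally opposite to your model: you discard the stretching and keep transport, the paper discards the transport and keeps stretching. Since $\|\Omega\|_{L^\infty}$ (hence $\|\omega\|_{L^\infty}$) must grow like $1/(1-t)$ to violate Beale--Kato--Majda, a multiplicative model is what can deliver this and a transport model cannot. The subsequent spectral and modulation steps you sketch are broadly in the spirit of Sections \ref{LinearizedFundamentalModel}--\ref{Modulation}, but they would be applied to a linearization of the wrong equation and do not rescue the argument.
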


\begin{remark}
The solution $\omega$ is exactly self-similar. That is, it takes the form: \[\omega(x,t)=\frac{1}{1-t}F\Big(\frac{x}{(1-t)^\xi}\Big)\] for some constant $\xi>0$. As $t$ approaches $t=1$ (from below), $\omega$ develops a singularity like $|x|^{-\beta}$ near the origin for some small $\beta>0$. 
\end{remark}

\begin{remark}\label{EnergyRemark}
The solutions of Theorem \ref{MainTheorem} have infinite energy and do not satisfy \eqref{EnergyBalance}; however, it is easy to show, using the preceding remark, that there are compactly supported classical solutions to \eqref{Vorticity}-\eqref{BSLaw} with compactly supported forcing $\nabla\times f\in C^{\alpha}([0,1]\times\mathbb{R}^3)$ satisfying \eqref{EnergyBalance} which become singular as $t\rightarrow 1$. Note that the force is uniformly $C^{1,\alpha}$ up to and including the time of blow-up. That is, if one allows for a \emph{uniformly} $C^{1,\alpha}$ external force, blow-up for finite energy solutions follows almost directly from the above result. 

More than this, we show in a joint work with T. Ghoul and N. Masmoudi \cite{EGM3dE} that these solutions can be localized to locally self-similar solutions with compactly supported vorticity and without external force since the blow-up is stable to certain kinds of perturbations that allow us to construct a $L^2\cap C^{1,\alpha}$ classical solution that becomes singular in finite time \cite{EGM3dE}. We will discuss this point in more detail on a model problem at the end of Section \ref{Examples}.

Besides, at a more heuristic level, this result should \emph{not} be confused with previous blow-up results for infinite energy solutions such as the ones in (\cite{Stuart1988},\cite{Con},\cite{GMS}); indeed, in all these cases the vorticity itself grows linearly at spatial infinity and the blow-up occurs on an infinite line or plane. The vorticity is decaying in our case and the blow-up occurs at a single point. This is what makes it possible to localize the blow-up. 
\end{remark}

\begin{remark}
The solutions of Theorem \ref{MainTheorem} that we construct here are axially symmetric and without swirl. It is known that sufficiently smooth (in particular, $C^\infty$) axi-symmetric solutions without swirl are globally regular; however, all the available global regularity results seem to require the velocity field to be at least $C^{1,\frac{1}{3}+}$ smooth. Heuristics suggest that this regularity threshold is actually sharp and that there exist axi-symmetric solutions in $C^{1,\frac{1}{3}-}$ which become singular in finite time. We also remark, importantly, that while the methods used here are applicable to axi-symmetric solutions without swirl, it is likely that they are also applicable in less rigid geometries and that in such settings one might be able to get much smoother solutions which develop singularities. 
\end{remark}

\subsection{Previous works on singularity formation}

There are numerous previous works on the global regularity problem and we will only discuss a few which are directly relevant to this work. A more extensive list of works can be found in the book \cite{MB}, the review papers \cite{Gibbon2008}, \cite{BardosTitiReview}, \cite{ConstantinReview}, and \cite{KiselevReview}, the numerical work \cite{HouLuo} as well as the author's work with I. Jeong \cite{EJE}. We will discuss three types of results here: blow-up criteria, infinite-time singularity formation, and model problems. We will not be discussing weak solutions in any detail but we refer the reader to the recent review papers \cite{DSReview} and \cite{BVReview}. 

The most well-known blow-up criterion is that of Beale, Kato, and Majda \cite{BKM} which we have already seen; it states that singularities in classical solutions occur if and only if the vorticity becomes unbounded. Another blow-up criterion is due to Constantin, Fefferman, and Majda \cite{CFM96} and dictates that if the velocity field remains uniformly bounded and the direction of the vorticity remains uniformly Lipschitz continuous up to time $T$, then there is no singularity at time $T$. This can be seen as a generalization of the global regularity for two-dimensional flows. Further advances in this direction have been made in \cite{JianHouYu}. Another line of work in the direction of ruling out singularities is devoted to self-similar singularities. That is, one postulates a form for the solution like \[\omega(x,t)=\frac{1}{(1-t)^{\alpha}}F(\frac{x}{(1-t)^\beta}).\] Then $F$ satisfies a time-independent equation which can be studied directly. Several authors have ruled out self-similar singularities for the Euler and Navier-Stokes equations (see \cite{Chae2007}, \cite{ChaeShv}, \cite{Tsai1998}, and \cite{NRS96}). In the case of the Euler equations, usually this is done under quite strong decay conditions on the vorticity. Since the profile we construct decays very slowly at spatial infinity, it does not contradict any of those results. 

In terms of results on singularity formation in the Euler equations, most of them have to do with infinite time singularity formation in two dimensions. We mention without details the results of Yudovich \cite{YLoss}, Nadirashvilli \cite{N}, Denisov (\cite{Denisov1}, \cite{Denisov2}), Kiselev and \v{S}ver\'ak \cite{KS}, and Zlato\v{s} \cite{Z}. There are also a few results on infinite time singularity formation in the 3D Euler equations such as \cite{YLoss}, \cite{EM1}, and \cite{TDo}. To the author's knowledge, the only result on finite-time singularity formation for finite-energy solutions to the 3D Euler equation prior to the present one is that of the author and I. Jeong \cite{EJE} on hour-glass shaped axi-symmetric domains with a sharp corner. It was shown that a natural local well-posedness theory can be established on those domains, but that solutions with (constant) finite energy could become singular in finite time. This was done by taking advantage of the scaling and rotational symmetries of the 3D Euler equation. It remains open whether those methods can be used to give a singularity on $\mathbb{R}^3$ though there seems to be some evidence that this can be done. The present work, however, follows a different philosophy which is closer to the study of simplified models of the Euler equation which we discuss next. 

Because the dynamics of solutions to \eqref{Vorticity}-\eqref{BSLaw} is still not well understood due to the many facets of the equations, many model equations have been devised to study some of the basic elements that make up the Euler equations. The first model problem we will discuss was introduced by Constantin, Lax, and Majda \cite{CLM} to investigate the amplifying effects of the vortex stretching term in a non-local model. For this model, almost all of the geometric properties of the vorticity equation are neglected, the advection term is neglected and we get:
\[\partial_t\omega=\omega\partial_x u.\] Moreover, the Biot-Savart law is replaced by \[u=(-\Delta)^{-\frac{1}{2}}\omega.\] After all these reductions, it is not surprising that the resulting model can be solved explicitly. Indeed, this was shown in \cite{CLM} and a necessary and sufficient condition for singularity formation for smooth and localized solutions was found. A skeptical observer might view these reductions as baseless, but the surprising fact is that these reductions turn out to be quite meaningful and serve as a motivation for the present work. We should remark that if one retains the advection term in the above model, not much is known about the equation though there have been a few recent advances on that problem (\cite{EJDG}, \cite{JSS}, \cite{LeiLiuRen}); it has been conjectured by several authors that retaining the advection term $u\partial_x\omega$ actually leads to global regularity (see Section \ref{Examples} for more on this point). One work in this direction which we are drawing inspiration from is \cite{EJDG} where it is shown that the regularizing effect of the advection term can be minimized by considering vorticity at $C^\alpha$ regularity with $\alpha$ small. 

After the numerical work of Luo and Hou \cite{HouLuo} and the work of Kiselev and \v{S}ver\'ak \cite{KS}, several other model problems related to the scenario in \cite{HouLuo} were considered (see \cite{Choi2017}, \cite{KRYZ}, for example). One of the ideas in these works is to study scenarios where the Biot-Savart law \eqref{BSLaw} can be decomposed into a main singular term and a more regular lower order term. This idea also informs what we do here. In addition to the above works, there have been also been a few recent works by T. Tao exploring singularities for other types of model problems and the possibility of finite-time singularity for the Euler equations on manifolds of high dimension (\cite{Tao2016}, \cite{TaoManifold}, \cite{Tao2}). 

\subsection{Classical vs. Smooth and $\mathbb{R}^3$ vs. $\mathbb{R}^3_+$}

It is important to say this directly: \emph{It is still open whether $C^\infty$ solutions to the incompressible Euler equation on $\mathbb{R}^3$ can develop a singularity in finite-time; we have merely shown singularity formation for $C^{1,\alpha}$ solutions for some $\alpha>0$.} Furthermore, the degree of regularity of solutions plays a key role in the construction presented here. It must also be emphasized, however, that this limitation on the regularity of the data can most likely be improved significantly in the presence of physical boundaries and by applying the methods to scenarios less rigid than zero-swirl axi-symmetric solutions (though the construction will have to be modified accordingly). Indeed, it is well known to specialists that if the vorticity of an Euler solution is non-zero on spatial boundaries, then this is analogous to considering solutions on $\mathbb{R}^3$ which have jumps in its vorticity (that is, the regularity of the velocity field would only be Lipschitz continuous on $\mathbb{R}^3$). A relevant case is when the domain is $\mathbb{R}^3_+$. Any solution to the incompressible Euler equation on $\mathbb{R}^3_+$ satisfying the (natural) no-penetration boundary condition can be extended to a solution on $\mathbb{R}^3$ by extending the first and second components of the vorticity as odd functions in the third variable, $x_3,$ and the third component of the vorticity as an even function in $x_3$. Likewise, any solution on $\mathbb{R}^3$ satisfying these symmetries can be restricted to $\mathbb{R}^3_+$ and will solve the Euler equation with the natural boundary condition. Consequently, if the first and second components of the vorticity of a solution on $\mathbb{R}^3_+$ do not vanish on $x_3=0$, it can actually be viewed as a solution on $\mathbb{R}^3$ which \emph{jumps} across the plane $x_3=0$. In this case, the regularity of the velocity field on $\mathbb{R}^3$ will not even be $C^1$. This point is also explored in the second example of Section \ref{Examples}. In this sense, it is not possible to compare blow-up on a smooth domain (when the vorticity is non-vanishing on the boundary), such as the one which is numerically predicted to occur in \cite{HL}, with the result of the present work. Each blow-up result has different advantages and deficiencies but both would answer fundamental questions, in my view. 

To wrap this point up, I should say that it is conceivable that some of the methods that already exist in the literature (including this work) could be used to produce an example of singularity formation for smooth solutions on a domain with smooth boundary (like $\mathbb{R}^3_+$) or even for $C^{1,\alpha}(\mathbb{R}^3)$ solutions for any $\alpha<1$. The global regularity problem for $C^\infty$ and localized solutions on $\mathbb{R}^3$, on the other hand, seems quite far as of now, though there are claims of numerical evidence for breakdown in that case as well (see \cite{Ker1}-\cite{Ker2} and \cite{LPTW}, for example).

\subsection{Organization}

The introductory material comprises the first three sections of this work. The first section is general. Section \ref{Setup} describes the exact setup of this work. Section \ref{Examples} provides a few simple examples which demonstrate the ideas behind this work. Section \ref{FundamentalModelSection} provides a basic analysis of the "Fundamental Model" which encodes the leading order dynamics of the type of solutions we are looking for as described in Section \ref{Setup}. Section \ref{LinearizedFundamentalModel} describes the coercivity of the linearization of the fundamental model around its self-similar solutions. Section \ref{LinearAngularTransport} gives the coercivity estimates for the linearization of the fundamental model along with the relevant angular transport term. Section \ref{EllipticEstimates} gives elliptic estimates which allow us to approximate the main non-local terms as described in Subsection \ref{Reductions}. Section \ref{H2} gives some useful information about the function spaces we are working in. In Section \ref{Modulation} we set up the exact equation for the perturbation to the solution of the fundamental model, prove the relevant a-priori estimates on the perturbation, and construct the full self-similar solution.

\subsection{Notation}\label{Notation}

In this subsection we give a guide to the notation used in the rest of the paper. 

\subsubsection{Functions, variables, and parameters}
With the exception of introductory parts of this work, $r$ will generally denote the two dimensional radial variable: \[r=\sqrt{x_1^2+x_2^2}.\]
$\theta$ will denote\footnote{Except in Section 2.1-2.2 where it can also be taken to denote the two dimensional polar angle.} the angle between $r$ and $x_3$: \[\theta=\arctan(\frac{x_3}{r}),\] so that $\theta=0$ corresponds to the plane $x_3=0$ while $\theta=\pm \frac{\pi}{2}$ corresponds to the $x_3$ axis. $\rho$ will denote the three dimensional radial variable \[\rho=\sqrt{r^2+x_3^2}.\] $R$ will denote $\rho^\alpha$: \[R=\rho^\alpha\] (where $\alpha>0$ is a constant which will be small). $z$, on the other hand, will generally denote the self-similar radial variable: \[z=\frac{R}{(1-(1+\mu)t)^{1+\lambda}}\] where $\lambda$ and $\mu$ are small constants. Functions in this paper will generally take the variables $z$ and $\theta$ or $R$, $\theta,$ and $t$ (dependence on $t$ is usually suppressed). Because the axial vorticity will be odd in the third variable, the $\theta$ variable will generally be in $[0,\pi/2]$ while the $z$ variable will usually be in $[0,\infty)$.  
The main parameters we will use are:
\[\eta=\frac{99}{100}, \qquad \alpha>0,\qquad \gamma=1+\frac{\alpha}{10}.\] $\alpha$ will be chosen at the end to be very small. 
In the later sections we use the functions \[\Gamma(\theta)=(\sin(\theta)\cos^2(\theta))^{\alpha/3}\] and \[K(\theta)=3\sin(\theta)\cos^2(\theta).\] Often there will be a constant $c$ and a constant $C$. The constant $c$ always satisfies $\frac{1}{10}\leq c\leq 10$ and it is a normalization constant. The constant $C$ will change from line to line but will be universal and independent of the main parameters $\alpha$ and $\gamma$. 

\subsubsection{Norms and Operators}
We first define the $L^2$ inner product: \[(f,g)_{L^2(\Omega)}=\int_{\Omega} f g\] and norm \[|f|_{L^2}=\sqrt{(f,f)_{L^2(\Omega)}},\] where $\Omega$ is the spatial domain. Often we will not write the subscript $L^2$ in the norm and/or the inner product and the meaning will have to be understood from context. For a bounded continuous function $f$, we define \[|f|_{L^\infty(\Omega)}=\sup_{x\in \Omega}|f(x)|.\] We also define the H\"older spaces using the norms:
\[|f|_{C^{\beta}(\Omega)}=\sup_{x\in\Omega} |f|+\sup_{x\not=y} \frac{|f(x)-f(y)|}{|x-y|^\beta}.\] If $f\in C^1$ we say that $f\in C^{1,\beta}$ when $\nabla f\in C^\beta$.
When the domain $\Omega$ is clearly understood from context, we often omit writing it.

{\emph{ Warning}:} In most of this paper, we will be working in some form of polar or spherical coordinates and will be using spaces like $L^2([0,\infty)\times [0,\pi/2])$ or similar spaces where the relevant variables are a radial and angular variable. The norm on this space is the usual $L^2$ norm with the measure $dr d\theta$ and not the measure $r dr d\theta$. 

We define the weights \[w(z)=\frac{(1+z)^2}{z^2},\]  \[w_\theta(\theta)=\frac{1}{\sin(2\theta)^\frac{\gamma}{2}},\]  and \[W=w \cdot w_\theta.\]
We also define the differential operators:
\[D_\theta(f)=\sin(2\theta)\partial_\theta f, \qquad D_R(f)=R\partial_R f,\] and
\[D_z(f)=z\partial_z f.\]
We define the space $\mathcal{H}$ by the norm: \begin{equation}\label{HNorm} |f|_{\mathcal{H}}=|f \frac{w}{\sin(2\theta)^{\eta/2}}|_{L^2}.\end{equation}
We define the $\mathcal{H}^k([0,\infty)\times[0,\pi/2])$ norm: \begin{equation}\label{HkNorm} |f|_{\mathcal{H}^k}^2=\sum_{i=0}^k|D_R^if \frac{w}{\sin^{\eta/2}(2\theta)}|_{L^2}^2+\sum_{0\leq i+j\leq k, i\geq 1}|D_R^jD_\theta^ifW|^2_{L^2}.\end{equation} 
We also define the $\mathcal{W}^{l,\infty}$ norm:
\[|f|_{\mathcal{W}^{l,\infty}}=\sum_{0\leq k\leq l}|(z+1)^k\partial_z^kf|_{L^\infty}+ \sum_{1\leq k+j\leq l, j\geq 1}|(z+1)^k\partial_z^kD_\theta^j f \frac{\sin(2\theta)^{-\frac{\alpha}{5}}}{\alpha+\sin(2\theta)}|_{L^\infty}.\] It is clear that any smooth function with sufficient $z$ decay belongs to $\mathcal{W}^{l,\infty}$ due to the inequality:
\[\sup_{x\in [0,1], \epsilon\in [0,1]}\frac{x^{1-\epsilon}}{\epsilon+x}\leq 1.\] The basic example of a $\mathcal{W}^{l,\infty}$ function is the function \[\Gamma(\theta) \frac{z}{(1+z)^2}.\]

Define the integral operator $L_{12}: L^2([0,\infty)\times [0,\pi/2])\rightarrow L^2([0,\infty))$ by \[L_{12}(f)(z)=\int_{z}^\infty \int_0^{\pi/2}f(r,\theta)\frac{K(\theta)}{r}d\theta dr.\] 

\subsubsection{Linearized Operators}
Also define the linearized operators $\mathcal{L}$, $\mathcal{L}_\Gamma$, and $\mathcal{L}_\Gamma^T$ as follows:
\[\mathcal{L}(f)=f+z\partial_z f-2\frac{f}{1+z},\]
\[\mathcal{L}_{\Gamma}(f)= f+z\partial_z f-2 \frac{f}{1+z}-\frac{2z\Gamma(\theta)}{c(1+z)^2}L_{12}(f),\]
and
\[\mathcal{L}_{\Gamma}^T(f)=\mathcal{L}_\Gamma(f)-\mathbb{P}(\frac{3}{1+z}\sin(2\theta)\partial_\theta f),\] where\[\mathbb{P}(f)(z,\theta)=f(z,\theta)- \frac{\Gamma(\theta)}{c}\frac{2 z^2}{(1+z)^3}L_{12}(f)(0).\]

\section{The Setup}\label{Setup}
A natural idea to use to establish singularity formation for solutions to the 3D Euler equation is to try to reduce as much as possible the complexity of the solutions we are studying. One of the simplest examples of three dimensional flows are the axi-symmetric flows without swirl. Such velocity fields are symmetric with respect to rotations which preserve the $x_3$ axis and have zero axial velocity (see \cite{MB} for more details). In this case, the vorticity equation and Biot-Savart law become the much simplified system \eqref{axisymmetric3DE}-\eqref{PsiTou} below. 

We start with the axi-symmetric 3D incompressible Euler equations (with vanishing swirl):

\[\partial_t \omega+u\cdot\nabla_{r,x_3}\omega=\frac{1}{r}u_r \omega, \]
where $\nabla_{r,x_3}=(\partial_r, \partial_{x_3})$ and $u=(u_r, u_3)$ is determined as follows. 
First we solve the elliptic problem\footnote{Note that the $-$ sign on the left hand side is not conventionally added, but there is no difference up to a change of variables.}:
\[\partial_r(\frac{1}{r}\partial_r\tilde\psi)+\frac{1}{r}\partial_{33}\tilde\psi=-\omega\] and then we set \[u_r=\frac{1}{r}\partial_3\tilde\psi\qquad u_3=-\frac{1}{r}\partial_r\tilde\psi.\] Next, in order to fix the homogeneity, we set $\tilde\psi=r\psi$. 
Then we have:
\[u_r=\partial_3\psi\qquad u_3=-\frac{1}{r}\psi-\partial_r\psi\] and 
\[\partial_r(\frac{1}{r}\partial_r(r\psi))+\partial_{33}\psi=-\omega,\] which leads us to the system:
\begin{equation}\label{axisymmetric3DE}
\partial_t \omega+u_r\partial_r\omega+u_3\partial_3\omega=\frac{1}{r}u_r \omega,
\end{equation}
\begin{equation}\label{3dBSL}
-\partial_{rr}\psi-\partial_{33}\psi-\frac{1}{r}\partial_r\psi+\frac{\psi}{r^2}=\omega,
\end{equation}
\begin{equation}\label{PsiTou}
u_r=\partial_3\psi\qquad u_3=-\frac{1}{r}\psi-\partial_r\psi.
\end{equation}
The problem is normally set on the spatial domain $\{(r,x_3)\in [0,\infty)\times (-\infty,\infty)\}$ and  the elliptic problem \eqref{3dBSL} is solved with the boundary condition $\psi=0$ on $r=0$. We will start by imposing an odd symmetry on $\omega$ with respect to $x_3$. That is, we search for solutions with: \[\omega(r,x_3)=-\omega(r,-x_3)\] for all $r,x_3$. Consequently, we may reduce to solving on the domain $[0,\infty)\times[0,\infty)$ while enforcing that $\psi$ vanish on $r=0$ and $x_3=0$ when solving \eqref{3dBSL}:
\begin{equation} \label{BCForPsi} \psi(r,0)=\psi(0,x_3)=0, \end{equation} for all $r,x_3\in [0,\infty)$. 
We note that with these conditions, the original $\tilde\psi$ actually vanishes quadratically on $r=0$. Note also, that for the full three dimensional vorticity to be $C^\infty$ a necessary condition is that $\omega$ vanish at least linearly on $r=0$. We are only interested in H\"older continuous solutions so we only impose that $\omega$ vanishes on $r=0$ for now. 

Let us make a few remarks about the system \eqref{axisymmetric3DE}-\eqref{PsiTou}. Since solutions to this system are automatically solutions to the $3D$ Euler equation, any $C^\alpha$ solution to \eqref{axisymmetric3DE} with sufficient decay at infinity and which vanishes on $r=0$ is a classical solution to the full $3D$ Euler system and thus falls into the range of applicability of the local well-posedness results of Lichtenstein \cite{Lich25} and Gunther \cite{Gunther}. Global well-posedness for this system has been established by Ukhovskii and Yudovich \cite{UY} under the \emph{additional assumption} that $\frac{\omega_0}{r}\in L^\infty.$ This assumption was later relaxed to $\frac{\omega_0}{r}\in L^{3,1}(\mathbb{R}^3)$ by Saint-Raymond \cite{SaintRaymond}, Abidi, Himidi, and Keraani \cite{AHK}, Shirota and Yanagisawa \cite{ShirotaY}, and Danchin \cite{Danchin} in various settings.  In particular, in the scale of H\"older spaces, global regularity of axi-symmetric solutions without swirl remained open if $u\in C^{1,\alpha}$ for $0<\alpha\leq \frac{1}{3}$. Here we construct a self-similar solution with a finite-time singularity when $\alpha$ is small. 

We will now proceed to explain how we are going to prove existence of a self-similar blow-up solution to \eqref{axisymmetric3DE}-\eqref{PsiTou}. The reader may find the following schematic helpful: \[\text{Full 3D Euler}\implies\text{Axisymmetric without swirl}\implies\text{Neglect the regular part of the singular integral}\]\[\implies\text{Remove the transport terms}\implies \text{Solve}\implies\text{Stability}\]

\subsection{Passing to a form of polar coordinates}

First we define $\rho=\sqrt{r^2+x_3^2}$ and $\theta=\arctan(\frac{x_3}{r})$ and set $R=\rho^\alpha$ for some (small) constant $\alpha>0$. Then we introduce new functions $\omega(r,x_3)=\Omega(R,\theta)$ and $\psi(r,x_3)=\rho^2 \Psi(R,\theta)$. We now show the forms of \eqref{axisymmetric3DE}, \eqref{3dBSL}, and \eqref{PsiTou} in the new coordinates.
Note that 
\[
\partial_r\rightarrow  \frac{\cos(\theta)}{\rho}\alpha R \partial_R-\frac{\sin(\theta)}{\rho} \partial_\theta \qquad \partial_3\rightarrow \frac{\sin(\theta)}{\rho}\alpha R\partial_R+\frac{\cos(\theta)}{\rho} \partial_\theta
\]

\subsubsection*{$u$ in terms of $\Psi$}
From \eqref{PsiTou} and the above facts we see:
\[u_r=\rho\Big(2\sin(\theta)\Psi+\alpha\sin(\theta)R\partial_R\Psi+\cos(\theta)\partial_\theta\Psi\Big)\] while \[u_3=\rho\Big(-\frac{1}{\cos(\theta)}\Psi-2\cos(\theta)\Psi-\alpha\cos(\theta)R\partial_R\Psi+\sin(\theta)\partial_\theta\Psi\Big)\]

\subsubsection*{Evolution equation for $\Omega$}
Observe that using the above calculations, \eqref{axisymmetric3DE} becomes 
\begin{equation}\label{OmegaEvolution}
\partial_t\Omega+(-3\Psi-\alpha R\partial_R\Psi)\partial_\theta\Omega+(\partial_\theta\Psi-\tan(\theta)\Psi)\alpha R\partial_R\Omega=\frac{1}{\cos(\theta)}\Big(2\sin(\theta)\Psi+\alpha\sin(\theta)R\partial_R\Psi+\cos(\theta)\partial_\theta\Psi\Big) \Omega.
\end{equation}
One can notice that the quantity $\frac{\Omega}{\cos(\theta)R^{\frac{1}{\alpha}}}$ (which is $\frac{\omega}{r}$) is exactly transported. 

\subsubsection*{Relation between $\Psi$ and $\Omega$}
After some calculations\footnote{See the calculation preceding \eqref{PolarBSL}.} \eqref{3dBSL} becomes: \begin{equation}\label{PolarBSL1}-\alpha^2R^2\partial_{RR}\Psi-\alpha(5+\alpha)R\partial_R\Psi-\partial_{\theta\theta}\Psi+\partial_\theta\big(\tan(\theta)\Psi\big)-6\Psi=\Omega.\end{equation} with the boundary conditions:
\[\Psi(R,0)=\Psi(R,\frac{\pi}{2})=0\] for all $R\in [0,\infty)$. 

\subsection{Reductions by taking $\alpha$ small and looking at $R=0$}\label{Reductions}
Up to now all we have done is a change of variables. Now we start to make reductions. First, by analyzing the equation \eqref{PolarBSL1} (according to the analysis done in Section \ref{EllipticEstimates}), we realize that \[\Psi=\frac{1}{4\alpha}\sin(2\theta)L_{12}(\Omega)+\text{lower order terms},\] with \[L_{12}(\Omega)=\int_{R}^\infty \int_{0}^{\frac{\pi}{2}}\Omega(s,\theta)\frac{K(\theta)}{s}dsd\theta,\] with $K(\theta)=3\sin(\theta)\cos^2(\theta)$. The idea behind this is that one first tries to derive $L^2$ estimates for solutions of \eqref{PolarBSL1}. If one multiplies by $\Psi$ and integrates, it becomes apparent that the a-priori estimate blows up as $\alpha\rightarrow 0$. This leads to studying \eqref{PolarBSL1} when $\alpha=0$. It then becomes apparent that $\sin(2\theta)$ is in the kernel of the operator \[L_0(\Psi)=-\partial_{\theta\theta}\Psi+\partial_\theta(\tan(\theta)\Psi)-6\Psi\] while $\sin(\theta)\cos^2(\theta)$ is the unique element of the kernel of the adjoint. Thus, a necessary (and sufficient) condition to solve $L_0(\Psi)=\Omega$ is that $\Omega$ is orthogonal to $\sin(\theta)\cos^2(\theta)$. When $\alpha>0$ there is no solvability condition but $\alpha$ independent bounds are gotten by first subtracting a specific term which is the main term in the expansion above. 

Next, we neglect all terms which vanish to quadratic order at $R=0$ \emph{and} contain a factor of $\alpha$. The reason we do this is that the equation which we will eventually get has self-similar blow-up which is stable under these kinds of perturbations. 
We thus write:
\[-3\Psi-\alpha R\partial_R\Psi\approx -\frac{3}{4\alpha}\sin(2\theta)L_{12}(\Omega), \qquad \partial_\theta\Psi-\tan(\theta)\Psi\approx \frac{1}{4\alpha}(2\cos(2\theta)-2\sin^2(\theta))L_{12}(\Omega)\]
\[\frac{1}{\cos(\theta)}\Big(2\sin(\theta)\Psi+\alpha\sin(\theta)R\partial_R\Psi+\cos(\theta)\partial_\theta\Psi\Big)\approx \frac{2}{4\alpha}L_{12}(\Omega)\]

After (time) scaling out a constant factor and neglecting the above-mentioned terms in \eqref{OmegaEvolution} we get:
\begin{equation}\label{OmegaEvolution2}
\partial_t\Omega-\frac{3}{2\alpha}\sin(2\theta)L_{12}(\Omega)\partial_\theta\Omega+L_{12}(\Omega)(\cos(2\theta)-\sin^2(\theta))R\partial_R\Omega=\frac{1}{\alpha}L_{12}(\Omega)\Omega.
\end{equation}

Notice that the second transport term on the left looks much smaller than the other two non-linear terms in the equation. The reason we have kept it is to balance the first transport term. Indeed, for this model, it is very likely that if $\Omega$ is smooth in $\theta$ there is global regularity. However, if one considers solutions which roughly behave like $R(\sin(\theta)\cos^2(\theta))^{\alpha/3}$ near $R=0$, a simple computation shows that the first two terms are annihilated to leading order. This is a key observation which now leads us to neglect the transport terms. 

\subsection{Dropping the transport term}

From the discussion above, if we view the solution $\Omega$ as being of the form: $\Omega(R,t,\theta)=(\sin(\theta)\cos^2(\theta))^{\alpha/3}\Omega_*(R,t)$, then the transport term becomes negligible in front of the term $L_{12}(\Omega)\Omega$ since \[|\sin(2\theta)\partial_\theta(\sin(\theta)\cos^2(\theta))^{\alpha/3}|\leq 2\alpha (\sin(\theta)\cos^2(\theta))^{\alpha/3}\] and $\alpha$ is small. For this reason, we drop the transport terms\footnote{Note that when we come to estimating the effects of dropping the transport terms we will only do so in an energy-type argument using integration by parts since otherwise we would incur a loss of derivatives.} and now study the equation: 
\begin{equation} \partial_t\Omega=\frac{1}{\alpha}L_{12}(\Omega)\Omega,\end{equation}
\begin{equation}L_{12}(\Omega)=\int_{R}^\infty \int_{0}^{\frac{\pi}{2}}\Omega(s,\theta)\frac{K(\theta)}{s}dsd\theta. \end{equation} This is what we call the fundamental model in this paper. It turns out that this equation possesses simple self-similar blow-up solutions which 
\begin{enumerate} 
\item have a fixed dependence on $\theta$ which can be freely chosen
\item are of order $\alpha,$  
\item are spectrally stable to perturbations which vanish quadratically at $R=0$.
\end{enumerate}
In particular, there are solutions to the fundamental model of the form: \[\Omega(R,\theta,t)=\tilde\Gamma(\theta) \frac{1}{1-t} F\Big(\frac{R}{1-t}\Big)\] for $F(z)=\frac{2z}{(1+z)^2}$ and for essentially \emph{any} $\tilde\Gamma$ (in particular we can take $\Gamma=c(\sin(\theta)\cos^2(\theta))^{\alpha/3}$ for some fixed constant $c>0$ close to $1$ uniformly in $\alpha$). 

(1) and (3) above is what allows us to indeed neglect the transport terms (to first order in $R$, we can choose an angular dependence which forces the transport terms to vanish). (2) and (3) is what allows us to neglect the rest of the terms. By carefully choosing the spaces where we are working, the preceding considerations can be made rigorous and the reductions can be justified. After all this is done, we thus prove existence of a self similar solution to \eqref{OmegaEvolution}-\eqref{PolarBSL1} near the one for the fundamental model with the angular dependence prescribed by the transport terms. 

\begin{remark}
It is important to mention the exact geometry of the solution constructed. Particles flow down the $x_3$ axis and outward on the $x_3=0$ plane. Because of the weak vanishing of vorticity on the axis of symmetry, vorticity accumulates near the origin and becomes infinite at the time of singularity.  
\end{remark}

\section{Three Examples}\label{Examples}

In this section we give examples of two equations with structure similar to the 3D Euler equation which highlight the effects of $C^\alpha$ regularity of the vorticity and/or the effects of spatial boundaries. We also give an example of how to continue a self-similar blow-up in a very simple model problem (which will be useful to understand the general scheme of the proof of Theorem \ref{MainTheorem}). 
The first two examples are based on the following general principle:

\vspace{3mm}

\emph{The vortex stretching term in \eqref{Vorticity} tends to cause vorticity growth while the advection term tends to deplete that growth. Thus, singularities should be found in scenarios where the depletion from advection is minimized.}

\vspace{3mm}

The following two examples show how low regularity in the vorticity or solid boundaries on which the vorticity does not vanish (which, as we mentioned, is essentially equivalent to a jump in the vorticity!) can stop the regularizing effect of the advection term. As far as the author knows, these are the only scenarios known to have this effect, but there may be others. We remark that the idea that the advection term in 3D Euler and Navier-Stokes is regularizing is present in work of Hou and Lei \cite{HouLei}. Also see work of Larios and Titi in this direction \cite{LariosTiti}.

\subsection{First Example}
 We consider the following active scalar model: 
\begin{equation}\label{FunnyModel1} \partial_t\omega+u\partial_x\omega=\omega\partial_x u \end{equation}
\begin{equation}\label{FunnyModel2} -\partial_{xx} u=\omega.\end{equation}
If we are solving this equation on $\mathbb{S}^1$, we should impose that $\int_{\mathbb{S}^1}\omega=0$ (which we may assume on the initial data). 
Now we recall from \cite{SarriaSaxton} that this system satisfies 
\begin{enumerate}
\item If we solve \eqref{FunnyModel1}-\eqref{FunnyModel2} in $[0,\pi]$ with the natural boundary conditions and if $\omega$ is non-vanishing on $[0,\pi]$, then $\omega$ may become singular in finite time. 
\item If we solve \eqref{FunnyModel1}-\eqref{FunnyModel2} on $\mathbb{S}^1$ with $C^\alpha$ data for some $\alpha<1$, the unique local solution may become singular in finite time. 
\item If we solve \eqref{FunnyModel1}-\eqref{FunnyModel2} on $\mathbb{S}^1$ with $C^1$ data, the solution is global. 
\end{enumerate}
These points lead us to the following conclusion: 
\begin{itemize}
\item Either by imposing solid boundaries at which the vorticity does not vanish or by taking the vorticity to vanish on that spatial boundary to order $|x|^\alpha$ for some $\alpha<1$, the regularizing effect of the advection term can be overcome. Otherwise, solutions are global due to the regularizing effect of the advection term.
\end{itemize}

\subsection{Second Example}

We now present a second example which can be seen as the motivation for this whole work. 
Consider the following 2D system 
\begin{equation}\label{LocalModel1}\partial_t\omega-(x_1\lambda(t),-x_2\lambda(t))\cdot\nabla\omega=\partial_1 \rho \end{equation}
\begin{equation}\label{LocalModel2} \partial_t\rho-(x_1\lambda(t),-x_2\lambda(t))\cdot\nabla \rho=0, \end{equation}
\begin{equation}\label{LocalModel3} \lambda(t)=\int_{\mathbb{R}^2} \frac{y_1y_2}{|y|^4}\omega(y,t)dy. \end{equation}
This can be seen as a local model of the dynamics of solutions to the 3D axi-symmetric Euler equation (with swirl) away from the axis of symmetry near a hyperbolic stagnation point which we take to be $(0,0)$. We remark that this also serves as a toy model of the scenario discovered in the numerical work \cite{HouLuo}. We consider solutions with $\omega$ odd in $x_1$ and $x_2$ separately and $\rho$ odd in $x_2$ and even in $x_1$. For such solutions, we have the following:
\begin{enumerate}
\item If $\omega_0,\partial_1\rho_0\in C^2_c(\mathbb{R}^2)$ the unique local solution to \eqref{LocalModel1}-\eqref{LocalModel3} is global.
\item There exist $\omega_0,\partial_1\rho_0\in C^\infty_c(\mathbb{R}^2_+)$ so that the unique solution to \eqref{LocalModel1}-\eqref{LocalModel3} develops a singularity in finite time. 
\end{enumerate}
The proof of both statements follows essentially by solving the equation. First we introduce \[\mu(t)=\exp\Big(\int_0^t \lambda(s)ds\Big).\] For simplicity we assume that $\omega_0\equiv 0$ (this assumption can be easily removed). Then we see that the unique local solution of \eqref{LocalModel1}-\eqref{LocalModel3} can be written as:
\[\omega(x_1,x_2,t)=\partial_1\rho_0(\mu(t)x_1,\frac{x_2}{\mu(t)})\int_0^t\mu(s)ds.\]
Consequently, 
\[\frac{\dot\mu(t)}{\mu(t)}=\Big(\int_0^t\mu(s)ds\Big)\Big(\int_0^\infty\int_0^\infty\frac{y_1y_2}{|y|^4}\partial_1\rho_0(\mu(t)y_1,\frac{y_2}{\mu(t)})dy_1dy_2\Big).\]

\emph{Case 1: Smooth on $\mathbb{R}^2$:} 

The important point is that when $\rho$ is $C^2$ and compactly supported on $\mathbb{R}^2$, we must have that \[|\partial_1\rho_0(x_1,x_2)|\leq |x_1x_2|D(x_1,x_2)\] for $x_1, x_2$ small for $D$ a uniformly bounded and compactly supported function. 
Consequently, \[\Big|\frac{\dot\mu(t)}{\mu(t)}\Big|\leq C\int_0^t \mu(s)ds \int_0^{\infty} \int_0^\frac{A}{\mu(t)} \frac{(y_1y_2)^2}{|y|^4}dy_1dy_2\leq C\int_0^t\mu(s)ds \int_0^{\frac{A}{\mu(t)}} y_1dy_1\leq \frac{C}{\mu(t)^2}\int_0^t\mu(s)ds.\]
Thus, $\mu$ remains bounded for all finite times. 

\emph{Case 2: Smooth on $\mathbb{R}^2_+$}.

Now let's look at the case when $\rho_0$ is just smooth on $\mathbb{R}^2_+$ and not vanishing on $x_2=0$. We take  $\partial_1\rho_0(x_1,x_2)$ to be a smooth odd-in-$x_1$ function on $(-\infty,\infty)\times[0,\infty)$ equal to $x_1$ on $[0,1]^2$, vanishing outside of $[0,2]^2$ and non-negative on $[0,\infty)^2$. In this case, we again have:
\[\frac{\dot\mu(t)}{\mu(t)}=\Big(\int_0^t\mu(s)ds\Big)\Big(\int_0^\infty\int_0^\infty\frac{y_1y_2}{|y|^4}\partial_1\rho_0(\mu(t)y_1,\frac{y_2}{\mu(t)})dy_1dy_2\Big)\geq\Big(\int_0^t\mu(s)ds\Big)\mu(t)\Big(\int_0^{\mu(t)}\int_0^{\frac{1}{\mu(t)}}\frac{y_1^2y_2}{|y|^4}dy_1dy_2\Big)\]
\[=\Big(\int_0^t\mu(s)ds\Big)\mu(t)\Big(\int_0^{\mu(t)}\int_0^{\frac{1}{\mu(t)}}\frac{y_1^2y_2}{|y|^4}dy_1dy_2\Big)\] \[=\frac{1}{2}\Big(\int_0^t\mu(s)ds\Big)\mu(t)\int_0^{\frac{1}{\mu(t)}}y_1^2\Big(\frac{1}{y_1^2}-\frac{1}{y_1^2+\mu(t)^2}\Big)dy_1\geq \frac{1}{10}\int_0^t\mu(s)ds,\] so long as $\mu(t)\geq 1$. 
Thus, 
\[\dot\mu(t)\geq c\mu(t)\int_0^t\mu(s)ds\] for some fixed $c>0$. 
Since $\mu(0)=1$, $\mu$ becomes infinite in finite time. 
\begin{remark}
The above calculation shows that if $\rho_0$ vanishes to order $y^\alpha$ at $y=0$ with $\alpha$ sufficiently small, then there will still be singularity in finite time on $\mathbb{R}^2$. 
\end{remark}

\subsection{Stable singularity formation in the simplest setting}

In this subsection we explore the problem of finite time-singularity formation in the ODE:
\begin{equation}\label{ODEBlowUp} \partial_t f = f^2+\epsilon N(f),\end{equation} for $x,t\in [0,\infty)$. Here  $N$ is a quadratic non-linearity with total degree zero\footnote{By this we mean that for $\lambda>0$, if $f_\lambda(\cdot)=f(\lambda \cdot),$ then $N(f_\lambda)=N(f)_\lambda$} satisfying some natural conditions and $\epsilon$ is a small constant. The question we wish to consider is: how can we efficiently show that the blow-up for the $\epsilon=0$ problem persists when $\epsilon>0$. Our goal is to have a method which is flexible enough to handle non-linearities $N$ which may include derivatives and non-local operators at least. This is, admittedly, just an exercise, so those familiar with these types of questions and the methods to solve them can skip this part all together. 

We first observe that when $\epsilon=0$ we have the self similar solution:
\[f(x,t)=\frac{1}{1-t}F_*(\frac{x}{1-t}),\] with $F_*(z)=\frac{1}{1+z}$. Now we search similarly for a solution to \eqref{ODEBlowUp} now of the form
\[f(x,t)=\frac{1}{1-(1+\mu) t} F(\frac{x}{(1-(1+\mu) t)^{1+\lambda}}).\]
If $\epsilon$ is small, we should think that $\mu,\lambda= O(\epsilon)$ and $F= F_*+O(\epsilon)$. 
We see that
\[(1+\mu) F+(1+\lambda)(1+\mu)z\partial_z F=F^2 +\epsilon N(F).\]
Now we write $F=F_*+g.$
Then, 
\begin{equation}\label{BabyModulation} g+z\partial_z g-\frac{2}{1+z} g=-\mu F_*-(\mu+\lambda+\lambda\mu) z \partial_z F_*-(\mu+\lambda+\lambda\mu) g-(\mu+\lambda+\lambda\mu) z\partial_z g+\epsilon N(F_*+g)+g^2.\end{equation}
We re-write this as:
\[\mathcal{L}(g)=-\mu F_*-(\mu+\lambda+\lambda\mu) z \partial_z F_*-(\mu+\lambda+\lambda\mu) g-(\mu+\lambda+\lambda\mu) z\partial_z g+\epsilon N(F_*+g)+g^2.\]
Now, after studying $\mathcal{L}=\text{Id}+z\partial_z -\frac{2}{1+z}\text{Id}$ a little bit, it becomes apparent that $\mathcal{L}$ is a coercive operator on the weighted $L^2$ space with weight $\frac{(z+1)^2}{z^2}$ and properly weighted $H^s$ spaces. In particular, in order to solve for $g$, we need $g$ and the RHS of \eqref{BabyModulation} to vanish at least quadratically at $z=0$. We just choose $\mu$ and $\lambda$ so that the right hand side vanishes quadratically at $z=0$ assuming that $g$ itself vanishes quadratically. In particular, evaluating the right hand side at $z=0$ to $0^{th}$ and $1^{st}$ order we see:
\[0=-\mu+\epsilon N(F_*+g)(0) \]
\[0=\mu+(\mu+\lambda+\lambda\mu)+\epsilon\partial_z(N(F_*+g))(0).\]
In particular, we can solve for $\mu$ and $\lambda$ explicitly in terms of $\epsilon N(F_*+g)$. 
Making the above choices for $\mu$ and $\epsilon$, we now observe that if $X$ is a weighted $H^1$ space in which $\mathcal{L}$ satisfies \[(\mathcal{L} g,g)_{X}\geq c|g|_{X}^2,\] then we have from \eqref{BabyModulation} that
\[c|g|_{X}^2\leq C\epsilon(|g|_{X}+|g|_{X}^2)+|g|_{X}^3,\] which yields the a-priori estimate:
\[|g|_{X}\leq C\epsilon,\] when $\epsilon$ is small enough.

One difficulty we will face is that the $\epsilon=0$ problem in our setting is actually the "fundamental model" which we describe in the coming section which is, itself, non-local and multivariable. As can be expected, the linearized operator requires a non-local condition to be coercive. For this reason, the actual argument is not as simple as the one above, but there is a a lot to be gained from studying \eqref{ODEBlowUp} and \eqref{BabyModulation} first. 

\subsubsection{Localization}

Taking the above example further, we want to explain how the above stability at the level of self-similar solutions can be used to establish non-linear stability of the blow-up (and, ultimately, a finite-energy version of the main theorem). As remarked above, the full details of this argument for the Euler equation is given in the joint work with Ghoul and Masmoudi \cite{EGM3dE}, but here we explain how this can be done on the above simple model. 

Let us start with 
\begin{equation}\label{Ric} 
\partial_t f=f^2. 
\end{equation} 
We know that there is a self-similar solution. Now let us suppose that we want to know whether this solution is "stable" among general solutions (not just self-similar solutions to nearby problems). We formalize this as follows. Let us assume that nice functions $\bar\mu(t), \bar\lambda(t)$ are given and let us write the solution to \eqref{Ric} as:
\[f(x,t)=\frac{1}{\bar\lambda(t)}F\Big(\frac{\bar\mu(t)x}{\bar\lambda(t)},t\Big).\] Let us define the variable \[z=\frac{\bar\mu(t)}{\bar\lambda(t)}x,\qquad s=\frac{1}{\bar\lambda(t)}.\] The above change of variables can always be done (locally in time) under mild assumptions on $\bar\lambda,\bar\mu$. Now let us see what equation we get for $F$:
\begin{equation}
\partial_s F-\bar\lambda'(t)F-\Big({\bar\lambda'(t)}-\frac{\bar\lambda(t)^2\bar\mu'(t)}{\bar\mu(t)}\Big)zF'=F^2.
\end{equation}
Our self-similar solution is a solution triple:
\[(F(z,t),\bar\lambda(t),\bar\mu(t))=(F_*(z),1-t,1).\]
Now all we wish to know is whether this solution triple is spectrally stable. Looking at solutions of the form $(F_*, 1-t, 1)$ we get:
\[\partial_s g+g+zg'-2F_* g=g^2+\lambda'(t)F_*-(1-t)^2\mu'(t)zF_*',\] which is just 
\[\partial_s g+\mathcal{L}(g)=\lambda'(t)F_*-(1-t)^2\mu'(t)zF_*'.\]
From the preceding discussion, we know that $\mathcal{L}$ is positive on a certain space; the (free) constants $\lambda(t)$ and $\mu(t)$ are then used to correct the solution so that it stays in the correct space. The proof then becomes akin to showing asymptotic stability of a stable fixed point in an ODE. 

We will not pursue this example any further but what should be clear is that the heart of the matter in is understanding the linearized operator $\mathcal{L}$. For the Euler equation there is a similar linear operator (what we call $\mathcal{L}_\Gamma^T$ below). The preceding discussion, of course, is just a very rough sketch of the idea and the interested reader is encouraged to study \cite{EGM3dE} for more information.

\section{The Fundamental Model}\label{FundamentalModelSection}

In this section we describe the basic model which we will use to approximate some solutions to the Euler equation. First we describe how the model originally came about and then we exhibit the specific solutions to the model which we will be using later on. We remark that this model can also be used to model situations other than the one discussed in Section \ref{Setup}; in fact, I believe that some form of this model is also behind the singularity in the numerical work \cite{HouLuo}.

\subsection{Origin}
We begin by introducing the model:
\begin{equation}
\label{FundamentalModel}
\partial_t f(\rho,\theta, t)=f(\rho,\theta, t)L_{12}^K(f)(\rho,t),\qquad L_{12}^K(f)(\rho,t)=\int_{\rho}^\infty\int_0^{2\pi}\frac{K(\theta')f(s,\theta',t)}{s}d\theta' ds,
\end{equation} with $K$ some $2\pi$ periodic function whose identity we will discuss later. The choice of $K$ really depends upon the scenario we are trying to model; specifically, what kind of symmetries we impose on the vorticity.
One can view this model in the spirit of the Constantin-Lax-Majda model \cite{CLM} for the vortex stretching term in the 3D Euler equation: \[\partial_t f=f H(f),\] where $H$ is the Hilbert transform. To arrive at this model, one first builds a more realistic model\footnote{We remark that this type of model appeared in a work of Constantin and Sun \cite{ConstantinSun} and a note of A. Kiselev in the list of open problems \cite{problems}.} \[\partial_t f=f R_{12} f,\] where $R_{12}$ is just the singular integral operator with Fourier symbol $-\frac{\xi_1\xi_2}{|\xi|^2}$. The advantage of this model is that the non-linearity \[f R_{12}f\] appears in some form in the vortex stretching term of the actual 3D Euler equation. It appears that $C^\infty$ solutions to this model which are odd in $x$ and $y$ separately and non-negative on $[0,\infty)^2$ become singular in finite time (though this remains open). One disadvantage of this model is that it seems much more difficult to analyze than the Constantin-Lax-Majda model. However, in the odd scenario described above, it turns out that when we replace $R_{12}$ by $L_{12}^K$ (when $K=\sin(2\theta)$), the problem becomes solvable again. Moreover, replacing $R_{12}$ by $L_{12}^K$ (with $K=\sin(2\theta)$) is actually justifiable! This is an important observation which has its origins in the work of Kiselev and \v{S}ver\'ak \cite{KS} and further refinements in previous works of the author \cite{E1} and the author and Jeong (\cite{EJB}, \cite{EJE},\cite{EJVP}). 

\subsection{Analysis}
Now we turn to a basic analysis of \eqref{FundamentalModel} and the main result here is Lemma \ref{FMSelfSimilar}.
First, in order to get local well-posedness for solutions to \eqref{FundamentalModel}, we should only search for solutions which vanish at $\rho=0$, which are at least H\"older continuous in $(\rho,\theta)$ (and thus on $\mathbb{R}^2$), and which vanish at infinity like $\rho^{-\delta}$ for some $\delta>0$. It is not difficult to establish local well-posedness in this class by using that the mapping $L_{12}^K$ is a bounded operator on the class of functions we just described (on $\mathbb{R}^2$ it can be viewed as local well-posedness on the class $C^\alpha \cap L^p$ for some $0<\alpha<1$ and $p<\infty$). 

Next, it is not difficult to see that smooth solutions to this equation can become singular in finite time. Indeed, upon multiplying both sides of the equation by $\frac{K(\theta)}{\rho}$ and integrating on the whole space we see that: \[\frac{d}{dt}L_{12}^K(f)(0)=\frac{1}{2}L_{12}^K(f)(0)^2.\] However, in order to use solutions to this equation to approximate some solutions to the Euler equation, it is necessary to get a finer understanding of the blow-up behavior.

We now show how to solve \eqref{FundamentalModel} explicitly. This is not difficult to achieve since $L_{12}^K(f)$ is a radial function and thus it is possible to reduce this problem to an ODE. Indeed, upon multiplying by $\frac{K(\theta)}{s}$ and now integrating on the region $[\rho,\infty)\times [0,2\pi]$ we see that: \[\partial_t L_{12}^Kf(\rho,t)=\frac{1}{2}L_{12}^Kf(\rho,t)^2.\] This gives us a formula for $L_{12}^Kf$ in terms of $L_{12}^Kf_0.$ Then we further have that \[f=f_0\exp\Big(\int_0^tL_{12}^Kf(\cdot,s)ds\Big)=f_0\exp\Big(\int_0^t \frac{L_{12}^Kf_0}{1-\frac{1}{2}sL_{12}^Kf_0}ds\Big)=\frac{f_0}{(1-\frac{1}{2}tL_{12}^Kf_0)^2}.\]

In fact, it is not so important for us that this problem is explicitly solvable. What is important for us is that it possesses many families of self-similar blow-up solutions (which are, of course, easy to find when we have a solution formula!).
One such family is described in the following. We search for solutions of the form \[f(\rho,\theta,t)=\frac{\Gamma(\theta)}{c}\frac{1}{1-t}F_{*,rad}(\frac{\rho}{1-t}),\] where $c=\int_{0}^{2\pi}\Gamma(\theta)K(\theta)d\theta$ and where \[F_{*,rad}(z)=\frac{2z}{(1+z)^2}.\] We see that, after plugging the ansatz into \eqref{FundamentalModel}, for this to be truly a self-similar solution $F_*$ should satisfy: \[F_{*,rad}+z\partial_z F_{*,rad}=F_{*,rad}\int_z^\infty \frac{F_{*,rad}(\rho)}{\rho}d\rho.\]
Now plugging in $F_{*,rad}=\frac{2z}{(1+z)^2}$ we note: \[F_{*,rad}+z\partial_z F_{*,rad}=\frac{2z}{(1+z)^2}+\frac{2z}{(1+z)^2}-\frac{4z^2}{(1+z)^3}=\frac{4z}{(1+z)^3}\]\[=F_{*,rad}(z)\int_z^\infty\frac{F_{*,rad}(\rho)}{\rho}d\rho.\] Consequently, we get the following lemma.
\begin{lemma}\label{FMSelfSimilar}
The fundamental model \eqref{FundamentalModel} possesses a family of self similar solutions of the form: \[f(r,\theta,t)=2\alpha\frac{\Gamma(\theta)}{c} \frac{1}{1-t}F_{*,rad}\Big(\frac{r^\alpha}{1-t}\Big),\] where \[F_{*,rad}(z)=\frac{z}{(1+z)^2},\] $K\Gamma\in L^1([0,2\pi]),$ and \[c=\int_{0}^{2\pi}K(\theta)\Gamma(\theta)d\theta\] whenever $c\not=0$ and $\alpha>0$. 
\end{lemma}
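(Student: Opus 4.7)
The plan is to prove the lemma by direct substitution and verification: the ansatz is explicit, so we only need to check that it indeed satisfies \eqref{FundamentalModel}. The two technical points are (i) collapsing the double integral defining $L_{12}^K$ into a purely radial quantity via the normalization $c=\int_0^{2\pi} K(\theta)\Gamma(\theta)\,d\theta$, and (ii) verifying the resulting profile ODE for $F_{*,rad}$.

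First I would compute $L_{12}^K(f)$. Plugging the ansatz into the definition gives
\[
L_{12}^K(f)(r,t)=\frac{2\alpha}{c(1-t)}\int_r^\infty \frac{1}{s}F_{*,rad}\!\left(\frac{s^\alpha}{1-t}\right)ds \cdot \int_0^{2\pi} K(\theta')\Gamma(\theta')\,d\theta',
\]
and the choice of $c$ cancels the angular integral. Substituting $\sigma = s^\alpha/(1-t)$, so that $ds/s = d\sigma/(\alpha \sigma)$, turns this into the scale-invariant form
\[
L_{12}^K(f)(r,t)=\frac{2}{1-t}\int_z^\infty \frac{F_{*,rad}(\sigma)}{\sigma}\,d\sigma, \qquad z=\frac{r^\alpha}{1-t}.
\]
The factor of $\alpha$ has been absorbed into the Jacobian, which is exactly why the prefactor $2\alpha$ was included in the ansatz.

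Next I would compute $\partial_t f$ using the chain rule with $z=r^\alpha/(1-t)$, obtaining
\[
\partial_t f = \frac{2\alpha\,\Gamma(\theta)}{c(1-t)^2}\bigl(F_{*,rad}(z)+zF_{*,rad}'(z)\bigr).
\]
Setting $\partial_t f = f \cdot L_{12}^K(f)$ and dividing through by the common factor $\frac{2\alpha \Gamma(\theta)}{c(1-t)^2}$ reduces the PDE to the scalar profile equation
\[
F_{*,rad}(z)+zF_{*,rad}'(z) = 2F_{*,rad}(z)\int_z^\infty \frac{F_{*,rad}(\sigma)}{\sigma}\,d\sigma.
\]

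The final step is to verify that $F_{*,rad}(z)=z/(1+z)^2$ solves this equation. A direct computation gives $\int_z^\infty F_{*,rad}(\sigma)/\sigma\,d\sigma = \int_z^\infty (1+\sigma)^{-2}d\sigma = 1/(1+z)$, so the right-hand side equals $2z/(1+z)^3$. On the left, $F_{*,rad}(z)+zF_{*,rad}'(z) = \frac{d}{dz}\bigl(z F_{*,rad}(z)\bigr) = \frac{d}{dz}\bigl(z^2/(1+z)^2\bigr) = 2z/(1+z)^3$, which matches. There is no genuine obstacle here — the argument is a one-page verification — but the content worth emphasizing is the structural reason it works: the operator $L_{12}^K$ is angular-averaging, so any ansatz of the product form $\Gamma(\theta)\cdot g(r,t)/c$ with $c$ chosen as in the statement decouples the angular and radial dynamics completely, reducing the problem to the single exactly solvable radial ODE above and leaving $\Gamma$ entirely free (subject only to $K\Gamma\in L^1$ and $c\ne 0$).
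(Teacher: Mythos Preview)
Your proof is correct and follows essentially the same approach as the paper: plug the ansatz into \eqref{FundamentalModel}, use the normalization $c$ to collapse the angular integral, reduce to the radial profile identity, and verify it for $F_{*,rad}$ directly. Your treatment is in fact slightly more detailed than the paper's, since you make explicit the change of variables $\sigma=s^\alpha/(1-t)$ that absorbs the factor of $\alpha$, whereas the paper simply states the resulting profile equation.
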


\subsubsection*{Specification of $K$ and angular domain}
In this work, we will be working in a situation where \[K(\theta)=3\sin(\theta)\cos^2(\theta)\] and the spatial domain is $[0,\infty)\times [0,\pi/2].$ For this reason, from now on, we will take \[L_{12}(f)=\int_{r}^{\infty}\int_{0}^{\pi/2}\frac{3f(\rho,\theta)\sin(\theta')\cos^2(\theta')}{\rho}d\theta'd\rho\] and \[c=\int_{0}^{\pi/2}K(\theta)\Gamma(\theta)d\theta.\] 

\section{Linearization of the Fundamental Model in Self-Similar Variables} \label{LinearizedFundamentalModel}

By solving the system \eqref{FundamentalModel} directly, it is not difficult to see that the solutions described in Lemma \ref{FMSelfSimilar} are \emph{stable} in that there are open sets of functions which all blow-up in the same way. Since we will not be able to solve explicitly after this section, it will be more useful to see this stability in terms of spectral properties of the linearization around the self similar solutions of Lemma \ref{FMSelfSimilar}. First we will define the relevant linear operator, then we will discuss its coercivity properties which are motivated by previous work with Ghoul and Masmoudi \cite{EGM} and Jeong \cite{EJDG}. The main result of this subsection is Proposition \ref{prop:L2Coercivity} which shows coercivity of the relevant linear operator in a weighted $L^2$ space. 
 
\begin{definition}\label{Definition_Linearization}
We define the operators \[\mathcal{L}_{\Gamma}(f)= f+z\partial_z f-2 \frac{f}{1+z}-\frac{2z\Gamma(\theta)}{c(1+z)^2}L_{12}(f),\]
\[\mathcal{L}(f)=f+z\partial_z f-2\frac{f}{1+z}.\]
\end{definition}

To study the coercivity properties of $\mathcal{L}$ and $\mathcal{L}_\Gamma$, we begin by defining the weight function $w$ which will be used throughout the paper. 
\begin{definition}
Define $w:(0,\infty)\rightarrow(1,\infty)$ by: \[w=\frac{(1+z)^2}{z^2}.\]
\end{definition}
Next, we have the following useful lemma. 
\begin{lemma}
We have that \begin{equation}\label{L12L}  L_{12}\Big(\mathcal{L}_\Gamma(f)\Big)=\mathcal{L}\Big(L_{12}(f)\Big)\end{equation}
 \begin{equation}\label{LW} \mathcal{L}(g)w=gw+z\partial_z(gw).\end{equation}
\end{lemma}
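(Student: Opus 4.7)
The plan is to establish both identities by direct computation, with the second being almost immediate and the first requiring an integration-by-parts on the intermediate term.

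For the second identity, I would simply expand $\mathcal{L}(g)w = gw + zw\,\partial_z g - \frac{2gw}{1+z}$ and compare with $gw + z\partial_z(gw) = gw + zw\,\partial_z g + zg\,\partial_z w$. These agree iff $zg\,\partial_z w = -\frac{2gw}{1+z}$, i.e.\ iff $\partial_z w = -\frac{2w}{z(1+z)}$. Differentiating $w = (1+z)^2/z^2$ gives $\partial_z w = -2(1+z)/z^3$, and indeed $-\frac{2w}{z(1+z)} = -\frac{2(1+z)}{z^3}$, so the identity holds.

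For the first identity, set $g(z) = L_{12}(f)(z)$ and apply $L_{12}$ to each of the four terms comprising $\mathcal{L}_\Gamma(f)$. The first term gives $L_{12}(f) = g$ trivially. For the second, I would use Fubini and the fundamental theorem of calculus in the radial variable:
\begin{equation*}
L_{12}(r\partial_r f)(z) = \int_0^{\pi/2} K(\theta) \int_z^\infty \partial_r f(r,\theta)\,dr\,d\theta = -\int_0^{\pi/2} K(\theta) f(z,\theta)\,d\theta = z\,\partial_z g(z),
\end{equation*}
using that $f$ decays at infinity and noting that $\partial_z g(z) = -\int_0^{\pi/2} f(z,\theta)K(\theta)/z\,d\theta$.

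The crux is the remaining pair of terms. Writing $h(r) := \int_0^{\pi/2} f(r,\theta)K(\theta)/r\,d\theta = -g'(r)$, the third term contributes
\begin{equation*}
L_{12}\!\left(-\tfrac{2f}{1+r}\right)(z) = -\int_z^\infty \frac{2h(r)}{1+r}\,dr = \int_z^\infty \frac{2g'(r)}{1+r}\,dr,
\end{equation*}
and integration by parts (with $g(\infty)=0$) turns this into $-\frac{2g(z)}{1+z} + 2\int_z^\infty g(r)(1+r)^{-2}\,dr$. The fourth (non-local) term is
\begin{equation*}
L_{12}\!\left(-\tfrac{2r\Gamma(\theta)}{c(1+r)^2} L_{12}(f)\right)(z) = -\int_z^\infty \frac{2g(r)}{c(1+r)^2}\left(\int_0^{\pi/2} K(\theta)\Gamma(\theta)\,d\theta\right)dr = -2\int_z^\infty \frac{g(r)}{(1+r)^2}\,dr,
\end{equation*}
using precisely the normalization $c = \int_0^{\pi/2} K\Gamma\,d\theta$. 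Adding, the integral remainders cancel exactly, leaving $-\frac{2g(z)}{1+z}$. Collecting all four pieces gives $g + z\partial_z g - \frac{2g}{1+z} = \mathcal{L}(g)$, which is the claim.

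The only genuine subtlety is justifying the Fubini swap and the vanishing of boundary terms at infinity in the integration by parts; this is why $L_{12}$ was introduced to act on functions decaying appropriately (as specified in the well-posedness discussion for the fundamental model), and under those decay assumptions both steps are straightforward. What is most worth emphasizing is that the precise prefactor $\frac{2z\Gamma(\theta)}{c(1+z)^2}$ appearing in $\mathcal{L}_\Gamma$ is engineered exactly so that its image under $L_{12}$ cancels the residual integral produced by the local term $-\frac{2f}{1+z}$; this is the algebraic reason $\mathcal{L}_\Gamma$ is the correct linearization to intertwine with $\mathcal{L}$ through $L_{12}$.
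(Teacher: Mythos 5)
Your proof is correct and follows essentially the same direct-computation route as the paper; you simply spell out the integration by parts and the role of the normalization $c=\int_0^{\pi/2}K\Gamma\,d\theta$ that the paper leaves implicit when it states the first identity in one line.
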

\begin{proof}
Both of these statements are simple computations which we give now.  
To show \eqref{L12L} we compute directly:
\[L_{12}(\mathcal{L}_\Gamma(f))= L_{12}\Big(f+z\partial_z f-2 \frac{f}{1+z}-\frac{2z\Gamma(\theta)}{c(1+z)^2}L_{12}(f)\Big)\]
\[=L_{12}(f)+z\partial_z L_{12}(f)-\frac{2}{1+z}L_{12}(f).\] 
For \eqref{LW} we have that \[\mathcal{L}(g)w=gw+z\partial_z (gw)-\frac{2}{1+z}gw_z-g z\partial_z w=gw+z\partial_z (gw)-\frac{2}{1+z}gw+g z\Big(\frac{2}{z^2}+\frac{2}{z^3}\Big)=gw +z\partial_z(gw).\]
\end{proof}

We need the following Hardy-type inequality.

\begin{lemma}\label{HardyWeight}
Assume $fw\in L^2$ and $L_{12}(f)w\in L^2$. Then, \[|L_{12}(f)w|_{L^2}\leq 4|fw|_{L^2}.\]
\end{lemma}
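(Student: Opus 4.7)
Set $G(z) := L_{12}(f)(z)$ (a function of the radial variable alone) and denote the angular projection $h(z) := \int_0^{\pi/2} f(z,\theta) K(\theta) \, d\theta$, so that $G(z) = \int_z^\infty h(s)/s \, ds$ and $z G'(z) = -h(z)$. The plan is to first prove the sharper one-variable bound $|Gw|_{L^2([0,\infty))} \leq 2|hw|_{L^2([0,\infty))}$, and then pass from $|hw|$ to $|fw|$ by Cauchy--Schwarz in the angular variable.

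Because $w(z) \sim z^{-2}$ near the origin, the hypothesis $Gw \in L^2$ forces $G(0)=0$ and enough vanishing of $G$ near zero that the auxiliary function
\[v(z) := \frac{(1+z)^2}{z}\,G(z)\]
satisfies $v(0)=0$ and $v^2/z^2 = (Gw)^2$. The sharp classical Hardy inequality then gives
\[|Gw|_{L^2}^2 \;=\; \int_0^\infty \frac{v(z)^2}{z^2}\,dz \;\leq\; 4\int_0^\infty (v'(z))^2\,dz.\]

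Next I would compute $v'(z) = (1+z)^2 G'/z + (1+z)(z-1)G/z^2$, expand $(v')^2$, and integrate by parts the cross term $2GG'(1+z)^3(z-1)/z^3$. After simplification (the relevant algebra hinges on the identity $(z-1)^2 - (z^2-2z+3) = -2$), this collapses to
\[\int_0^\infty (v')^2\,dz \;=\; \int_0^\infty (G')^2\,\frac{(1+z)^4}{z^2}\,dz \;-\; 2\int_0^\infty G^2\,\frac{(1+z)^2}{z^4}\,dz.\]
The first term equals $|hw|_{L^2}^2$ because $zG'=-h$, and the second is nonnegative, so $|Gw|_{L^2}\leq 2|hw|_{L^2}$. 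Finally, Cauchy--Schwarz in $\theta$ yields $|h(z)|^2 \leq |K|_{L^2([0,\pi/2])}^2 \int_0^{\pi/2} f(z,\theta)^2\,d\theta$, with $|K|_{L^2}^2 = 9\pi/32 < 1$, whence $|hw|_{L^2} \leq |fw|_{L^2}$ and $|Gw|_{L^2} \leq 2|fw|_{L^2} \leq 4|fw|_{L^2}$ (the stated constant $4$ has considerable slack).

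The main technical obstacle is justifying that the boundary terms in Hardy's inequality and in the integration by parts on the cross term vanish at both $z=0$ and $z=\infty$ --- namely $z(Gw)^2$ at the endpoints, and $(1+z)^3(z-1)G^2/z^3$ at the endpoints. These should follow by a density/approximation argument from the hypotheses $fw, Gw \in L^2$, exploiting the explicit representation $G(z) = \int_z^\infty h/s\,ds$; once proved for smooth compactly supported $f$, the inequality extends by continuity to the full class considered in the lemma.
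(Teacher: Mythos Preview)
Your argument is correct and genuinely different from the paper's. The paper proceeds by decomposing the weight into homogeneous pieces, reducing to the three inequalities $|z^{-j}L_{12}(f)|_{L^2}\le 2|z^{-j}f|_{L^2}$ for $j=0,1,2$, each of which is a standard one-variable Hardy inequality proved by writing $z^{-2j}=\frac{1}{1-2j}\partial_z z^{1-2j}$ and integrating by parts. You instead package everything into a single application of the classical Hardy inequality $\int v^2/z^2\le 4\int (v')^2$ with the clever choice $v=(1+z)^2G/z$, and your integration-by-parts identity $\int (v')^2=|hw|_{L^2}^2-2\int (1+z)^2G^2/z^4$ (which I checked, including the key algebraic cancellation $(z-1)^2-(z^2-2z+3)=-2$) immediately yields the sharper bound $|Gw|_{L^2}\le 2|hw|_{L^2}$. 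Both approaches require the same density step to justify vanishing boundary terms, which you correctly flag. Your route is tighter (it produces the constant $2$ rather than $4$) and arguably more elegant; the paper's decomposition is more modular and makes the role of each homogeneous weight transparent.
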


\begin{proof}
We will establish the result for smooth functions with $f$ and $L_{12}(f)$ vanishing (at least) quadratically at zero. The general case will follow by approximation. 
Note that $w(z)^2=\frac{1}{z^4}+\frac{2}{z^2}+1$. Thus, it suffices to prove that \[|L_{12}(f)|_{L^2}^2\leq 4|f|_{L^2}^2,\qquad |z^{-1}L_{12}(f)|_{L^2}^2\leq 4|z^{-1}f|_{L^2}^2, \qquad |z^{-2}L_{12}(f)|_{L^2}^2\leq 4|z^{-2}f|_{L^2}^2.\]
We leave the first two to the reader and establish the last one. 
\[\int z^{-4}L_{12}(f)^2=\Big|\frac{1}{3}\int \partial_z z^{-3}L_{12}(f)^2\Big|=\Big|\frac{2}{3}\int z^{-3}L_{12}(f)\partial_z L_{12}(f)\Big|\] \[=\frac{2}{3}\int z^{-4} L_{12}(f)(z) \int_{0}^{\pi/2}K(\theta)f(z,\theta)d\theta dz\leq \frac{2}{3}|K|_{L^2}|z^{-2}L_{12}(f)|_{L^2}|z^{-2}f|_{L^2}<\frac{2}{3}|z^{-2}L_{12}(f)|_{L^2}|z^{-2}f|_{L^2}.\]
\end{proof}

We make the following observation about the function $\Gamma$: 
\begin{equation}\label{GammaAssumption} |\frac{\Gamma}{c}-K|_{L^2[0,\pi/2])}\leq \frac{7}{10}.
\end{equation}  Recall that $\Gamma$ takes the form $\Gamma=(\sin(\theta)\cos^2(\theta))^\beta$ for some  $0\leq\beta\leq 1.$ The fact that such examples actually satisfy this assumption is a simple exercise which is easiest to check when $\beta=0$ and $\beta=1$. 

We now proceed to establish weighted $L^2$ coercivity estimates on $\mathcal{L}_\Gamma$. 

\begin{proposition}
\label{prop:L2Coercivity}
We have that \begin{equation}\label{L2Coercivity} (\mathcal{L}_\Gamma(f)w, fw)_{L^2}\geq \frac{1}{4}|fw|_{L^2}^2. \end{equation}
\end{proposition}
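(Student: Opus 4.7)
The plan is to exploit the two identities from the preceding lemma together with integration by parts in $z$. First, writing $\mathcal{L}_\Gamma(f) = \mathcal{L}(f) - \tfrac{2z\Gamma(\theta)}{c(1+z)^2} L_{12}(f)$ and multiplying by $w$, the elementary simplification $\frac{z}{(1+z)^2}\cdot w = \frac{1}{z}$ combined with \eqref{LW} yields
\[
\mathcal{L}_\Gamma(f) w \;=\; fw + z\partial_z(fw) - \frac{2\Gamma(\theta)}{cz} L_{12}(f)(z).
\]
Taking the inner product with $fw$ and integrating by parts in $z$ (boundary terms vanish by the decay forced by $fw,\, L_{12}(f)w \in L^2$ together with a standard smooth approximation argument as in the proof of Lemma \ref{HardyWeight}), the first two terms contribute exactly $\tfrac{1}{2}|fw|^2_{L^2}$.

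Denote the remaining term $I := \int \tfrac{2\Gamma(\theta)}{cz} L_{12}(f)\, fw\, d\theta dz$. Since $\int_0^{\pi/2} K(\theta) f(z,\theta)\,d\theta = -z\partial_z L_{12}(f)(z)$ directly from the definition of $L_{12}$, decomposing $\Gamma = cK + (\Gamma - cK)$ turns the $K$-piece of $I$ into $-2\int L_{12}(f)\, w\, \partial_z L_{12}(f)\, dz$, and one more integration by parts produces $-\int \tfrac{2(1+z)}{z^3} L_{12}(f)^2\, dz$ (using $\partial_z w = -\tfrac{2(1+z)}{z^3}$). This enters $-I$ with a favorable sign, supplying a positive good term $2A$ with $A := \int \tfrac{(1+z)}{z^3} L_{12}(f)^2\, dz$.

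For the $(\Gamma - cK)$-piece of $I$, I would apply Cauchy--Schwarz in $\theta$ and invoke \eqref{GammaAssumption} to pull out a factor of $\tfrac{7}{10}$. Rewriting $\bigl(\int f^2 d\theta\bigr)^{1/2} = w^{-1}\bigl(\int (fw)^2 d\theta\bigr)^{1/2}$ (using that $w$ is $\theta$-independent) and then applying Cauchy--Schwarz in $z$ against the weight pairing $\tfrac{1+z}{z^3}\cdot \tfrac{z}{1+z}$ bounds this remainder by $\tfrac{7}{5} A^{1/2} B^{1/2}$, where $B := |fw|^2_{L^2}$. Combining everything gives
\[
(\mathcal{L}_\Gamma(f) w, fw)_{L^2} \;\geq\; \tfrac{1}{2}B + 2A - \tfrac{7}{5} A^{1/2} B^{1/2}.
\]

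The stated bound then follows from the elementary algebraic inequality $\tfrac{1}{4} u^2 - \tfrac{7}{5} uv + 2v^2 \geq 0$ applied to $u = B^{1/2},\ v = A^{1/2}$: its discriminant is $\tfrac{49}{25} - 2 = -\tfrac{1}{25} < 0$, so the quadratic form is nonnegative. The main obstacle is constant tracking: the hypothesis $|\Gamma/c - K|_{L^2} \leq \tfrac{7}{10}$ is close to the sharp threshold $1/\sqrt{2}$ forced by this final discriminant, so no step of the chain can afford to leak slack. Everything else is a routine pair of integrations by parts and two applications of Cauchy--Schwarz.
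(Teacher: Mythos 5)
Your proof is correct and follows essentially the same route as the paper: apply the identity \eqref{LW} to reduce the $\mathcal{L}$-part to $\tfrac{1}{2}|fw|^2_{L^2}$, decompose $\Gamma/c = K + (\Gamma/c - K)$, convert the $K$-piece into a positive term via $\int_0^{\pi/2}K f\,d\theta = -z\partial_z L_{12}(f)$ and one integration by parts, estimate the remainder by Cauchy--Schwarz together with \eqref{GammaAssumption}, and close with the discriminant of the resulting quadratic form. The only cosmetic difference is bookkeeping: you retain the full good term $A = \int\frac{1+z}{z^3}L_{12}(f)^2\,dz$ and compensate by splitting the weight $\tfrac{1}{z} = \bigl(\tfrac{1+z}{z^3}\bigr)^{1/2}\bigl(\tfrac{z}{1+z}\bigr)^{1/2}$ in the Cauchy--Schwarz step, whereas the paper drops to the smaller $\int\frac{1}{z^2}L_{12}(f)^2\,dz$ and uses the plainer split; both lead to the same discriminant $(\tfrac{7}{5})^2 - 4\cdot\tfrac{1}{4}\cdot 2 < 0$.
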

\begin{proof}
Observe that \[\mathcal{L}_\Gamma(f)=\mathcal{L}(f)-\frac{2\Gamma z}{c(1+z)^2}L_{12}(f).\]
Thus, 
\[
(\mathcal{L}_\Gamma(f)w,fw)_{L^2}=\Big(\mathcal{L}(f)w,fw\Big)_{L^2}-2\Big(K\frac{z}{(1+z)^2} f w^2, L_{12}(f)\Big)_{L^2}-2\Big((\frac{\Gamma}{c}-K)\frac{z}{(1+z)^2} f w^2, L_{12}(f)\Big)_{L^2}
\]
\[
=\frac{1}{2}|fw|^2_{L^2}-2\Big(K\frac{f}{z} w, L_{12}(f)\Big)_{L^2}-2\Big(f w, (\frac{\Gamma}{c}-K)\frac{L_{12}(f)}{z}\Big)_{L^2}
\]
\[=\frac{1}{2}|fw|^2_{L^2}+\Big(\partial_z\big(L_{12}(f)^2\big) , w\Big)_{L^2_z}-2\Big(f w, (\frac{\Gamma}{c}-K)\frac{L_{12}(f)}{z}\Big)_{L^2}\]
\[\geq \frac{1}{2}|fw|_{L^2}^2-(L_{12}(f)^2,\partial_z w)_{L^2_z}-2|fw|_{L^2}|\frac{\Gamma}{c}-K|_{L^2(\mathbb{S}^1)}|\frac{1}{z}L_{12}(f)|_{L^2_z}\]
\[\geq \frac{1}{2}|fw|_{L^2}^2+2|\frac{1}{z}L_{12}(f)|_{L^2_z}^2-\frac{7}{5}|fw|_{L^2}|\frac{1}{z}L_{12}(f)|_{L^2_z},\]
where we used \eqref{LW} and the definition of $w$ in the second equality, the definition of $L_{12}$ in the third equality, integration by parts in the first inequality, and \eqref{GammaAssumption} in the second inequality. 

Since $\Big(\frac{7}{5}\Big)^2<4(\frac{1}{4})(2),$ we have
\[(\mathcal{L}_\Gamma(f)w,fw)_{L^2}\geq \frac{1}{4}|fw|_{L^2}^2. \] 
\end{proof}

\section{Linearization with angular transport}\label{LinearAngularTransport}

To move back toward the Euler equation from the fundamental model as explained in Section 2 (read backwards), it will be necessary to also study the coercivity properties of the following operator which is the same as $\mathcal{L}_\Gamma$ but with an extra transport term in the angular direction. We begin by defining this operator in Definition \ref{Def:LinearOperator}. The goal of this section will then be to prove that $\mathcal{L}_\Gamma^T$ is coercive on $\mathcal{H}^k$ as is explained in Proposition \ref{prop:HkCoercivity} below and the remarks preceding it. Note that the weights in the definition of $\mathcal{H}^k$ (as in \eqref{HkNorm}) are chosen to have favorable properties when we take the inner product between $\mathcal{L}_\Gamma^T(g)$ and $g$ when $g\in\mathcal{H}^k$. The use of the spatial weights is clear from Proposition \ref{prop:L2coercivity} and the importance of the angular weights $\frac{1}{\sin(2\theta)^{99/100}}$ and $\frac{1}{\sin(2\theta)^\gamma}$ are that they will allow us to hide the effect of the angular transport term. The reason for having two different angular weights comes from the elliptic estimates as is explained in Section \ref{H2Elliptic}. 

\begin{definition}\label{Def:LinearOperator}
We define the following operator acting first on $C^1_c$ functions:\[\mathcal{L}_{\Gamma}^T(f)=\mathcal{L}_\Gamma(f)-\mathbb{P}(\frac{3}{1+z}\sin(2\theta)\partial_\theta f),\]
\end{definition}
\noindent where $\mathbb{P}$ is an operator which we will now define. 
First recall that \[ \Gamma(\theta)= \Big(\sin(\theta)\cos^2(\theta)\Big)^{\alpha/3}\qquad c=\int_{0}^{\pi/2}\Gamma(\theta)K(\theta)d\theta.\]
\begin{definition}
For $f\in \mathcal{H}$ we define \[\mathbb{P}(f)(z,\theta)=f(z,\theta)- \frac{\Gamma(\theta)}{c}\frac{2 z^2}{(1+z)^3}L_{12}(f)(0).\]
\end{definition}
\begin{remark}
Note that $L_{12}(\mathbb{P}(f))(0)=0$ for every $f$.  The reason for including the projector $\mathbb{P}$ in the definition of $\mathcal{L}_\Gamma^T$ is that we want to be able to say that if $g$ vanishes quadratically at $z=0$ and $L_{12}(g)(0)=0$ then the same can be said about $\mathcal{L}_\Gamma^T(g)$. Let us also note that many projectors could have been chosen to achieve these properties, but this is the only one which also arises naturally from relaxing certain scaling parameters in the problem (see the calculation preceding equation \eqref{gEquationFinal}).
\end{remark}
Observe the pointwise inequality
\begin{equation}\label{GammaAssumption2} |\sin(2\theta)\partial_\theta \Gamma|\leq 2\alpha \Gamma. \end{equation} 
To avoid cumbersome notation, we define the operator \[D_\theta:=\sin(2\theta)\partial_\theta.\]
We also define 
\begin{equation} 
\label{gamma} \gamma=1+\frac{\alpha}{10}.
\end{equation}

\subsection{$L^2$ coercivity for $\mathcal{L}_\Gamma^T$ with one $\theta$-derivative}

We begin with an $L^2$ estimate which directly follows from Proposition \ref{prop:L2Coercivity}.
\begin{proposition}\label{prop:L2CoercivityTransport}
We have that \[(\mathcal{L}_\Gamma^T(f)w,fw)_{L^2}\geq \frac{1}{5}|fw|_{L^2}^2-100|D_\theta f w|_{L^2}^2.\] 
\end{proposition}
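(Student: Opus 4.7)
The plan is to reduce the claim to Proposition \ref{prop:L2Coercivity} by treating the angular-transport correction as a lower-order perturbation, absorbed via a weighted Cauchy-Schwarz inequality. Writing
\[
(\mathcal{L}_\Gamma^T(f)w,fw)_{L^2}=(\mathcal{L}_\Gamma(f)w,fw)_{L^2}-\Big(\mathbb{P}\big(\tfrac{3}{1+z}D_\theta f\big)w,fw\Big)_{L^2},
\]
I would immediately apply Proposition \ref{prop:L2Coercivity} to obtain $(\mathcal{L}_\Gamma(f)w,fw)_{L^2}\ge\tfrac14|fw|_{L^2}^2$, after which the whole task reduces to bounding the perturbation term from above by $\epsilon|fw|_{L^2}^2+C_\epsilon|D_\theta fw|_{L^2}^2$ with $\epsilon=\tfrac{1}{20}$ and $C_\epsilon\le 100$.

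Next I would split $\mathbb{P}$ using its definition and estimate the two resulting pieces separately. For the identity piece $\big(\tfrac{3}{1+z}D_\theta f\,w,\,fw\big)_{L^2}$, boundedness of $\tfrac{3}{1+z}$ by $3$ plus Cauchy-Schwarz immediately gives a bound by $3|D_\theta fw|_{L^2}|fw|_{L^2}$. For the rank-one piece, which equals
\[
L_{12}\!\Big(\tfrac{3}{1+z}D_\theta f\Big)(0)\cdot\Big(\tfrac{\Gamma}{c}\tfrac{2z^2}{(1+z)^3}w,\,fw\Big)_{L^2},
\]
I would estimate the two factors separately. Using $\tfrac{z^2}{(1+z)^3}w=\tfrac{1}{1+z}$, a Cauchy-Schwarz on the right factor, together with the obvious uniform bound $|\Gamma/c|_{L^\infty([0,\pi/2])}\le C$ (since $\Gamma\to 1$ and $c\to 1$ as $\alpha\to 0$) and $\int_0^\infty(1+z)^{-2}dz=1$, yields a bound by $C|fw|_{L^2}$. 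For the $L_{12}$ factor I write
\[
\Big|L_{12}\!\Big(\tfrac{3}{1+z}D_\theta f\Big)(0)\Big|=\Big|\int_0^\infty\!\!\int_0^{\pi/2}\tfrac{3K(\theta)\,D_\theta f(r,\theta)}{r(1+r)}\,d\theta\,dr\Big|,
\]
and apply Cauchy-Schwarz against $|D_\theta f|w$ with the residual weight $\tfrac{3K(\theta)}{r(1+r)w}$; since $1/w^2=r^4/(1+r)^4$, the residual weight is in $L^2_{dr\,d\theta}$ thanks to $K\in L^2([0,\pi/2])$ and convergence of $\int_0^\infty r^2(1+r)^{-6}dr$. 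This gives $|L_{12}(\tfrac{3}{1+z}D_\theta f)(0)|\le C|D_\theta fw|_{L^2}$.

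Combining these two bounds on the rank-one piece with the bound on the identity piece, I obtain
\[
\Big|\big(\mathbb{P}(\tfrac{3}{1+z}D_\theta f)w,fw\big)_{L^2}\Big|\le C_0|D_\theta fw|_{L^2}|fw|_{L^2}
\]
for some absolute constant $C_0$. Applying Young's inequality $ab\le\tfrac{1}{20}a^2+5b^2$ with $a=|fw|_{L^2}$ and $b=C_0|D_\theta fw|_{L^2}$ then yields
\[
(\mathcal{L}_\Gamma^T(f)w,fw)_{L^2}\ge\tfrac14|fw|_{L^2}^2-\tfrac{1}{20}|fw|_{L^2}^2-5C_0^2|D_\theta fw|_{L^2}^2=\tfrac15|fw|_{L^2}^2-5C_0^2|D_\theta fw|_{L^2}^2,
\]
and tracking constants through the above (using the explicit value of $K$ and the crude $C_0\le\sqrt{20}$) gives $5C_0^2\le 100$, completing the estimate. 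The only mild obstacle is keeping the arithmetic tight enough to land the explicit constants $\tfrac15$ and $100$; the analytic ingredients themselves are all immediate consequences of Proposition \ref{prop:L2Coercivity} and elementary weighted Cauchy-Schwarz estimates.
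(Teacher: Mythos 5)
Your proposal follows the paper's proof step for step: decompose $\mathcal{L}_\Gamma^T$ into $\mathcal{L}_\Gamma$ plus the angular correction, apply Proposition \ref{prop:L2Coercivity} to the first piece, and absorb the correction via weighted Cauchy--Schwarz and Young with $ab\le\frac{1}{20}a^2+5b^2$. The only difference is that you spell out the implicit bound $|\mathbb{P}(\frac{3}{1+z}D_\theta f)w|_{L^2}\le\sqrt{20}\,|D_\theta fw|_{L^2}$ (by splitting $\mathbb{P}$ into its identity and rank-one parts), which the paper asserts without detail; your arithmetic for that bound checks out.
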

\begin{proof}
The proof is a direct application of Proposition \ref{prop:L2Coercivity} and the Cauchy-Schwarz inequality on the last term. Indeed, 
\[(\mathcal{L}_\Gamma^T(f)w, fw)_{L^2}=(\mathcal{L}_\Gamma(f)w, fw)_{L^2}-(\mathbb{P}(\frac{3}{1+z}D_\theta f)w, fw)_{L^2}\]
\[\geq \frac{1}{4}|fw|_{L^2}^2-|\mathbb{P}(\frac{3}{1+z}D_\theta f)w|_{L^2}|fw|_{L^2}\geq \frac{1}{5}|fw|_{L^2}^2-5|\mathbb{P}(\frac{3}{1+z}D_\theta f)w|_{L^2}^2\geq \frac{1}{5}|fw|_{L^2}^2-100|D_\theta f w|_{L^2}^2.\]
\end{proof}

\begin{proposition}\label{prop:thetaderivative}
\[\Big(\big(D_\theta \mathcal{L}_\Gamma^T(f)\big),\big(D_\theta f\big) \frac{w^2}{\sin(2\theta)^\gamma}\Big)_{L^2}\geq (\frac{1}{4}-\alpha)\Big|\big(D_\theta f\big) \frac{w}{\sqrt{\sin(2\theta)^\gamma}}\Big|_{L^2}^2-10^7 \alpha|fw|_{L^2}^2.\]
\end{proposition}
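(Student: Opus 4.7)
The plan is to expand $D_\theta \mathcal{L}_\Gamma^T(f)$ by commuting $D_\theta$ through each summand of the definition. Since $L_{12}(f)$ and the number $L_{12}(\,\cdot\,)(0)$ are independent of $\theta$, and since $D_\theta$ commutes with the purely $z$-based operations appearing in $\mathcal{L}$, one gets
\[
D_\theta \mathcal{L}_\Gamma^T(f) = \mathcal{L}(D_\theta f) - \frac{2z\, D_\theta \Gamma}{c(1+z)^2}L_{12}(f) - \frac{3}{1+z}D_\theta^2 f + \frac{D_\theta \Gamma}{c}\frac{2z^2}{(1+z)^3}L_{12}\!\Bigl(\tfrac{3}{1+r}D_\theta f\Bigr)(0).
\]
Pairing against $D_\theta f \cdot w^2/\sin(2\theta)^\gamma$ produces four inner products $I_1, I_2, I_3, I_4$ which I handle separately. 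The goal is for $I_1$ to supply $\tfrac{1}{2}$, for $I_2$ and $I_4$ to each consume at most $\tfrac{1}{8}$, and for all remaining losses to be proportional to $\alpha$.

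For $I_1$ I apply the identity $\mathcal{L}(g)w = gw + z\partial_z(gw)$ from \eqref{LW} and integrate by parts in $z$ (boundary terms vanish for $f$ in the relevant class), yielding the clean equality $I_1 = \tfrac{1}{2}\bigl|D_\theta f \cdot w/\sqrt{\sin(2\theta)^\gamma}\bigr|_{L^2}^2$. For $I_3$ I rewrite $D_\theta^2 f \cdot D_\theta f = \tfrac{1}{2}\sin(2\theta)\partial_\theta\!\bigl((D_\theta f)^2\bigr)$ and integrate by parts in $\theta$; the $\theta$-derivative falls on $\sin(2\theta)^{1-\gamma}$ and pulls out the favorable small factor $\gamma - 1 = \tfrac{\alpha}{10}$, so $|I_3| \leq \tfrac{3\alpha}{10}\bigl|D_\theta f \cdot w/\sqrt{\sin(2\theta)^\gamma}\bigr|_{L^2}^2$. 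The boundary contribution vanishes because $(D_\theta f)^2 \sin(2\theta)^{1-\gamma} = \sin(2\theta)^{2-\alpha/10}(\partial_\theta f)^2 \to 0$ at $\theta=0,\pi/2$.

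For the two $D_\theta \Gamma$ terms $I_2, I_4$, the engine is the pointwise bound $|D_\theta \Gamma| \leq 2\alpha \Gamma \leq 2\alpha \sin(2\theta)^{\alpha/3}$ from \eqref{GammaAssumption2}. Applying Cauchy-Schwarz to split off $D_\theta f\cdot w/\sqrt{\sin(2\theta)^\gamma}$, the remaining $\theta$-integral $\int_0^{\pi/2}\Gamma^2/(c^2 \sin(2\theta)^\gamma)\, d\theta$ is a barely integrable singularity of size $\sim 1/\alpha$ (the exponent is $-1 + \tfrac{17\alpha}{30}$). The $z$-weights simplify via the identities $\tfrac{zw^2}{(1+z)^2} = \tfrac{w}{z}$ (for $I_2$) and $\tfrac{z^2 w^2}{(1+z)^3} = \tfrac{w}{1+z}$ (for $I_4$). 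In $I_2$ this converts $L_{12}(f)$ into $L_{12}(f)/z$, which is controlled by $4|fw|_{L^2}$ via Lemma \ref{HardyWeight} (using $1/z \leq w$). In $I_4$, the constant $L_{12}(\tfrac{3}{1+r}D_\theta f)(0)$ is bounded by $C|fw|_{L^2}$ after integrating by parts in $\theta$ (boundary vanishes because $\sin(2\theta)K(\theta) \to 0$ at $\theta = 0, \pi/2$) and Cauchy-Schwarz against $1/(r(1+r))$. Combining, $|I_2|,|I_4| \leq C\sqrt{\alpha}\, |fw|_{L^2}\cdot \bigl|D_\theta f \cdot w/\sqrt{\sin(2\theta)^\gamma}\bigr|_{L^2}$ for universal $C$, and AM-GM yields $|I_2|,|I_4| \leq \tfrac{1}{8}\bigl|D_\theta f \cdot w/\sqrt{\sin(2\theta)^\gamma}\bigr|_{L^2}^2 + C'\alpha |fw|_{L^2}^2$. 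Summing, $I_1-|I_2|-|I_3|-|I_4| \geq \bigl(\tfrac{1}{4}-\tfrac{3\alpha}{10}\bigr)\bigl|D_\theta f \cdot w/\sqrt{\sin(2\theta)^\gamma}\bigr|_{L^2}^2 - 2C'\alpha |fw|_{L^2}^2$, which gives the claim after absorbing $\tfrac{3\alpha}{10} < \alpha$ and noting that $2C'$ is bounded by the universal constant $10^7$.

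The delicate point is the choice of $\gamma$: the value $\gamma = 1 + \tfrac{\alpha}{10}$ sits inside a narrow window, since $\gamma > 1$ is required for the $I_3$ integration by parts to produce a small positive factor $\gamma - 1$ (angular dissipation instead of a loss), while $\gamma < 1 + \tfrac{2\alpha}{3}$ is required so that $\int_0^{\pi/2} \Gamma^2/\sin(2\theta)^\gamma \, d\theta$ remains finite; the $1/\alpha$ growth of this integral is precisely what combines with $|D_\theta \Gamma|^2 \sim \alpha^2$ to yield the crucial $\sqrt{\alpha}$ factor in $I_2$ and $I_4$. The scheme is tight in both directions, and this balancing is the chief obstacle.
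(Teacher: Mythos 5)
Your proof is correct and follows essentially the same route as the paper: expand $D_\theta\mathcal{L}_\Gamma^T(f)$ into $\mathcal{L}(D_\theta f)$, the two $D_\theta\Gamma$-weighted nonlocal terms, and the angular transport term; extract the coercive $\tfrac12$ via \eqref{LW} and radial integration by parts; bound the $D_\theta\Gamma$-terms using $|D_\theta\Gamma|\leq 2\alpha\Gamma$ together with the $O(1/\alpha)$ integral $\int\Gamma^2\sin(2\theta)^{-\gamma}\,d\theta$ and Lemma~\ref{HardyWeight}; control the transport term by integration by parts in $\theta$ picking up $\gamma-1=\alpha/10$; and close with Young's inequality, exactly as in the paper.

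One small correction to your closing commentary: $\gamma>1$ is not actually forced by this proposition. The term you call $I_3$ does not have a uniform sign after integration by parts (the factor is $\partial_\theta(\sin(2\theta)^{1-\gamma})$, proportional to $\cos(2\theta)$, which changes sign at $\theta=\pi/4$), so it is not "angular dissipation"; what your estimate in fact uses --- correctly --- is only $|\gamma-1|\lesssim\alpha$, and $\gamma=1$ would even make $I_3$ vanish identically. The constraint $\gamma<1+\tfrac{2\alpha}{3}$ is the one genuinely binding here; the requirement $\gamma>1$ comes from elsewhere in the paper (it drives the H\"older embedding in Corollary~\ref{Linfty} and the elliptic estimates of Section~\ref{H2Elliptic}, as the paper notes in the remarks preceding Proposition~\ref{prop:HkCoercivity}).
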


\begin{remark}
The idea behind the proof is simple. First, $D_\theta$ commutes with the transport term and all of $\mathcal{L}_\Gamma$ except the term involving $\Gamma$. Because of \eqref{GammaAssumption2}, when $D_\theta$ hits $\Gamma$, this produces a term of size $\alpha$. Similarly, when $D_\theta$ hits the extra term in $\mathbb{P}$ we get the same factor of $\alpha$. Finally, the purpose of the weight $\frac{1}{\sin(2\theta)^\gamma}$ is to give a mostly-favorable term when the inner product is taken with the transport term. 
\end{remark}

\begin{proof}
We write: 
\[\mathcal{L}_\Gamma^T(f)=\mathcal{L}(f)-\frac{2\Gamma z}{c(1+z)^2}L_{12}(f)-\frac{3}{1+z}\sin(2\theta)\partial_\theta f+\frac{\Gamma(\theta)}{c}\frac{2 z^2}{(1+z)^3}L_{12}(\frac{3}{1+z}\sin(2\theta)\partial_\theta f)(0).\]
Thus, 
\[D_\theta\mathcal{L}_\Gamma^T(f)=\mathcal{L}(D_\theta f)+D_\theta\Gamma \Big(-\frac{2z}{c(1+z)^2}L_{12}(f)+\frac{1}{c}\frac{2 z^2}{(1+z)^3}L_{12}(\frac{3}{1+z}\sin(2\theta)\partial_\theta f)(0)\Big)-\frac{3}{1+z}\sin(2\theta)\partial_\theta D_\theta f.\]
Now, it is easy to check that 
\[
\int_0^{\pi/2}\frac{1}{\sin(2\theta)^\gamma}|D_\theta\Gamma(\theta)|^2d\theta\leq\int_{0}^{\pi/2} \frac{4\alpha^2\Gamma^2}{\sin(2\theta)^{\gamma}}\ \leq  4\alpha^2\int_{0}^{\pi/2} \frac{d\theta}{\sin(2\theta)^{1-\frac{\alpha}{2}}}\leq  4\alpha^2\int_{0}^{\pi/2} \frac{1}{(\frac{2}{\pi}\theta)^{1-\alpha/2}} \leq10\alpha.
\] 
Consequently, if we multiply $D_\theta\mathcal{L}_\Gamma^T(f)$ by $w^2\frac{1}{\sin(2\theta)^\gamma}D_\theta f$ and integrate, we get:
\[\Big(\big(D_\theta \mathcal{L}_\Gamma^T(f)\big),\big(D_\theta f\big) \frac{w^2}{\sin(2\theta)^\gamma}\Big)_{L^2}\geq \frac{1}{2}\Big|\big(D_\theta f\big) \frac{w}{\sqrt{\sin(2\theta)^\gamma}}\Big|_{L^2}^2-10^3\sqrt{\alpha}|fw|_{L^2}|(D_\theta f)\frac{w}{\sqrt{\sin(2\theta)^\gamma}}|_{L^2}\] \[+\frac{3}{2}\Big(\partial_\theta(\sin(2\theta)^{-\alpha/10}), \frac{w^2}{(1+z)}(D_\theta f)^2\Big).\]
The third term comes from integrating the transport term by parts. 
Thus, 
\[\Big(\big(D_\theta \mathcal{L}_\Gamma^T(f)\big),\big(D_\theta f\big) \frac{w^2}{\sin(2\theta)^\gamma}\Big)_{L^2}\geq (\frac{1}{4}-\alpha)\Big|\big(D_\theta f\big) \frac{w}{\sqrt{\sin(2\theta)^\gamma}}\Big|_{L^2}^2-2\times 10^4 \alpha|fw|_{L^2}^2.\]
\end{proof}
A similar calculation as above gives the following proposition:

\begin{proposition} \label{prop:L2CoercivityTransport2}
Let $\eta=\frac{99}{100}$. Then, \[(\mathcal{L}_\Gamma^T(f)w,fw\frac{1}{\sin(2\theta)^\eta})_{L^2}\geq \frac{1}{5}\Big|f \frac{w}{\sqrt{\sin(2\theta)^\eta}}\Big|_{L^2}^2-10^{5}|\frac{1}{z}L_{12}f|_{L^2}^2.\]
\end{proposition}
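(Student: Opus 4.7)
The plan is to expand $\mathcal{L}_\Gamma^T(f) = \mathcal{L}(f) - \frac{2\Gamma z}{c(1+z)^2}L_{12}(f) - \frac{3\sin(2\theta)\partial_\theta f}{1+z} + \frac{\Gamma(\theta)}{c}\frac{2z^2}{(1+z)^3}A$, where $A := L_{12}\bigl(\frac{3\sin(2\theta)\partial_\theta f}{1+z}\bigr)(0)$, pair this against $fw^2/\sin(2\theta)^\eta$, and estimate each of the four resulting pieces separately. The $\mathcal{L}(f)$-contribution is handled exactly as in Proposition \ref{prop:L2Coercivity}: using \eqref{LW} and integration by parts in $z$ (the weight $1/\sin(2\theta)^\eta$ is $z$-independent and commutes through the computation), it contributes precisely $\frac{1}{2}|fw/\sqrt{\sin(2\theta)^\eta}|_{L^2}^2$.

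For the $L_{12}(f)$ piece, I would use the identity $\frac{zw^2}{(1+z)^2}=w/z$ and Cauchy-Schwarz in $\theta$; the key observation is that since $w$ is $\theta$-independent, $\bigl(\int_0^{\pi/2} f^2/\sin(2\theta)^\eta\,d\theta\bigr)^{1/2} = \frac{1}{w(z)}\bigl(\int_0^{\pi/2} f^2 w^2/\sin(2\theta)^\eta\,d\theta\bigr)^{1/2}$, so one factor of $w$ cancels cleanly; Cauchy-Schwarz in $z$ followed by Young's inequality then produces a bound of the shape $\epsilon|fw/\sqrt{\sin(2\theta)^\eta}|^2 + (C/\epsilon)|L_{12}(f)/z|^2_{L^2_z}$, exploiting that $C_1 := \int \Gamma^2/(c^2\sin(2\theta)^\eta)\,d\theta$ is finite thanks to $\eta<1$. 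This is the mechanism by which the $|L_{12}(f)/z|$ norm enters the error. The transport piece is handled by rewriting $\sin(2\theta)/\sin(2\theta)^\eta = \sin(2\theta)^{1-\eta}$ and integrating by parts in $\theta$, using $\partial_\theta(\sin(2\theta)^{1-\eta}) = 2(1-\eta)\cos(2\theta)/\sin(2\theta)^\eta$ and the vanishing of boundary terms (since $\sin(2\theta)^{1-\eta}\to 0$ at $\theta\in\{0,\pi/2\}$); this bounds its contribution in absolute value by $3(1-\eta)|fw/\sqrt{\sin(2\theta)^\eta}|^2 = \frac{3}{100}|fw/\sqrt{\sin(2\theta)^\eta}|^2$, small thanks to $\eta = 99/100$.

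The projection correction $A\cdot\bigl(\frac{\Gamma\cdot 2z^2}{c(1+z)^3}w,\, fw/\sin(2\theta)^\eta\bigr)$ is the main technical difficulty. Using $\frac{z^2 w^2}{(1+z)^3}=w/(1+z)$ together with $\int_0^\infty(1+z)^{-2}\,dz=1$ and Cauchy-Schwarz, the double integral is bounded by $2\sqrt{C_1}|fw/\sqrt{\sin(2\theta)^\eta}|_{L^2}$. For $|A|$, I would integrate by parts in $\theta$ in its defining integral (boundary terms vanish since $\sin(2\theta)K(\theta)\to 0$ at both endpoints), writing $A = -3\int_0^\infty\int_0^{\pi/2} M_1(\theta)f(s,\theta)/(s(1+s))\,d\theta\,ds$ with $M_1 := \partial_\theta(\sin(2\theta)K)$ pointwise bounded, and then apply Cauchy-Schwarz in $(s,\theta)$ with weights $\sin(2\theta)^\eta/w^2$; the computation $\int_0^\infty s^2/(1+s)^6\,ds = 1/30$ then yields $|A|\leq C_A|fw/\sqrt{\sin(2\theta)^\eta}|_{L^2}$ for a finite constant $C_A$. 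Applying Young's inequality to the cross-term $|A|\cdot|fw/\sqrt{\sin(2\theta)^\eta}|$ and summing all contributions produces the claimed estimate. The main obstacle is the careful bookkeeping of constants required to keep the coefficient of $|fw/\sqrt{\sin(2\theta)^\eta}|^2$ at least $\frac{1}{5}$ after absorbing the projection correction; the generous factor $10^5$ on $|L_{12}(f)/z|^2$ accommodates the various loose Cauchy-Schwarz and Young constants arising from $C_1$, $C_A$, and the inverse of the $\epsilon$'s.
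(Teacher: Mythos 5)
Your decomposition of $\mathcal{L}_\Gamma^T(f)$ and the treatment of the first three pieces are correct: the $\mathcal{L}(f)$ piece gives $\tfrac12|fw/\sqrt{\sin(2\theta)^\eta}|^2$ via \eqref{LW} and integration by parts in $z$; the $L_{12}(f)$ piece gives, via Cauchy--Schwarz and Young, $-\epsilon|fw/\sqrt{\sin(2\theta)^\eta}|^2-\tfrac{C_1}{\epsilon}|\tfrac1z L_{12}f|^2$; and the transport piece gives, after integrating $\sin(2\theta)^{1-\eta}\partial_\theta(f^2)$ by parts, an error of size $3(1-\eta)=\tfrac{3}{100}$ times the main norm. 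The gap is in the projection correction. You bound $|J|\leq 2\sqrt{C_1}\,|fw/\sqrt{\sin(2\theta)^\eta}|_{L^2}$ and, after integrating by parts in $\theta$, $|A|\leq C_A|fw/\sqrt{\sin(2\theta)^\eta}|_{L^2}$. But then $|A\cdot J|\leq 2C_A\sqrt{C_1}\,|fw/\sqrt{\sin(2\theta)^\eta}|_{L^2}^2$ with \emph{both} factors already the same norm, so the ``Young's inequality on the cross-term'' you invoke is vacuous: there is nothing to split, and nothing here involves $|\tfrac1zL_{12}f|$. The resulting sign-indefinite contribution has a constant $2C_A\sqrt{C_1}$ which is not small. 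Indeed $C_1=\tfrac1{c^2}\int_0^{\pi/2}\Gamma^2\sin(2\theta)^{-\eta}d\theta\sim(1-\eta)^{-1}=100$, so $\sqrt{C_1}\approx 10$, while $C_A$ is a fixed $O(1)$ quantity; the product is on the order of $20$ or more, which completely overwhelms the $\tfrac12$ of coercivity available from $\mathcal{L}(f)$. Since this error sits precisely on the coefficient of the good norm and cannot be shifted onto $|\tfrac1zL_{12}f|^2$, the final ``summing all contributions'' does not produce the claimed inequality.

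To make the argument actually close, do \emph{not} integrate by parts in $\theta$ when estimating $A$. Instead estimate directly
\[
|A|=\Big|\,3\int_0^\infty\!\!\int_0^{\pi/2}\frac{K(\theta)}{s(1+s)}\,D_\theta f\,d\theta\,ds\,\Big|\le 3\Big(\int_0^\infty\!\!\int_0^{\pi/2}\frac{K(\theta)^2}{s^2(1+s)^2w(s)^2}\,d\theta\,ds\Big)^{1/2}|D_\theta f\,w|_{L^2}\le C\,|D_\theta f\,w|_{L^2},
\]
which now makes $|A\cdot J|$ a genuine \emph{cross}-term $|D_\theta f\,w|\cdot|fw/\sqrt{\sin(2\theta)^\eta}|$; Young's inequality then yields $\epsilon|fw/\sqrt{\sin(2\theta)^\eta}|^2+\tfrac{C}{\epsilon}|D_\theta f\,w|^2$ with $\epsilon$ as small as one likes. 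This closes the estimate, but note that the resulting error term is $|D_\theta f\,w|^2$, which does not appear in the stated Proposition (it is, however, harmlessly dominated in Corollary~\ref{cor:firsttransportcoercivity} by the large multiple of $|D_\theta f\,\tfrac{w}{\sqrt{\sin(2\theta)^\gamma}}|^2$ coming from Proposition~\ref{prop:thetaderivative}). In short: the decomposition is the right one and most of the pieces are handled correctly, but the projection correction cannot be bounded purely by the two norms appearing in the statement; the bookkeeping you defer to does not in fact close with the tools you list, and the estimate you need passes through $|D_\theta f\,w|$.
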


\begin{corollary}\label{cor:firsttransportcoercivity}
If $\alpha<10^{-14}$ and $\eta=\frac{99}{100}$ we have: \[10(\mathcal{L}_\Gamma^T(f)w,fw\frac{1}{\sin(2\theta)^\eta})_{L^2}+10^8(\mathcal{L}_\Gamma^T(f)w,fw)_{L^2}+10^{12}\Big(\big(D_\theta \mathcal{L}_\Gamma^T(f)\big),\big(D_\theta f\big) \frac{w^2}{\sin(2\theta)^\gamma}\Big)_{L^2}\]\[\geq \Big|f \frac{w}{\sqrt{\sin(2\theta)^\eta}}\Big|_{L^2}^2 +\Big|\big(D_\theta f\big) \frac{w}{\sqrt{\sin(2\theta)^\gamma}}\Big|_{L^2}^2+|fw|_{L^2}^2.\]
\end{corollary}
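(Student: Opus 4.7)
The plan is to add the three available coercivity estimates (Propositions \ref{prop:L2CoercivityTransport2}, \ref{prop:L2CoercivityTransport}, and \ref{prop:thetaderivative}) with the prescribed weights $10$, $10^8$, $10^{12}$, and then absorb the three error terms $|\tfrac{1}{z}L_{12}(f)|_{L^2}^2$, $|D_\theta f\,w|_{L^2}^2$, and $\alpha|fw|_{L^2}^2$ using the other (positive) contributions. Writing the linear combination directly gives
\[
\text{LHS}\;\geq\;2\Big|f\tfrac{w}{\sqrt{\sin(2\theta)^\eta}}\Big|_{L^2}^2
+\tfrac{10^{12}}{4}(1-4\alpha)\Big|D_\theta f\tfrac{w}{\sqrt{\sin(2\theta)^\gamma}}\Big|_{L^2}^2
+2\cdot 10^{7}|fw|_{L^2}^2
-10^{6}\big|\tfrac{L_{12}(f)}{z}\big|_{L^2}^2-10^{10}|D_\theta f\,w|_{L^2}^2-10^{19}\alpha|fw|_{L^2}^2,
\]
so the task reduces to showing that the three error terms on the second line are dominated by the three positive terms on the first line.

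\textbf{Step 1: absorbing the transport error.} Since $\sin(2\theta)^\gamma\leq 1$ on $[0,\pi/2]$, we have the pointwise bound $|D_\theta f\,w|^2\leq |D_\theta f\,w/\sqrt{\sin(2\theta)^\gamma}|^2$. Hence $10^{10}|D_\theta f\,w|_{L^2}^2$ is controlled by a fraction of $\tfrac{10^{12}}{4}|D_\theta f\,w/\sqrt{\sin(2\theta)^\gamma}|_{L^2}^2$, leaving a coefficient of order $10^{12}$ in front of the $D_\theta f$ term.

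\textbf{Step 2: absorbing the Biot--Savart error.} Here I would invoke Lemma \ref{HardyWeight}: one easily checks from its proof that in fact $|z^{-1}L_{12}(f)|_{L^2}\leq 2|z^{-1}f|_{L^2}$, and since $w(z)=(1+z)^2/z^2\geq 1/z^2$ we get $|z^{-1}L_{12}(f)|_{L^2}\leq 2|fw|_{L^2}$. Consequently $10^{6}|z^{-1}L_{12}(f)|_{L^2}^2\leq 4\cdot 10^{6}|fw|_{L^2}^2$, which is swallowed by $2\cdot 10^{7}|fw|_{L^2}^2$.

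\textbf{Step 3: absorbing the $\alpha$-error.} The smallness assumption $\alpha<10^{-14}$ yields $10^{19}\alpha<10^{5}\ll 2\cdot 10^{7}$, so this term is also controlled by the $|fw|_{L^2}^2$ contribution. Collecting Steps 1--3 leaves coefficients of at least $2$, $\tfrac{10^{12}}{8}$, and $10^{7}$ in front of the three norms on the right-hand side of the corollary, all of which are $\geq 1$.

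The plan is essentially bookkeeping; there is no real obstacle, but the one point that requires the most care is the inequality $|z^{-1}L_{12}(f)|_{L^2}\leq 2|fw|_{L^2}$, which must be extracted from the proof of Lemma \ref{HardyWeight} (it is not literally the statement of that lemma, which controls $|L_{12}(f)w|_{L^2}$). With this in hand the corollary follows at once.
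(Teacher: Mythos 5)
Your proof is correct and follows the same route as the paper: the paper's entire proof of this corollary is the one-line statement ``We combine the results of Propositions \ref{prop:L2CoercivityTransport}, \ref{prop:thetaderivative}, and \ref{prop:L2CoercivityTransport2},'' and your argument is precisely the explicit bookkeeping behind that combination, using the stated weights $10$, $10^8$, $10^{12}$ and the hypothesis $\alpha<10^{-14}$. One small imprecision worth flagging: in Step 2 you justify $|z^{-1}f|_{L^2}\leq|fw|_{L^2}$ by citing $w(z)\geq 1/z^2$, but what the deduction actually needs is $w(z)\geq 1/z$ (which follows from $(1+z)^2\geq z$, or more sharply $(1+z)^2\geq 4z$); the inequality $w\geq 1/z^2$ alone does not imply $w\geq 1/z$ for $z>1$. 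Aside from this slip the arithmetic checks out: after absorbing, the remaining coefficients in front of $|f w/\sqrt{\sin(2\theta)^\eta}|_{L^2}^2$, $|D_\theta f\,w/\sqrt{\sin(2\theta)^\gamma}|_{L^2}^2$, and $|fw|_{L^2}^2$ are all well above $1$.
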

\begin{proof}
We combine the results of Propositions \ref{prop:L2CoercivityTransport}, \ref{prop:thetaderivative}, and \ref{prop:L2CoercivityTransport2}.
\end{proof}

\subsection{$L^2$ coercivity for $\mathcal{L}_\Gamma^T$ with one $z$-derivative}
With Corollary \ref{cor:firsttransportcoercivity} in hand, we now move to give higher order coercivity results on $\mathcal{L}_\Gamma^T.$ We introduce the weighted differential operator: \[D_z:=z\partial_z,\] set \begin{equation}\label{eta} \eta=\frac{99}{100}, \end{equation} and define the energies:
\[E_\theta^1:=\Big|\big(D_\theta f\big) \frac{w}{\sqrt{\sin(2\theta)^\gamma}}\Big|_{L^2}^2+|f\frac{w}{\sqrt{\sin(2\theta)^\eta}}|_{L^2}^2,\] 
\[E_{z,\theta}^1:=\Big|\big(D_z f\big)\frac{w}{\sqrt{\sin(2\theta)^\eta}}|_{L^2}^2+\Big|\big(D_\theta f\big) \frac{w}{\sqrt{{\sin(2\theta)^\gamma}}}\Big|_{L^2}^2+|f\frac{w}{\sqrt{\sin(2\theta)^\eta}}|_{L^2}^2.\] 

\begin{proposition} Under the assumptions of Corollary \ref{cor:firsttransportcoercivity}, we have
\[\Big(\big(D_z \mathcal{L}_\Gamma^T(f)\big),\big(D_z f\big) \frac{w^2}{\sin(2\theta)^\eta})_{L^2}\geq \frac{1}{4}\Big|\big(D_z f\big) \frac{w}{\sqrt{\sin(2\theta)^\eta}}\Big|_{L^2}^2-10^8E_\theta^1.\]
\end{proposition}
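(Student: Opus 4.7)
The plan is to distribute $D_z$ term by term through $\mathcal{L}_\Gamma^T(f)$, extract the principal coercive piece coming from $\mathcal{L}(D_z f)$, handle the angular transport term by integration by parts in $\theta$, and bound all remaining commutator terms as lower-order errors controlled by $E_\theta^1$. The two auxiliary identities used throughout are that $D_z$ commutes with $D_\theta$, and that integration by parts in $z$ gives $D_z L_{12}(f) = L_{12}(D_z f) = -\int_0^{\pi/2} K(\theta) f(z,\theta)\,d\theta$, so the $L_{12}$ of a $D_z$-derivative is a plain $\theta$-integral of $f$. A direct calculation of $D_z$ on the four pieces of $\mathcal{L}_\Gamma^T(f)$ then yields
\begin{equation*}
D_z\mathcal{L}_\Gamma^T(f) \;=\; \mathcal{L}(D_z f) \;-\; \frac{3}{1+z}\,D_\theta D_z f \;+\; R(f),
\end{equation*}
where $R(f)$ is the explicit sum of five commutator terms
\[
\frac{2zf}{(1+z)^2},\quad -\frac{2\Gamma z(1-z)}{c(1+z)^3}L_{12}(f),\quad \frac{2\Gamma z}{c(1+z)^2}\!\int_0^{\pi/2}\! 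K(\theta')f(z,\theta')\,d\theta',
\]
\[
\frac{3z}{(1+z)^2}D_\theta f,\qquad \frac{2z^2(2-z)\Gamma}{c(1+z)^4}\,L_{12}\!\Bigl(\tfrac{3}{1+z}D_\theta f\Bigr)(0).
\]
Crucially, none of these five terms contains $D_z f$ or a second derivative; only $f$, $D_\theta f$, the Hardy-controlled $L_{12}(f)$, or a scalar $L_{12}(\cdot)(0)$ appears.

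For the main term, identity \eqref{LW} together with a single integration by parts in $z$ (as in the proofs of Propositions \ref{prop:L2Coercivity} and \ref{prop:L2CoercivityTransport2}) gives the exact equality
\begin{equation*}
\bigl(\mathcal{L}(D_z f)\,w,\;(D_z f)\,\tfrac{w}{\sin(2\theta)^\eta}\bigr)_{L^2} \;=\; \tfrac{1}{2}\,\bigl|\,(D_z f)\,\tfrac{w}{\sqrt{\sin(2\theta)^\eta}}\,\bigr|_{L^2}^2.
\end{equation*}
For the angular transport piece, I write $(D_\theta D_z f)(D_z f) = \tfrac{1}{2}D_\theta\bigl((D_z f)^2\bigr)$ and integrate by parts in $\theta$ against $\sin(2\theta)^{-\eta}$, exactly as in the proof of Proposition \ref{prop:thetaderivative}. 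The boundary terms vanish because $\sin(2\theta) = 0$ at $\theta = 0,\pi/2$ and $1-\eta > 0$, and the remaining integral equals $3(1-\eta)\!\int\!\!\int \tfrac{w^2\cos(2\theta)}{(1+z)\sin(2\theta)^\eta}(D_z f)^2$, bounded in absolute value by $\tfrac{3}{100}\,\bigl|(D_z f)w/\sqrt{\sin(2\theta)^\eta}\bigr|_{L^2}^2$.

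Each term in $R(f)$ is then paired against $(D_z f)w^2/\sin(2\theta)^\eta$ and handled by weighted Cauchy--Schwarz in the form $|(g,h)_{L^2}| \leq \epsilon|h|_{L^2}^2 + (4\epsilon)^{-1}|g\cdot w/\sqrt{\sin(2\theta)^\eta}|_{L^2}^2$, so that everything reduces to controlling $|g\cdot w/\sqrt{\sin(2\theta)^\eta}|_{L^2}^2$ by a constant multiple of $E_\theta^1$. The $f$- and $D_\theta f$-terms are handled by pointwise weight inequalities such as $\tfrac{z^2}{(1+z)^4}\,w^2 = \tfrac{1}{z^2} \leq w$, combined with $\sin(2\theta)^{-\eta}\leq \sin(2\theta)^{-\gamma}$; the $L_{12}(f)$-term is handled by the Hardy inequality of Lemma \ref{HardyWeight}; the $\int Kf\,d\theta'$-term uses Cauchy--Schwarz in $\theta$ via $|\int Kf\,d\theta'|^2 \leq (\int K^2\sin(2\theta)^\eta\,d\theta')(\int f^2/\sin(2\theta)^\eta\,d\theta')$ with the first factor finite; and the scalar $L_{12}(\cdot)(0)$-term is first reduced by integration by parts in $\theta$ (which converts $D_\theta f$ to a plain $f$ via $\int K\sin(2\theta)\partial_\theta f\,d\theta = -\int \partial_\theta(K\sin(2\theta))f\,d\theta$) and then bounded in $r$ by Cauchy--Schwarz with the integrable weight $r^2/(1+r)^6$. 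Choosing $\epsilon$ small enough that the five contributions together eat at most $\tfrac{1}{10}\,|(D_z f)w/\sqrt{\sin(2\theta)^\eta}|_{L^2}^2$, the net coefficient in front of $|(D_z f)w/\sqrt{\sin(2\theta)^\eta}|_{L^2}^2$ is at least $\tfrac{1}{2}-\tfrac{3}{100}-\tfrac{1}{10} > \tfrac{1}{4}$, and the total error constant on $E_\theta^1$ fits comfortably under $10^8$. The main obstacle is precisely the angular transport term $-\tfrac{3}{1+z}D_\theta D_z f$: the mixed second derivative $D_\theta D_z f$ is not controlled by $E_\theta^1$, so it cannot be absorbed as a plain remainder error, and the integration-by-parts-in-$\theta$ trick succeeds only because $\eta = 99/100$ is strictly less than $1$, producing the small-but-positive prefactor $1-\eta = 1/100$ that keeps this term below the coercivity budget.
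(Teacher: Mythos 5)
Your proposal is correct and follows essentially the same route as the paper's proof: the same decomposition $D_z\mathcal{L}_\Gamma^T(f)=\mathcal{L}(D_zf)+I_6+R(f)$ (your five $R(f)$ terms are the paper's $I_2,I_3,I_4,I_5,I_7$), the same use of identity \eqref{LW} plus one integration by parts in $z$ to extract the factor $\tfrac12$ from the main term, the same integration by parts in $\theta$ against $\sin(2\theta)^{-\eta}$ to reduce the transport piece to a $3(1-\eta)$-sized term, and the same weight/Hardy/Cauchy--Schwarz bounds (including the $r^2/(1+r)^6$ reduction for the $L_{12}(\cdot)(0)$ scalar) to absorb $R(f)$ into $E_\theta^1$. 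The bookkeeping of constants also matches the paper's budget, so no substantive comparison is needed.
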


\begin{proof}

\[D_z\mathcal{L}_\Gamma^T(f)=\mathcal{L}(D_z(f))+\frac{2z}{(1+z)^2}f-D_z\Big(\frac{2\Gamma z}{c(1+z)^2}L_{12}(f)\Big)-3D_z\mathbb{P}\Big(\frac{1}{1+z}\sin(2\theta) \partial_\theta f\Big).\]
\[=\mathcal{L}(D_z(f))+\frac{2z}{(1+z)^2}f+\frac{2\Gamma z}{c(1+z)^2}\big(K,f(z,\theta)\big)_{L^2_\theta}-2\frac{\Gamma z(1-z)}{c(1+z)^3}L_{12}(f)+\frac{3z}{(1+z)^2}D_\theta f\]\[-\frac{3}{1+z}\sin(2\theta)\partial_\theta D_z f+3D_z(\frac{\Gamma}{c}\frac{2z^2}{(1+z)^3}L_{12}(\frac{1}{1+z}\sin(2\theta) \partial_\theta f)(0))\]
\[=\sum_{i=1}^7I_{i}.\]
Let us take a brief look at each term before proceeding. 
$I_{1}$ gives us coercivity once we include the weight $\frac{w^2}{\cos(\theta)^\eta}$. 
$I_{2}$ can be seen as lower order since it contains no derivative on $f$ and we already have an $L^2$ estimate from Corollary \ref{cor:firsttransportcoercivity}. $I_3$ and $I_4$ is also lower order in this sense but they contain $\frac{1}{1-\eta}$ as a factor. 
$I_5$ can also be seen as lower order since we have already controlled $D_\theta f$ in Corollary \ref{cor:firsttransportcoercivity}.
For $I_6$, after integrating by parts in $\theta$ we will get a positive term which we forget about and a negative term which contains $1-\eta$ as a coefficient just as we argued in the proof of Proposition \ref{prop:thetaderivative}. More precisely we have the following bounds which are not difficult to check:
\[(I_1, D_z(f)\frac{w^2}{\sin(2\theta)^\eta})_{L^2}\geq \frac{1}{2}\Big|\big(D_z f\big) \frac{w}{\sqrt{\sin(2\theta)^\eta}}\Big|_{L^2}^2. \]
\[\Big|I_{2}\frac{w}{\sqrt{\sin(2\theta)^\eta}}\Big|_{L^2}\leq |f\frac{w}{\sqrt{\cos(\theta)^\eta}}|_{L^2}\]
\[\Big|I_3\frac{w}{\sqrt{\sin(2\theta)^\eta}}\Big|_{L^2}\leq \frac{10}{\sqrt{1-\eta}}|fw|_{L^2}.\]
\[\Big|I_4\frac{w}{\sqrt{\sin(2\theta)^\eta}}\Big|_{L^2}\leq \frac{10}{\sqrt{1-\eta}}|\frac{1}{z}L_{12}(f)|_{L^2_z}.\]
\[\Big|I_5\frac{w}{\sqrt{\sin(2\theta)^\eta}}\Big|_{L^2}\leq 3|D_\theta f \frac{w}{\sqrt{\cos(\theta)^\eta}}|_{L^2}\]
\[\Big|(I_6, D_z(f)\frac{w^2}{\sin(2\theta)^\eta})_{L^2}\Big|\leq 3(1-\eta)|(D_zf)\frac{w}{\sqrt{\cos(\theta)^\eta}}|_{L^2}.\]
\[\Big|I_7\frac{w}{\sqrt{\sin(2\theta)^\eta}}\Big|_{L^2}\leq \frac{100}{\sqrt{1-\eta}}|fw|_{L^2}.\]

The result now follows using the Cauchy-Schwarz inequality. 
\end{proof}

\begin{definition}
We define the $\mathcal{H}^1$ inner product by \[(f,g)_{\mathcal{H}^1}=10\Big(\big(D_z f\big),\big(D_z g\big) \frac{w^2}{\sin(2\theta)^\eta}\Big)_{L^2}+10^{10}(fw,gw\frac{1}{\sin(2\theta)^\eta})_{L^2}\] \[+{10^{17}}(fw,gw)_{L^2}+10^{21}\Big(\big(D_\theta f\big),\big(D_\theta g\big) \frac{w^2}{\sin(2\theta)^\gamma}\Big)_{L^2},\] which induces a norm equivalent to 
\[|f|_{\mathcal{H}^1}^2=\sum_{k=0}^1|(D_z)^kf \frac{w}{\sqrt{\sin(2\theta)^\eta}}|_{L^2}^2+|(D_\theta) f\frac{w}{\sqrt{\sin(2\theta)^\gamma}}|_{L^2}^2.\]
\end{definition}

\begin{corollary}\label{FirstDerivativeCoercivity}
Let $\eta=\frac{99}{100}$ and $\alpha<\frac{1}{10^{14}}$. Then, 
\begin{equation}\label{eq:FirstDerivativeCoercivity} (\mathcal{L}_\Gamma^T(f),f)_{\mathcal{H}^1}\geq |f|_{\mathcal{H}^1}^2.\end{equation}
\end{corollary}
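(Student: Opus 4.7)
The plan is to observe that the $\mathcal{H}^1$ inner product $(f,f)_{\mathcal{H}^1}$ is, by construction, exactly the linear combination with coefficients $10$, $10^{10}$, $10^{17}$, $10^{21}$ of the four bilinear forms whose lower bounds were just established: the unlabeled $D_z$-Proposition immediately above, together with Propositions \ref{prop:L2CoercivityTransport2}, \ref{prop:L2CoercivityTransport}, and \ref{prop:thetaderivative}. Thus the task reduces to adding these four lower bounds with the prescribed coefficients and checking that the error terms from each are dominated by the positive contributions coming from the others. The large multiplicative gaps $1, 10^9, 10^7, 10^4$ between consecutive coefficients are exactly engineered for this purpose.

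Concretely, I would introduce the shorthand
\[
a := \Big|D_z f \tfrac{w}{\sqrt{\sin(2\theta)^\eta}}\Big|_{L^2}^2,\quad b := \Big|f \tfrac{w}{\sqrt{\sin(2\theta)^\eta}}\Big|_{L^2}^2,\quad c := |fw|_{L^2}^2,\quad d := \Big|D_\theta f \tfrac{w}{\sqrt{\sin(2\theta)^\gamma}}\Big|_{L^2}^2,
\]
so that $|f|_{\mathcal{H}^1}^2 = a+b+d$ and $E_\theta^1 = b+d$. Inserting the four lower bounds with coefficients $10,10^{10},10^{17},10^{21}$ yields
\[
(\mathcal{L}_\Gamma^T f, f)_{\mathcal{H}^1}
\ \geq\ \tfrac{10}{4}a \;-\; 10^9(b+d) \;+\; \tfrac{10^{10}}{5}b \;-\; 10^{15}\bigl|\tfrac{1}{z}L_{12}f\bigr|_{L^2}^2 \;+\; \tfrac{10^{17}}{5}c \;-\; 10^{19}|D_\theta f\,w|_{L^2}^2 \;+\; \bigl(\tfrac{1}{4}-\alpha\bigr)10^{21}d \;-\; 10^{28}\alpha c.
\]

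Each error term is then absorbed by a cheap comparison. Lemma \ref{HardyWeight} combined with the pointwise bound $\tfrac{1}{z}\leq w$ gives $|\tfrac{1}{z}L_{12}f|_{L^2}^2 \leq 16 c$, so the second error is at most $1.6\cdot 10^{16} c$, which is dwarfed by the gain $\tfrac{10^{17}}{5}c = 2\cdot 10^{16}c$. The inequality $\sin(2\theta)^\gamma\leq 1$ gives $|D_\theta f\,w|_{L^2}^2 \leq d$, so the third error is at most $10^{19} d$, dwarfed by $\tfrac{10^{21}}{4}d$. The hypothesis $\alpha<10^{-14}$ shrinks $10^{28}\alpha c$ to $\leq 10^{14}c$ and $10^{21}\alpha d$ to $\leq 10^{7}d$, both negligible. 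Finally the $10^9(b+d)$ bleed-through from the $D_z$-estimate is dominated by the $\sim 10^{10}b$ and $\sim 10^{20}d$ gains.

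After these absorptions every one of $a,b,c,d$ carries a coefficient which is (very comfortably) at least $1$, so
\[
(\mathcal{L}_\Gamma^T f, f)_{\mathcal{H}^1}\ \geq\ a + b + c + d\ \geq\ |f|_{\mathcal{H}^1}^2,
\]
which is \eqref{eq:FirstDerivativeCoercivity}. There is no real conceptual obstacle at this stage: the heavy lifting was done in setting up Propositions \ref{prop:L2CoercivityTransport}--\ref{prop:L2CoercivityTransport2} and the $D_z$-Proposition, and in choosing the angular weights $\sin(2\theta)^{-\eta}$ and $\sin(2\theta)^{-\gamma}$ so that the angular transport term contributed a favorable sign upon integration by parts. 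The only point that required foresight in the bookkeeping is the inclusion of the summand $10^{17}(fw,gw)_{L^2}$ in the definition of $(\cdot,\cdot)_{\mathcal{H}^1}$: the quantity $c = |fw|_{L^2}^2$ does not appear in the final norm $|f|_{\mathcal{H}^1}^2$, but is needed as an intermediary that controls both $|\tfrac{1}{z}L_{12}f|^2$ (via Hardy) and the $10^{28}\alpha c$ term pushed in by the angular-derivative estimate.
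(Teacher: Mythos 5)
Your bookkeeping is correct, and this is exactly the intended argument: the paper states Corollary~\ref{FirstDerivativeCoercivity} without an explicit proof because it is precisely the linear combination of the unlabeled $D_z$-Proposition with Propositions~\ref{prop:L2CoercivityTransport2}, \ref{prop:L2CoercivityTransport}, and \ref{prop:thetaderivative} using the coefficients $10,10^{10},10^{17},10^{21}$ baked into the definition of $(\cdot,\cdot)_{\mathcal{H}^1}$, together with the three absorptions you carry out (Hardy for $|\tfrac{1}{z}L_{12}f|^2 \le 16|fw|^2$, the pointwise $\sin(2\theta)^\gamma\le 1$ for $|D_\theta f\,w|^2\le d$, and $\alpha<10^{-14}$ to tame the $10^{28}\alpha$ and $10^{21}\alpha$ factors). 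Your observation about the role of the extra $10^{17}(fw,gw)_{L^2}$ summand in the inner product is also on point.
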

\begin{remark}
The reader should take note that $(f,f)_{\mathcal{H}^1}\not=|f|_{\mathcal{H}^1}^2$ but $|f|_{\mathcal{H}^1}^2\leq (f,f)_{\mathcal{H}^1}^2\leq 10^{21}|f|_{\mathcal{H}^2}^2$
\end{remark}

\subsection{Higher order derivatives and the inner product on $\mathcal{H}^k$}

In this section, we show how to inductively define an inner product on $\mathcal{H}^k$ from the inner product on $\mathcal{H}^1$ to show that $\mathcal{L}_\gamma^T$ is coercive on $\mathcal{H}^k$ for each $k\geq 2$. Toward this end, fix some $k\geq 2$ and assume that we have defined an inner product $(\cdot,\cdot)_{\mathcal{H}^{k-1}}$ with the following properties:
\begin{enumerate}
\item  $|f|_{\mathcal{H}^{k-1}}^2\leq (f,f)_{\mathcal{H}^{k-1}}\leq C_{k-1}|f|_{\mathcal{H}^{k-1}}^2$
\item $(\mathcal{L}^{T}_\Gamma f, f)_{\mathcal{H}^{k-1}}\geq c_{k-1}|f|_{\mathcal{H}^{k-1}}^2.$
\end{enumerate} 
Then, we will construct an inner product on $\mathcal{H}^k$ satisfying the above two properties with $k-1$ replaced by $k$. Observing that we have already established these in the case $k=2$ the main result of this section will follow by induction:

\begin{proposition}\label{prop:HkCoercivity}
Fix $\alpha<10^{-14}$  and $k\in\mathbb{N}$.Then, there exists $c_{k}>0$ so that for all $f\in \mathcal{H}^k$ we have:
\begin{equation} (\mathcal{L}_\Gamma^T(f), f)_{\mathcal{H}^k}\geq c_k|f|_{\mathcal{H}^2}^2.\end{equation}
\end{proposition}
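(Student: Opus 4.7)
My plan is to proceed by induction on $k$, with the base cases $k=0,1$ supplied by Corollaries \ref{cor:firsttransportcoercivity} and \ref{FirstDerivativeCoercivity}. Assuming that an inner product $(\cdot,\cdot)_{\mathcal{H}^{k-1}}$ has been constructed satisfying the two properties listed in the excerpt, I would define
\[
(f,g)_{\mathcal{H}^k} = A_k (f,g)_{\mathcal{H}^{k-1}} + \Big(D_z^k f,\, D_z^k g\, \tfrac{w^2}{\sin(2\theta)^\eta}\Big)_{L^2} + \sum_{\substack{i+j=k\\ i\geq 1}} M_{i,j} \Big(D_z^j D_\theta^i f,\, D_z^j D_\theta^i g\, W^2\Big)_{L^2},
\]
where $A_k$ will eventually be chosen much larger than the constants $M_{i,j}$. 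The norm equivalence $|f|_{\mathcal{H}^k}^2 \leq (f,f)_{\mathcal{H}^k} \leq C_k |f|_{\mathcal{H}^k}^2$ then follows immediately from the inductive property and the definition of $|\cdot|_{\mathcal{H}^k}$.

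The central step is to compute $(\mathcal{L}_\Gamma^T f, f)_{\mathcal{H}^k}$ and isolate a coercive top-order contribution. For each multi-index $(i,j)$ with $i+j=k$, I would expand
\[
D_z^j D_\theta^i \, \mathcal{L}_\Gamma^T(f) = \mathcal{L}_\Gamma^T\bigl(D_z^j D_\theta^i f\bigr) + \mathcal{C}_{i,j}(f),
\]
where $\mathcal{C}_{i,j}(f)$ collects all commutator terms. Pairing the leading piece against $D_z^j D_\theta^i f$ under the corresponding weighted inner product and reproducing the arguments of Propositions \ref{prop:L2Coercivity}, \ref{prop:L2CoercivityTransport}, \ref{prop:thetaderivative}, and \ref{prop:L2CoercivityTransport2} (using the identities \eqref{L12L}, \eqref{LW}, the splitting of the nonlocal coefficient via \eqref{GammaAssumption}, and integration by parts of the transport term against the weight $\sin(2\theta)^{-\gamma}$) yields a coercive top-order contribution of the form $c|D_z^j D_\theta^i f \cdot (\text{weight})|_{L^2}^2$ modulo an $\alpha$-small multiple of lower-order quantities.

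The commutator $\mathcal{C}_{i,j}(f)$ is the heart of the work and splits into three families. First, commutators of $D_z$ with the $z$-dependent coefficients $\tfrac{2}{1+z}$ and $\tfrac{2z}{c(1+z)^2}$ strictly reduce the $z$-derivative count and introduce an extra factor of $\tfrac{1}{1+z}$, so the contribution is bounded by $|f|_{\mathcal{H}^{k-1}}$. Second, commutators of $D_\theta$ with $\Gamma(\theta)$ and with the $\Gamma$-dependent piece of $\mathbb{P}$ each produce, thanks to \eqref{GammaAssumption2}, a factor of $\alpha$ times an expression of the same top order, which can therefore be absorbed into the bulk coercive term once $\alpha$ is small. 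Third, commutators of $D_z$ with $L_{12}$ collapse the nonlocal operator via the identity $D_z L_{12}(f)(z) = -\int_0^{\pi/2} K(\theta) f(z,\theta)\, d\theta$, producing a local object that is bounded, after Cauchy--Schwarz and Lemma \ref{HardyWeight}, by a norm at level $\mathcal{H}^{k-1}$. In every case the total commutator contribution is at most $\tilde C_k |f|_{\mathcal{H}^{k-1}} |f|_{\mathcal{H}^k} + C_k \alpha |f|_{\mathcal{H}^k}^2$.

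Combining these bounds with the inductive coercivity of $A_k (\mathcal{L}_\Gamma^T f, f)_{\mathcal{H}^{k-1}}$ and absorbing mixed terms by Cauchy--Schwarz yields a lower bound of the form $(c - C_k \alpha)|f|_{\mathcal{H}^k}^2 + (A_k c_{k-1} - \tilde C_k^2/c)|f|_{\mathcal{H}^{k-1}}^2$, so keeping $\alpha < 10^{-14}$ and choosing $A_k$ sufficiently large closes the induction. The only serious obstacle I anticipate is bookkeeping: the quantity $\mathcal{C}_{i,j}(f)$ contains many cross terms, particularly from repeated action of $D_z$ and $D_\theta$ on the nonlocal piece $\mathbb{P}(\tfrac{3}{1+z}D_\theta f)$, and one must verify case by case that every resulting term carries either an $\alpha$ factor from \eqref{GammaAssumption2} or a genuine reduction in total derivative order; once this is done, the argument is a direct inductive extension of the $\mathcal{H}^1$ estimate.
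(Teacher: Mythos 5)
Your proposal is a legitimate inductive argument and shares the paper's essential strategy: commute derivatives past $\mathcal{L}_\Gamma^T$, bound the commutators using the $\alpha$-smallness from \eqref{GammaAssumption2}, the smoothing of $L_{12}$, and the derivative-order drop from the $z$-dependent coefficients, then absorb by choosing weights large. The significant structural difference lies in the construction of the inner product. You propose the explicit sum of weighted $L^2$ pairings over all top-order multi-indices $(i,j)$ with $i+j=k$, which forces you to handle the full multi-derivative commutator $\mathcal{C}_{i,j}(f) = D_z^j D_\theta^i \mathcal{L}_\Gamma^T(f) - \mathcal{L}_\Gamma^T(D_z^j D_\theta^i f)$ case by case. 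The paper instead defines the inner product recursively as
\[
(f,g)_{\mathcal{H}^k}= (f,g)_{\mathcal{H}^{k-1}}+c_{1,k} (D_\theta f, D_\theta g)_{\mathcal{H}^{k-1}}+c_{2,k} (D_z f, D_z g)_{\mathcal{H}^{k-1}},
\]
so the only new computations at each step are the two single-commutator expansions $D_\theta \mathcal{L}_\Gamma^T(f)=\mathcal{L}_\Gamma^T(D_\theta f)+E_1$ and $D_z \mathcal{L}_\Gamma^T(f)=\mathcal{L}_\Gamma^T(D_z f)+E_2$ with $E_1,E_2$ bounded in $\mathcal{H}^{k-1}$; everything else is inherited from the inductive hypothesis. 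This recursive construction also makes visible a point your sketch leaves implicit: because the commutator of $D_z$ with the angular transport term produces an extra $\theta$-derivative, the error $E_2$ involves $|D_\theta f|_{\mathcal{H}^{k-1}}$ and not merely $|f|_{\mathcal{H}^{k-1}}$, which forces the paper to fix $c_{1,k}$ (the $D_\theta$ coefficient) \emph{before} $c_{2,k}$. In your scheme the analogous requirement is a hierarchy among the $M_{i,j}$ — $\theta$-heavy coefficients should dominate $z$-heavy ones — and you only discuss choosing $A_k$ large, so this would need to be spelled out. Once it is, your argument should close, but the recursive inner product is the cleaner device for organizing exactly this bookkeeping.
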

\begin{remark}
The reader should take note that $(f,f)_{\mathcal{H}^k}\not=|f|_{\mathcal{H}^k}^2$ but $|f|_{\mathcal{H}^k}^2\leq (f,f)_{\mathcal{H}^k}^2\leq C_k|f|_{\mathcal{H}^k}^2$
\end{remark}

\begin{proof}
Suppose $k\geq 2$ and we have defined an inner product on $\mathcal{H}^{k-1}$ on which $\mathcal{L}_{\Gamma}^T$ is coercive as explained above. We will now define 
\[(f,g)_{\mathcal{H}^k}= (f,g)_{\mathcal{H}^{k-1}}+c_{1,k} (D_\theta f, D_\theta g)_{\mathcal{H}^{k-1}}+c_{2,k} (D_z f, D_z g)_{\mathcal{H}^{k-1}},\] where $c_{1,k}, c_{2,k}\geq 1$ will be chosen depending on $k$ only. First, let us observe that the first condition is satisfied automatically. To avoid unnecessary repetition let us introduce the notation $\approx$ here to mean $a\approx b$ if $ \bar c_k b\leq a\leq \bar C_k b$ for some positive universal constants\footnote{In the following, $\bar C_k$ and $\bar c_{k}$ are constants that depend on $k$ but which may change from line to line. 
} depending only on $k$. 

\[(f,f)_{\mathcal{H}^k}\approx |f|_{\mathcal{H}^{k-1}}^2+|D_z f|_{\mathcal{H}^{k-1}}^2+|D_\theta f|_{\mathcal{H}^{k-1}}^2\approx |f|_{\mathcal{H}^k}^2.\] Now let us define $c_{1,k}$ and $c_{2,k}$ in such a way that $\mathcal{L}_{\Gamma}^T$ will be coercive. Now let us compute:
\[(\mathcal{L}_{\Gamma}^T (f), f)_{\mathcal{H}^k}\geq c_{k-1}|f|_{\mathcal{H}^{k-1}}^2+c_{1,k}(D_\theta \mathcal{L}_{\Gamma}^T(f), D_\theta f)_{\mathcal{H}^{k-1}}+c_{2,k}(D_z \mathcal{L}_\Gamma^T(f), D_z f)_{\mathcal{H}^{k-1}}.\]
Now, as in the proof of Proposition \ref{prop:thetaderivative}, we note that 

\[D_\theta\mathcal{L}_\Gamma^T(f)=\mathcal{L}(D_\theta f)+D_\theta\Gamma \Big(-\frac{2z}{c(1+z)^2}L_{12}(f)+\frac{1}{c}\frac{2 z^2}{(1+z)^3}L_{12}(\frac{3}{1+z}\sin(2\theta)\partial_\theta f)(0)\Big)-\frac{3}{1+z}\sin(2\theta)\partial_\theta D_\theta f
\]
\[=\mathcal{L}_{\Gamma}^T(D_\theta f)+D_\theta\Gamma \Big(-\frac{2z}{c(1+z)^2}L_{12}(f)+\frac{1}{c}\frac{2 z^2}{(1+z)^3}L_{12}(\frac{3}{1+z}\sin(2\theta)\partial_\theta f)(0)\Big)-\frac{3}{1+z}\sin(2\theta)\partial_\theta D_\theta f\]
\[+2z\frac{\Gamma(\theta)}{c(1+z)^2}L_{12}(D_\theta f)\]
\[:=\mathcal{L}_{\Gamma}^T(D_\theta f)+E_1.\]
Observe that \[|E_1|_{\mathcal{H}^{k-1}}\leq \bar{C}_{k}|f|_{\mathcal{H}^{k-1}}.\] This is because $L_{12}$ is actually smoothing in $\theta$ so that $L_{12}(D_\theta f)$ and $L_{12}(f)$ actually have the same regularity (they can both be bounded in $\mathcal{H}^{k-1}$ by $f$ in the same space). 

Thus, 
\[(D_\theta\mathcal{L}_\Gamma^T(f), D_\theta f)_{\mathcal{H}^{k-1}}=(\mathcal{L}_\Gamma^T(D_\theta f), D_\theta f)_{\mathcal{H}^{k-1}}+(E_1, D_\theta f)_{\mathcal{H}^{k-1}}\geq c_{k-1}|D_\theta f|_{\mathcal{H}^{k-1}}^2-C_{k-1}|E_1|_{\mathcal{H}^{k-1}}|D_\theta f|_{\mathcal{H}^{k-1}}\]
Thus, 
\[(D_\theta\mathcal{L}_\Gamma^T(f), D_\theta f)_{\mathcal{H}^{k-1}}\geq c_{k-1}|D_\theta f|_{\mathcal{H}^{k-1}}^2 -\bar{C}_k |f|_{\mathcal{H}^{k-1}}|D_\theta f|_{\mathcal{H}^{k-1}}.\]
In particular, we now choose $c_{1,k}$ so that \[c_{1,k}\bar C_k=\frac{1}{2}c_{k-1}^2,\] we will have:
\[(\mathcal{L}_{\Gamma}^T (f), f)_{\mathcal{H}^k}\geq \frac{1}{2}c_{k-1}|f|_{\mathcal{H}^{k-1}}^2+\frac{1}{2}c_{1,k}c_{k-1}|D_\theta f|_{\mathcal{H}^{k-1}}^2+c_{2,k}(D_z \mathcal{L}_\Gamma^T(f), D_z f)_{\mathcal{H}^{k-1}}.\] Next, in an almost identical way to the above calculation, we observe that:
\[D_z(\mathcal{L}_\Gamma^T(f))=\mathcal{L}_\Gamma^T(D_zf) +E_2, \] where 
\[|E_2|_{\mathcal{H}^{k-1}}\leq \bar C_k (|f|_{\mathcal{H}^{k-1}}+|D_\theta f|_{\mathcal{H}^{k-1}}).\] Observe that the derivative term comes from the commutator with the angular transport term and $D_z$ and this is why we choose $c_{1,k}$ first as in the calculations in the preceding section. Thus, choosing $c_{2,k}$ so that: \[c_{2,k}\bar C_k=\frac{1}{8}c_{1,k}^2c_{k-1}^2,\] we get:
\[(\mathcal{L}_{\Gamma}^T (f), f)_{\mathcal{H}^k}\geq c_{k}|f|_{\mathcal{H}^k}^2\] for some constant $c_k>0$. This concludes the proof.

\end{proof}

\section{Elliptic Regularity Estimates}\label{EllipticEstimates}

The purpose of this section is to establish the necessary weighted $L^2$ and Sobolev estimates for the elliptic operator which relates the stream function and the vorticity. This is where the relationship between $\frac{u_r}{r}$ and $L_{12}$ as explained in Section \ref{Reductions} is made precise. The main technical results of this section are Propositions \ref{prop:L2} and \ref{prop:H2}. From these we establish Theorem \ref{RemovingL12} which is one of the pillars of this work. We should remark that Theorem \ref{RemovingL12} is related to the "Key Lemma" in \cite{KS}, the point being to isolate a main term in the Biot-Savart law for functions that are merely bounded. This connection and related issues are discussed in some detail in \cite{E1} and Section 6 of \cite{EJVP}. 

Consider the axi-symmetric Biot-Savart law: 
\[-\partial_{rr}\psi-\partial_{33}\psi-\frac{1}{r}\partial_r\psi+\frac{1}{r^2}\psi=f.\]
We begin by writing this in polar coordinates. We define \[\rho=\sqrt{r^2+x_3^2} \qquad \theta=\arctan(\frac{x_3}{r}).\]
Then we see:
\[-\partial_{\rho\rho}\psi-\frac{2}{\rho}\partial_\rho\psi-\frac{1}{\rho^2}\partial_{\theta\theta}\psi +\frac{\tan(\theta)}{\rho^2}\partial_\theta\psi +\frac{\sec^2(\theta)}{\rho^2}\psi=f\]
Next, let's write $f=F(\rho^\alpha,\theta)$ and (postulate that) $\psi=\rho^2\Psi(\rho^\alpha,\theta)$. 
It is convenient to introduce another variable \[R=\rho^\alpha.\]
Then we see:
\[-(2\Psi+\alpha(1+\alpha) R\partial_R \Psi+\alpha^2 R^2\partial_{RR}\Psi)-4\Psi-2\alpha R\partial_R\Psi -\partial_{\theta\theta}\Psi+\tan(\theta)\partial_\theta\Psi+\sec^2(\theta)\Psi=F.\]
One way to rewrite this is:
\begin{equation}\label{PolarBSL} L(\Psi)=-\alpha^2R^2\partial_{RR}\Psi-\alpha(5+\alpha)R\partial_R\Psi-\partial_{\theta\theta}\Psi+\partial_\theta\big(\tan(\theta)\Psi\big)-6\Psi=F.\end{equation}
We couple this equation with the natural boundary conditions on $\Psi$:
\[\Psi(R,0)=\Psi(R,\pi/2)=0,\qquad \lim_{R\rightarrow\infty} \Psi(R,\theta)=0.\]
\subsection{$L^2$ Estimates}
Notice that the first four terms of \eqref{PolarBSL} form a "positive" operator in the $L^2$ sense. The dangerous term is the $-6\Psi$ term--especially when $\alpha$ is very small. 
Despite this problem, we have the following proposition which is the backbone of this work. 
\begin{proposition}\label{prop:L2}
Let $F\in L^2$ be given and $0<\alpha\leq 1$. Assume that for every $R$ we have \[\int_{0}^{\pi/2}F(R,\theta)\cos^2(\theta)\sin(\theta)d\theta=0.\] Then, the unique $L^2$ solution to \eqref{PolarBSL} with Dirichlet boundary conditions on $[0,\infty)\times [0,\pi/2]$ satisfies:
\begin{equation}\label{L2Estimate} \Big|\partial_\theta\Big(\frac{\Psi}{\cos(\theta)}\Big)\Big|_{L^2}+|\partial_{\theta\theta}\Psi|_{L^2}+\alpha^2|R^2\partial_{RR}\Psi|_{L^2}\leq 100|F|_{L^2}. \end{equation}
\end{proposition}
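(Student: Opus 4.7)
My approach begins with the substitution $\tilde\Psi := \Psi/\cos\theta$, which converts the angular part of $L$ into the self-adjoint form
\[-\frac{1}{\cos^3\theta}\partial_\theta\bigl(\cos^3\theta\,\partial_\theta\tilde\Psi\bigr)-4\tilde\Psi\]
on $L^2(\cos^3\theta\,d\theta)$. The first eigenfunction (with Dirichlet at $\theta=0$) is $\sin\theta$ with eigenvalue exactly $4$ --- the translation of $L_0(\sin 2\theta)=0$ into the new variables --- and the next eigenvalue is strictly greater than $4$, providing a crucial spectral gap on the orthogonal complement of $\sin\theta$.

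The first step is to establish the pointwise orthogonality
\[\phi(R):=\int_0^{\pi/2}\Psi(R,\theta)\sin\theta\cos^2\theta\,d\theta \equiv 0.\]
Testing the full equation against $\sin\theta\cos^2\theta$ in $\theta$, the angular contribution vanishes because $L_0^{\ast}(\sin\theta\cos^2\theta)=0$ (with both functions vanishing at the $\theta$-endpoints, so boundary terms disappear), the right-hand side vanishes by the solvability hypothesis, and there remains the Euler ODE $\alpha^2 R^2\phi''+\alpha(5+\alpha)R\phi'=0$. Its general solution is a linear combination of $1$ and $R^{-5/\alpha}$, so the $L^2$/decay assumption on $\Psi$ forces $\phi\equiv 0$. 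This is equivalent to $\tilde\Psi(R,\cdot)$ being orthogonal to $\sin\theta$ in $L^2(\cos^3\theta\,d\theta)$ for every $R$.

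Second, I plan to multiply the $\tilde\Psi$-equation by $\tilde\Psi\cos^3\theta$ and integrate over $[0,\infty)\times[0,\pi/2]$. After integration by parts (all boundary terms vanish by the boundary conditions, decay at infinity, and the weight vanishing at $\theta=\pi/2$), the identity
\[\alpha^2\int\!\!\int R^2(\partial_R\tilde\Psi)^2\cos^3\theta+\tfrac{5\alpha-\alpha^2}{2}\int\!\!\int\tilde\Psi^2\cos^3\theta+\int\!\!\int\cos^3\theta(\partial_\theta\tilde\Psi)^2-4\int\!\!\int\cos^3\theta\tilde\Psi^2=\int\!\!\int F\Psi\cos\theta\]
emerges. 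Combining the orthogonality above with the spectral gap to bound
\[\int\cos^3\theta(\partial_\theta\tilde\Psi)^2-4\int\cos^3\theta\tilde\Psi^2\geq c_0\int\cos^3\theta\bigl[(\partial_\theta\tilde\Psi)^2+\tilde\Psi^2\bigr],\]
for some $c_0>0$ independent of $\alpha$, followed by Cauchy--Schwarz on the right-hand side, will yield the weighted $H^1$ bound
\[\alpha^2\int\!\!\int R^2(\partial_R\tilde\Psi)^2\cos^3\theta+\int\!\!\int\cos^3\theta\bigl[(\partial_\theta\tilde\Psi)^2+\tilde\Psi^2\bigr]\leq C|F|_{L^2}^2.\]

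The third step is to promote this to the unweighted second-order bound of the statement. I plan to square the equation $L(\Psi)=F$ and integrate, exploiting the key identity (from two integrations by parts)
\[\int\!\!\int R^2\partial_{RR}\Psi\,\partial_{\theta\theta}\Psi\,dRd\theta=\int\!\!\int R^2(\partial_R\partial_\theta\Psi)^2\,dRd\theta-\int\!\!\int(\partial_\theta\Psi)^2\,dRd\theta,\]
which yields positive mixed-derivative control. Upon expanding $|L(\Psi)|_{L^2}^2=|F|_{L^2}^2$, using the identity $\partial_\theta(\tan\theta\,\Psi)=\partial_\theta(\sin\theta\,\tilde\Psi)$ to reorganize the lower-order angular term, and absorbing the lower-order cross terms via the weighted estimate above, one reads off control of $|\partial_{\theta\theta}\Psi|_{L^2}^2+\alpha^4|R^2\partial_{RR}\Psi|_{L^2}^2$. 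Finally, rearranging the equation recovers $|\partial_\theta(\Psi/\cos\theta)|_{L^2}$.

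The main obstacle I anticipate is the third step: tracking the cross terms, boundary contributions (especially at $\theta=\pi/2$, where the weight $\cos^3\theta$ degenerates), and the quantitative Cauchy--Schwarz and spectral-gap losses needed to produce the explicit constant $100$. The transition from the $\cos^3\theta$-weighted norms produced in Step 2 to the unweighted norms of the conclusion is the genuinely subtle point, and relies crucially on the Dirichlet condition $\Psi(R,\pi/2)=0$ compensating the weight degeneracy there.
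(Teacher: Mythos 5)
Your overall skeleton (orthogonality against $\sin\theta\cos^2\theta$, a spectral gap, then a second‑order estimate) parallels the paper's proof, and Step 1 is essentially identical to the paper's Step 1. But there is a concrete gap in the passage from your Step 2 to Step 3, and it comes from your choice of test function. You multiply the equation by $\tilde\Psi\cos^3\theta$ to symmetrize the angular operator; after using the spectral gap this produces only the $\cos^3\theta$-weighted bound $\int\cos^3\theta\bigl[(\partial_\theta\tilde\Psi)^2+\tilde\Psi^2\bigr]\leq C|F|^2$. The paper instead multiplies by $\Psi=\tilde\Psi\cos\theta$, which is a weaker weight and produces the extra positive term $\tfrac12\int\sec^2\theta\,\Psi^2=\tfrac12\int\tilde\Psi^2$; after the Fourier/combinatorial argument in the $\{\sin 2n\theta\}$ basis this yields the \emph{unweighted} bound $|\partial_\theta\Psi|_{L^2}\leq 4|F|_{L^2}$, and hence (via the Hardy inequality of Lemma~\ref{HardyInequality}) unweighted control of $\tilde\Psi$ and of $\partial_\theta\tilde\Psi$. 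That unweighted first-order control is what makes the second-order step close: whether one squares the equation as you propose or tests against $-\partial_{\theta\theta}\Psi$ as the paper does, one encounters cross terms of the form $-12\int(\partial_\theta\Psi)^2$ and $-6\int\tilde\Psi^2$ (e.g. from $\partial_{\theta\theta}\Psi\cdot 6\Psi$ and $\partial_\theta(\tan\theta\,\Psi)\cdot 6\Psi$) which must be absorbed. Your $\cos^3\theta$-weighted Step 2 bound is not strong enough to absorb them, because $\int(\partial_\theta\Psi)^2=\int\cos^2\theta[(\partial_\theta\tilde\Psi)^2+\tilde\Psi^2]$ involves the strictly larger weight $\cos^2\theta$ and the weight $\cos^3\theta$ degenerates at $\theta=\pi/2$; the Dirichlet condition $\Psi(R,\pi/2)=0$ does not give $\tilde\Psi(R,\pi/2)=0$, so there is no extra vanishing to compensate.

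Two secondary points. First, your spectral-gap assertion (next eigenvalue strictly above $4$) is true — the eigenfunctions of $-\cos^{-3}\theta\,\partial_\theta(\cos^3\theta\,\partial_\theta\cdot)$ with $u(0)=0$ are the odd Gegenbauer polynomials $C_n^{(3/2)}(\sin\theta)$ with eigenvalues $n(n+3)$, so the gap is $4$ to $18$ — but it has to be proved, and the paper's expansion in $\sin(2n\theta)$ together with the explicit coefficient bound $81\sum_{n\geq 2}\frac{n^2}{(2n-3)^4(2n+1)^4}<1$ is precisely a self-contained quantitative substitute for that spectral fact, without invoking special-function theory. Second, squaring $L(\Psi)=F$ generates many more cross terms than the paper's single test against $-\partial_{\theta\theta}\Psi$; your key mixed-derivative identity $\int R^2\partial_{RR}\Psi\,\partial_{\theta\theta}\Psi=\int R^2(\partial_{R\theta}\Psi)^2-\int(\partial_\theta\Psi)^2$ is correct and is in fact used (in the form of a single integration by parts) in the paper's argument, but squaring also produces the problematic $-12\int(\partial_\theta\Psi)^2$ and $-6\int\tilde\Psi^2$ terms noted above, whereas testing against $-\partial_{\theta\theta}\Psi$ isolates exactly the terms the paper needs. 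To repair your plan, replace Step 2 with the paper's unweighted multiplier $\Psi$ (or at minimum add it), keep the $\frac12\int\sec^2\theta\,\Psi^2$ term, and then justify the gap either via the Fourier computation of the paper or via the Gegenbauer eigenvalue data made explicit.
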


\subsubsection*{The proof of Proposition \ref{prop:L2}}

\begin{proof}[Proof of Proposition \ref{prop:L2}]
We only establish the a-priori estimate as existence and uniqueness follows from the standard $L^p$ theory. 

\vspace{3mm}
\emph{Step 1: $\Psi$ is orthogonal to $\sin(\theta)\cos^2(\theta)$}

\vspace{3mm}

An important observation is that under the conditions of the lemma, $\Psi$ must also be orthogonal to $\sin(\theta)\cos^2(\theta)$. Indeed, define \[\Psi_\star(R):=\int_0^{\pi/2}\Psi(R,\theta)\sin(\theta)\cos^2(\theta)d\theta.\]
Then, we see:
\[\alpha^2R^2\partial_{RR}\Psi_\star+\alpha(5+\alpha)R\partial_R\Psi_\star=0.\] This is because $\sin(\theta)\cos^2(\theta)$ is in the kernel of the adjoint problem when $\alpha=0$. 
This ODE for $\Psi_\star$ (an Euler equation!) can be solved explicitly and its solutions are determined by solving 
\[\alpha^2 \lambda(\lambda-1)+\alpha(5+\alpha)\lambda=0\] which gives $\lambda_1=0$ and $\lambda_2=-\frac{5+\alpha}{\alpha}+1.$ Thus, \[\Psi_\star(R)=c_1+c_2 R^{1-\frac{5+\alpha}{\alpha}}\]
and the condition that $\Psi_\star\rightarrow 0$ as $R\rightarrow \infty$ and that $R^2\Psi$ vanishes at $0$ implies that $c_1=c_2=0$. Therefore,\[\Psi_\star\equiv 0.\]

\vspace{3mm}
\emph{Step 2: Energy estimates}
\vspace{3mm}

As usual, we multiply the equation by $\Psi$ and integrate by parts. For this part, we use the notation: \[|\cdot|=|\cdot|_{L^2}.\]
Multiplying \eqref{PolarBSL} by $\Psi$ and integrating we get \[\alpha^2|R\partial_R\Psi|^2-\alpha^2|\Psi|^2+\frac{\alpha(5+\alpha)}{2}|\Psi|^2+|\partial_\theta\Psi|^2-6|\Psi|^2+\frac{1}{2}|\sec(\theta)\Psi|^2=(F,\Psi).\] 
In particular, since $0<\alpha\leq 1$ we have:
\[|\partial_\theta\Psi|^2-6|\Psi|^2\leq |F|_{L^2}|\Psi|_{L^2}.\] 
Now let's expand the left hand side in a series (recalling the boundary conditions):\[\Psi(R,\theta)=\sum_{n\in\mathbb{N}} \Psi_n(R)\sin(2n\theta).\]
In particular, 
\[\sum_{n\geq 2} (4n^2-6)|\Psi_n(R)|_{L^2_R}^2\leq 2|\Psi_1(R)|_{L^2}^2+|F|_{L^2}|\Psi|_{L^2}.\]
But we also know that $\Psi_\star\equiv 0$. 
Thus, 
\[0=\frac{4}{\pi}\sum_{n}\Psi_n(R)\int_0^{\pi/2}\sin(\theta)\cos^2(\theta)\sin(2n\theta)d\theta=\frac{4}{\pi}\sum_{n}\Psi_{n}(R)(-1)^n\frac{4n}{(2n-3)^2(2n+1)^2}.\]
In particular
\[|\Psi_1|_{L^2}\leq \sum_{n\geq 2}|\Psi_n|_{L^2}\frac{9n}{(2n-3)^2(2n+1)^2}.\]
Thus, 
\[|\Psi_1|_{L^2}^2\leq 81\sum_{n\geq 2}|\Psi_n|_{L^2}^2\sum_{n\geq 2}\frac{n^2}{(2n-3)^4(2n+1)^4}<\sum_{n\geq 2}|\Psi_n|_{L^2}^2.\]
The last inequality is clear since \[\sum_{n\geq 2}\frac{n^2}{(2n-3)^4}<4\frac{\pi^2}{6}<7\] and $(2n+1)^4\geq 625$ if $n\geq 2$.
Thus, 
\[\sum_{n\geq 1} 4n^2|\Psi_n(R)|_{L^2}\leq 4|F||\Psi|_{L^2}.\]
In particular, \begin{equation}\label{L2Est} |\partial_\theta\Psi|_{L^2}\leq 4|F|_{L^2}\end{equation} At this point we are done and the rest is standard, but let us give more details. 
Now we come back to equation \eqref{PolarBSL} and multiply by $-\partial_{\theta\theta}\Psi$ and integrate. 
Integrating by parts in $R$ and $\theta$ we get:
\[\alpha^2| R\partial_{R\theta}\Psi|_{L^2}^2-\alpha^2|\partial_\theta\Psi|^2+\frac{\alpha(5+\alpha)}{2}|\partial_\theta\Psi|^2+|\partial_{\theta\theta}\Psi|^2-6|\partial_\theta\Psi|^2-\int\partial_\theta\Big(\tan(\theta)\Psi\Big)\partial_{\theta\theta}\Psi=\int F\partial_{\theta\theta}\Psi.\]
Since we have already controlled $|\partial_\theta\Psi|_{L^2}^2$, we only need to study the last term before the equality sign. Indeed, it has the same scaling as the term $|\partial_{\theta\theta}\Psi|^2$ and could destroy the energy estimates if handled foolishly. First let \[\tilde\Psi=\frac{\Psi}{\cos(\theta)}.\] By Lemma \ref{HardyInequality}, we have already established an $L^2$ a-priori estimate on $\tilde\Psi$. Moreover, \[-\int\partial_\theta\Big(\tan(\theta)\Psi\Big)\partial_{\theta\theta}\Psi=-\int \partial_\theta(\sin(\theta)\tilde\Psi)\partial_{\theta\theta}(\cos(\theta)\tilde\Psi)\]
\[=-\int(\cos(\theta)\tilde\Psi+\sin(\theta)\partial_\theta\tilde\Psi)(-\cos(\theta)\tilde\Psi-2\sin(\theta)\partial_\theta\tilde\Psi+\cos(\theta)\partial_{\theta\theta}\tilde\Psi).\]
\[=2\int\sin^2(\theta)(\partial_\theta\tilde\Psi)^2+\int\cos^2(\theta)(\partial_\theta\tilde\Psi)^2-\int\sin(\theta)\cos(\theta)\partial_\theta\tilde\Psi\partial_{\theta\theta}\tilde\Psi+G\]
\[=\frac{3}{2}\int(\partial_\theta\tilde\Psi)^2+G\] where $|G|\leq \frac{3}{2}|\tilde\Psi|_{L^2}^2\leq 15|\partial_\theta\Psi|_{L^2}^2$ using Lemma \ref{HardyInequality}.
Thus, 
\[|\partial_{\theta\theta}\Psi|_{L^2}^2+\frac{3}{2}|\partial_\theta\tilde\Psi|_{L^2}^2\leq 21|\partial_\theta\Psi|_{L^2}^2+|F|_{L^2}|\partial_{\theta\theta}\Psi|_{L^2}\]
Thus, 
\[|\partial_{\theta\theta}\Psi|_{L^2}^2+3|\partial_\theta\tilde\Psi|_{L^2}^2\leq (2(21)(16)+1)|F|_{L^2}^2.\] using \eqref{L2Est}
The estimate on $\alpha^2|R^2\partial_{RR}f|_{L^2}$ follows easily. 
\end{proof}

\subsection{The $\mathcal{H}^2$ Norm}

We now define the main norm which we will use which depends on two parameters: $\eta$ and $\gamma$. Recall first the weight $w=\frac{(1+z)^2}{z^2}$ and the derivatives $D_R=R\partial_R$ and $D_\theta=\sin(2\theta)\partial_\theta$. Then the $\mathcal{H}^2$ norm is defined as: \[|f|_{\mathcal{H}^2}^2=\sum_{k=0}^2\Big|D_R^k f \frac{w}{\sqrt{\sin^\eta(2\theta)}}\Big|_{L^2}^2+\Big|D_\theta f\frac{w}{\sqrt{\sin^{\gamma}(2\theta)}}\Big|_{L^2}^2+\Big|D_\theta D_R f\frac{w}{\sqrt{\sin^{\gamma}(2\theta)}}\Big|_{L^2}^2+\Big|D_\theta^2 f\frac{w}{\sqrt{\sin^{\gamma}(2\theta)}}\Big|_{L^2}^2\] As before, we take $\eta=\frac{99}{100}$ and $\gamma=1+\frac{\alpha}{10}.$ It is important to point out: \emph{when we prove elliptic estimates in $\mathcal{H}^2$ the constants will be independent of $\gamma$ and $\alpha$ and thus universal since we fix $\eta=\frac{99}{100}$.} 
Toward proving elliptic estimates in $\mathcal{H}^2$  we need a few Hardy-type inequalities.

\subsubsection*{Hardy Inequalities}

\begin{lemma}\label{HardyInequality}
Let $f\in H^1([0,\pi/2])$. Assume that $f(0)=f(\pi/2)=0$. Then, \[\int_{0}^{\pi/2}\frac{|f(\theta)|^2}{\sin^2(2\theta)}d\theta\leq 10\int_0^{\pi/2} |f'(\theta)|^2d\theta.\]
\end{lemma}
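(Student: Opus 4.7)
The plan is to reduce the weighted inequality to the classical Hardy inequality $\int_0^a (g/x)^2\,dx \le 4\int_0^a (g')^2\,dx$ (valid for $g(0)=0$ on any finite interval) by splitting $[0,\pi/2]$ at the midpoint $\pi/4$, where $\sin(2\theta)$ attains its maximum. On each half, the weight $\sin(2\theta)$ vanishes at exactly one endpoint, and there the function $f$ is required to vanish, so a change of variables centered at the vanishing endpoint will put us in the standard Hardy setting.

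First, I would handle the left half $[0,\pi/4]$. On this interval $2\theta$ ranges over $[0,\pi/2]$, and by concavity of $\sin$ on $[0,\pi/2]$ one has $\sin(2\theta) \ge \frac{2}{\pi}(2\theta) = \frac{4\theta}{\pi}$, giving the pointwise bound $\frac{1}{\sin^2(2\theta)} \le \frac{\pi^2}{16\theta^2}$. Since $f(0)=0$, the classical Hardy inequality on $(0,\pi/4)$ (proved in one line by integrating $\frac{d}{d\theta}(-f^2/\theta) = -2ff'/\theta + f^2/\theta^2$, noting $f(0)^2/\theta \to 0$, and using Cauchy–Schwarz) yields
\[
\int_0^{\pi/4}\frac{f(\theta)^2}{\theta^2}\,d\theta \;\le\; 4\int_0^{\pi/4}|f'(\theta)|^2\,d\theta.
\]
Combining these two bounds gives $\int_0^{\pi/4}\frac{f^2}{\sin^2(2\theta)}\,d\theta \le \frac{\pi^2}{4}\int_0^{\pi/4}|f'|^2\,d\theta$.

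For the right half $[\pi/4,\pi/2]$, I would perform the change of variable $\phi = \pi/2 - \theta$, which reduces to the same situation since $\sin(2\theta) = \sin(2\phi)$ and the new function $g(\phi) := f(\pi/2 - \phi)$ satisfies $g(0)=0$ and $|g'(\phi)| = |f'(\theta)|$. The same argument then delivers $\int_{\pi/4}^{\pi/2}\frac{f^2}{\sin^2(2\theta)}\,d\theta \le \frac{\pi^2}{4}\int_{\pi/4}^{\pi/2}|f'|^2\,d\theta$. Summing the two halves gives the constant $\pi^2/4 < 10$, which is well within the factor $10$ claimed.

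There is no serious obstacle here — the only point requiring mild care is the justification of the boundary terms when integrating by parts to obtain the classical Hardy inequality, which follows from the $H^1$ hypothesis and the vanishing of $f$ at the endpoint. The constant $10$ in the statement is generous enough that one need not optimize the sine bound.
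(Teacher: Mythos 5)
Your proof is correct and follows essentially the same route as the paper: the paper also splits at $\theta = \pi/4$, uses a linear lower bound on $\sin(2\theta)$ (stated there as $\sin(2\theta)\geq 1-\tfrac{4}{\pi}|\theta-\tfrac{\pi}{4}|$, which reduces to your $\sin(2\theta)\geq\tfrac{4\theta}{\pi}$ on $[0,\pi/4]$ after simplification), and then applies the classical one-sided Hardy inequality on each half after a change of variables. Your handling of the boundary term at $\pi/4$ (it has the favorable sign $-f(\pi/4)^2/(\pi/4)\leq 0$, so only $f(0)=0$ is needed on that half) and the constant accounting giving $\pi^2/4<10$ are both right.
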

The proof of this lemma follows from the original Hardy inequality by noting that $\sin(2\theta)\geq 1-\frac{4}{\pi}|\theta-\frac{\pi}{4}|$ for $\theta\in [0,\pi/2]$, splitting the integral into two pieces, and making a change of variables. Later on we will also need the following two inequalities.
\begin{lemma}\label{HardyInequality2}
Let $f\in H^2([0,\pi/2])$. Assume that $f(0)=f(\pi/2)=0.$ Then,
\[\int_0^{\pi/2} \Big(\partial_\theta\big(\frac{f(\theta)}{\sin(2\theta)}\big)\Big)^2d\theta\leq 10\int_{0}^{\pi/2}|f''(\theta)|^2d\theta. \] 
\end{lemma}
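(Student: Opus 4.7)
The plan is to reduce the inequality to Lemma \ref{HardyInequality} applied to a suitable auxiliary function. Set $g := f/\sin(2\theta)$, so the left-hand side is $\int (g')^2\, d\theta$, and observe that $g' = G/\sin^2(2\theta)$ where
\[
G(\theta) := f'(\theta)\sin(2\theta) - 2 f(\theta)\cos(2\theta).
\]
The conditions $f(0) = f(\pi/2) = 0$ give $G(0) = G(\pi/2) = 0$, and a direct differentiation yields the identity $G'(\theta) = (f''(\theta) + 4 f(\theta))\sin(2\theta)$; in particular $G'$ also vanishes at the endpoints, so $G$ vanishes quadratically there.

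Next define $H := G/\sin(2\theta) = \sin(2\theta)\, g'$. The quadratic vanishing of $G$ forces $H(0) = H(\pi/2) = 0$, so Lemma \ref{HardyInequality} applies and gives
\[
\int_0^{\pi/2}(g')^2 \, d\theta \;=\; \int_0^{\pi/2}\frac{H^2}{\sin^2(2\theta)} \, d\theta \;\leq\; 10\int_0^{\pi/2}(H')^2 \, d\theta.
\]
From $H' = (f''+4f) - 2\cos(2\theta)\, g'$, the proof will be complete once we show $\int (H')^2 \, d\theta \leq \int (f'')^2 \, d\theta$.

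The critical step is the cross term in $(H')^2 = (f''+4f)^2 - 4(f''+4f)\cos(2\theta)\, g' + 4\cos^2(2\theta)(g')^2$. Writing $(f''+4f)\, g' = G G'/\sin^3(2\theta) = \tfrac12 (G^2)'/\sin^3(2\theta)$ and integrating by parts — the boundary term $[G^2 \cos(2\theta)/\sin^3(2\theta)]_0^{\pi/2}$ vanishes because $G^2 = O(\theta^4)$ while $\cos(2\theta)/\sin^3(2\theta) = O(\theta^{-3})$ at the endpoints — together with the identity $\partial_\theta[\cos(2\theta)/\sin^3(2\theta)] = -2/\sin^2(2\theta) - 6\cos^2(2\theta)/\sin^4(2\theta)$, yield
\[
-4\int (f''+4f)\cos(2\theta)\, g' \, d\theta \;=\; -4\int \frac{G^2}{\sin^2(2\theta)} \, d\theta \;-\; 12 \int \cos^2(2\theta)(g')^2 \, d\theta.
\]
Substituting gives $\int (H')^2 = \int(f''+4f)^2 - 4\int G^2/\sin^2(2\theta) - 8\int \cos^2(2\theta)(g')^2 \leq \int (f''+4f)^2$, the discarded terms being nonnegative.

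To finish, integration by parts $\int f f'' \, d\theta = -\int (f')^2 \, d\theta$ (using $f(0)=f(\pi/2)=0$) and the Poincar\'e inequality $\int f^2 \leq \tfrac14 \int (f')^2$ on $[0,\pi/2]$ with Dirichlet data give
\[
\int(f''+4f)^2 \, d\theta \;=\; \int (f'')^2 - 8\int (f')^2 + 16\int f^2 \;\leq\; \int (f'')^2 - 4\int (f')^2 \;\leq\; \int (f'')^2.
\]
Chaining the estimates produces $\int (g')^2 \leq 10\int (f'')^2$. The main obstacle is the $4\cos^2(2\theta)(g')^2$ remainder produced by the naive application of Hardy, which cannot be absorbed on its own; the integration-by-parts identity above is crucial because it delivers a \emph{negative} contribution of size $-12\int \cos^2(2\theta)(g')^2$ that dominates the remainder, closing the estimate with the very same constant $10$ inherited from Lemma \ref{HardyInequality}.
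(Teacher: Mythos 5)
Your proof is correct, and it takes a genuinely different route from the one sketched in the paper. The paper proves a prototype inequality $\int_0^\infty (\partial_x(f/x))^2 \leq \tfrac12 \int (f'')^2$ by writing $f = gx$, so $f'' = xg'' + 2g'$, squaring, and absorbing the single cross term $\int 4xg''g' = -2\int (g')^2$ via one integration by parts; it then leaves the $\sin(2\theta)$ version to the reader. Carrying out the analogous direct computation with $f = g\sin(2\theta)$, so $f'' = g''\sin(2\theta) + 4g'\cos(2\theta) - 4g\sin(2\theta)$, and integrating by parts on the cross terms gives $\int (f'')^2 = \int (g'')^2\sin^2(2\theta) + \int (g')^2\big(8 + 8\sin^2(2\theta)\big) + 16\int g^2\sin^2(2\theta) \geq 8\int (g')^2$, i.e.\ constant $\tfrac18$ --- comfortably inside the stated $10$. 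Your argument instead reduces to Lemma \ref{HardyInequality} applied to the auxiliary function $H = \sin(2\theta)\,g'$, and then handles the quadratic form in $H' = (f''+4f) - 2\cos(2\theta)g'$ by the less obvious but entirely correct step of rewriting $(f''+4f)g' = \tfrac12 (G^2)'/\sin^3(2\theta)$ and integrating by parts to release the compensating $-12\int \cos^2(2\theta)(g')^2$, finishing with Poincar\'e. Both routes work: the paper's is shorter, self-contained, and gives a sharper constant; yours is longer and inherits the factor $10$ from Lemma \ref{HardyInequality}, but it makes transparent that this lemma is a consequence of the one-variable Hardy inequality already in the paper. One small gloss: the claim that $G = f'\sin(2\theta) - 2f\cos(2\theta)$ ``vanishes quadratically'' at the endpoints holds pointwise only for $C^2$ data (for $f \in H^2$ alone one gets $G = O(\theta^{3/2})$, which would make the boundary term $O(1)$ rather than vanishing); as in the proof of Lemma \ref{HardyWeight}, the clean way to say it is to first prove the estimate for $f \in C^\infty_c((0,\pi/2))$, where all boundary terms vanish trivially, and then pass to general $f$ by density.
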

\begin{proof}
For simplicity, we give a proof of a simpler version and leave the stated result to the reader:
\[\int_{0}^\infty \Big(\partial_x\big(\frac{f}{x}\big)\Big)^2\leq \frac{1}{2}\int_0^{\infty} (f''(x))^2.\]
The proof of this is as follows:
Set $g=\frac{f}{x}$. Then, $f''=(gx)''=xg''+2g'.$
\[\int (f'')^2=\int x^2 (g'')^2+4(g')^2+4xg''g'=\int x^2 (g'')^2+2(g')^2.\]
\end{proof}
We also need the following \emph{sharp} version of Lemma \ref{HardyInequality}
\begin{lemma}\label{SharpHardyInequality1}
Let $f\in H^1([0,\pi])$ and $0\leq \eta\leq 1$. Assume that $f(0)=f(\pi)=0$. Then, \[\int_{0}^{\pi}\frac{|f(\theta)|^2}{\sin^{2+\eta}(\theta)}d\theta\leq \frac{4}{(\eta+1)^2}\int_0^{\pi} \frac{|f'(\theta)|^2}{\sin^\eta(\theta)}d\theta+100|f|_{H^1}^2.\]
\end{lemma}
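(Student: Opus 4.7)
The first reduction is symmetry: because $\sin(\pi-\theta)=\sin\theta$ and $f(\pi)=0$, the substitution $\theta\mapsto \pi-\theta$ turns the integral over $[\pi/2,\pi]$ into one over $[0,\pi/2]$ for the reflected function $\tilde f(\theta)=f(\pi-\theta)$, which again vanishes at $\theta=0$. Hence it suffices to prove
\[
\int_{0}^{\pi/2}\frac{f^{2}}{\sin^{2+\eta}\theta}\,d\theta \;\leq\; \frac{4}{(\eta+1)^{2}}\int_{0}^{\pi/2}\frac{(f')^{2}}{\sin^{\eta}\theta}\,d\theta+C\|f\|_{H^{1}}^{2}
\]
for $f\in H^{1}([0,\pi/2])$ with $f(0)=0$, then add the two half-interval estimates.

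The core identity is
\[
\frac{\cos\theta}{\sin^{2+\eta}\theta}\;=\;-\frac{1}{1+\eta}\,\frac{d}{d\theta}\!\left(\frac{1}{\sin^{1+\eta}\theta}\right),
\]
which, combined with a density argument reducing to smooth $f$ vanishing linearly at $0$, lets me integrate by parts:
\[
\int_{0}^{\pi/2}\frac{f^{2}\cos\theta}{\sin^{2+\eta}\theta}\,d\theta\;=\;-\frac{f(\pi/2)^{2}}{1+\eta}+\frac{2}{1+\eta}\int_{0}^{\pi/2}\frac{f f'}{\sin^{1+\eta}\theta}\,d\theta.
\]
The boundary term at $0$ vanishes since $|f(\theta)|^{2}\lesssim \theta^{2}$ there, and the boundary term at $\pi/2$ is nonpositive and may be dropped. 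Writing $\frac{ff'}{\sin^{1+\eta}\theta}=\frac{f}{\sin^{1+\eta/2}\theta}\cdot\frac{f'}{\sin^{\eta/2}\theta}$ and applying Cauchy--Schwarz yields, with $A:=\int_{0}^{\pi/2}f^{2}/\sin^{2+\eta}\theta$ and $B:=\int_{0}^{\pi/2}(f')^{2}/\sin^{\eta}\theta$,
\[
\int_{0}^{\pi/2}\frac{f^{2}\cos\theta}{\sin^{2+\eta}\theta}\,d\theta\;\leq\;\frac{2}{1+\eta}\sqrt{AB}.
\]

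The missing piece is the error $\int f^{2}(1-\cos\theta)/\sin^{2+\eta}\theta$. Using the half-angle formulas one checks that on $[0,\pi/2]$,
\[
\frac{1-\cos\theta}{\sin^{2+\eta}\theta}\;=\;\frac{1}{2^{1+\eta}\cos^{2+\eta}(\theta/2)\sin^{\eta}(\theta/2)}\;\leq\;\frac{C}{\theta^{\eta}},
\]
since $\cos(\theta/2)$ is bounded below and $\sin(\theta/2)\geq \theta/\pi$. Combined with $|f(\theta)|^{2}\leq \theta\,\|f'\|_{L^{2}}^{2}$ (from $f(0)=0$), this gives
\[
\int_{0}^{\pi/2}f^{2}\,\frac{1-\cos\theta}{\sin^{2+\eta}\theta}\,d\theta\;\leq\;C\|f'\|_{L^{2}}^{2}\int_{0}^{\pi/2}\theta^{1-\eta}\,d\theta\;\leq\;C'\|f\|_{H^{1}}^{2},
\]
which is an $\eta$-uniform lower-order term.

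Putting the two pieces together,
\[
A\;\leq\;\frac{2}{1+\eta}\sqrt{AB}+C'\|f\|_{H^{1}}^{2},
\]
and the sharp constant is recovered by the AM--GM bound $\frac{2}{1+\eta}\sqrt{AB}\leq \tfrac{1}{2}A+\tfrac{2}{(1+\eta)^{2}}B$, which yields $A\leq \tfrac{4}{(1+\eta)^{2}}B+2C'\|f\|_{H^{1}}^{2}$. Adding the estimates from $[0,\pi/2]$ and $[\pi/2,\pi]$ gives the claim, and the universal absolute constant is easily verified to be at most $100$. The main (indeed only nontrivial) step is obtaining the sharp prefactor $4/(1+\eta)^{2}$; this is precisely what forces the split into the exact derivative piece $\cos\theta/\sin^{2+\eta}\theta$ plus a harmless remainder controlled by $\|f\|_{H^{1}}^{2}$, rather than the cruder comparison $\sin\theta\asymp \theta$ that would lose a constant.
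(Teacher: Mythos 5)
Your proof is correct, and it takes a genuinely different route from the paper's. The paper's proof compares the weight $\sin^{-(2+\eta)}\theta$ with $\theta^{-(2+\eta)}$, absorbs the difference into an error controlled by $\int f^2/\sin^\eta\theta\lesssim\|f\|_{H^1}^2$, and then runs the integration-by-parts Hardy argument with the exact derivative $\partial_\theta(\theta^{-(1+\eta)})$ on the enlarged interval $[0,\pi]$ before splitting back into $[0,\pi/2]$ and $[\pi/2,\pi]$. You instead keep the $\sin$ weight intact, split $1=\cos\theta+(1-\cos\theta)$ in the numerator, and exploit the exact identity $\cos\theta/\sin^{2+\eta}\theta=-\frac{1}{1+\eta}\partial_\theta(\sin^{-(1+\eta)}\theta)$; the error becomes $\int f^2(1-\cos\theta)/\sin^{2+\eta}\theta$, which is $\eta$-uniformly bounded by $\|f\|_{H^1}^2$ via the half-angle formula and $|f(\theta)|^2\leq\theta\|f'\|_{L^2}^2$. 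Your decomposition has two small advantages: the integration by parts is exact with no $\sin$-versus-$\theta$ approximation, and the boundary term at $\pi/2$ comes out nonpositive and can simply be dropped; the reflection $\theta\mapsto\pi-\theta$ is also a cleaner way to handle the right half than the paper's ad hoc splitting of the intermediate integral. Both approaches rely on the same core mechanism (integration by parts plus Cauchy--Schwarz plus AM--GM), and both tacitly invoke the same density reduction to functions vanishing to high order at the endpoints to justify dropping the boundary term at $0$.

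One remark worth stating explicitly, which you handled correctly but glossed over: the pointwise bound ``$|f(\theta)|^{2}\lesssim\theta^{2}$'' you cite to kill the boundary term at $0$ does \emph{not} hold for general $f\in H^1$ with $f(0)=0$; for $H^1$ one only gets $|f(\theta)|\lesssim\sqrt\theta$, which is not enough to make $f^2/\sin^{1+\eta}\theta$ vanish at the origin when $\eta>0$. The density reduction to smooth $f$ vanishing to first (or higher) order at $0$ that you invoke at the start is therefore essential, not cosmetic, and the final step should observe that once the inequality holds for the dense class, it extends to all $f\in H^1$ with $f(0)=0$ and both sides finite (and is vacuous otherwise). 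The paper's proof has the same implicit reliance on this point.
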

\begin{remark}
The lemma is sharp in terms of the size of the first constant; the size of the second constant is irrelevant for our purposes. 
\end{remark}
\begin{proof}
Observe that for $\theta\in [0,\pi/2]$ we have: \[\Big|\frac{1}{\sin^{2+\eta}(\theta)}-\frac{1}{\theta^{2+\eta}}\Big|\leq\frac{|\theta^{2+\eta}-\sin^{2+\eta}(\theta)|}{\sin^{4+2\eta}(\theta)}\leq \frac{C}{\sin^{\eta}(\theta)}.\] Thus, \[\Big|\int_0^{\pi/2}\frac{f(\theta)^2}{\sin^{2+\eta}(\theta)}d\theta-\int_{0}^{\pi/2}\frac{f(\theta)^2}{\theta^{2+\eta}}d\theta\Big|\leq 2(1+\eta)\int_0^{\pi}\frac{|f(\theta)|^2}{\sin^\eta(\theta)}d\theta.\]
Now, \[\int_{0}^{\pi/2} \frac{f(\theta)^2}{\theta^{2+\eta}}\leq \int_{0}^{\pi} \frac{f(\theta)^2}{\theta^{2+\eta}} =-\frac{2}{1+\eta}\int_{0}^\pi \frac{f(\theta)f'(\theta)}{\theta^{1+\eta}}d\theta\leq-\frac{2}{1+\eta}\int_{0}^{\pi/2} \frac{f(\theta)f'(\theta)}{\theta^{1+\eta}}d\theta+\frac{2^{2+\eta}}{(1+\eta)}\int_{\pi/2}^\pi f(\theta)f'(\theta)d\theta.\] Thus, using the Cauchy-Schwarz inequality we see:
\[\int_{0}^{\pi/2} \frac{f(\theta)^2}{\theta^{2+\eta}}d\theta\leq \frac{4}{(1+\eta)^2}\int_{0}^{\pi/2}\frac{f'(\theta)^2}{\theta^\eta}d\theta+\frac{2^{3+\eta}}{(1+\eta)}|f|_{H^1}^2.\]
Now note that\[\Big|\frac{1}{\theta^\eta}-\frac{1}{\sin^\eta(\theta)}\Big|\leq 10\] for $\theta\in [0,\pi/2]$ since $0\leq \eta\leq 1$. Now we do a similar calculation on the interval $[\pi/2,\pi]$ and we are done. 
\end{proof}
We now have the following corollary which follows from Lemma \ref{SharpHardyInequality1} by scaling.
\begin{corollary}\label{SharpHardyInequality2}
Let $f\in H^1([0,\pi/2])$ and $0\leq \eta\leq 1$. Assume that $f(0)=f(\pi/2)=0$. Then, \[\int_{0}^{\pi/2}\frac{|f(\theta)|^2}{\sin^{2+\eta}(2\theta)}d\theta\leq \frac{1}{(\eta+1)^2}\int_0^{\pi/2} \frac{|f'(\theta)|^2}{\sin^\eta(2\theta)}d\theta+100|f|_{H^1}^2.\]
\end{corollary}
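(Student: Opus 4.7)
The plan is to reduce the statement directly to Lemma \ref{SharpHardyInequality1} via the change of variable $\phi = 2\theta$, which maps $[0,\pi/2]$ onto $[0,\pi]$ and turns $\sin(2\theta)$ into $\sin(\phi)$. Concretely, define $g(\phi) := f(\phi/2)$ on $[0,\pi]$. Then $g(0) = f(0) = 0$ and $g(\pi) = f(\pi/2) = 0$, so $g$ satisfies the hypotheses of Lemma \ref{SharpHardyInequality1}, and $g'(\phi) = \tfrac{1}{2}f'(\phi/2)$.

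Next, I would rewrite both sides of the target inequality in terms of $g$. Substituting $\phi = 2\theta$ in the left-hand side gives
\[
\int_{0}^{\pi/2}\frac{|f(\theta)|^2}{\sin^{2+\eta}(2\theta)}\,d\theta \;=\; \frac{1}{2}\int_{0}^{\pi}\frac{|g(\phi)|^2}{\sin^{2+\eta}(\phi)}\,d\phi,
\]
and a similar substitution on the right-hand side of Lemma \ref{SharpHardyInequality1} yields
\[
\int_{0}^{\pi}\frac{|g'(\phi)|^2}{\sin^{\eta}(\phi)}\,d\phi \;=\; \frac{1}{4}\int_{0}^{\pi}\frac{|f'(\phi/2)|^2}{\sin^{\eta}(\phi)}\,d\phi \;=\; \frac{1}{2}\int_{0}^{\pi/2}\frac{|f'(\theta)|^2}{\sin^{\eta}(2\theta)}\,d\theta.
\]
Applying Lemma \ref{SharpHardyInequality1} to $g$ and combining these identities gives the constant $\tfrac{1}{2}\cdot\tfrac{4}{(\eta+1)^2}\cdot\tfrac{1}{2} = \tfrac{1}{(\eta+1)^2}$ in front of the derivative integral, which is exactly the sharp constant claimed.

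Finally, for the $|f|_{H^1}^2$ remainder, note that the same change of variable gives $|g|_{H^1([0,\pi])}^2 \leq 2|f|_{H^1([0,\pi/2])}^2$, so the $100|g|_{H^1}^2$ term from Lemma \ref{SharpHardyInequality1}, after being multiplied by the prefactor $\tfrac{1}{2}$, is bounded by $100|f|_{H^1}^2$ as required. There is no real obstacle here; the only thing to watch is the bookkeeping of the factor $\tfrac{1}{2}$ coming from $d\theta = \tfrac{1}{2}d\phi$ and the factor $\tfrac{1}{4}$ coming from $|g'|^2 = \tfrac{1}{4}|f'|^2$, which together conspire to preserve the sharp constant $1/(\eta+1)^2$ on passing from Lemma \ref{SharpHardyInequality1} to the corollary.
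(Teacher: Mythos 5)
Your proof is correct and is essentially the paper's own argument spelled out in full: the paper simply remarks that Corollary \ref{SharpHardyInequality2} ``follows from Lemma \ref{SharpHardyInequality1} by scaling,'' and the change of variable $\phi = 2\theta$ with $g(\phi) = f(\phi/2)$ is exactly that scaling. Your bookkeeping of the factors of $\tfrac12$ and $\tfrac14$ is correct, and the $H^1$ remainder estimate works out as you say.
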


\subsection{$\mathcal{H}^2$ Estimates}\label{H2Elliptic}

We now move to establish the $\mathcal{H}^2$ estimates for solutions to \eqref{PolarBSL} which is the heart of this section. 
\begin{proposition}\label{prop:H2}
Under the same assumptions as Proposition \ref{prop:L2} along with the assumptions that $0\leq \alpha\leq \frac{1}{4},$ $1<\gamma\leq \frac{5}{4}$ and $|F|_{\mathcal{H}^2}<\infty,$ we have:
\[\alpha^2 |R^2\partial_{RR}\Psi|_{\mathcal{H}^2}+|\partial_{\theta\theta}\Psi|_{\mathcal{H}^2}\leq C|F|_{\mathcal{H}^2}\] for some universal constant $C>0$ independent of $\alpha$ and $\gamma$. 
\end{proposition}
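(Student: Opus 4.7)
The operator $L$ takes a pleasant form in terms of the Euler derivative $D_R = R\partial_R$: using $R^2\partial_{RR} = D_R^2 - D_R$, we have
\[
L = -\alpha^2 D_R^2 - 5\alpha\, D_R + \bigl(-\partial_{\theta\theta} + \partial_\theta\tan\theta - 6\bigr).
\]
The $R$-part is a polynomial in $D_R$ with constant coefficients, so $[D_R, L] = 0$. Moreover, $D_R$ does not interact with angular integration, so if $F$ is orthogonal to $\sin\theta\cos^2\theta$ at every $R$, then so is $D_R^k F$ for every $k$. Hence $L(D_R^k \Psi) = D_R^k F$ again satisfies the hypotheses of Proposition~\ref{prop:L2}. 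This is the structural observation that drives the whole proof.

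For the $D_R^k$-pieces of the $\mathcal{H}^2$ norm, my plan is first to upgrade Proposition~\ref{prop:L2} to include the weight $w(R)^2/\sin^\eta(2\theta)$. Because $w$ depends only on $R$ and the angular weight only on $\theta$, the proof of Proposition~\ref{prop:L2} repeats essentially verbatim when we multiply $L(g) = G$ by $g \cdot w^2/\sin^\eta(2\theta)$ instead of just $g$. The orthogonality condition still drives the Fourier-in-$\theta$ subtraction that handles the $-6g$ term, while Corollary~\ref{SharpHardyInequality2} with its sharp constant absorbs the boundary singularity from the $\sin^{-\eta}$ weight (uniformly, since $\eta = 99/100$ is fixed strictly below $1$). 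Applying this weighted estimate to $L(D_R^k \Psi) = D_R^k F$ for $k = 0, 1, 2$ then controls the $D_R^k$-terms of $\partial_{\theta\theta}\Psi$ in $\mathcal{H}^2$. The matching $D_R^k$-estimates on $\alpha^2 R^2 \partial_{RR}\Psi$ are recovered by solving
\[
\alpha^2 R^2\partial_{RR}\Psi = -F - \alpha(5+\alpha)\, D_R\Psi - \partial_{\theta\theta}\Psi + \partial_\theta(\tan\theta\,\Psi) - 6\Psi
\]
algebraically and bounding each term in the weighted norm.

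For the $D_\theta$, $D_\theta D_R$, and $D_\theta^2$ pieces the orthogonality is broken, since $D_\theta$ does not annihilate $\sin\theta\cos^2\theta$, so the Fourier trick is unavailable. Instead I would differentiate the equation in $\theta$ and run a direct energy estimate in the $\sin^{-\gamma}(2\theta)$-weighted $L^2$ space. The choice $\gamma > 1$ is exactly what makes the angular operator $-\partial_{\theta\theta} + \partial_\theta\tan\theta - 6$ coercive \emph{without} orthogonality in this weighted inner product: the heavier weight penalizes the problematic low-frequency $\theta$-modes ($\sin(2\theta)$ and $\sin\theta\cos^2\theta$) enough that the $-6$ contribution is dominated by the first-derivative contribution via the sharp Hardy inequality of Corollary~\ref{SharpHardyInequality2}. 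An explicit computation shows that $[D_\theta, L]\Psi$ consists only of bounded-coefficient terms of order $\le 2$ in $\Psi$, namely $4 D_\theta\Psi - 4\cos(2\theta)\partial_{\theta\theta}\Psi$ and lower-order contributions from the $\tan\theta$ term; each of these is already controlled by the previous step. Similar commutations handle $D_\theta D_R$ and $D_\theta^2$. The main obstacle is this last energy estimate: integration by parts against the singular weight $\sin^{-\gamma}(2\theta)$ generates several terms with singular angular factors that must be carefully balanced against one another, and the asymmetric weights $\sin^{-\eta}$ (which needs orthogonality to work) versus $\sin^{-\gamma}$ (which replaces orthogonality by coercivity) in the $\mathcal{H}^2$ norm are tuned precisely to close this balance with universal constants independent of $\alpha$ and $\gamma$ in the claimed range.
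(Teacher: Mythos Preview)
Your treatment of the $D_R^k$-pieces is essentially the paper's: commute $D_R$ through $L$, and upgrade Proposition~\ref{prop:L2} to carry the weight $w^2/\sin^\eta(2\theta)$ (the paper does this in two sub-steps, first $w$ alone, then the angular weight, but the idea is the same).

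The $D_\theta$-part, however, rests on a claim that is false. You assert that $\gamma>1$ makes the angular operator $L_0=-\partial_{\theta\theta}+\partial_\theta(\tan\theta\,\cdot)-6$ coercive in the $\sin^{-\gamma}(2\theta)$-weighted inner product. But $\sin(2\theta)$ is in the kernel of $L_0$, and $\sin(2\theta)\in L^2(\sin^{-\gamma}(2\theta)\,d\theta)$ for every $\gamma<3$; hence $(L_0 g,\,g\sin^{-\gamma}(2\theta))$ vanishes at $g=\sin(2\theta)$ regardless of the weight. No amount of weighting inside $L^2$ removes a genuine kernel mode, and the sharp Hardy inequality cannot dominate the $-6$ against a function on which the full operator is identically zero. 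Since $D_\theta\Psi$ has no reason to be orthogonal to $\sin\theta\cos^2\theta$, your proposed energy estimate gives no control in that direction.

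The paper sidesteps this entirely. Having already bounded $\partial_{\theta\theta}\Psi$ (and its $D_R$-derivatives) with the $\eta$-weight in the earlier steps, it moves $6\Psi$ and the $\alpha$-radial terms to the right-hand side as \emph{known} forcing, leaving only the strictly positive operator $-\partial_{\theta\theta}+\partial_\theta(\tan\theta\,\cdot)$. It then applies $\partial_\theta$ (not $D_\theta$), multiplies by $-\sin(2\theta)^{2-\gamma}\partial_\theta^3\Psi\, w^2$, and carries out a delicate integration by parts using the substitution $\bar\Psi=\Psi/\cos\theta$ to extract a sign from the $\tan\theta$-term. The role of $\gamma>1$ here is not to restore coercivity of $L_0$; it only governs the integrability of the weight $\sin(2\theta)^{2-\gamma}$ so that boundary terms vanish and the resulting quadratic form in $\partial_\theta^2\bar\Psi$ is nonnegative. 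The second $D_\theta$-derivative is handled by iterating this with $\partial_{\theta\theta}$ and weight $\sin(2\theta)^{4-\gamma}$.
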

\begin{remark}
The assumption that $\alpha\leq \frac{1}{4}$ is probably technical, but we are only going to use this when $\alpha$ is very small. 
\end{remark}
\begin{proof}
Recall the $\mathcal{H}$ norm defined in \eqref{HNorm}.

\vspace{3mm}
\emph{Step 1: Only radial weights}
\vspace{3mm}

The goal of this step is to establish a weighted version of Proposition \ref{prop:L2}.
We start by multiplying \eqref{PolarBSL} by $\Psi w^2$ and integrating (note that we are only putting a weight in $R$ to begin with). We see: 

\[\alpha^2|\partial_R\Psi Rw|^2-\frac{\alpha^2}{2}(\Psi^2,\partial_{R}^2(R^2w^2))_{L^2}+\frac{\alpha(5+\alpha)}{2}(\Psi^2,\partial_R(Rw^2))_{L^2}+|\partial_{\theta\theta}\Psi w|^2-6|\Psi|^2+\frac{1}{2}|\sec^2(\theta)\Psi w|^2=(F,\Psi w^2).\]
Moreover, it can be checked directly that \[|\partial_{R}^2(R^2 w^2)|\leq 6 w^2,\qquad |\partial_R(Rw^2)|\leq 3w^2.\]
Thus, 
\[\alpha^2|\partial_R\Psi Rw|^2+|\partial_{\theta\theta}\Psi w|^2-6|\Psi w|^2+\frac{1}{2}|\sec^2(\theta)\Psi w|^2\leq (F,\Psi w^2)+(3\alpha^2+\frac{3\alpha(5+\alpha)}{2})|w\Psi|^2.\]
In particular, since $0\leq\alpha\leq \frac{1}{4}$ we see:
\[|\partial_{\theta\theta} \Psi w|^2-9.5|\Psi w|^2\leq |(Fw,\Psi w)_{L^2}|\]
Now we argue as in Step 2 of the proof of Proposition \ref{prop:L2}. Using that $w\Psi_\star\equiv 0,$ we see that \[\sum_{n\geq 2} (4n^2-9.5)|\Psi_n w|^2_{L^2}\leq 5.5|\Psi_1w|_{L^2}^2+|Fw|_{L^2}|\Psi w|_{L^2}.\]
Thus, since \[|\Psi_1w|_{L^2}^2<\sum_{n\geq 2}|\Psi_n w|_{L^2}^2\] we get
\[\sum_{n\geq 2} (4n^2-15)|\Psi_n w|^2_{L^2}\leq |Fw|_{L^2}|\Psi w|_{L^2}.\]
Thus, 
\[\sum_{n\geq 1} 4n^2|\Psi_n w|^2_{L^2}\leq 20|Fw|_{L^2}|\Psi w|_{L^2}.\]
Now the proof follows the same as before to give:
\begin{equation}\label{radialweightonlyL2} \alpha^2 |R^2\partial_{RR}\Psi w|_{L^2}+|\partial_{\theta\theta}\Psi w|_{L^2}\leq C_1|F w|_{L^2}.\end{equation}

\vspace{3mm}

\emph{Step 2: Radial and (weak) angular weights}

\vspace{3mm}

In this step we will prove \begin{equation}\label{radialandangularweightL2} \alpha^2 |R^2\partial_{RR}\Psi \frac{w}{\sin(2\theta)^{\eta/2}}|_{L^2}+|\partial_{\theta\theta}\Psi \frac{w}{\sin(2\theta)^{\eta/2}}|_{L^2}\leq C_1|F \frac{w}{\sin(2\theta)^{\eta/2}}|_{L^2}.\end{equation}

As in Step 1, we multiply \eqref{PolarBSL} by $-\partial_{\theta\theta}\Psi\frac{w^2}{\sin(2\theta)^\eta}$ and integrate. We get:
\[\sum_{i=1}^5I_i=-\Big(F,\partial_{\theta\theta}\Psi\frac{w^2}{\sin(2\theta)^\eta}\Big),\] where
\[I_1=\alpha^2\Big(R^2\partial_{RR}\Psi, \partial_{\theta\theta}\Psi\frac{w^2}{\sin(2\theta)^\eta}\Big)\qquad I_2=\Big(\alpha(5+\alpha)R\partial_R\Psi, \partial_{\theta\theta}\Psi\frac{w^2}{\sin(2\theta)^\eta}\Big), I_3=\Big(\partial_{\theta\theta}\Psi, \partial_{\theta\theta}\Psi\frac{w^2}{\sin(2\theta)^\eta}\Big)\] \[\qquad I_4=-\Big(\partial_\theta\big(\tan(\theta)\Psi\big), \partial_{\theta\theta}\Psi\frac{w^2}{\sin(2\theta)^\eta}\Big),\qquad I_5=6\Big(\Psi,\partial_{\theta\theta}\Psi\frac{w^2}{\sin(2\theta)^\eta}\Big).\]
Note that $I_3$ is a positive term which we will not touch. After integrating by parts in the right way, $I_1 $ and $I_4$ contain positive terms and some terms which we control by the information we gained from Step 1. For $I_2$ and $I_5$ we just estimate estimate them directly using the Cauchy-Schwarz inequality.  
For $I_5,$ that \[\Big|\frac{\Psi}{\sin(2\theta)^{\eta/2}}w\Big|_{L^2}\leq |\partial_\theta\Psi w|_{L^2}\leq |Fw|_{L^2}\] using the Hardy inequality and \eqref{radialweightonlyL2} from Step 1. 
Similarly, for $I_2$, observe that \[\alpha \Big|R\partial_R\Psi\frac{w}{\sin(2\theta)^{\eta/2}}\Big|_{L^2}\leq C\alpha |R\partial_{R\theta}\Psi w|_{L^2}\leq C|Fw|_{L^2}\] again using the Hardy inequality and \eqref{radialweightonlyL2}.
We now move to $I_1$. {\bf In what follows we will denote by $E$ an error which changes from line to line but can be controlled in a similar way to how $I_5$ and $I_2$ were just estimated. }
\[I_1=\alpha^2\Big(R^2\partial_{RR}\Psi, \partial_{\theta\theta}\Psi\frac{w^2}{\sin(2\theta)^\eta}\Big)=-\alpha^2\Big(R^2\partial_R\Psi,\partial_{R\theta\theta}\Psi\frac{w^2}{\sin(2\theta)^\eta}\Big)+E\]
\[=\alpha^2\Big(R^2\partial_{R\theta}\Psi,\partial_{R\theta}\Psi\frac{w^2}{\sin(2\theta)^\eta}\Big)+\alpha^2\Big(R^2\partial_{R}\Psi,\partial_{R\theta}\Psi \partial_{\theta}\frac{w^2}{\sin(2\theta)^{\eta}}\Big)+E\]
Now, we have to be very careful in how we handle \[\alpha^2\Big(R^2\partial_{R}\Psi,\partial_{R\theta}\Psi \partial_{\theta}\frac{w^2}{\sin(2\theta)^{\eta}}\Big)=-\frac{\alpha^2}{2}(R^2\partial_R\Psi,\partial_R\Psi\partial_{\theta\theta}\frac{w^2}{\sin^{\eta}(2\theta)}\Big).\] Observe that \[\partial_{\theta\theta}\frac{1}{\sin(2\theta)^\eta}=4\eta(\eta+1)\frac{\cos^2(2\theta)}{\sin^{2+\eta}(2\theta)}+\frac{4\eta}{\sin^{\eta}(2\theta)}.\]
Thus, 
\[I_1=\alpha^2\Big(R^2\partial_{R\theta}\Psi,\partial_{R\theta}\Psi\frac{w^2}{\sin(2\theta)^\eta}\Big)-\alpha^2(2\eta(\eta+1))\Big(R^2\partial_R\Psi,\partial_R\Psi \frac{w^2}{\sin^{2+\eta}(2\theta)}\Big)+E.\]
Now, by the sharp Hardy inequality \eqref{SharpHardyInequality2}, we have:
\[I_1\geq \alpha^2\Big[\Big(R^2\partial_{R\theta}\Psi,\partial_{R\theta}\Psi\frac{w^2}{\sin(2\theta)^\eta}\Big)-\frac{ 2\eta}{\eta+1}\Big(R^2\partial_{R\theta}\Psi,\partial_{R\theta}\Psi\frac{w^2}{\sin(2\theta)^\eta}\Big)\Big]-|Fw|_{L^2}^2\]
\[=\frac{1-\eta}{1+\eta}\alpha^2|R\partial_{R\theta}\Psi \frac{w}{\sin(2\theta)^{\eta/2}}|_{L^2}^2-C|Fw|_{L^2}\sqrt{I_3}.\]

We now turn to $I_4$. 
To estimate \[I_4=-\Big(\partial_\theta\big(\tan(\theta)\Psi\big), \partial_{\theta\theta}\Psi\frac{w^2}{\sin(2\theta)^\eta}\Big),\]
we again introduce \[\bar\Psi=\frac{\Psi}{\cos(\theta)}.\]
As before, we denote by $E$ an error term which is easily controlled. Then, 
\[I_4=-(\sin(\theta)\partial_\theta\bar \Psi \partial_{\theta\theta}(\cos(\theta)\bar\Psi),\frac{w^2}{\sin^\eta(2\theta)}\Big)+E=I_{4,1}+I_{4,2}+I_{4,3}+E,\] where \[I_{4,1}=-\frac{1}{2}\Big(\sin(2\theta)\partial_\theta\bar\Psi\partial_{\theta\theta}\bar\Psi, \frac{w^2}{\sin^{\eta}(2\theta)}\Big)\qquad I_{4,2}=2\Big(\sin^2(\theta)(\partial_\theta\bar\Psi)^2,\frac{w^2}{\sin^\eta(2\theta)}\Big)\qquad I_{4,3}=-\frac{1}{2}\Big(\sin(2\theta)\partial_\theta\bar\Psi\bar\Psi,\frac{w^2}{\sin^\eta(2\theta)}\Big).\]
Integrating by parts and using \eqref{radialweightonlyL2}, it is easy to see that \[|I_{4,3}|\leq C|Fw|_{L^2}.\]
$I_{4,2}$ is a positive term which we will use and $I_{4,1}$ can be re-written as:
\[I_{4,1}=\frac{1-\eta}{2}\Big(\cos(2\theta) (\partial_{\theta}\bar\Psi)^2, \frac{w^2}{\sin^\eta(2\theta)}\Big).\]. Thus, since $\cos(2\theta)=\cos^2(\theta)-\sin^2(\theta)$ we see:
\[I_{4}\geq \Big(\sin^2(\theta)(\partial_\theta\bar\Psi)^2,\frac{w^2}{\sin^\eta(2\theta)}\Big)-C|Fw|_{L^2}^2. \]
Summing up the estimates of $I_{i}$ for $1\leq i\leq 5$, we get:
\[|\partial_{\theta\theta}\Psi \frac{w}{\sin(2\theta)^{\eta/2}}|_{L^2}\leq C_1|F \frac{w}{\sin(2\theta)^{\eta/2}}|_{L^2}.\] From here, it is not difficult to get \eqref{radialandangularweightL2}.
\vspace{3mm}

\emph{Step 3: Radial and (weak) angular weights with radial derivatives}

\vspace{3mm}

We note that we can rewrite \eqref{PolarBSL} in the following convenient form:
\[L(\Psi)=-\alpha^2(R\partial_R)^2\Psi-5\alpha R\partial_R\Psi-\partial_{\theta\theta}\Psi+\partial_\theta(\tan(\theta)\Psi)-6\Psi=F.\] Recall the notation $D_R=R\partial_R$. 
Consequently, 
\[L(D_R^k\Psi)=D_R^k F\] for $k=0,1,2$. 
Thus, using Steps 1 and 2 (in particular, \eqref{radialandangularweightL2}) we have:
\begin{equation}\label{radialweightonlyH2} \alpha^2 |R^2\partial_{RR}(D_R)^{k}\Psi \frac{w}{\sin(2\theta)^{\eta/2}}|_{L^2}+|\partial_{\theta\theta}(D_R)^k\Psi \frac{w}{\sin(2\theta)^{\eta/2}}|_{L^2}\leq C_1|(D_R)^kF \frac{w}{\sin(2\theta)^{\eta/2}}|_{L^2}\end{equation}
for $k=0,1,2$.

\vspace{3mm}

\emph{Step 4: Radial and angular weights with one angular derivative.}

\vspace{3mm}

Now notice that from Step 3 have shown that $|D_R^2\partial_{\theta\theta}\Psi w|_{L^2}\leq |D_R^2F w|_{L^2}.$
Consequently, we can write:
\[-\partial_{\theta\theta}\Psi+\partial_{\theta}(\tan(\theta)\Psi)=F+6\Psi+\alpha^2D_R^2\Psi-3\alpha D_R\Psi:=F_1.\]
Now let's apply $\partial_{\theta}$ to this equation and multiply both sides by $-\sin(2\theta)^{2-\gamma}\partial_{\theta}^3\Psi w^2$. We get:
\[\int |\partial_\theta^3\Psi|^2\sin(2\theta)^{2-\gamma}w^2-\int\partial_{\theta\theta}(\tan(\theta)\Psi)\partial_{\theta}^3\Psi\sin(2\theta)^{2-\gamma}w^2=-\Big(\partial_\theta F_1\sin(2\theta)^{\frac{2-\gamma}{2}}w,\partial_{\theta}^3\Psi \sin(2\theta)^{\frac{2-\gamma}{2}}w\Big)_{L^2}.\]
By assumption as well as \eqref{radialweightonlyH2} we have that \[|\partial_{\theta}F_1\sin(2\theta)^{2-\gamma}w|\leq C|F|_{\mathcal{H}^2}.\]
Thus, our concern is to deal with the term:
\[I:=-\int\partial_{\theta\theta}(\tan(\theta)\Psi)\partial_{\theta}^3\Psi\sin(2\theta)^{2-\gamma}w^2.\]
As in Step 1, we define: \[\bar\Psi=\frac{\Psi}{\cos(\theta)}.\]
Then,
\[I=-\int\Big(\sin(\theta)\partial_{\theta}^2\bar\Psi+2\cos(\theta)\partial_\theta\bar\Psi-\sin(\theta)\bar\Psi\Big)\partial_\theta^3\Psi\sin(2\theta)^{2-\gamma}w^2\]
\[=-\int \sin(\theta)\partial_{\theta}^2\bar\Psi \Big(\cos(\theta)\partial_\theta^3\bar\Psi-3\sin(\theta)\partial_\theta^2\bar\Psi-3\cos(\theta)\partial_\theta\bar\Psi+\sin(\theta)\bar\Psi\Big)\sin(2\theta)^{2-\gamma}w^2\]
\[-\int \Big(2\cos(\theta)\partial_\theta\bar\Psi-\sin(\theta)\bar\Psi\Big)\partial_\theta^3\Psi \sin(2\theta)^{2-\gamma}w^2\]
\[=I_1+I_2.\]
First we estimate $I_2$. 
Note that $\cos(\theta)\partial_\theta\bar\Psi=\partial_\theta(\cos(\theta)\bar\Psi)+\sin(\theta)\bar\Psi.$
Thus, 
\[I_2=-\int\Big(2\partial_\theta\Psi+\sin(\theta)\bar\Psi)\partial_\theta^3\Psi\sin(2\theta)^{2-\gamma}w^2.\]
Thus, \[|I_2|\leq C|\partial_\theta^3\Psi\sin(2\theta)^{\frac{2-\gamma}{2}}w||Fw|_{L^2}\] using \eqref{radialweightonlyL2} and the Hardy inequality. 
Now we turn to $I_1$. 
\[I_1=-\int \sin(\theta)\partial_{\theta}^2\bar\Psi \Big(\cos(\theta)\partial_\theta^3\bar\Psi-3\sin(\theta)\partial_\theta^2\bar\Psi-3\cos(\theta)\partial_\theta\bar\Psi+\sin(\theta)\bar\Psi\Big)\sin(2\theta)^{2-\gamma}w^2\]
\[=\int \Big[-\frac{1}{4}\sin(2\theta)^{3-\gamma}\partial_{\theta}\Big((\partial_{\theta}^2\bar\Psi)^2\Big)+3\sin^2(\theta)\Big(\partial_{\theta}^2\bar\Psi\Big)^2\sin(2\theta)^{2-\gamma}+\frac{3}{4}\sin(2\theta)^{3-\gamma}\partial_{\theta}\Big((\partial_{\theta}\bar\Psi)^2\Big)-\sin^2(\theta)\sin(2\theta)^{2-\gamma}\bar\Psi\partial_{\theta}^2\bar\Psi\Big]w^2\]
\[=\int \Big[(\frac{3-\gamma}{2}\cos(2\theta)+3\sin^2(\theta))\Big(\partial_{\theta}^2\bar\Psi\Big)^2\sin(2\theta)^{2-\gamma}-\frac{3(3-\gamma)}{2}\cos(2\theta)\sin(2\theta)^{2-\gamma}(\partial_\theta\bar\Psi)^2-\sin^2(\theta)\sin(2\theta)^{2-\gamma}\bar\Psi\partial_{\theta}^2\bar\Psi\Big]w^2\]
\[\geq \frac{1}{2}\int \Big(\partial_\theta^2\bar\Psi\Big)^2\sin(2\theta)^{2-\gamma}-C|Fw|^2_{L^2}\] using the Hardy inequality and \eqref{radialweightonlyL2} since $1\leq\gamma\leq\frac{3}{2}$. 
Thus we conclude that:
\begin{equation}\label{angularweightH1}\Big|\partial_\theta^3\Psi\sin(2\theta)^{\frac{2-\gamma}{2}}w\Big|_{L^2}\leq C|F|_{\mathcal{H}^2}.\end{equation}
Next we want to estimate two derivatives in $\theta$. 

\vspace{3mm}

\emph{Step 5: Radial and angular weights with two angular derivatives.}

\vspace{3mm}

We now come to the last step of the proof which handles the case of two angular derivatives. As in the previous step, where we only took one angular derivative, we just take the equation:
\[-\partial_{\theta\theta}\Psi+\partial_{\theta}(\tan(\theta)\Psi)=F+6\Psi+\alpha^2D_R^2\Psi-5\alpha D_R\Psi:=F_1,\]
apply $\partial_{\theta\theta}$, multiply by $-\sin(2\theta)^{4-\gamma}\partial_{\theta}^4\Psi w^2$, and integrate. 
We get:
\[\int |\partial_\theta^4\Psi|^2\sin(2\theta)^{4-\gamma}w^2=-\Big(\partial_\theta^2 F_1\sin(2\theta)^{\frac{4-\gamma}{2}}w,\partial_{\theta}^4\Psi \sin(2\theta)^{\frac{4-\gamma}{2}}w\Big)_{L^2}+\int\partial_{\theta}^3(\tan(\theta)\Psi)\partial_{\theta}^4\Psi\sin(2\theta)^{4-\gamma}w^2.\]
As before, \[\Big|\Big(\partial_\theta^2 F_1\sin(2\theta)^{\frac{4-\gamma}{2}}w,\partial_{\theta}^4\Psi \sin(2\theta)^{\frac{4-\gamma}{2}}w\Big)_{L^2}\Big|\leq |F|_{\mathcal{H}^2}|\partial_{\theta}^4\Psi \sin(2\theta)^{\frac{4-\gamma}{2}}w|_{L^2}.\]
Thus we are left to study:
\[I:=\int\partial_{\theta}^3(\tan(\theta)\Psi)\partial_{\theta}^4\Psi\sin(2\theta)^{4-\gamma}w^2.\] We will show that this quantity consists of negative terms and terms which we have already controlled. 
Again we introduce the function $\frac{\Psi}{\cos(\theta)}=\bar\Psi$. 
Then \[I=\int\partial_\theta^3(\sin(\theta)\bar\Psi)\partial_\theta^4(\cos(\theta)\bar\Psi)\sin(2\theta)^{4-\gamma}w^2=\sum_{i=1}^{14}\int I_{i}\sin(2\theta)^{4-\gamma}w^2,\] with: 
\[I_{1}=-\cos^2(\theta) \bar\Psi^2 \qquad I_2=-\frac{7}{2}\sin(2\theta)\bar\Psi\partial_\theta\bar\Psi \qquad I_3=9\cos^2(\theta)\bar\Psi \partial_{\theta\theta}\bar\Psi  \qquad I_4=\frac{5}{2}\sin(2\theta)\bar\Psi \partial_\theta^3\bar\Psi,\] 
\[I_{5}=-\cos^2(\theta) \bar\Psi \partial_\theta^{4}\bar\Psi \qquad I_6=-12\sin^2(\theta)(\partial_\theta\bar\Psi)^2 \qquad I_7=15\sin(2\theta)\partial_\theta\bar\Psi \partial_{\theta}^2\bar\Psi  \qquad I_8=16\sin^2(\theta)\partial_\theta\bar\Psi \partial_\theta^3\bar\Psi\] 
\[I_{9}=-\frac{3}{2}\partial_\theta \bar\Psi \partial_\theta^4\bar\Psi  \qquad I_{10}=-18\cos^2(\theta)(\partial_\theta^2\bar\Psi)^2 \qquad I_{11}=-9\sin(2\theta)\partial_\theta^2\bar\Psi \partial_\theta^3\bar\Psi  \qquad I_{12}=3\cos^2(\theta)\partial_\theta^2\bar\Psi \partial_\theta^4\bar\Psi \] 
\[I_{13}=-4\sin^2(\theta)(\partial_\theta^3\bar\Psi)^2 \qquad I_{14}=\frac{1}{2}\sin(2\theta)\partial_\theta^3\bar\Psi \partial_\theta^4\bar\Psi.\]
Note that $\int\partial_\theta^2\tilde\Psi\sin^{2-\gamma}(2\theta)w^2$ has already been controlled (see the inequality right before \eqref{angularweightH1}). Similarly, all lower order terms have been controlled. In particular, $I_{i}$ for $1\leq i\leq 11$ can be controlled by $C|F|_{\mathcal{H}^2}^2$ as before using integration by parts in some terms (though there are many good terms as well). Thus we will only need to consider $I_{12},I_{13},$ and $I_{14}$. 
\[I \leq C|F|_{\mathcal{H}^2}^2 -3\int \cos^2(\theta)(\partial_\theta^3\bar\Psi)^2\sin^{4-\gamma}(2\theta)w^2-4\int \sin^2(\theta)(\partial_\theta^3\bar\Psi)^2\sin^{4-\gamma} (2\theta)w^2 +\frac{5-\gamma}{2}\int \cos(2\theta) (\partial_\theta^3\bar\Psi)^2\sin^{4-\gamma}(2\theta)w^2\]
\[\leq C|F|_{\mathcal{H}^2}^2-\int  (\partial_\theta^3\bar\Psi)^2\sin^{4-\gamma}(2\theta)w^2\]
This concludes the proof. 

\end{proof}

\subsection{General $\mathcal{H}^k$ Case}

In this subsection we extend Proposition \ref{prop:H2} which established elliptic estimates on $\mathcal{H}^2$ to all $\mathcal{H}^k$ spaces for $k\geq 3$. 
\begin{proposition}\label{prop:Hk}
Fix $k\geq 2$. Under the same assumptions as Proposition \ref{prop:H2} if $|F|_{\mathcal{H}^k}<\infty,$ we have:
\[\alpha^2 |R^2\partial_{RR}\Psi|_{\mathcal{H}^k}+|\partial_{\theta\theta}\Psi|_{\mathcal{H}^k}\leq C_k|F|_{\mathcal{H}^k}\] for some constant $C_k>0$ depending only on $k$ and independent of $\alpha$ and $\gamma$. 
\end{proposition}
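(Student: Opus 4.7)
My plan is to prove Proposition \ref{prop:Hk} by induction on $k$, with base case $k=2$ supplied by Proposition \ref{prop:H2}. The whole argument rests on one structural observation: since $R^2\partial_{RR}=D_R^2-D_R$, the operator $L$ may be rewritten as
\[L=-\alpha^2 D_R^2-5\alpha D_R-\partial_{\theta\theta}+\partial_\theta(\tan(\theta)\,\cdot\,)-6,\]
so that $L$ commutes with $D_R$. Consequently, if $L(\Psi)=F$, then $L(D_R^j\Psi)=D_R^jF$ for every $j\geq 0$. Applying Proposition \ref{prop:H2} to $D_R^j\Psi$ and using $|D_R^jF|_{\mathcal{H}^2}\leq|F|_{\mathcal{H}^k}$ for $j\leq k-2$ already yields control of all mixed norms of the form $|D_\theta^a D_R^b\partial_{\theta\theta}\Psi\,W|_{L^2}$ and $\alpha^2|D_\theta^a D_R^b R^2\partial_{RR}\Psi\,W|_{L^2}$ in which $b\leq k-2$ and $a\leq 2$, because these are precisely the $\mathcal{H}^2$ components of $D_R^j\Psi$.

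To reach the remaining components of the $\mathcal{H}^k$ norm, namely those with $a+b\leq k$ and $a\geq 3$, I would iterate on the number of angular derivatives, exactly in the spirit of Steps 4 and 5 in the proof of Proposition \ref{prop:H2}. Rewrite
\[-\partial_{\theta\theta}\Psi+\partial_\theta(\tan(\theta)\Psi)=F+6\Psi+\alpha^2 D_R^2\Psi-5\alpha D_R\Psi=:F_1,\]
apply $D_R^b\partial_\theta^{a-2}$ for $a+b\leq k$, $a\geq 3$, and multiply the resulting equation by $-\sin(2\theta)^{2a-2-\gamma}\,D_R^b\partial_\theta^{a}\Psi\,w^2$. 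Integration by parts in $\theta$ then produces the positive top-order term $|D_R^b\partial_\theta^a\Psi\,\sin(2\theta)^{(2a-2-\gamma)/2}w|_{L^2}^2$ together with commutator terms where either (i) the $\sin(2\theta)$ weight loses powers but these are absorbed after combining with the $\partial_\theta(\tan(\theta)\Psi)$ term, using $\cos(\theta)^{-1}$ substitutions as in the $\mathcal{H}^2$ proof, or (ii) the $\partial_\theta$ falls onto the weight, yielding at worst a factor of $(2a-2-\gamma)\cos(2\theta)\sin(2\theta)^{2a-3-\gamma}$ which, after Hardy on the lower-order pieces and the sign arithmetic from $\cos^2-\sin^2$ identities, combines favorably with the $-\sin^2(\theta)$ contribution from expanding $\partial_\theta^{a}(\sin(\theta)\bar\Psi)$. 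Everything on the right-hand side involving $F_1$ is bounded by $C_k|F|_{\mathcal{H}^k}$ using the already-controlled lower-order terms and the inductive hypothesis at level $k-1$.

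The real obstacle, as in the base case, is the interaction between the $\tan(\theta)\Psi$ term and the angular weights $\sin(2\theta)^{-\gamma/2}$: the $\tan(\theta)$ introduces a factor that interacts with the boundary singularity of the weight, and commuting $\partial_\theta^{a-2}$ across $\partial_\theta(\tan(\theta)\Psi)$ produces terms like $\partial_\theta^{a-1}(\tan(\theta))\Psi$ that must be controlled by lower-order pieces of the $\mathcal{H}^k$ norm. I would manage this the same way Step 5 managed the $I_{12},I_{13},I_{14}$ terms: introduce $\bar\Psi=\Psi/\cos(\theta)$, write $\sin(\theta)\bar\Psi=\tan(\theta)\Psi$, expand the resulting products using Leibniz, and collect the coercive terms, noting that the signed prefactor in front of the highest-order term is
\[\tfrac{2a-1-\gamma}{2}\cos(2\theta)-(\text{lower-order positive contributions})\]
which remains positive after integration due to $\gamma\leq 5/4$ and the structural identity $\cos(2\theta)=\cos^2\theta-\sin^2\theta$ combined with Hardy (Corollary \ref{SharpHardyInequality2}) on the $\sin^2\theta$ piece. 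This last bookkeeping is tedious but is bounded in complexity by $k$, and produces constants $C_k$ that depend only on $k$, completing the inductive step.
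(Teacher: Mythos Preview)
Your proposal is correct and follows essentially the same route as the paper. Both proceed by induction on $k$ with base $k=2$, exploit the identity $R^2\partial_{RR}=D_R^2-D_R$ to see that $L$ commutes with $D_R$ (so radial derivatives are free), reduce the remaining work to the angular ODE $-\partial_{\theta\theta}\Psi+\partial_\theta(\tan\theta\,\Psi)=F_1$, and then control the dangerous cross term $(\partial_\theta^{m}(\tan\theta\,\Psi),\partial_\theta^{m+1}\Psi\,\sin(2\theta)^{2m-2-\gamma})$ by the substitution $\bar\Psi=\Psi/\cos\theta$ and the same sign bookkeeping on the three top-order pieces. One small indexing slip: if $a$ denotes the number of $D_\theta$ derivatives in the $\mathcal{H}^k$ norm and you apply $\partial_\theta^{a}$ to the angular equation to reach $\partial_\theta^{a+2}\Psi$, the correct multiplier weight is $\sin(2\theta)^{2a-\gamma}$, not $\sin(2\theta)^{2a-2-\gamma}$; this is a bookkeeping issue, not a gap in the argument.
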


The proof is quite similar to the proof in the $\mathcal{H}^2$ case so we only give a sketch. 
\begin{proof}

As with the coercivity estimates, since we already have an $\mathcal{H}^2$ we can proceed by induction on $k$. Let us first rewrite \eqref{PolarBSL} as:
\[-\partial_{\theta\theta}\Psi+\partial_\theta(\tan(\theta)\Psi)=F+6\Psi+\alpha^2 D_R^2 \Psi+\alpha D_R\Psi:=G.\]
Since estimates on the radial derivatives are relatively simple to get (because the equation commutes with $D_R$ derivatives), it suffices to establish $\mathcal{H}^k$ estimates on just the angular part of the equation: 
\[-\partial_{\theta\theta}\Psi+\partial_\theta(\tan(\theta)\Psi)=G,\] for $|G|_{\mathcal{H}^k}\leq C_k |F|_{\mathcal{H}^k}$. 
Toward this end, we wish to show that the following quantity is non-positive up to lower order terms \[(\partial_\theta^{k+1}(\tan(\theta)\Psi), \partial_\theta^{k+2} \Psi \sin(2\theta)^{2k-\gamma})_{L^2_\theta}.\]
It is natural to consider $\tilde\Psi= \frac{\Psi}{\cos(\theta)}$ so that we wish to study:
\[(\partial_\theta^{k+1}(\sin(\theta)\tilde\Psi), \partial_\theta^{k+2} (\cos(\theta)\tilde\Psi) \sin(2\theta)^{2k-\gamma})_{L^2_\theta}.\]
By induction on $k$, it suffices to consider only the following three terms: 
\[\frac{1}{2}\int \partial_\theta^{k+1}\tilde\Psi\partial_\theta^{k+2}\tilde\Psi \sin(2\theta)^{2k+1-\gamma}-(k+2)\int\sin^2(\theta)\big(\partial_\theta^{k+1}\tilde\Psi\big)^2\sin(2\theta)^{2k-\gamma}\] \[+(k+1)\int \cos^2(\theta)\partial_\theta^k\tilde\Psi \partial_\theta^{k+2}\tilde\Psi \sin(2\theta)^{2k-\gamma}\]
\[=\frac{2k+1-\gamma}{4}\int (\partial_\theta^{k+1}\tilde\Psi)^2 \cos(2\theta) \sin(2\theta)^{2k-\gamma}-(k+2)\int\sin^2(\theta)\big(\partial_\theta^{k+1}\tilde\Psi\big)^2\sin(2\theta)^{2k-\gamma}\] \[-(k+1)\int \cos^2(\theta)\big(\partial_\theta^{k+1}\tilde\Psi \big)^2sin(2\theta)^{2k-\gamma}+E,\] where $E$ is lower order and satisfies \[|Ew^2|_{L^1_R}\leq C|F|_{\mathcal{H}^k}.\]
On the other hand, we have:
\[\frac{2k+1-\gamma}{4}\int (\partial_\theta^{k+1}\tilde\Psi)^2 \cos(2\theta) \sin(2\theta)^{2k-\gamma}-(k+2)\int\sin^2(\theta)\big(\partial_\theta^{k+1}\tilde\Psi\big)^2\sin(2\theta)^{2k-\gamma}\] \[-(k+1)\int \cos^2(\theta)\big(\partial_\theta^{k+1}\tilde\Psi \big)^2sin(2\theta)^{2k-\gamma}\leq 0.\] 

\end{proof}

\subsection{The singular term}\label{SingularTerm}
In Propositions \ref{prop:L2}, \ref{prop:H2}, and \ref{prop:Hk}, one of the main conditions on $F$ is the condition \[\int_{0}^{\pi/2}F(R,\theta)\cos^2(\theta)\sin(\theta)d\theta\equiv 0.\] In fact, when $\alpha=0$ this is precisely the condition necessary to solve \eqref{PolarBSL}. Now we show how to solve the problem when \[F_\star(R):=\int_0^{\pi/2}F(R,\theta)\cos^2(\theta)\sin(\theta)d\theta\not\equiv 0.\] Note first that when $\alpha=0$, $\sin(2\theta)$ is in the kernel of $L$ (in \eqref{PolarBSL}). Consequently, if $G$ is some function of $R$ only we have:
\[L(G(R)\sin(2\theta))=(\alpha^2 R^2\partial_{RR}G+\alpha(5+\alpha)R\partial_R G) \sin(2\theta).\] Thus, if $\Psi$ solves \eqref{PolarBSL} and we define $\hat\Psi=\Psi+G\sin(2\theta)$ then we have:
\[L(\hat\Psi)=F+(\alpha^2 R^2\partial_{RR}G+\alpha(5+\alpha)R\partial_R G) \sin(2\theta).\] Now noting that $\int_{0}^{\pi/2}\sin(2\theta)\cos^2(\theta)\sin(\theta)d\theta=\frac{4}{15}$ we see that if we define $G$ by: \[\alpha^2 R^2\partial_{RR}G+\alpha(5+\alpha)R\partial_R G=-\frac{15}{4}F_\star.\] where $G$ vanishes as $R\rightarrow\infty$, then $\hat\Psi$ will enjoy all the bounds given in Propositions \ref{prop:L2} and \ref{prop:Hk}. For example, we will have that
\[|\partial_{\theta\theta}\hat\Psi|_{\mathcal{H}^k}\leq C|F|_{\mathcal{H}^k},\] which will follow from a bound on $\alpha G$ which we will observe below.

Now we just need to solve for $G$. 
Notice that \[\partial_{RR}G+\frac{5+\alpha}{\alpha}\frac{1}{R}\partial_R G=-\frac{15}{4\alpha^2 R^2}F_\star.\] 
Thus, 
\[\partial_R\Big(R^{\frac{5+\alpha}{\alpha}}\partial_RG\Big)=-\frac{15}{4\alpha^2}R^{\frac{5-\alpha}{\alpha}}F_\star\]
and so 
\[R^{\frac{5+\alpha}{\alpha}}\partial_R G=-\frac{15}{4\alpha^2}\int_0^Rs^{\frac{5-\alpha}{\alpha}}F_\star(s)ds.\]
Therefore,
\[G=-\frac{15}{4\alpha^2}\int_{R}^\infty \rho^{-\frac{5+\alpha}{\alpha}}\int_0^\rho s^{\frac{5-\alpha}{\alpha}}F_\star(s)dsd\rho.\]
Next, by integrating by parts we see:
\[G=\frac{3}{4\alpha}\int_R^\infty \partial_\rho(\rho^{-\frac{5}{\alpha}})\int_0^\rho s^{\frac{5-\alpha}{\alpha}}F_\star(s)dsd\rho=-\frac{3}{4\alpha}\int_{R}^\infty \frac{F_\star(\rho)}{\rho}d\rho-\frac{3}{4\alpha}R^{-\frac{3}{\alpha}}\int_0^R \rho^{\frac{3-\alpha}{\alpha}}F_\star(\rho)d\rho.\]
Thus, 
\[G=-\frac{1}{4\alpha}L_{12}(F)-\frac{3}{4\alpha}R^{-\frac{3}{\alpha}}\int_0^R \rho^{\frac{3-\alpha}{\alpha}}F_\star(\rho)d\rho:=G_\star+\bar{G}.\]
Next, observe that while $\bar G$ is preceded by $\frac{1}{\alpha}$, we still have a good bound for it. 
\[|\bar G|_{L^2}\leq C|F|_{L^2}\] with $C$ a constant \emph{independent} of $\alpha$. This is a consequence of the following Hardy-type inequality established in Lemma A.7 of \cite{EJDG}:
\begin{lemma} \label{HardyInequality4}
Let $\alpha>0$. For all $f\in \mathcal{H}^2([0,\infty))$ we have \[\Big|\sin(2\theta)R^{-\frac{5}{\alpha}}\int_0^R \rho^{\frac{5-\alpha}{\alpha}}f(\rho)d\rho\Big|_{\mathcal{H}^2}\leq 100\alpha|f|_{\mathcal{H}^2}.\]
\end{lemma}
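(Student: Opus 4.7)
The plan is to reduce the lemma to two one-dimensional facts about the operator
$$T(f)(R) := R^{-5/\alpha}\int_0^R \rho^{(5-\alpha)/\alpha} f(\rho)\,d\rho,$$
namely (i) the sharp weighted $L^2$ bound $\|T(f)\,w\|_{L^2(dR)} \leq \alpha \|fw\|_{L^2(dR)}$, and (ii) the commutation identity $D_R T = T D_R$ with $D_R = R\partial_R$. These two together automatically upgrade the bound to every order of $D_R$, after which the $\mathcal{H}^2$ estimate reduces to bounded $\theta$-integrals.

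First I would establish the commutation. A direct differentiation gives $D_R T(f) = -(5/\alpha) T(f) + f$, while integration by parts (using $\rho^{(5-\alpha)/\alpha}\cdot\rho = \rho^{5/\alpha}$ and the vanishing of $\rho^{5/\alpha} f(\rho)$ at $\rho = 0$) gives $T(D_R f) = f - (5/\alpha) T(f)$; the two agree, so $D_R^k T = T D_R^k$ for $k \leq 2$. This step is essential: the formula $D_R T(f) = f - (5/\alpha) T(f)$ alone combined with the $L^2$ bound only gives $|D_R T(f)\,w|_{L^2} = O(1)|fw|_{L^2}$, which loses the $\alpha$ smallness we need.

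Next I would prove the $L^2$ bound by passing to logarithmic coordinates $R = e^s$, $\rho = e^t$, $u = t - s \leq 0$, which converts $T$ into the convolution
$$T(f)(e^s) = \int_{-\infty}^0 e^{5u/\alpha} f(e^{s+u})\,du.$$
Writing $v(s) := f(e^s)\,w(e^s)\,e^{s/2}$, so that $\|v\|_{L^2(ds)} = \|fw\|_{L^2(dR)}$, and similarly $h$ from $T(f)$, the key observation is that $W(s) := w(e^s) = (e^{-s}+1)^2$ is strictly decreasing in $s$, so $W(s)/W(s+u) \leq 1$ whenever $u \leq 0$. This gives the pointwise inequality $|h(s)| \leq \int_{-\infty}^0 e^{(5/\alpha - 1/2)u}|v(s+u)|\,du$, and Young's convolution inequality yields $\|h\|_{L^2} \leq \frac{2\alpha}{10-\alpha}\|v\|_{L^2} \leq \alpha\|v\|_{L^2}$ for $\alpha$ small. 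Combined with the commutation, $\|D_R^k T(f)\,w\|_{L^2(dR)} \leq \alpha\,\|D_R^k f\,w\|_{L^2(dR)}$ for $k = 0,1,2$.

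Finally, I would unpack the $\mathcal{H}^2$ norm of $\Phi(R,\theta) := \sin(2\theta) T(f)(R)$. Because $\Phi$ factors, every mixed derivative $D_R^j D_\theta^i \Phi$ is an explicit bounded angular factor (such as $\sin(4\theta)$ or $4\sin(2\theta)\cos(4\theta)$) multiplied by $D_R^j T(f)(R)$; since $f$ is $\theta$-independent, $|f|_{\mathcal{H}^2([0,\infty))}^2$ reduces up to a finite constant $\int_0^{\pi/2}\sin^{-\eta}(2\theta)\,d\theta$ to $\sum_{k \leq 2}\|D_R^k f\,w\|_{L^2(dR)}^2$. The remaining angular integrals against the weights $\sin^{-\eta}(2\theta)$ and $\sin^{-\gamma}(2\theta)$ converge (since $\eta, \gamma < 2$) and produce universal constants, so assembling all pieces yields $|\Phi|_{\mathcal{H}^2} \leq C\alpha\,|f|_{\mathcal{H}^2([0,\infty))}$, with $C \leq 100$ after bookkeeping. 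The main obstacle throughout is obtaining the constant $\alpha$ (rather than $O(1)$) in the top-order derivative bounds; this is precisely what the commutation relation unlocks.
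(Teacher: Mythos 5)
The paper does not supply its own proof of this lemma; it simply invokes Lemma~A.7 of \cite{EJDG}, so there is no internal argument to compare against. Your self-contained proof is correct. The key structural points are all right: the exact commutation $D_R T = T D_R$, verified via $D_R T(f) = f - (5/\alpha)T(f) = T(D_R f)$ after integration by parts (justified because $\rho^{5/\alpha}f(\rho)\to 0$ as $\rho\to 0$ for $f\in\mathcal{H}^2$), is indeed what prevents the bound from degrading to $O(1)$ at the first and second derivative levels. The logarithmic change of variables turns $T$ into a one-sided convolution, and the crucial monotonicity observation --- that $w(e^s) = (e^{-s}+1)^2$ is decreasing, so $w(e^s)/w(e^{s+u}) \le 1$ for $u\le 0$ --- lets you peel off the weight before applying Young's inequality, giving the clean $L^1$ kernel bound $\int_{-\infty}^0 e^{(5/\alpha-1/2)u}\,du = \tfrac{2\alpha}{10-\alpha}$. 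The factoring $\Phi = \sin(2\theta)T(f)$ then reduces all six summands of the $\mathcal{H}^2$ norm to bounded angular integrals (finite for $\eta,\gamma<2$) times the $D_R^k$-weighted $L^2$ norms already controlled. This is the standard way to prove such a weighted Hardy inequality and is a perfectly good substitute for the citation. Two minor caveats worth noting: (i) the kernel integral diverges as $\alpha\to 10$, so the uniform bound $100\alpha$ as stated ``for all $\alpha>0$'' really requires $\alpha$ bounded away from $10$ --- harmless here since the paper always takes $\alpha$ tiny, but it means the quantifier in the lemma is a little loose; (ii) the estimate $\tfrac{2\alpha}{10-\alpha}\le\alpha$ requires $\alpha\le 8$, which again is fine in context.
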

We have proved the following Theorem. 
\begin{theorem}\label{RemovingL12}
Let $\alpha>0$ and $F\in\mathcal{H}^k$ given. Let $\Psi$ be the unique $C^2$ solution to \eqref{PolarBSL} which vanishes on $\theta=0$, $\theta=\frac{\pi}{2}$ and as $R\rightarrow\infty.$ Then, 
\[\alpha^2|R^2\partial_{RR}\Psi|_{\mathcal{H}^k}+|\partial_{\theta\theta}\big(\Psi-\frac{1}{4\alpha}\sin(2\theta)L_{12}(F)\big)|_{\mathcal{H}^k}\leq C|F|_{\mathcal{H}^k},\] with $C$ a universal constant independent of $\alpha, \gamma,$ and $k$ in the definition of $\mathcal{H}^k$.
\end{theorem}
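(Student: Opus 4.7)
The plan is to reduce to the case already handled by Propositions \ref{prop:L2}, \ref{prop:H2}, and \ref{prop:Hk} by subtracting off the solvability obstruction from the right-hand side of \eqref{PolarBSL}. Define the angular obstruction
$$F_\star(R) := \int_0^{\pi/2} F(R,\theta)\cos^2(\theta)\sin(\theta)\,d\theta,$$
which is precisely the projection of $F$ onto $\sin(\theta)\cos^2(\theta)$, the unique element spanning the kernel of the adjoint of the $\alpha=0$ angular operator. I will seek a correction of the form $G(R)\sin(2\theta)$, exploiting that $\sin(2\theta)$ itself lies in the kernel of this angular operator, so that a direct computation yields
$$L(G\sin(2\theta)) = (\alpha^2 R^2\partial_{RR}G + \alpha(5+\alpha)R\partial_R G)\sin(2\theta).$$
Since $\int_0^{\pi/2}\sin(2\theta)\cos^2(\theta)\sin(\theta)\,d\theta = 4/15$, choosing $G$ to solve the Euler-type radial ODE
$$\alpha^2 R^2\partial_{RR}G + \alpha(5+\alpha)R\partial_R G = -\tfrac{15}{4}F_\star,\qquad G(\infty)=0,$$
makes $L(\hat\Psi) = F - \tfrac{15}{4}F_\star(R)\sin(2\theta)$ angularly orthogonal to $\sin(\theta)\cos^2(\theta)$ at every $R$, where $\hat\Psi := \Psi + G\sin(2\theta)$. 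Proposition \ref{prop:Hk} then applies directly to $\hat\Psi$ and gives
$$\alpha^2|R^2\partial_{RR}\hat\Psi|_{\mathcal{H}^k} + |\partial_{\theta\theta}\hat\Psi|_{\mathcal{H}^k} \leq C|F|_{\mathcal{H}^k}.$$

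The next step is to solve the ODE for $G$ explicitly and extract the contribution of $L_{12}(F)$. Multiplying by $R^{(5+\alpha)/\alpha}$ puts the equation in conservation form; integrating once on $[0,R]$, then once more on $[R,\infty)$, and finally performing a single integration by parts that converts the factor $\rho^{-(5+\alpha)/\alpha}$ into the derivative of $\rho^{-5/\alpha}$, cleanly separates the leading singular piece from a smoother remainder. Using that $K(\theta)=3\sin(\theta)\cos^2(\theta)$ identifies $\int_R^\infty 3F_\star(\rho)/\rho\,d\rho$ with $L_{12}(F)(R)$, this produces the decomposition $G = G_\star + \bar G$ with
$$G_\star(R) = -\frac{1}{4\alpha}L_{12}(F)(R),$$
and $\bar G$ a concentrated average of $F_\star$ of the form $-\frac{3}{4\alpha}R^{-3/\alpha}\int_0^R \rho^{(3-\alpha)/\alpha}F_\star(\rho)\,d\rho$. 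The identity
$$\Psi - \frac{1}{4\alpha}\sin(2\theta)L_{12}(F) = \hat\Psi - \bar G(R)\sin(2\theta)$$
then reduces the second half of the theorem to a uniform-in-$\alpha$ bound for $\bar G \sin(2\theta)$ in $\mathcal{H}^k$.

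This last bound is the main obstacle, since $\bar G$ carries an overall factor of $1/\alpha$. The saving point is that the averaging kernel $R^{-3/\alpha}\int_0^R \rho^{(3-\alpha)/\alpha}(\cdot)\,d\rho$ concentrates on $R$ itself as $\alpha\to 0$ and gains a compensating factor of $\alpha$ in the weighted norms—this is precisely the content of Lemma \ref{HardyInequality4} borrowed from \cite{EJDG}, which gives $|\bar G \sin(2\theta)|_{\mathcal{H}^k} \leq C|F_\star|_{\mathcal{H}^k} \leq C|F|_{\mathcal{H}^k}$ with $C$ independent of $\alpha$, $\gamma$, and $k$. Applying $\partial_{\theta\theta}$ acts on $\sin(2\theta)$ as multiplication by $-4$, so this handles the $|\partial_{\theta\theta}(\cdot)|_{\mathcal{H}^k}$ portion of the theorem. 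For the radial piece, write $R^2\partial_{RR}\Psi = R^2\partial_{RR}\hat\Psi - \sin(2\theta)R^2\partial_{RR}G$ and use the ODE to rewrite $\alpha^2 R^2\partial_{RR}G$ as a linear combination of $\alpha R\partial_R G$ and $F_\star$; both contributions are controlled by the same Hardy-type estimate together with the elementary bound $|\alpha R\partial_R G_\star|_{\mathcal{H}^k} = \tfrac14|R\partial_R L_{12}(F)|_{\mathcal{H}^k} \leq C|F|_{\mathcal{H}^k}$. Combining the three pieces yields the stated inequality with a universal constant.
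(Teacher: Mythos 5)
Your proposal is correct and follows the same route as the paper's Section \ref{SingularTerm}: subtract the angular obstruction via a corrector $G(R)\sin(2\theta)$ with $G$ solving the Euler-type ODE, invoke Proposition \ref{prop:Hk} for $\hat\Psi$, split $G$ into $-\frac{1}{4\alpha}L_{12}(F)$ plus the localized average $\bar G$, and control $\bar G$ uniformly in $\alpha$ by Lemma \ref{HardyInequality4}. One small note: the exponents in $\bar G$ should be $R^{-5/\alpha}$ and $\rho^{(5-\alpha)/\alpha}$ (as in Lemma \ref{HardyInequality4}), not $3/\alpha$ and $(3-\alpha)/\alpha$ — you appear to have inherited a typo from the paper's displayed formula; your final paragraph's ODE-based reduction of the $\alpha^2 R^2\partial_{RR}\Psi$ piece is a welcome explicit closing of a step the paper leaves implicit.
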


\section{Some useful facts about $\mathcal{H}^k$ and $\mathcal{W}^{l,\infty}$}\label{H2}
In this section we collect a few facts about the spaces $\mathcal{H}^k$ and $\mathcal{W}^{l,\infty}$ that we will find useful. For the sake of concreteness, we will fix $k=4$ and $l=5$ in the following, but everything we will do will be applicable for any $k\geq 4$ and $l\geq 5$. We should remark that in the original version of this paper on the ArXiv the non-linear estimates were done in $\mathcal{H}^2$. In $\mathcal{H}^2$ the problem becomes critical in a certain sense and it is not clear whether the non-linear estimates can be closed in an easy way. The author thanks Jiajie Chen for pointing out this oversight. In $\mathcal{H}^4$ the estimates are significantly simpler because we have better embedding theorems. 

\subsection{Product Rules in $\mathcal{H}^4$}

We begin by proving that $\mathcal{H}^4$ embeds in a space of H\"older continuous functions.

\begin{lemma}\label{infinity_separated}Let $g\in C_c^\infty((0,\infty)\times (0,\pi/2))$. We have that
\[\sup_\theta |g(z,\theta)|^2\leq \frac{C}{\gamma-1} \int_0^{\pi/2} |\partial_\theta g(z,\theta)|^2 \sin(2\theta)^{2-\gamma}d\theta,\] and
\[\sup_z |g(z,\theta)|^2\leq C\int_0^\infty |D_z g(z,\theta)|^2 \frac{(1+z)^4}{z^4}dz,\] with $C$ a universal constant.
\end{lemma}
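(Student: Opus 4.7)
Both inequalities are one-variable weighted Sobolev embeddings for a fixed value of the other variable, and the plan is to prove them by the classical recipe of the fundamental theorem of calculus, a weighted Cauchy–Schwarz split, and then absorbing a $\sup$ from the right-hand side. Since $g\in C^\infty_c((0,\infty)\times(0,\pi/2))$, for each fixed $z$ the function $\theta\mapsto g(z,\theta)$ vanishes in neighborhoods of $\theta=0$ and $\theta=\pi/2$, and for each fixed $\theta$ the function $z\mapsto g(z,\theta)$ vanishes in neighborhoods of $z=0$ and $z=\infty$. These vanishing properties are what allow the fundamental theorem of calculus to be used.

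For the angular inequality, fix $z$ and let $\theta_0\in[0,\pi/4]$. Using vanishing at $\theta=0$ together with Cauchy--Schwarz,
\[
g(z,\theta_0)^2 = 2\int_0^{\theta_0} g\,\partial_\theta g\,d\theta \leq 2\Bigl(\int_0^{\theta_0} g^2 \sin(2\theta)^{\gamma-2}\,d\theta\Bigr)^{1/2}\Bigl(\int_0^{\theta_0}(\partial_\theta g)^2\sin(2\theta)^{2-\gamma}\,d\theta\Bigr)^{1/2}.
\]
Using $\sin(2\theta)\geq \frac{4\theta}{\pi}$ on $[0,\pi/4]$ and $\gamma\in(1,2)$,
\[
\int_0^{\pi/4}\sin(2\theta)^{\gamma-2}\,d\theta \leq C\int_0^{\pi/4}\theta^{\gamma-2}\,d\theta \leq \frac{C}{\gamma-1}.
\]
Bounding $g^2$ by its supremum in the first factor, taking the supremum over $\theta_0\in[0,\pi/4]$, and absorbing $(\sup_\theta g^2)^{1/2}$ to the left yields the desired bound on $[0,\pi/4]$. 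The interval $[\pi/4,\pi/2]$ is handled identically by writing $g(z,\theta_0)^2 = -2\int_{\theta_0}^{\pi/2} g\partial_\theta g\,d\theta$ and using vanishing at $\theta=\pi/2$.

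For the radial inequality, fix $\theta$ and any $z_0>0$. Using vanishing at $z=\infty$ and rewriting $\partial_z g = z^{-1} D_z g$,
\[
g(z_0,\theta)^2 = -2\int_{z_0}^\infty g\,\partial_z g\,dz = -2\int_{z_0}^\infty \frac{g}{z}\,D_z g\,dz.
\]
Cauchy--Schwarz with the split $\frac{g}{z}\cdot D_z g = \Bigl(\frac{gz}{(1+z)^2}\Bigr)\cdot\Bigl(\frac{(1+z)^2}{z^2}D_z g\Bigr)$ gives
\[
g(z_0,\theta)^2 \leq 2\Bigl(\int_0^\infty g^2\frac{z^2}{(1+z)^4}\,dz\Bigr)^{1/2}\Bigl(\int_0^\infty (D_z g)^2\frac{(1+z)^4}{z^4}\,dz\Bigr)^{1/2}.
\]
Since $\int_0^\infty \frac{z^2}{(1+z)^4}dz$ is a finite universal constant, the first factor is at most $(C\sup_z g^2)^{1/2}$; taking the supremum over $z_0$ and dividing by $(\sup_z g^2)^{1/2}$ yields the claim.

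There is no real obstacle here beyond verifying that the weighted Cauchy--Schwarz splits are made so that the ``good'' weight on the right matches exactly the weight appearing in the hypothesis, and checking that the remaining weight is integrable with a constant that blows up only like $(\gamma-1)^{-1}$. The only slightly delicate point is the uniformity as $\gamma\to 1^+$, which is made explicit by the singular integral $\int_0^{\pi/4}\theta^{\gamma-2}d\theta = (\gamma-1)^{-1}(\pi/4)^{\gamma-1}$ and is the reason the factor $\frac{1}{\gamma-1}$ appears in the angular estimate but not in the radial one.
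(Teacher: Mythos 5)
Your proof is correct. The paper argues slightly differently: it uses the fundamental theorem of calculus to write $g(z,\theta)=\int_0^\theta\partial_\theta g\,d\theta'$, squares, and then applies weighted Cauchy--Schwarz \emph{directly to the derivative integral}, splitting the integrand as $(\partial_\theta g\,\sin(2\theta')^{(2-\gamma)/2})\cdot\sin(2\theta')^{(\gamma-2)/2}$. This produces the $\tfrac{C}{\gamma-1}$ factor in one step, with no need to introduce $\sup|g|$ on the right-hand side or perform an absorption. Your argument instead writes $g^2=2\int g\,\partial_\theta g$ and splits the weight between $g$ and $\partial_\theta g$, which forces you to bound $g^2$ by its supremum inside the first factor and then absorb $(\sup g^2)^{1/2}$ into the left side. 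Both are standard one-dimensional Sobolev embeddings and both are correct; the paper's version is marginally shorter since it avoids the absorption step, while yours is the slightly more general Gagliardo--Nirenberg-flavored pattern. Your treatment of the radial inequality with the explicit split $\frac{g}{z}\cdot D_z g = \bigl(\tfrac{gz}{(1+z)^2}\bigr)\cdot\bigl(\tfrac{(1+z)^2}{z^2}D_z g\bigr)$ is also fine; the paper's direct route, applied there, would use $g(z_0,\theta)=-\int_{z_0}^\infty z^{-1}D_z g\,dz$ and Cauchy--Schwarz with $z^{-1}=\tfrac{(1+z)^2}{z^2}\cdot\tfrac{z}{(1+z)^2}$, noting $\int_0^\infty\tfrac{z^2}{(1+z)^4}\,dz<\infty$, again with no absorption.
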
 
\begin{proof}
The proofs of both statements is essentially the same so we only prove the first one. 
Observe that \[|g(z,\theta)|^2=|g(z,\theta)-g(z,0)|^2\leq \Big(\int_{0}^{\pi/2} |\partial_\theta g(z,\theta)|d\theta\Big)^2\leq \int_{0}^{\pi/2} |\partial_\theta g(z,\theta)|^2\sin(2\theta)^{2-\gamma}d\theta \int_{0}^{\pi/2} \sin(2\theta)^{\gamma-2}d\theta \]
\[\leq\frac{C}{\gamma-1}\int_{0}^{\pi/2} |\partial_\theta g(z,\theta)|^2\sin(2\theta)^{2-\gamma}d\theta.\]
\end{proof}
\begin{corollary}\label{Linfty} Assuming that $g\in \mathcal{H}^2$ we have that
\[|g|_{L^\infty}\leq \frac{C}{\sqrt{\gamma-1}}|g|_{\mathcal{H}^2},\] for $C$ a universal constant. In fact, for any $\beta<\gamma-1$, we have that
\[|g|_{C^\beta}\leq C_{\beta}|g|_{\mathcal{H}^2}.\]
\end{corollary}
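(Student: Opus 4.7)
The strategy for the $L^\infty$ bound is to iterate Lemma \ref{infinity_separated} twice, converting the weight $\sin(2\theta)^{2-\gamma}$ produced by the lemma into the weight $\sin(2\theta)^{-\gamma}$ that appears in $\|\cdot\|_{\mathcal{H}^2}$, via the algebraic identity $\partial_\theta g = D_\theta g/\sin(2\theta)$. First, for each fixed $z$, the angular inequality of Lemma \ref{infinity_separated} gives
\[
\sup_\theta |g(z,\theta)|^2 \leq \frac{C}{\gamma-1}\int_0^{\pi/2}|\partial_\theta g(z,\theta)|^2 \sin(2\theta)^{2-\gamma}\,d\theta = \frac{C}{\gamma-1}\int_0^{\pi/2}|D_\theta g(z,\theta)|^2 \sin(2\theta)^{-\gamma}\,d\theta.
\]
Taking $\sup_z$ and pulling it inside the $\theta$-integral (permissible since the integrand is non-negative), then applying the radial part of Lemma \ref{infinity_separated} to $D_\theta g$ and using that $D_\theta$ commutes with $D_z = z\partial_z$, I would obtain
\[
|g|_{L^\infty}^2 \leq \frac{C}{\gamma-1}\int_0^{\pi/2}\!\!\int_0^\infty |D_z D_\theta g|^2\,\frac{(1+z)^4}{z^4}\sin(2\theta)^{-\gamma}\,dz\,d\theta.
\]
Recognizing $w^2 = (1+z)^4/z^4$ and identifying $D_z$ with $D_R$, the right-hand side is precisely the square of one of the four terms in $|g|_{\mathcal{H}^2}$, which closes the first bound.

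For the Hölder statement, the plan is to apply the same iteration scheme to the increments of $g$ rather than to $g$ itself. Writing $|g(z,\theta_1)-g(z,\theta_2)| \leq \int_{\theta_1}^{\theta_2}|\partial_\theta g|\,d\theta$ and using Cauchy--Schwarz against $\sin(2\theta)^{(2-\gamma)/2}$ gives
\[
|g(z,\theta_1)-g(z,\theta_2)|^2 \leq \Big(\int_{\theta_1}^{\theta_2}|\partial_\theta g|^2\sin(2\theta)^{2-\gamma}\,d\theta\Big)\Big(\int_{\theta_1}^{\theta_2}\sin(2\theta)^{\gamma-2}\,d\theta\Big),
\]
and the elementary estimate $\int_{\theta_1}^{\theta_2}\sin(2\theta)^{\gamma-2}\,d\theta \leq \frac{C}{\gamma-1}|\theta_1-\theta_2|^{\gamma-1}$, which follows from $\sin(2\theta)\gtrsim \theta$ near the boundary together with $\tau^p-\sigma^p \leq (\tau-\sigma)^p$ for $p = \gamma-1 \in (0,1)$, yields a pointwise-in-$z$ bound with angular Hölder exponent $(\gamma-1)/2$. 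An analogous argument handles $z$-increments (where no singular weight is present). Upgrading uniformity in $z$ is then done exactly as in the $L^\infty$ step.

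The main obstacle is pushing the naive exponent $(\gamma-1)/2$ obtained from one angular derivative up to any $\beta < \gamma-1$ as claimed. The plan here is to use both the first and second angular derivative control afforded by $|g|_{\mathcal{H}^2}$ via an interpolation between the $L^\infty$ bound just proved and the weighted $L^2$ bound on $D_\theta g$, together with a Morrey-type embedding in the weighted Sobolev space. Alternatively, one can extract higher integrability of $\partial_\theta g$ by interpolating $\partial_\theta g \cdot \sin(2\theta)^{(2-\gamma)/2} \in L^2$ with the control of $D_\theta^2 g$, and then apply one-dimensional $p$-Morrey for $p$ close to $2/(2-\gamma)$. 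This bookkeeping, together with the blowup of the constant as $\beta \uparrow \gamma-1$, delivers the claimed $C_\beta$ with $C_\beta \to \infty$ as $\beta \uparrow \gamma-1$; everywhere else the argument is a straightforward iteration of Lemma \ref{infinity_separated}.
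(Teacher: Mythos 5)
Your $L^\infty$ argument is correct and is precisely the intended one: the paper leaves this step implicit, but the two inequalities of Lemma \ref{infinity_separated} are clearly meant to be iterated exactly as you do --- rewrite $\partial_\theta g$ as $D_\theta g/\sin(2\theta)$ to turn the weight $\sin(2\theta)^{2-\gamma}$ into $\sin(2\theta)^{-\gamma}$, pull $\sup_z$ under the angular integral, and apply the radial inequality to $D_\theta g(\cdot,\theta)$, landing exactly on the $\mathcal{H}^2$ term $\big|D_\theta D_R g\,w\sin(2\theta)^{-\gamma/2}\big|_{L^2}^2$. A density pass (mentioned in the remark that follows the corollary) extends from $C_c^\infty((0,\infty)\times(0,\pi/2))$ to all of $\mathcal{H}^2$.

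Your Hölder computation is also correct as far as it goes, and you are right to flag the discrepancy, but the interpolation/higher-integrability scheme you sketch to upgrade $(\gamma-1)/2$ to $\gamma-1$ is fighting a genuine obstruction and cannot succeed. The exponent $(\gamma-1)/2$ is \emph{sharp}. Take $g_s(z,\theta)=\chi(z)(\sin 2\theta)^s$ with $\chi\in C_c^\infty(0,\infty)$ and $s>(\gamma-1)/2$. Then $D_\theta g_s = 2s\cos(2\theta)(\sin 2\theta)^s\chi$ and $D_\theta^2 g_s = \big(4s^2\cos^2(2\theta)-4s\sin^2(2\theta)\big)(\sin 2\theta)^s\chi$, so every term in $|g_s|_{\mathcal{H}^2}^2$ reduces (up to the harmless $z$-integral) to an integral of $(\sin 2\theta)^{2s-\gamma}$ or $(\sin 2\theta)^{2s-\eta}$, and all of these are finite exactly when $s>(\gamma-1)/2$. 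Yet $g_s$ has boundary modulus $\theta^s$ at $\theta=0$, so $g_s\notin C^\beta$ whenever $\beta>s$; letting $s\downarrow(\gamma-1)/2$ shows no embedding $\mathcal{H}^2\hookrightarrow C^\beta$ can hold for any $\beta>(\gamma-1)/2$. (Scaling says the same: $\theta^{(2-\gamma)/2}\partial_\theta g\in L^2$ and $\theta^{(4-\gamma)/2}\partial_{\theta\theta}g\in L^2$ permit $\partial_\theta g\sim\theta^{(\gamma-3)/2}$, which integrates to $\theta^{(\gamma-1)/2}$, so the $D_\theta^2$ control adds nothing.) Thus the paper's stated range $\beta<\gamma-1$ appears to be a factor-of-two slip for $\beta<(\gamma-1)/2$; it is harmless, since both are $O(\alpha)$ and only the $L^\infty$ conclusion of the corollary is invoked quantitatively elsewhere (e.g.\ in the proof of Proposition \ref{muestimate}). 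You should state the exponent you actually obtained rather than trying to force agreement with $\gamma-1$.
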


\begin{remark}
We should remark that it is not difficult to see that the class $C_c^\infty((0,\infty)\times (0,\pi/2))$ is dense in $\mathcal{H}^k$ for any $k\in\mathbb{N}$. Using this along with the preceding corollary, it is not difficult to see that each $\mathcal{H}^2$ embeds continuously in $C^\beta$ for any $\beta<\gamma-1$. 
\end{remark}

\begin{corollary}\label{SecondDerivativesBounded} If $g\in\mathcal{H}^4$ and if $D$ is any first or second derivative of $D_z$ and $D_\theta$, then we have that $|g|_{L^\infty}+|Dg|_{L^\infty}\leq \frac{C}{\sqrt{\gamma-1}}|g|_{\mathcal{H}^4}$. 
\end{corollary}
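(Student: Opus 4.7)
The plan is to reduce everything to Corollary~\ref{Linfty} applied to $Dg$, and then verify the embedding $|Dg|_{\mathcal{H}^2} \leq C|g|_{\mathcal{H}^4}$ by unpacking the definitions. The $|g|_{L^\infty}$ piece follows immediately from Corollary~\ref{Linfty} together with the obvious bound $|g|_{\mathcal{H}^2} \leq |g|_{\mathcal{H}^4}$, so the task is to control $|Dg|_{L^\infty}$ for $D$ ranging over the operators $D_z, D_\theta, D_z^2, D_z D_\theta, D_\theta^2$. Recalling from the paper's setup that $z$ and $R$ differ only by a time-dependent scalar, one has $D_z = D_R$, and since $D_R$ acts only in the radial variable while $D_\theta$ acts only in the angular variable, these operators commute, so each $D$ equals $D_R^a D_\theta^b$ with $a+b\in\{1,2\}$.

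The main step is the bound $|Dg|_{\mathcal{H}^2} \leq C|g|_{\mathcal{H}^4}$. Expanding the definition, each term in $|Dg|_{\mathcal{H}^2}^2$ is of one of the two types
\[
\Big|D_R^i(D g)\tfrac{w}{\sin^{\eta/2}(2\theta)}\Big|_{L^2}^2 \quad (0\le i\le 2), \qquad \Big|D_R^j D_\theta^i(Dg)\, W\Big|_{L^2}^2 \quad (i\ge 1,\ i+j\le 2),
\]
which by commutativity becomes $|D_R^p D_\theta^q g \cdot (\text{weight})|_{L^2}^2$ with $p+q \le 4$, i.e., within the range controlled by $|g|_{\mathcal{H}^4}$. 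The only subtle point is a weight comparison: when $D$ contains a $D_\theta$, the first (``radial'') type of term in $|Dg|_{\mathcal{H}^2}^2$ carries the lighter weight $w/\sin^{\eta/2}(2\theta)$, yet it is in fact a \emph{mixed} derivative of $g$, which in the $\mathcal{H}^4$ norm appears with the heavier weight $W = w/\sin^{\gamma/2}(2\theta)$. Since $\gamma = 1 + \alpha/10 > \eta = 99/100$ and $\sin(2\theta)\in[0,1]$, we have $\sin^{\gamma/2}(2\theta) \le \sin^{\eta/2}(2\theta)$, hence $w/\sin^{\eta/2}(2\theta) \le W$ pointwise, so
\[
\Big|D_R^p D_\theta^q g\,\tfrac{w}{\sin^{\eta/2}(2\theta)}\Big|_{L^2} \le \Big|D_R^p D_\theta^q g\, W\Big|_{L^2} \le |g|_{\mathcal{H}^4}.
\]
This absorbs the mismatch; all other terms appear with exactly the weight $\mathcal{H}^4$ assigns to them.

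Combining the two steps, for each admissible $D$,
\[
|Dg|_{L^\infty} \le \tfrac{C}{\sqrt{\gamma-1}}|Dg|_{\mathcal{H}^2} \le \tfrac{C'}{\sqrt{\gamma-1}}|g|_{\mathcal{H}^4},
\]
and adding $|g|_{L^\infty} \le \tfrac{C}{\sqrt{\gamma-1}}|g|_{\mathcal{H}^2}$ yields the claim. To apply Corollary~\ref{Linfty} rigorously, I would first approximate $g$ by $C_c^\infty((0,\infty)\times(0,\pi/2))$ functions in the $\mathcal{H}^4$ norm (using the density noted in the remark following Corollary~\ref{Linfty}), carry out both bounds for the smooth approximants, and pass to the limit. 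There is no real analytic obstacle here; the only thing to be careful about is precisely the weight inequality $w/\sin^{\eta/2}(2\theta) \le W$, which is what makes the $\mathcal{H}^2$ norm of $Dg$ genuinely sit inside the $\mathcal{H}^4$ norm of $g$.
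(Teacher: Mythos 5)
Your proof is correct and matches what the paper leaves implicit: Corollary~\ref{SecondDerivativesBounded} is stated without proof because it is simply Corollary~\ref{Linfty} applied to $Dg$, with the routine verification that $|Dg|_{\mathcal{H}^2}\leq C|g|_{\mathcal{H}^4}$. You have correctly isolated the only point requiring care — that the radial-type terms in $|Dg|_{\mathcal{H}^2}$ carry the weight $w/\sin^{\eta/2}(2\theta)$ while the corresponding mixed-derivative terms of $|g|_{\mathcal{H}^4}$ carry $W=w/\sin^{\gamma/2}(2\theta)$, and the pointwise inequality $w/\sin^{\eta/2}(2\theta)\leq W$ (since $\eta<1<\gamma$ and $\sin(2\theta)\leq 1$) closes the gap.
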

Now we have the main product lemma in $\mathcal{H}^4$.  
\begin{lemma}\label{HProductRule}
Let $f,g\in\mathcal{H}^4$. Then, 
\[|fg|_{\mathcal{H}^4}\leq \frac{C}{\sqrt{\gamma-1}}|f|_{\mathcal{H}^4}|g|_{\mathcal{H}^4}.\]
\end{lemma}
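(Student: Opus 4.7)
The plan is to apply the Leibniz rule derivative-by-derivative and, for each resulting product, to place the factor of lower derivative order into $L^\infty$ via Corollary \ref{SecondDerivativesBounded} and the remaining factor into the appropriate weighted $L^2$. Since $D_R$ and $D_\theta$ are derivations, for any multi-index $(a,b)$ with $a+b\le 4$,
$$D_R^a D_\theta^b(fg) = \sum_{c\le a,\, d\le b}\binom{a}{c}\binom{b}{d}(D_R^c D_\theta^d f)\,(D_R^{a-c} D_\theta^{b-d} g).$$
In any such term one of the two factors carries at most two derivatives in total, and Corollary \ref{SecondDerivativesBounded} bounds that factor in $L^\infty$ by $\tfrac{C}{\sqrt{\gamma-1}}$ times its $\mathcal{H}^4$ norm; this is the factor I pull out in $L^\infty$.

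The weight prescribed by $|fg|_{\mathcal{H}^4}$ on the index $(a,b)$ is $w/\sin^{\eta/2}(2\theta)$ when $b=0$ and $W=w/\sin^{\gamma/2}(2\theta)$ when $b\ge 1$. When this weight matches the one the surviving high-order factor already carries inside the definition of $|g|_{\mathcal{H}^4}$ (or $|f|_{\mathcal{H}^4}$), the estimate is immediate. The one awkward case is $b\ge 1$ with all of the $\theta$-derivatives landing on the low-order factor: the high-order factor is then $D_R^j g$ (or $D_R^j f$) for some $j$, whose natural weight in $\mathcal{H}^4$ is $w/\sin^{\eta/2}$ — strictly weaker than the required $W$.

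In this branch I would invoke a one-dimensional Hardy inequality in $\theta$. Because $\mathcal{H}^4\hookrightarrow C^\beta$ by Corollary \ref{Linfty} and $C_c^\infty$ is dense, every $g\in\mathcal{H}^4$ vanishes at $\theta=0,\pi/2$, and so does $D_R^j g$. Since $\gamma>1$, Hardy yields, for each fixed $z$,
$$\int_0^{\pi/2}\frac{|D_R^j g|^2}{\sin^\gamma(2\theta)}\,d\theta \le C\int_0^{\pi/2}\frac{|D_\theta D_R^j g|^2}{\sin^\gamma(2\theta)}\,d\theta,$$
and integrating against $w^2\,dz$ gives $\|D_R^j g\cdot W\|_{L^2}\le C\|D_\theta D_R^j g\cdot W\|_{L^2}\le C|g|_{\mathcal{H}^4}$, provided $j+1\le 4$. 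This last inequality is automatic here: the low-order factor has already absorbed at least one $\theta$-derivative ($b\ge 1$) and has total order at most two, so $j\le |\alpha|-1\le 3$. Summing the resulting estimates over every $\alpha$ with $|\alpha|\le 4$ and every Leibniz splitting yields $|fg|_{\mathcal{H}^4}\le \tfrac{C}{\sqrt{\gamma-1}}|f|_{\mathcal{H}^4}|g|_{\mathcal{H}^4}$. The only genuine technical point is the weight mismatch just described, resolved by the Hardy conversion; the factor $\tfrac{1}{\sqrt{\gamma-1}}$ is inherited verbatim from Corollary \ref{SecondDerivativesBounded}.
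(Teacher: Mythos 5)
Your overall strategy --- Leibniz, put the factor carrying at most two derivatives into $L^\infty$ via Corollary \ref{SecondDerivativesBounded}, keep the high-order factor in weighted $L^2$ --- is the same as the paper's, and you correctly isolate the single obstruction: when $b\ge 1$ but all $\theta$-derivatives fall on the low-order factor, the surviving $D_R^j$ factor ($j=3$ being the genuinely bad case) must be measured against the strong weight $W=w/\sin^{\gamma/2}(2\theta)$, whereas $\mathcal{H}^4$ only controls $D_R^j$ against $w/\sin^{\eta/2}(2\theta)$.

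The gap is in the Hardy step you use to bridge that mismatch. The inequality
\[
\int_0^{\pi/2}\frac{|h|^2}{\sin^\gamma(2\theta)}\,d\theta \le C\int_0^{\pi/2}\frac{|D_\theta h|^2}{\sin^\gamma(2\theta)}\,d\theta
\]
does hold for $h$ vanishing at $\theta=0,\pi/2$, but the sharp constant is $4/(\gamma-1)^2$, not a universal one: this is precisely the scaling in Corollary \ref{SharpHardyInequality2}, and the family $h\sim\sin^{(\gamma-1)/2+\delta}(2\theta)$, $\delta\to 0^+$, saturates it. After you pass to $L^2$ norms you therefore pick up an extra factor $C/(\gamma-1)$ from the Hardy conversion, which combined with the $C/\sqrt{\gamma-1}$ from the $L^\infty$ embedding yields a product constant $C/(\gamma-1)^{3/2}$, not the claimed $C/\sqrt{\gamma-1}$. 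This loss is not cosmetic: since $\gamma-1=\alpha/10$, Lemma \ref{HProductRule} is used in Section \ref{Modulation} to estimate $\tfrac{1}{\alpha}|gL_{12}(g)|_{\mathcal{H}^4}$, which feeds the cubic term $\tfrac{C}{\alpha^{3/2}}|g|_{\mathcal{H}^4}^3$ into Proposition \ref{innerproductestimate}. With your constant this becomes $\tfrac{C}{\alpha^{5/2}}|g|_{\mathcal{H}^4}^3$, and because the a-priori bound is only $|g|_{\mathcal{H}^4}\sim\alpha^2$ the prefactor $|g|_{\mathcal{H}^4}/\alpha^{5/2}\sim\alpha^{-1/2}$ is not small, so the absorption at the end of Section \ref{Modulation} fails.

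The paper avoids Hardy entirely by using Lemma \ref{infinity_separated} \emph{anisotropically} on the bad term $|D_z^3 f\,D_\theta g\,W|_{L^2}^2$: bound $(\partial_\theta g)^2\sin^{2-\gamma}(2\theta)$ by its supremum over $z$ (the $z$-embedding of Lemma \ref{infinity_separated} carries no $\gamma$-factor at all) and $(D_z^3 f)^2 w^2$ by its supremum over $\theta$ (which costs $C/(\gamma-1)$); the double integral then factors into a pure $\theta$-integral times a pure $z$-integral. The total cost is $C/(\gamma-1)$ for the square, hence $C/\sqrt{\gamma-1}$ for the norm. The repair to your argument is therefore to split the supremum \emph{between} the two factors --- $\sup_z$ on the low-derivative factor, $\sup_\theta$ on the high-derivative one --- rather than pulling a full two-variable $L^\infty$ on one side and paying for a Hardy conversion on the other.
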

\begin{proof}
The proof follows basically directly from Corollary \ref{SecondDerivativesBounded} and is elementary. For this reason we just consider two model terms: $D_\theta^{4} (fg)$ and $D_\theta D_z^3(fg)$, the second one being the only real difficulty. First,
\[D_\theta^4 (fg)=\sum_{i=0}^2 a_iD_\theta^i f D_\theta^{4-i} g+ \sum_{i=3}^4 a_i D_\theta^{i} f D_\theta^{4-i} g,\] where $a_i$ are binomial coefficients. 
Thus, 
\[|D_\theta^4(fg)W|_{L^2}\leq C\Big(\sum_{i=0}^2 |D_\theta^i f D_\theta^{4-i} g W|_{L^2}+ \sum_{i=3}^4 |D_\theta^{i} f D_\theta^{4-i} gW|_{L^2}\Big)\]
\[\leq C(|f|_{L^\infty}+|D_\theta f|_{L^\infty}+|D_\theta^2 f|_{L^\infty})|g|_{\mathcal{H}^4}+C(|D_\theta g|_{L^\infty}+|g|_{L^\infty})|f|_{\mathcal{H}^4}.\]
Next, consider 
$(D_\theta D_z^3)(fg).$ The problem with this term is that since, in the definition of $\mathcal{H}^4$, terms without a $\theta$ derivative have a slightly weaker weight than terms with a $\theta$ derivative, when we encounter the term $|D_z^3 f D_\theta g W|_{L^2},$ we cannot simply pull the $|D_\theta g|_{L^\infty}$ out since we do not have an estimate on $|D_z^3 f W|_{L^2}$. This problem \emph{only} occurs when we pair one derivative in $\theta$ with three derivatives in $z$ and all other terms in $|fg|_{\mathcal{H}^4}$ can be treated as the first term $D_\theta^4(fg)$ was. To overcome this, we use the full power of Lemma \ref{infinity_separated} to pull out $D_\theta g w_\theta$ in $L^\infty_z$ and $|D_z^3 f w|_{L^\infty_\theta}$. Indeed, 
\[|D_\theta g D_z^3 f W|_{L^2}^2=\int_0^\infty \int_0^{\pi/2} (D_z^3 f)^2 (\partial_\theta g)^2 \sin(2\theta)^{2-\gamma}\frac{(1+z)^4}{z^4}dzd\theta\]
\[\leq  \Big(\int_0^{\pi/2} \sup_z |\partial_\theta g|^2\sin(2\theta)^{2-\gamma} d\theta\Big)\Big(\int_0^\infty \sup_\theta |D_z^3 f|^2 \frac{(1+z)^4}{z^4}dz\Big)\]
\[\leq \frac{C}{\gamma-1} |g|_{\mathcal{H}^2}|f|_{\mathcal{H}^4},\] by Lemma \ref{infinity_separated}.

\end{proof}
We now state a corresponding statement regarding the product of $\mathcal{W}^{4,\infty}$ and $\mathcal{H}^4$ functions whose proof is also elementary\footnote{A small difference between the preceding case and this one is that the $\mathcal{W}^{4,\infty}$ part can accept four $D_\theta$ or $D_z$ derivatives in $L^\infty$ which actually makes the proof even easier.}:
\begin{lemma}\label{WProductRule}
Let $f\in\mathcal{W}^{4,\infty}$ and $g\in\mathcal{H}^4$. Then, $fg\in\mathcal{H}^4$ and
\[|fg|_{\mathcal{H}^4}\leq \frac{C}{\sqrt{\gamma-1}}|f|_{\mathcal{W}^{4,\infty}}|g|_{\mathcal{H}^4}.\]
\end{lemma}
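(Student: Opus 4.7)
The strategy mirrors Lemma \ref{HProductRule}: expand $D_R^a D_\theta^b(fg)$ via the Leibniz rule for each multi-index with $a+b\leq 4$, extract the $f$-factor in $L^\infty$ using the $\mathcal{W}^{4,\infty}$ norm, and retain the $g$-factor in the appropriate weighted $L^2$. The advantage here (compared to the previous lemma) is that $f$ is already given in an $L^\infty$-type space, so no Sobolev embedding has to be used on the factor being pulled out. First I record two pointwise consequences of the $\mathcal{W}^{4,\infty}$ definition. Writing $D_z^a=(z\partial_z)^a$ as a finite linear combination of $z^k\partial_z^k$ with $k\leq a$ and using the weight $(z+1)^k$ in the definition, one obtains
\[|D_z^a f|_{L^\infty}\leq C|f|_{\mathcal{W}^{4,\infty}},\qquad a\leq 4,\]
and, when at least one $\theta$-derivative is present,
\[|D_z^a D_\theta^b f|_{L^\infty}\leq C(\alpha+\sin(2\theta))\sin(2\theta)^{\alpha/5}|f|_{\mathcal{W}^{4,\infty}},\qquad a+b\leq 4,\ b\geq 1.\]
The second inequality is the key structural feature: a $\theta$-derivative of $f$ vanishes to some order at $\theta=0$ and $\theta=\pi/2$.

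\textbf{The Leibniz expansion.} Fix $(a,b)$ with $a+b\leq 4$ and let $W_{a,b}$ be the corresponding weight in the $\mathcal{H}^4$ definition (namely $W$ if $b\geq 1$, and $w/\sin(2\theta)^{\eta/2}$ if $b=0$). By Leibniz,
\[D_R^a D_\theta^b(fg)=\sum_{a'\leq a,\, b'\leq b}\binom{a}{a'}\binom{b}{b'}\,D_R^{a'}D_\theta^{b'}f\cdot D_R^{a-a'}D_\theta^{b-b'}g.\]
For a typical summand I extract the $f$-factor in $L^\infty$ and bound the $g$-factor in weighted $L^2$. In the easy cases---either $b'=0$ (no $\theta$-derivative on $f$, so the bound on $f$ is clean), or $1\leq b'<b$ (at least one $\theta$-derivative remains on $g$, matching the weight $W$ on the left-hand side)---the pointwise bounds above, together with $(\alpha+\sin(2\theta))\sin(2\theta)^{\alpha/5}\leq C$, yield the required estimate $\leq C|f|_{\mathcal{W}^{4,\infty}}|g|_{\mathcal{H}^4}$ immediately.

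\textbf{The one delicate case and the main obstacle.} The only genuinely nontrivial term is $b'=b\geq 1$: all $\theta$-derivatives land on $f$, and the $g$-factor $D_R^{a-a'}g$ carries no $\theta$-derivative. Within $|g|_{\mathcal{H}^4}$ this factor is controlled only with the weaker angular weight $\sin(2\theta)^{-\eta/2}$, whereas the left-hand side forces the stronger weight $\sin(2\theta)^{-\gamma/2}$ (since $b\geq 1$). To close the gap I use the vanishing from the second pointwise inequality above; the task reduces to the uniform pointwise bound
\[(\alpha+\sin(2\theta))\sin(2\theta)^{\alpha/5-\gamma/2+\eta/2}\leq C.\]
With $\gamma=1+\alpha/10$ and $\eta=99/100$ the exponent equals $\tfrac{3\alpha}{20}-\tfrac{1}{200}$, which is negative but of small magnitude. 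Splitting the range of $\theta$ into $\sin(2\theta)\geq \alpha$ (where $\alpha+\sin(2\theta)\leq 2\sin(2\theta)$ and the resulting power $1+\tfrac{3\alpha}{20}-\tfrac{1}{200}$ is positive) and $\sin(2\theta)<\alpha$ (where $\alpha+\sin(2\theta)\leq 2\alpha$ and $\sin(2\theta)^{\alpha/5-\gamma/2+\eta/2}\leq \alpha^{\alpha/5-\gamma/2+\eta/2}$) yields a universal bound in each regime. This is the main obstacle; all remaining terms are routine. The factor $1/\sqrt{\gamma-1}$ in the statement is inherited from Lemma \ref{HProductRule} for uniformity and to accommodate any ancillary use of Corollary \ref{Linfty}, but is not actually needed for the summands treated above since $f$ is placed in $L^\infty$ directly from the $\mathcal{W}^{4,\infty}$ norm.
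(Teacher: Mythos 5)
Your overall framework (Leibniz expansion, pull the $f$-factor out in $L^\infty$ via the pointwise consequences of the $\mathcal{W}^{4,\infty}$ norm, correctly identify $b'=b\geq 1$ as the one case with an angular-weight mismatch) is sound, and it matches the structure the paper intends. But your treatment of that delicate case contains a genuine error, and as a result the argument does not close.

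You reduce the case $b'=b\geq 1$ to the claimed uniform pointwise bound
\[
(\alpha+\sin(2\theta))\,\sin(2\theta)^{\,\alpha/5-\gamma/2+\eta/2}\leq C ,
\]
with exponent $p:=\alpha/5-\gamma/2+\eta/2=\tfrac{3\alpha}{20}-\tfrac{1}{200}$, which is \emph{negative} for the small $\alpha$ used throughout the paper (e.g.\ $\alpha<1/30$, and ultimately $\alpha<10^{-14}$). Your split then asserts, in the regime $\sin(2\theta)<\alpha$, that $\sin(2\theta)^{p}\leq\alpha^{p}$. This is backwards: for $p<0$ and $s<\alpha$ one has $s^{p}>\alpha^{p}$, and in fact $s^{p}\to\infty$ as $s\to 0^{+}$. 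Consequently $(\alpha+\sin(2\theta))\sin(2\theta)^{p}\sim\alpha\,\sin(2\theta)^{p}$ is \emph{unbounded} near $\theta=0$ and $\theta=\pi/2$, and the claimed pointwise bound is false. So pulling the $f$-factor out in $L^\infty$ simply does not control this term, and your remark that the $1/\sqrt{\gamma-1}$ factor ``is not actually needed'' is incorrect precisely because it is needed for this term.

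The correct way to close this case is not a pointwise estimate on $f$ at all; it is the same mixed Fubini/$\sup$ device used for the delicate term in Lemma \ref{HProductRule}. Concretely, write
\[
\big|D_R^{a'}D_\theta^{b}f\cdot D_R^{a-a'}g\cdot W\big|_{L^2}^2
\leq \Big(\int_0^\infty \sup_\theta |D_R^{a-a'}g(z,\cdot)|^2\, w(z)^2\, dz\Big)\Big(\sup_{z}\int_0^{\pi/2}|D_R^{a'}D_\theta^{b}f(z,\theta)|^2\sin(2\theta)^{-\gamma}\,d\theta\Big).
\]
The second factor is finite \emph{in $L^2_\theta$}, not pointwise: with the $\mathcal{W}^{4,\infty}$ bound one gets
\[
\int_0^{\pi/2}|D_R^{a'}D_\theta^{b}f|^2\sin(2\theta)^{-\gamma}d\theta
\leq C|f|_{\mathcal{W}^{4,\infty}}^2\int_0^{\pi/2}(\alpha+\sin(2\theta))^2\sin(2\theta)^{2\alpha/5-\gamma}d\theta\leq C|f|_{\mathcal{W}^{4,\infty}}^2 ,
\]
because $2\alpha/5-\gamma=3\alpha/10-1>-1$, so the singularity is integrable even though the integrand is not bounded. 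The first factor is controlled by Lemma \ref{infinity_separated}: $\sup_\theta|D_R^{a-a'}g|^2\leq \frac{C}{\gamma-1}\int_0^{\pi/2}|\partial_\theta D_R^{a-a'}g|^2\sin(2\theta)^{2-\gamma}d\theta$, which after integrating in $z$ gives $\frac{C}{\gamma-1}\,|D_\theta D_R^{a-a'}g\, W|_{L^2}^2\leq \frac{C}{\gamma-1}|g|_{\mathcal{H}^4}^2$ (note $a-a'+1\leq 4$ since $b\geq 1$). This is where the $1/\sqrt{\gamma-1}$ in the statement comes from. Your easy cases are all fine and your identification of where the difficulty lies is correct, but this one term must be handled by the mixed-norm argument, not a pointwise bound on $f$.
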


\subsection{Transport Estimates}

We now move to state and prove some transport estimates which are similar in nature to the product rules in the preceding subsection. 
We will encounter a number of different types of transport terms, the most dangerous of which (in terms of regularity considerations) are of the form:
\[\alpha z\partial_z \Psi_g \partial_\theta g,\qquad U(\Phi_g-\frac{1}{4\alpha}\sin(2\theta)L_{12}(g))\partial_\theta g,\qquad V(\Phi_g-\frac{1}{4\alpha}\sin(2\theta)L_{12}(g))z\partial_z g,\]
where the operator $U$ can be thought of as $Id+\alpha z\partial_z$ and $V$ can be thought of as $\partial_\theta$. In view of the elliptic estimates from Theorem \ref{RemovingL12}, we can think of the above transport terms as $g$ being transported by a velocity which is one derivative more regular than $g$ (as is classically the case with the transport term in the vorticity equation). 
We can thus formulate the following simple lemmas.

\begin{lemma}\label{Transport1}
Assume $f\in \mathcal{H}^4$ and $g\in \mathcal{H}^4$. Then, 
\[|(f D_\theta g, g)_{\mathcal{H}^4}|\leq \frac{C}{\sqrt{\gamma-1}}|f|_{\mathcal{H}^4}|g|_{\mathcal{H}^4}^2.\]
\end{lemma}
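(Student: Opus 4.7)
The goal is to bound $(fD_\theta g,g)_{\mathcal{H}^4}$, and the obstruction to the naive Cauchy--Schwarz $|(fD_\theta g,g)_{\mathcal{H}^4}|\le |fD_\theta g|_{\mathcal{H}^4}|g|_{\mathcal{H}^4}$ is that $fD_\theta g$ is generically only in $\mathcal{H}^3$, so a straight bound would cost one derivative of $g$ that is not available. The plan is to exploit the symmetry of the inner product at top order and recover that derivative by integration by parts in $\theta$, mimicking the classical energy-method commutator estimate for transport equations.

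First I would expand $(fD_\theta g,g)_{\mathcal{H}^4}$, via the inductive $\mathcal{H}^k$ inner-product construction of Section \ref{LinearAngularTransport}, into a finite linear combination of weighted $L^2$ inner products
\[\mathcal{I}_{j,i}=\bigl(D_R^{j}D_\theta^{i}(fD_\theta g),\, D_R^{j}D_\theta^{i}g\bigr)_{L^2_{\mathcal{W}_{j,i}^2}},\qquad 0\le i+j\le 4,\]
with weight $\mathcal{W}_{j,i}=w/\sin(2\theta)^{\eta/2}$ if $i=0$ and $\mathcal{W}_{j,i}=W$ if $i\ge 1$. Since $D_R$ and $D_\theta$ are derivations, the Leibniz rule gives
\[D_R^{j}D_\theta^{i}(fD_\theta g)=\sum_{j'\le j,\,i'\le i}c_{j,i,j',i'}\,(D_R^{j'}D_\theta^{i'}f)\,(D_R^{j-j'}D_\theta^{i-i'+1}g),\]
and I split each $\mathcal{I}_{j,i}$ into the top-order term $(j',i')=(0,0)$, where every derivative falls on $D_\theta g$, and the subcritical remainder where at least one derivative lands on $f$.

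For the top-order term, writing $h:=D_R^{j}D_\theta^{i}g$, integration by parts in $\theta$ yields
\[\int f\,(D_\theta h)\,h\,\mathcal{W}_{j,i}^2\,dR\,d\theta=-\tfrac12\int h^2\,\partial_\theta\!\bigl(\sin(2\theta)\,f\,\mathcal{W}_{j,i}^2\bigr)\,dR\,d\theta,\]
and since $\mathcal{W}_{j,i}^2\sin(2\theta)=w^2\sin(2\theta)^{1-\beta}$ with $\beta\in\{\eta,\gamma\}$ one finds $\partial_\theta(\mathcal{W}_{j,i}^2\sin(2\theta))=2(1-\beta)\cos(2\theta)\mathcal{W}_{j,i}^2$, so this piece is bounded by $\bigl(|1-\beta|\,|f|_{L^\infty}+|D_\theta f|_{L^\infty}\bigr)\,|h\mathcal{W}_{j,i}|_{L^2}^2$, which by Corollary \ref{SecondDerivativesBounded} is at most $C|f|_{\mathcal{H}^4}|g|_{\mathcal{H}^4}^2/\sqrt{\gamma-1}$. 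For the subcritical remainder, at least one derivative sits on $f$, so both $g$-factors carry at most four derivatives, and one may apply Cauchy--Schwarz in the weighted $L^2$, placing whichever of the three factors has at most two derivatives into $L^\infty$ via Corollary \ref{SecondDerivativesBounded}. The only delicate configuration is the $(j,i)=(3,1)$ pairing with three $D_R$ (or $D_z$) derivatives and one $D_\theta$ derivative, where the $\eta$-versus-$\gamma$ weight mismatch between the two contributions to the norm is resolved exactly as in the final step of Lemma \ref{HProductRule}, by using Lemma \ref{infinity_separated} to place one factor in $L^\infty_z$ and the other in $L^\infty_\theta$. The main obstacle is thus entirely concentrated in the top-order term, whose apparent derivative loss is absorbed precisely by the integration by parts above.
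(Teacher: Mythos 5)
Your proof is correct and follows essentially the same route as the paper's: integrating by parts in $\theta$ at top order to absorb the apparent derivative loss, using Corollary \ref{SecondDerivativesBounded} to place low-derivative factors in $L^\infty$ for the commutator terms, and resolving the lone weight-mismatched configuration (three $D_z$ derivatives on $f$ against $D_\theta^2 g$ under the $W$ weight) via the mixed $L^\infty_z$/$L^\infty_\theta$ splitting of Lemma \ref{infinity_separated}. The paper presents the same argument through the two model terms $(D_\theta^4(fD_\theta g),D_\theta^4 g\, W^2)_{L^2}$ and $(D_z^3D_\theta(fD_\theta g),D_z^3D_\theta g\, W^2)_{L^2}$, which capture exactly the two mechanisms you describe.
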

\begin{proof}
The proof of the lemma is again elementary and we only consider two model cases: $(D_\theta^4 (f D_\theta g), D_\theta^4 g W^2)_{L^2}$ and $(D_z^3 D_\theta(fD_\theta g), D_z^3D_\theta g W^2)_{L^2},$ the delicate one being the second one.  
For the first one, observe that
\[(D_\theta^4(fD_\theta g), D_\theta^4 g W^2)_{L^2}=\sum_{i=1}^2 a_i(D_\theta^i f D_\theta^{5-i}g, D_\theta^4g W^2)_{L^2}+\sum_{i=3}^4 a_i(D_\theta^i f D_\theta^{5-i}g, D_\theta^4 W^2)_{L^2}+\frac{1}{2} ( D_\theta((D_\theta^4 g)^2), fW^2)_{L^2}\]
\[\leq C (|D_\theta f|_{L^\infty}+|D_\theta^2 f|_{L^\infty})|g|_{\mathcal{H}^4}^2+C(|D_\theta g|_{L^\infty}+|D^2_\theta g|_{L^\infty})|f|_{\mathcal{H}^k}|g|_{\mathcal{H}^k}. \]
\[\leq \frac{C}{\sqrt{\gamma-1}}|f|_{\mathcal{H}^k}|g|_{\mathcal{H}^k}^2.\]
Now we move to investigate the term $(D_z^3 D_\theta(fD_\theta g), D_z^3D_\theta g W^2)_{L^2}.$ Upon expanding the derivatives in this expression using Leibniz's rule as above, we see that there is again only one term that is not treated as above: $(D_z^3 f D_\theta^2 g, D_z^3 D_\theta g W^2)_{L^2}.$ As in the proof of Lemma \ref{HProductRule}, we cannot simply pull out $|D_\theta^2 g|_{L^\infty}.$ Thus we first apply Cauchy-Schwarz and then we observe that:   
\[\sqrt{\int_0^\infty \int_0^{\pi/2} (D_z^3 f)^2 (\partial_\theta D_\theta g)^2 \sin(2\theta)^{2-\gamma}\frac{(1+z)^4}{z^4}dzd\theta}\leq \frac{C}{\sqrt{\gamma-1}} |f|_{\mathcal{H}^4}|D_\theta g|_{\mathcal{H}^2}\leq \frac{C}{\sqrt{\gamma-1}}|f|_{\mathcal{H}^4}|g|_{\mathcal{H}^4}, \] where we applied Lemma \ref{infinity_separated}
just as in the second part of the proof of Lemma \ref{HProductRule}. 
\end{proof}
We next state the corresponding result for $D_z$ derivatives, whose proof is identical to the proof of Lemma \ref{Transport1}. 
\begin{lemma}\label{Transport2}
If $f, g\in \mathcal{H}^4$, then
\[|(f D_z g, g)_{\mathcal{H}^4}|\leq \frac{C}{\sqrt{\gamma-1}}|f|_{\mathcal{H}^4}|g|_{\mathcal{H}^4}^2.\]
\end{lemma}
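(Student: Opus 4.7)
The plan is to mirror the proof of Lemma \ref{Transport1}, replacing the angular transport by the radial one throughout. We expand $(f D_z g, g)_{\mathcal{H}^4}$ according to the definition of the inner product, which produces model terms of the form $(D_z^a D_\theta^b (f D_z g), D_z^a D_\theta^b g \cdot \omega^2)_{L^2}$ for $a+b \le 4$, where $\omega$ is either $w/\sqrt{\sin(2\theta)^{\eta}}$ (when $b=0$) or the full weight $W$ (when $b\ge 1$). For each such term, Leibniz's rule splits the integrand as $\sum_{i,j} (D_z^i D_\theta^j f)(D_z^{a-i+1} D_\theta^{b-j} g)$, tested against $D_z^a D_\theta^b g \cdot \omega^2$. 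The bulk of these pieces are estimated by pulling the $f$-factor out in $L^\infty$ via Corollary \ref{SecondDerivativesBounded} and applying Cauchy--Schwarz, each contributing at most $(C/\sqrt{\gamma-1})|f|_{\mathcal{H}^4}|g|_{\mathcal{H}^4}^2$.

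Two types of terms need extra care. First, when no derivative falls on $f$ (so $i=j=0$), we face $(f\cdot D_z^{a+1} D_\theta^b g, D_z^a D_\theta^b g \cdot \omega^2)_{L^2}$. Writing $D_z^{a+1} D_\theta^b g = D_z(D_z^a D_\theta^b g)$ and integrating by parts in $z$ turns this into $-\tfrac{1}{2}(h^2, D_z f\cdot\omega^2+f\cdot D_z(\omega^2)+f\omega^2)_{L^2}$ with $h=D_z^a D_\theta^b g$. Because $z\partial_z w/w$ is a bounded function of $z$ and the angular factor of $\omega^2$ is annihilated by $D_z$, the multiplier on $h^2$ is controlled by $|f|_{L^\infty}+|D_z f|_{L^\infty}\le C|f|_{\mathcal{H}^4}/\sqrt{\gamma-1}$. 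Second, when too many radial derivatives land on $f$ (for instance $D_z^3 f$ arising in the expansion of $D_z^3 D_\theta(f D_z g)$), we no longer have an $L^\infty$ bound on $f$ from the $\mathcal{H}^4$ embedding. The representative problematic term is $(D_z^3 f\cdot D_z D_\theta g, D_z^3 D_\theta g\cdot W^2)_{L^2}$; after Cauchy--Schwarz we must estimate $\iint (D_z^3 f)^2 (D_z D_\theta g)^2 W^2\,dz\,d\theta$. We dominate $(D_z^3 f)^2 \le \sup_\theta (D_z^3 f)^2$ and $(D_z D_\theta g)^2 \le \sup_z (D_z D_\theta g)^2$, after which the double integral factors as an integral in $z$ times an integral in $\theta$, and the two statements of Lemma \ref{infinity_separated} give the desired $(C/\sqrt{\gamma-1})|f|_{\mathcal{H}^4}|g|_{\mathcal{H}^4}$ bound, exactly as in the second part of the proof of Lemma \ref{HProductRule}.

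The main obstacle will be the weight mismatch between the purely radial part of the $\mathcal{H}^4$ norm (with angular weight $\sin^{\eta/2}(2\theta)$) and its mixed part (with the heavier $\sin^{\gamma/2}(2\theta)$, $\gamma>\eta$). When a term built from the mixed part of the norm contains a purely radial high-derivative factor such as $D_z^4 g$, we control $|D_z^4 g\cdot w/\sqrt{\sin^\eta(2\theta)}|_{L^2}$ but not $|D_z^4 g\cdot W|_{L^2}$. To close such estimates, we again invoke the separation-of-variables trick of Lemma \ref{infinity_separated}, absorbing the extra angular weight into the $f$-factor via a $\theta$-supremum bound, at the cost only of the universal constant $C/\sqrt{\gamma-1}$ already present in the target inequality. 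Summing the resulting bounds over all terms produced by the norm expansion and Leibniz's rule then concludes the proof.
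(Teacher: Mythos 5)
Your sketch correctly identifies the weight-mismatch phenomenon, but your resolution of it does not close, and the obstruction is specific to the radial transport $D_z g$ (it does not arise in Lemma~\ref{Transport1}, so the proofs are not in fact interchangeable). The paper's own ``proof'' is merely a pointer back to Lemma~\ref{Transport1}, so there is no written argument to compare against; what follows is therefore an assessment of your fleshed-out version.

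Consider the model term $(D_z^3 D_\theta(f\, D_z g),\, D_z^3 D_\theta g\, W^2)_{L^2}$, which does occur in the $\mathcal{H}^4$ inner product (trace the recursion through $c_{2,4}c_{2,3}c_{2,2}\,(D_z^3\cdot,D_z^3\cdot)_{\mathcal H^1}$ to its $D_\theta$, $\gamma$-weighted piece). After Leibniz, the expansion contains the piece
\[
\big(D_\theta f\cdot D_z^4 g,\; D_z^3 D_\theta g\, W^2\big)_{L^2},
\]
coming from $D_\theta$ falling on $f$ while all four radial derivatives (three from the model term plus the one from the transport) fall on $g$. You classify such terms under ``pull $D_\theta f$ out in $L^\infty$,'' but that leaves $|D_z^4 g\, W|_{L^2}$, which $\mathcal{H}^4$ does not control — only $|D_z^4 g\, w/\sqrt{\sin^\eta}|_{L^2}$ is controlled, and $\gamma>\eta$. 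You then say the separation trick of Lemma~\ref{infinity_separated} handles the remaining cases, but it cannot here: to run it one must take a supremum of the $g$-factor, and both choices need a \emph{fifth} derivative of $g$. Taking $\sup_\theta(D_z^4 g)^2$ via the first inequality of Lemma~\ref{infinity_separated} requires $\partial_\theta D_z^4 g=D_\theta D_z^4 g/\sin(2\theta)$, and taking $\sup_z(D_z^4 g)^2$ via the second requires $D_z^5 g$; neither is bounded in $\mathcal{H}^4$. This is exactly the new feature of the radial transport: in Lemma~\ref{Transport1} the transport derivative is $D_\theta$, so every Leibniz $g$-factor carries a $D_\theta$ and is controlled with the $W$-weight, and the only mismatch is on the $f$-factor ($D_z^3 f$), whose $\sup_\theta$ asks only for $D_\theta D_z^3 f\in\mathcal H^4$. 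For Transport2 the mismatch migrates to the $g$-factor, and there is no room for the extra derivative that the separation argument demands. (The companion pieces you did not single out, $D_zD_\theta f\cdot D_z^3 g$ and $D_z^3 f\cdot D_zD_\theta g$, \emph{do} succumb to the separation trick exactly as you describe; the failure is isolated to the piece with four pure $D_z$'s on $g$.)

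So the step ``absorbing the extra angular weight into the $f$-factor via a $\theta$-supremum'' does not actually close the estimate for the piece $D_\theta f\cdot D_z^4 g$: a $\theta$-supremum on $D_\theta f$ costs only $D_\theta^2 f$, but then you are still left integrating $(D_z^4 g)^2 w^2/\sin^\gamma$, which is not bounded by $|g|_{\mathcal H^4}^2$. A correct treatment of this piece needs an additional idea (e.g.\ a commutator identity, an interpolation step showing $|D_\theta f/\sin^{(\gamma-\eta)/2}|_{L^\infty}$ is bounded under extra assumptions, an integration by parts keeping the bilinear structure $D_z p\cdot D_\theta p$ with $p=D_z^3 g$, or a derivation of the bound directly from the applications where $f$ is nicer than generic $\mathcal H^4$). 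As written, your argument — and the paper's one-line ``identical to Transport1'' — does not account for this term.
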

We further have two more similar transport estimates with $\mathcal{W}^{4,\infty}$ velocity. 
\begin{lemma}\label{Transport3}
If $f\in \mathcal{W}^{4,\infty}$ and $g\in\mathcal{H}^4$, then
\[|(f D_z g, g)_{\mathcal{H}^4}|+|(f D_\theta g, g)_{\mathcal{H}^4|}\leq \frac{C}{\sqrt{\gamma-1}}|f|_{\mathcal{W}^{4,\infty}}|g|_{\mathcal{H}^4}^2.\]
\end{lemma}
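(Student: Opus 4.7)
The plan is to imitate the proofs of Lemmas \ref{Transport1} and \ref{Transport2} slot by slot, using that the $\mathcal{W}^{4,\infty}$-norm provides pointwise control of every derivative of $f$ up to order $4$ with an explicit angular weight. Concretely, for each of the norms $\lvert D_R^k g\,\tfrac{w}{\sqrt{\sin(2\theta)^\eta}}\rvert_{L^2}$ (with $0\le k\le 4$) and $\lvert D_R^j D_\theta^i g\,W\rvert_{L^2}$ (with $i\ge 1$ and $i+j\le 4$) appearing in the definition of $\mathcal{H}^4$, I would expand the corresponding inner product $(D_R^j D_\theta^i(fDg),\,D_R^j D_\theta^i g\,\mathrm{weight}^2)$ (where $D$ stands for either $D_z$ or $D_\theta$) by Leibniz's rule, and sort the resulting contributions into three families: (i) the top-order-on-$g$ term, where all outer derivatives fall on $Dg$; (ii) intermediate terms, where some (but not all) outer derivatives fall on $f$; (iii) the top-order-on-$f$ term, where all outer derivatives fall on $f$.

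For family (i) I would integrate by parts in the variable of $D$. This converts the term into a symmetric weighted quadratic form in the top-order derivative of $g$, against the multiplier $Df\cdot\mathrm{weight}^2+f\cdot D(\mathrm{weight}^2)$ plus lower-order harmless pieces (and a term proportional to $f\,\mathrm{weight}^2$ in the $D_z$ case, coming from the conversion $D_R=R\partial_R$). The boundary contributions vanish because $w$ kills the contribution at $z=0$ and $z=\infty$ while $\sin(2\theta)$ kills the $\theta$-boundary contribution. Using the pointwise bounds $\lvert f\rvert_{L^\infty}+\lvert (1+z)\partial_z f\rvert_{L^\infty}\le C\lvert f\rvert_{\mathcal{W}^{4,\infty}}$ and $\bigl\lvert D_\theta f\,\tfrac{\sin(2\theta)^{-\alpha/5}}{\alpha+\sin(2\theta)}\bigr\rvert_{L^\infty}\le\lvert f\rvert_{\mathcal{W}^{4,\infty}}$, together with the direct computations $\lvert D_R(w^2\sin(2\theta)^{-\eta})\rvert\le C\,w^2\sin(2\theta)^{-\eta}$, $\lvert D_\theta(W^2)\rvert\le C\,W^2$ and the pointwise bound $(\alpha+\sin(2\theta))\sin(2\theta)^{\alpha/5}\le C$, each IBP term is absorbed by $C\lvert f\rvert_{\mathcal{W}^{4,\infty}}\lvert g\rvert_{\mathcal{H}^4}^2$.

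For families (ii) and (iii) I would place the entire $f$-factor $D_R^{j_1}D_\theta^{i_1}f$ (with $j_1+i_1\le 4$) in $L^\infty$ via $\mathcal{W}^{4,\infty}$, acquiring at worst the bounded weight $(\alpha+\sin(2\theta))\sin(2\theta)^{\alpha/5}$ when $i_1\ge 1$, and apply Cauchy--Schwarz on the two surviving $g$-factors with the weight split symmetrically. For mixed slots ($i\ge 1$, weight $W^2$) both $g$-factors carry at least one $D_\theta$ and are therefore directly components of $\lvert g\rvert_{\mathcal{H}^4}$; for the pure-$D_R$ slot ($i=0$, weight $w^2\sin(2\theta)^{-\eta}$) the $g$-factor carrying the transported derivative gains a $D_\theta$, and the mild mismatch between $w\sin(2\theta)^{-\eta/2}$ and the $W$-weight is resolved by $\sin(2\theta)^{-\eta/2}\le\sin(2\theta)^{-\gamma/2}$ (valid since $\eta<\gamma$). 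In the few intermediate terms where pulling the $f$-factor out leaves a one-derivative $g$-factor paired against a weight that cannot be directly matched (for instance at the pure-$D_R$ slot with $j_1=j$), I would instead invoke Corollary \ref{SecondDerivativesBounded} to place that $g$-factor in $L^\infty$, which is precisely where the $(\gamma-1)^{-1/2}$ factor enters.

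The main obstacle is not conceptual but bookkeeping: carefully matching, at each Leibniz index and each slot, the weight on the surviving $g$-factors against an $\mathcal{H}^4$-component, and verifying that the $\mathcal{W}^{4,\infty}$ angular weight $\sin(2\theta)^{-\alpha/5}/(\alpha+\sin(2\theta))$ is pointwise compatible with both the $W$-weight and the $w\sin(2\theta)^{-\eta/2}$-weight. The proof is strictly easier than that of Lemmas \ref{Transport1}--\ref{Transport2} because $\mathcal{W}^{4,\infty}$ supplies pointwise control of \emph{all} derivatives of $f$ up to fourth order, so the split-$L^\infty$ trick of Lemma \ref{infinity_separated} is never needed on the velocity $f$ itself, only (occasionally) on a low-order derivative of $g$.
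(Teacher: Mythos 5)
Your proposal follows the paper's one-sentence sketch (pointwise $\mathcal{W}^{4,\infty}$ bound on derivatives hitting $f$, integration by parts on the top-order term), but the bookkeeping claim it rests on is inaccurate and conceals a genuine difficulty. You assert that for mixed slots ($i\ge 1$, weight $W^2$) ``both $g$-factors carry at least one $D_\theta$.'' That is true for $f D_\theta g$, but not for $f D_z g$: in the Leibniz expansion of $D_R^j D_\theta^i(fD_z g)$, the piece with all $i\ge1$ angular derivatives on $f$ (i.e.\ $i_1=i$) has surviving $g$-factor $D_R^{j-j_1}D_z g = D_R^{j-j_1+1}g$, which carries no $D_\theta$ and is controlled in $\mathcal{H}^4$ only against the weight $w\sin(2\theta)^{-\eta/2}$, while the test factor $D_R^j D_\theta^i g$ comes with the heavier weight $W^2=w^2\sin(2\theta)^{-\gamma}$.

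Your fallback strategy (Corollary \ref{SecondDerivativesBounded}) does not cover this piece either, since $D_R^{j-j_1+1}g$ can carry up to four derivatives (take $i=1$, $j=3$, $j_1\in\{0,1\}$) and cannot be placed in $L^\infty$. Nor does Cauchy--Schwarz close by itself: the angular mismatch between $\sin(2\theta)^{-\gamma}$ and $\sin(2\theta)^{-\eta/2-\gamma/2}$ demands a pointwise gain of $\sin(2\theta)^{(\gamma-\eta)/2}=\sin(2\theta)^{1/200+\alpha/20}$, whereas the $\mathcal{W}^{4,\infty}$ weight furnishes only $(\alpha+\sin(2\theta))\sin(2\theta)^{\alpha/5}$, which for small $\alpha$ (so $\alpha/5<1/200$) degenerates to a constant of size $\alpha$ once $\sin(2\theta)$ drops below $\alpha$, and therefore does not supply the required vanishing. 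To close this term one must either exploit additional structure on the actual velocities (in the paper's applications, e.g.\ $f=(\cos(2\theta)-\sin^2\theta)L_{12}(F_*)$, the derivative $D_\theta f$ vanishes like $\sin(2\theta)^2$, which is ample) or supply a different argument; the paper's own one-line sketch does not explicitly address this piece, so at minimum it should be flagged as a point requiring care rather than dismissed by the incorrect claim that all surviving $g$-factors carry a $D_\theta$.
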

The proof of Lemma \ref{Transport3} is much simpler than the proof of Lemma \ref{Transport1} since when any derivative hits $f$ we can simply estimate that term pointwise using $|f|_{\mathcal{W}^{4,\infty}}$ while the other term can be estimated using integration by parts. 

Let us end by mentioning a slightly more non-trivial estimate that we will use. 
\begin{proposition}\label{SpecialTransport}
Let $g\in\mathcal{H}^4$ and assume that $f$ is such that $\partial_\theta f\in\mathcal{W}^{4,\infty}$ and $f(z,0)=f(z,\pi/2)=0$, for all $z\in [0,\infty)$.  Then, 
\[(f\partial_\theta g,g)_{\mathcal{H}^4}\leq \frac{C}{\sqrt{\gamma-1}}|\partial_\theta f|_{\mathcal{W}^{4,\infty}}|g|_{\mathcal{H}^4}^2.\]
\end{proposition}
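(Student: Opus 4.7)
The plan is to reduce Proposition \ref{SpecialTransport} to Lemma \ref{Transport3}. The key factorization is
\[ f\partial_\theta g \;=\; \tilde f\,D_\theta g,\qquad \tilde f(z,\theta):=\frac{f(z,\theta)}{\sin(2\theta)}, \]
so once I show $|\tilde f|_{\mathcal{W}^{4,\infty}}\leq C|\partial_\theta f|_{\mathcal{W}^{4,\infty}}$, Lemma \ref{Transport3} applied to $\tilde f$ and $g$ immediately yields
\[(f\partial_\theta g,g)_{\mathcal{H}^4}=(\tilde f D_\theta g,g)_{\mathcal{H}^4}\leq \frac{C}{\sqrt{\gamma-1}}|\tilde f|_{\mathcal{W}^{4,\infty}}|g|_{\mathcal{H}^4}^2\leq \frac{C}{\sqrt{\gamma-1}}|\partial_\theta f|_{\mathcal{W}^{4,\infty}}|g|_{\mathcal{H}^4}^2.\]
All the work is therefore in the pointwise bound on $\tilde f$.

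The zeroth-order bound uses the boundary condition via an $L^\infty$ Hardy argument: for $\theta\in[0,\pi/4]$ the identity $f(z,\theta)=\int_0^\theta \partial_\theta f(z,\theta')\,d\theta'$ gives $|f(z,\theta)|\leq \theta|\partial_\theta f|_{L^\infty_\theta}\leq C\sin(2\theta)|\partial_\theta f|_{L^\infty_\theta}$, and on $[\pi/4,\pi/2]$ the analogous representation $f(z,\theta)=-\int_\theta^{\pi/2}\partial_\theta f$ gives the symmetric estimate; in particular $|\tilde f|_{L^\infty}\leq C|\partial_\theta f|_{L^\infty}$. Pure $z$-derivatives commute with $1/\sin(2\theta)$, so $(z+1)^k\partial_z^k\tilde f=((z+1)^k\partial_z^k f)/\sin(2\theta)$, and because $\partial_z^k f$ also vanishes on $\theta\in\{0,\pi/2\}$, the same Hardy argument applied to $\partial_z^k f$ gives $|(z+1)^k\partial_z^k\tilde f|_{L^\infty}\leq C|(z+1)^k\partial_z^k\partial_\theta f|_{L^\infty}\leq C|\partial_\theta f|_{\mathcal{W}^{4,\infty}}$.

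For the mixed term with $j\geq 1$, I would expand $D_\theta^j\tilde f$ inductively using $D_\theta(u/\sin(2\theta))=(D_\theta u)/\sin(2\theta)-2\cos(2\theta)\,u/\sin(2\theta)$, producing a finite linear combination of terms of the form $P(\theta)\,D_\theta^{j'} f/\sin(2\theta)$ with $0\leq j'\leq j$ and $P$ a smooth, uniformly bounded function of $\theta$ on $[0,\pi/2]$. For $j'\geq 1$, writing $D_\theta^{j'}f=\sin(2\theta)\cdot D_\theta^{j'-1}(\partial_\theta f)+R$, where $R$ collects the commutator terms between $D_\theta$ and multiplication by $\sin(2\theta)$, gives $D_\theta^{j'}f/\sin(2\theta)\in L^\infty$ with bound controlled by derivatives of $\partial_\theta f$ up to order $j-1$. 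For $j'=0$ the zeroth-order Hardy bound applies again. Combining these with $(z+1)^k\partial_z^k$ and matching against the $\mathcal{W}^{4,\infty}$ weight $\sin(2\theta)^{-\alpha/5}/(\alpha+\sin(2\theta))$, which tolerates exactly a $\sin(2\theta)^{-1}$-type singularity (the worst that appears on the right-hand side), closes the estimate $|\tilde f|_{\mathcal{W}^{4,\infty}}\leq C|\partial_\theta f|_{\mathcal{W}^{4,\infty}}$.

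The main obstacle is this last step: the derivative counting in $D_\theta^j\tilde f$ must be reconciled with the definition of $\mathcal{W}^{4,\infty}$ so that one does not overspend $\theta$-derivatives of $\partial_\theta f$ (we have at most $4$ of them available there). The fact that $D_\theta$ carries an intrinsic factor of $\sin(2\theta)$ is what makes the bookkeeping work: each time the Leibniz expansion produces a singular factor $1/\sin(2\theta)$, it is compensated either by an accompanying factor of $\sin(2\theta)$ coming from one of the $D_\theta$'s or by the allowed $1/(\alpha+\sin(2\theta))$ weight on the left-hand side. Once this pointwise reduction is checked, the proposition follows directly from Lemma \ref{Transport3}.
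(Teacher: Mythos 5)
Your plan is to reduce to Lemma \ref{Transport3} by writing $f\partial_\theta g = \tilde f\, D_\theta g$ with $\tilde f = f/\sin(2\theta)$, and the whole argument then hinges on the claim
\[
|\tilde f|_{\mathcal{W}^{4,\infty}}\;\leq\; C\,|\partial_\theta f|_{\mathcal{W}^{4,\infty}},
\]
with $C$ a universal constant. This reduction is clean, but the claim is exactly the fact the paper declines to assert: the Remark after Lemma \ref{WtoW} says explicitly, ``If we knew that $\partial_\theta f\in\mathcal{W}^{4,\infty}$ implies that $\frac{f}{\sin(2\theta)}\in\mathcal{W}^{4,\infty}$, this result would follow directly from Proposition \ref{ProductRuleInW}.'' The author avoids this claim for a reason, and your sketch does not actually establish it.

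Here is where the argument breaks down. Recall that the $\mathcal{W}^{4,\infty}$ norm attaches the \emph{singular} weight $\sin(2\theta)^{-\alpha/5}/(\alpha+\sin(2\theta))$ to every term carrying at least one $D_\theta$ derivative. So membership of $\tilde f$ in $\mathcal{W}^{4,\infty}$ is not the statement that $D_\theta^j\tilde f\in L^\infty$; it is the much stronger statement that $D_\theta^j\tilde f$ vanishes near $\theta=0,\pi/2$ at a rate comparable to $\sin(2\theta)^{\alpha/5}(\alpha+\sin(2\theta))$, so as to beat a weight that diverges there. Your expansion produces
\[
D_\theta^j\tilde f \;=\; \sum_{j'=0}^j P_{j,j'}(\theta)\,\frac{D_\theta^{j'}f}{\sin(2\theta)},
\]
and you dispose of the $j'=0$ contribution by the ``zeroth-order Hardy bound,'' i.e.\ $|\tilde f|_{L^\infty}\leq C|\partial_\theta f|_{L^\infty}$. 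But $P_{j,0}$ does not vanish at the endpoints (for instance $D_\theta\tilde f = \partial_\theta f - 2\cos(2\theta)\tilde f$, and $D_\theta^2\tilde f = D_\theta\partial_\theta f - 2\cos(2\theta)\partial_\theta f + 4\tilde f$), so $P_{j,0}\tilde f$ is bounded but does \emph{not} decay at the boundary, and $P_{j,0}\tilde f\cdot\frac{\sin(2\theta)^{-\alpha/5}}{\alpha+\sin(2\theta)}$ is unbounded in $\theta$. The same issue hits the $j'=1$ term $P_{j,1}\partial_\theta f$, since $|\partial_\theta f|_{\mathcal{W}^{4,\infty}}$ controls $\partial_\theta f$ only in $L^\infty$ with \emph{no} weight (the singular weight is not attached to the $j=0$ part). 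The claim can only hold because of a cancellation \emph{between} the $j'=0,1$ terms and the $j'\geq 2$ terms near the boundary (one can see this begin to happen for $j=1,2$ by writing $\partial_\theta f(\theta)=\partial_\theta f(0)+O(\theta^{\alpha/5})$, and the leading constants do cancel after integrating by parts in the integral $\tfrac1\theta\int_0^\theta$), but your sketch neither identifies this cancellation nor carries it out; your remark that the weight ``tolerates exactly a $\sin(2\theta)^{-1}$-type singularity'' has the logic reversed — the weight \emph{creates} a singularity that $D_\theta^j\tilde f$ must overcome.

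The paper's proof sidesteps all of this. It expands the $\mathcal{H}^4$ inner product into terms $(D_\theta^{i_1}D_z^{i_2}f\, D_\theta^{j_1}D_z^{j_2}\partial_\theta g,\, D_\theta^{i_1+j_1}D_z^{i_2+j_2}g\, W_{i,j})_{L^2}$ and, in the main case where all derivatives land on $g$, integrates by parts in $\theta$ so that only $\partial_\theta(f W_{i,j})$ appears, and then uses only the bare $L^\infty$ bounds $|\partial_\theta f|_{L^\infty}+|f/\sin(2\theta)|_{L^\infty}\leq C|\partial_\theta f|_{\mathcal{W}^{4,\infty}}$ together with $|\partial_\theta W_{i,j}|\lesssim W_{i,j}/\sin(2\theta)$. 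This never requires $\tilde f$ to lie in $\mathcal{W}^{4,\infty}$, so the dangerous singular weight is never tested against $\tilde f$ or $\partial_\theta f$ directly. If you want to rescue your reduction, you would have to either prove the cancellation for all $j\leq 4$ and all $\partial_z$ counts (a nontrivial computation that neither you nor the paper does), or else abandon the reduction and integrate by parts in the inner product as the paper does.
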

\begin{proof}
The proof is slightly more delicate here so we will go into more detail. We will deal with $D_\theta$ and $D_z$ derivatives slightly differently. First observe that for any $0\leq j\leq 4$ we have that \[|D_z^j\frac{f}{\sin(2\theta)}|_{L^\infty}\leq C|D_z^j\partial_\theta f|_{L^\infty}\leq C|\partial_\theta f|_{\mathcal{W}^{4,\infty}}.\] 
Now, $(f\partial_\theta g,g)_{\mathcal{H}^4}$ can be written as a sum of terms of the form:
\[(D_\theta^{i_1}D_z^{i_2}f D_{\theta}^{j_1}D_z^{j_2}\partial_\theta g, D_{\theta}^{i_1+j_1}D_{z}^{i_2+j_2}g W_{i,j})_{L^2},\] where $0\leq i_1+i_2+j_1+j_2\leq 4$ and $W_{i,j}$ is $\frac{w}{\sin(2\theta)^\eta}$ if $i_1=j_1=0$ and $\frac{w}{\sin(2\theta)^\gamma}$ otherwise. We only need to consider a couple of cases. 

{\bf Case 1:} $i_1=i_2=0$ (all derivatives fall on $g$). Here we will want to integrate by parts except in the commutator terms. The information we will use about $f$ is that
\[|\frac{f}{\sin(2\theta)}|_{L^\infty}+|\partial_\theta f|_{L^\infty}\leq C|f|_{\mathcal{W}^{4,\infty}}\] since $f(z,0)=f(z,\pi/2)=0$ for all $z$. 

We have:
\[(f D_\theta^{j_1} \partial_\theta D_z^{j_2} g, D_{\theta}^{j_1}D_z^{j_2}g W_{i,j})_{L^2}=(f \partial_\theta(D_{\theta}^{j_1}D_z^{j_2}g ), D_{\theta}^{j_1}D_z^{j_2}g W_{i,j})_{L^2}\]\[+(f[D_\theta^{j_1}, \partial_\theta]D_z^{j_2}g, D_{\theta}^{j_1}D_z^{j_2}g W_{i,j})_{L^2}:=I+II.\] 
Now, \[|I|=|(f \partial_\theta(D_{\theta}^{j_1}D_z^{j_2}g ), D_{\theta}^{j_1}D_z^{j_2}g W_{i,j})_{L^2}|=\frac{1}{2}|(\partial_{\theta}(fW_{i,j}), (D_{\theta}^{j_1}D_z^{j_2}g )^2)_{L^2}|\leq C|f|_{\mathcal{W}^{4,\infty}}|g|_{\mathcal{H}^4}^2,\] since \[|\partial_\theta(f W_{i,j})|\leq C|f|_{\mathcal{W}^{4,\infty}}W_{i,j},\] for $C$ a universal constant where we just used boundedness of $\partial_\theta f$ and $\frac{f}{\sin(2\theta)}$. Next, for the commutator term:
\[II=(f[D_\theta^{j_1}, \partial_\theta]D_z^{j_2}g, D_{\theta}^{j_1}D_z^{j_2}g W_{i,j})_{L^2}.\]
It is easy to see that \[|\sin(2\theta)[D_\theta^{j_1},\partial_\theta] D_{z}^{j_2}g|\leq C\sum_{i=1}^{j_1} |D_{\theta}^i D_{z}^{j_2}g|.\]
Note that the sum (and the commutator) is 0 when $j_1=0$. Thus, 
\[|II|\leq |\frac{f}{\sin(2\theta)}|_{L^\infty}|g|_{\mathcal{H}^4}^2.\]
This concludes Case 1.

{\bf Case 2:} $i_1+i_2\geq 1$. This case is very similar to the above and we leave it to the reader. 
\end{proof}

\subsection{Functions which belong to $\mathcal{W}^{5,\infty}$}
We now give the main example of a function belonging to $\mathcal{W}^{5,\infty}$ which we will use. We remind the reader that \[\gamma=1+\frac{\alpha}{10}.\] We let \[\Gamma(\theta)=(\sin(\theta)\cos^2(\theta))^{\alpha/3}.\]
\begin{proposition}\label{GammaRegularity}
$\Gamma\in \mathcal{W}^{5,\infty}$ with norm independent of $\alpha$. 
\end{proposition}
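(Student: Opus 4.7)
The plan is to exploit the fact that $\Gamma$ depends only on $\theta$ and has the special form $\Gamma=e^{h}$ with $h$ proportional to $\alpha$. Since $\partial_z\Gamma=0$, the $\mathcal{W}^{5,\infty}$-norm collapses to
\[
|\Gamma|_{L^\infty}+\sum_{j=1}^{5}\Big|D_\theta^j\Gamma\cdot\frac{\sin(2\theta)^{-\alpha/5}}{\alpha+\sin(2\theta)}\Big|_{L^\infty}.
\]
The zeroth-order term is trivially at most $1$, so the whole proposition reduces to showing that each of the five remaining sup-norms is bounded by a universal constant.

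The core claim I would isolate is the pointwise inequality
\[
|D_\theta^{n}\Gamma(\theta)|\le C_n\,\alpha\,\Gamma(\theta),\qquad \theta\in(0,\pi/2),\ n\ge 1,\qquad(\ast)
\]
with $C_n$ independent of $\alpha\in(0,1]$. To prove $(\ast)$, the cleanest route is the change of variables $u=\log\tan\theta$, which bijects $(0,\pi/2)$ onto $\mathbb{R}$ and transforms the awkward vector field $D_\theta=\sin(2\theta)\partial_\theta$ into the constant-coefficient field $2\partial_u$ (since $du/d\theta=2/\sin(2\theta)$). In this variable one computes
\[
\log\Gamma=h(u):=\frac{\alpha}{3}u-\frac{\alpha}{2}\log(1+e^{2u}),
\]
and a direct induction shows that $|h^{(n)}(u)|\le M_n\,\alpha$ uniformly on $\mathbb{R}$ for every $n\ge 1$: indeed $h'(u)=\frac{\alpha}{3}-\alpha\,\sigma(2u)$ where $\sigma$ is the logistic function, and all derivatives of $\sigma$ are bounded. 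Faà di Bruno applied to $\Gamma=e^h$ then gives $\partial_u^n\Gamma=\Gamma\cdot B_n(h',\ldots,h^{(n)})$, where $B_n$ is the $n$-th Bell polynomial. Every monomial in $B_n$ contains at least one factor $h^{(k)}$ with $k\ge 1$, so $|B_n|\le C_n\alpha$ when $\alpha\le 1$, and $(\ast)$ follows from $|D_\theta^n\Gamma|=2^n|\partial_u^n\Gamma|$.

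Combining $(\ast)$ with the pointwise comparison $\Gamma\le\sin(2\theta)^{\alpha/3}$—which follows from $\sin\theta\cos^2\theta\le\tfrac12\sin(2\theta)\le\sin(2\theta)$ since $\cos\theta\le 1$—I get
\[
|D_\theta^n\Gamma|\cdot\frac{\sin(2\theta)^{-\alpha/5}}{\alpha+\sin(2\theta)}\le C_n\cdot\frac{\alpha\,\sin(2\theta)^{2\alpha/15}}{\alpha+\sin(2\theta)}\le C_n,
\]
because $\sin(2\theta)^{2\alpha/15}\le 1$ on $[0,\pi/2]$ and $\alpha/(\alpha+\sin(2\theta))\le 1$. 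Summing over $n=1,\ldots,5$ yields $|\Gamma|_{\mathcal{W}^{5,\infty}}\le C$ with $C$ universal.

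The proof is essentially bookkeeping; there is no analytic difficulty. The only mild obstacle is organizing the chain rule cleanly enough to see that the factor of $\alpha$ in $h$ is inherited by all higher derivatives of $\Gamma$—this is exactly what the change of variables $u=\log\tan\theta$ automates, because after that substitution $h$ and all its derivatives are globally bounded $C^\infty$ functions of size $O(\alpha)$.
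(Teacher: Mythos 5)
Your proof is correct, and it is a more systematic version of the paper's own argument. The paper simply computes $\partial_\theta\Gamma = \tfrac{\alpha}{3}\Gamma\,\tfrac{\cos^2\theta - 2\sin^2\theta}{\sin\theta\cos\theta}$, multiplies by $\sin(2\theta)$ to get $D_\theta\Gamma = \tfrac{2\alpha}{3}\Gamma(\cos^2\theta-2\sin^2\theta)$, records the resulting weighted bounds for one and two derivatives, and then says "similar calculations for the third and fourth derivatives give the proof." Your change of variables $u=\log\tan\theta$, which turns $D_\theta$ into $2\partial_u$ and $\log\Gamma$ into a globally $C^\infty$ function with all derivatives $O(\alpha)$, is exactly the mechanism that justifies that "similar calculations" remark for all orders at once via Fa\`a di Bruno, so it buys a cleaner induction in place of term-by-term bookkeeping. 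The final absorption step is identical in spirit to the paper's: the key point is that $D_\theta^n\Gamma$ inherits a factor of $\alpha$ and that $\Gamma \le \sin(2\theta)^{\alpha/3}$ dominates $\sin(2\theta)^{-\alpha/5}$ with room to spare, leaving $\alpha/(\alpha+\sin(2\theta)) \le 1$ to finish. One small point in your favor: the paper's displayed estimate is written against the weight $\sin(2\theta)^{1-\gamma}=\sin(2\theta)^{-\alpha/10}$, which is not literally the weight $\tfrac{\sin(2\theta)^{-\alpha/5}}{\alpha+\sin(2\theta)}$ appearing in the definition of $\mathcal{W}^{l,\infty}$; your proof addresses the actual weight directly.
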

\begin{proof}
We compute:
\[\partial_\theta\Gamma=\frac{\alpha}{3} \Gamma\frac{\cos^2(\theta)-2\sin^2(\theta)}{\sin(\theta)\cos(\theta)}.\]
This shows that 
\[\sin(2\theta)\partial_\theta \Gamma=\frac{2\alpha}{3} \Gamma (\cos^2(\theta)-2\sin^2(\theta)).\]
Thus, \[|D_\theta \Gamma\sin(2\theta)^{1-\gamma}|\leq \frac{2\alpha}{3}.\]
Moreover,
\[|D_\theta^2\Gamma\sin(2\theta)^{1-\gamma}|\leq \frac{4\alpha^2}{3}+\frac{20\alpha}{3}\sin(2\theta).\]
Similar calculations for the third and fourth derivatives give the proof. 
\end{proof}

We also have the following clear proposition:

\begin{proposition}\label{ProductRuleInW}
There exists a universal constant $C>0$ so that if $f,g\in \mathcal{W}^{4,\infty},$ then \[|fg|_{\mathcal{W}^{4,\infty}}\leq C|f|_{\mathcal{W}^{4,\infty}}|g|_{\mathcal{W}^{4,\infty}}.\]
\end{proposition}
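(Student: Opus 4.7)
The plan is to prove the estimate by direct application of the Leibniz rule to both differential operators $\partial_z$ and $D_\theta = \sin(2\theta)\partial_\theta$ appearing in the definition of the $\mathcal{W}^{4,\infty}$ norm, and then to check each resulting product term against the target weight. First I would observe that both $\partial_z$ and $D_\theta$ are first-order derivations (for $D_\theta$ this is because $\sin(2\theta)\partial_\theta(fg) = D_\theta f\cdot g + f\cdot D_\theta g$), so they satisfy the standard binomial Leibniz expansions. Thus $(z+1)^k\partial_z^k D_\theta^j(fg)$ expands as a finite combinatorial sum of terms of the form
\[
\binom{k}{a}\binom{j}{b}\,(z+1)^a\partial_z^a D_\theta^b f \;\cdot\; (z+1)^{k-a}\partial_z^{k-a}D_\theta^{j-b}g,
\]
with $0\leq a\leq k\leq 4$ and $0\leq b\leq j\leq 4$ and $k+j\leq 4$.

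Next I would split into cases based on how many $\theta$-derivatives land on each factor. If $j=0$, then no angular weight appears and each factor is controlled directly by the pure $z$-part of the $\mathcal{W}^{4,\infty}$ norm. If $j\geq 1$ and either $b=0$ or $b=j$ (so all $\theta$-derivatives fall on a single factor), one factor is bounded by a pure $z$-seminorm of the other function and the remaining factor, together with the weight $\sin(2\theta)^{-\alpha/5}/(\alpha+\sin(2\theta))$, is exactly a term appearing in the $\mathcal{W}^{4,\infty}$ norm of the other function. The only genuinely interesting case is $1\leq b\leq j-1$, when both factors already carry at least one $D_\theta$: then I want to insert the weight on each factor separately, so I would write
\[
\frac{\sin(2\theta)^{-\alpha/5}}{\alpha+\sin(2\theta)} \;=\; \left(\frac{\sin(2\theta)^{-\alpha/5}}{\alpha+\sin(2\theta)}\right)^{\!2} \cdot (\alpha+\sin(2\theta))\sin(2\theta)^{\alpha/5}.
\]

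The key observation, which is really the only nontrivial ingredient, is that the correction factor $(\alpha+\sin(2\theta))\sin(2\theta)^{\alpha/5}$ is bounded by a universal constant (for $\alpha$ small, simply by $2$) on the angular domain $[0,\pi/2]$, so distributing one copy of the weight to each factor only costs a universal multiplicative constant. Combining this with the fact that the number of terms in the Leibniz expansion is bounded by a universal combinatorial constant (since $k+j\leq 4$), I obtain the pointwise bound
\[
\left|(z+1)^k\partial_z^k D_\theta^j(fg)\frac{\sin(2\theta)^{-\alpha/5}}{\alpha+\sin(2\theta)}\right| \;\leq\; C\,|f|_{\mathcal{W}^{4,\infty}}|g|_{\mathcal{W}^{4,\infty}},
\]
and summing over the finitely many admissible $(k,j)$ yields the claim. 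There is no real obstacle here; the only thing requiring any care is the bookkeeping of the angular weight in the mixed case $1\leq b\leq j-1$, and the identity above handles it in one line.
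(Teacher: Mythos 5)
Your argument is correct. The paper states this proposition without proof (introducing it as ``the following clear proposition''), so there is no written argument to compare against; your Leibniz expansion, together with the observation that the angular weight $\sin(2\theta)^{-\alpha/5}/(\alpha+\sin(2\theta))$ is bounded below by $1/2$ on $[0,\pi/2]$ for small $\alpha$ (and hence submultiplicative up to a universal factor, which is exactly your identity $(\alpha+\sin(2\theta))\sin(2\theta)^{\alpha/5}\leq 2$), is the natural direct verification and correctly handles the only case that requires any care.
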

Observe now that if we let  $F_*(z,\theta)=\frac{2\alpha \Gamma(\theta)}{c}\frac{z}{(z+1)^2}$, then we have that $L_{12}(F_*)=\frac{2\alpha}{z+1}.$ Thus, $L_{12}(F_*)\in\mathcal{W}^{5,\infty}$. Moreover, we define $\Phi_*$ by \[-\alpha^2z^2\partial_{zz}\Phi_*-\alpha(5+\alpha)z\partial_z\Phi_*-\partial_{\theta\theta}\Phi_*+\partial_\theta\big(\tan(\theta)\Phi_*\big)-6\Phi=F_*.\] 
\begin{proposition}\label{PhiStarRegularity}
We have \[|\partial_{zz}(\Phi_*-\frac{1}{4\alpha}\sin(2\theta)L_{12}(F_*))|_{\mathcal{W}^{5,\infty}}+|\frac{z+1}{z}\partial_{\theta\theta}(\Phi_*-\frac{1}{4\alpha}\sin(2\theta)L_{12}(F_*))|_{\mathcal{W}^{5,\infty}}\leq C\alpha.\]
\end{proposition}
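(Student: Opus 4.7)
My plan is to reduce the claim to Theorem~\ref{RemovingL12} after subtracting off a two-term Taylor ansatz of $\Phi_*$ at $z=0$. Since $F_*(z,\theta)=\tfrac{2\alpha\Gamma(\theta)}{c}\tfrac{z}{(z+1)^2}$ and $c=\int_0^{\pi/2}K(\theta)\Gamma(\theta)\,d\theta$, a direct computation gives $L_{12}(F_*)(z)=\tfrac{2\alpha}{z+1}$, so the object to bound is $\mathcal R:=\Phi_*-M$ with $M(z,\theta):=\tfrac{\sin(2\theta)}{2(z+1)}$. Applying $L$ explicitly to $M$ yields $L\mathcal R=S$ with an explicit source $S$ of pointwise size $O(\alpha)$; however $S$ vanishes only linearly at $z=0$, so $S\notin\mathcal H^k$ and Theorem~\ref{RemovingL12} does not apply to $\mathcal R$ directly.

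To resolve this I would introduce an ansatz absorbing the linear-in-$z$ Taylor coefficient of $\Phi_*$ at the origin. Writing $\Phi_*(z,\theta)=\mu\sin(2\theta)+z\phi_1(\theta)+O(z^2)$, matching $L\Phi_*=F_*$ order-by-order forces $\mu=\tfrac{1}{2}$ (for consistency with $M$) and $(L_\theta-\alpha(5+\alpha))\phi_1=\tfrac{2\alpha\Gamma}{c}$, where $L_\theta=-\partial_{\theta\theta}+\partial_\theta(\tan(\theta)\cdot)-6$. The substitution $\phi_1=-\tfrac{\sin(2\theta)}{2}+\alpha\psi$ then gives
\[\bigl(L_\theta-\alpha(5+\alpha)\bigr)\psi\,=\,\tfrac{2\Gamma(\theta)}{c}-\tfrac{(5+\alpha)\sin(2\theta)}{2}.\]
The decisive observation is that the right-hand side projects onto the cokernel $\sin(\theta)\cos^2(\theta)$ of $L_\theta$ with value exactly $-\tfrac{2\alpha}{15}$: this $O(\alpha)$ defect cancels the $O(\alpha^{-1})$ amplification of $(L_\theta-\alpha(5+\alpha))^{-1}$ along its near-kernel direction, so Fredholm theory yields $\psi$ with $\theta$-derivatives bounded independently of $\alpha$. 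Setting $\Phi_{\rm ansatz}(z,\theta):=M(z,\theta)+\tfrac{\alpha z\psi(\theta)}{(1+z)^2}$, a direct Taylor expansion gives $\Phi_*-\Phi_{\rm ansatz}=O(z^2)$ near $z=0$, and $\Phi_{\rm ansatz}-M$ already satisfies the claimed $O(\alpha)$ bound after two derivatives by Proposition~\ref{GammaRegularity} and the product rules from Section~\ref{H2}.

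The remainder $\Phi_*-\Phi_{\rm ansatz}$ then solves an elliptic equation with source $F_*-L\Phi_{\rm ansatz}$ which is explicit, of size $O(\alpha)$, vanishes quadratically at $z=0$, and hence lies in $\mathcal H^k$ for every $k$ with norm $\leq C_k\alpha$. Theorem~\ref{RemovingL12} then gives
\[\alpha^2\bigl|R^2\partial_{RR}(\Phi_*-\Phi_{\rm ansatz})\bigr|_{\mathcal H^k}+\Bigl|\partial_{\theta\theta}\bigl(\Phi_*-\Phi_{\rm ansatz}-\tfrac{\sin(2\theta)}{4\alpha}L_{12}(L(\Phi_*-\Phi_{\rm ansatz}))\bigr)\Bigr|_{\mathcal H^k}\leq C_k\alpha.\]
Taking $k$ large enough, an iteration of the embeddings in Lemma~\ref{infinity_separated} and Corollary~\ref{SecondDerivativesBounded} converts this $\mathcal H^k$ estimate into the required $\mathcal W^{5,\infty}$ bound, while the residual $L_{12}$-correction term is directly $O(\alpha)$ by the Hardy-type Lemma~\ref{HardyWeight} applied to the source. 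Combining with the ansatz estimate from Step~2 concludes.

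The principal technical obstacle is the Fredholm cancellation ensuring the $O(\alpha)$ size of the correction: a priori the near-kernel amplification $O(\alpha^{-1})$ of the perturbed angular operator could destroy the claim, but the explicit computation of the solvability defect shows exact cancellation to leading order. This is precisely what makes the ansatz construction possible and reduces the problem to one to which the $\mathcal H^k$ theory of Section~\ref{EllipticEstimates} applies.
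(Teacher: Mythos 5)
Your ansatz-based decomposition captures some of the structure, but the argument has genuine gaps at three points. The claim that Taylor matching ``forces $\mu=1/2$'' is circular: at $z=0$ the equation yields only $L_0\Phi_*(0,\cdot)=0$ with $L_0=-\partial_{\theta\theta}+\partial_\theta(\tan\theta\,\cdot\,)-6$, so $\Phi_*(0,\cdot)=\mu\sin(2\theta)$ for \emph{some} constant that the local matching leaves undetermined; the value $\mu=1/2$ is fixed only by the global radial Euler ODE for the cokernel projection $\int_0^{\pi/2}\Phi_*\sin\theta\cos^2\theta\,d\theta$ together with the decay condition at $z=\infty$, which is exactly what the paper's explicit function $G$ in Section 7.5 (with $G(0)=-1/2$) encodes, and you cannot bypass that analysis. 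Similarly, your Fredholm argument delivers only soft $L^2$-type control on $\psi$, whereas $\Phi_{\rm ansatz}-M=\frac{\alpha z\psi(\theta)}{(1+z)^2}$ must land in $\mathcal{W}^{5,\infty}$, a norm built from several $D_\theta$-derivatives in $L^\infty$ against the singular weight $\frac{\sin(2\theta)^{-\alpha/5}}{\alpha+\sin(2\theta)}$. This endpoint regularity for $\psi$ is precisely the delicate point the paper settles by \emph{solving} the angular ODE $\partial_{\theta\theta}\tilde\Phi-\partial_\theta(\tan\theta\,\tilde\Phi)=\Gamma(\theta)$ in closed form and reading off $\partial_{\theta\theta}\tilde\Phi\approx(\pi/2-\theta)^{2\alpha/3}$ pointwise near the endpoints; Proposition~\ref{GammaRegularity} controls $\Gamma$ but says nothing about $\psi$.

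The most serious flaw is the final conversion step. The embeddings in Lemma~\ref{infinity_separated} and Corollary~\ref{SecondDerivativesBounded} carry the constant $C/\sqrt{\gamma-1}=C\sqrt{10/\alpha}$, which annihilates the $O(\alpha)$ smallness you are trying to preserve; and even ignoring constants, they bound $D_\theta^j g$ only in plain $L^\infty$, not against the additional $\frac{\sin(2\theta)^{-\alpha/5}}{\alpha+\sin(2\theta)}$ weight appearing in the $\mathcal{W}^{l,\infty}$ definition, so an $\mathcal{H}^k$ bound cannot be iterated into the required $\mathcal{W}^{5,\infty}$ bound by a soft Sobolev argument. The paper's route avoids this entirely: after subtracting $\sin(2\theta)G$ it obtains $L^2$ bounds in $z$ of every order (Proposition~\ref{prop:L2} together with the $z$-analyticity of $F_*$), and then closes by the explicit one-variable angular computation; the $\mathcal{W}^{4,\infty}$ control there comes from the closed-form solution of the $\theta$-ODE, not from an embedding. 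That explicit computation has no analogue in your proposal and is the essential ingredient missing here.
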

\begin{proof}
As in Section \ref{SingularTerm}, we first note that $\int_{0}^{\pi/2}F_*(\theta,z)\sin(\theta)\cos^2(\theta)dz=\frac{2\alpha z}{(1+z)^2}.$ Now define $G$ by 
\[\alpha^2z^2\partial_{zz}G+\alpha(5+\alpha)z\partial_zG=-\frac{15}{4}\frac{2\alpha z}{(1+z)^2}.\]
\end{proof}
Now we observe, as in Section \ref{SingularTerm}, that 
\[G=-\frac{1}{4\alpha} L_{12}(F_*)-\frac{3}{2}z^{-5/\alpha}\int_0^z\rho^{\frac{5-\alpha}{\alpha}}\frac{\rho}{(1+\rho)^2}d\rho\]
\[=-\frac{1}{4\alpha}L_{12}(F_*)-\frac{3}{2}z^{-5/\alpha}\int_0^z \frac{\rho^{5/\alpha}}{(1+\rho)^2}d\rho.\]
\[=-\frac{1}{4\alpha}L_{12}(F_*)-\frac{3\alpha}{2}z^{-5/\alpha} \frac{\rho^{5/\alpha+1}}{5+\alpha}\frac{1}{(1+\rho)^2}\Big|_{\rho=0}^z+\frac{3\alpha}{5+\alpha}z^{-5/\alpha}\int_0^z \frac{\rho^{5/\alpha+1}}{(1+\rho)^3}d\rho.\]
\[=-\frac{1}{4\alpha}L_{12}(F_*)-\frac{3\alpha}{2(5+\alpha)}\frac{z}{(1+z)^2}+\frac{3\alpha}{5+\alpha}z^{-5/\alpha}\int_0^z \frac{\rho^{5/\alpha+1}}{(1+\rho)^3}d\rho.\]
In particular, it is easy to see that \[|G+\frac{1}{4\alpha}L_{12}(F_*)|_{W^{4,\infty}}\leq C\alpha.\]
Now, from Proposition \ref{prop:L2}, we get that \[\alpha^2 |z^2\partial_{zz}(\Phi_*+\sin(2\theta)G)|_{L^2}+ |\partial_{\theta\theta}(\Phi_*+\sin(2\theta)G)|_{L^2}\leq C|F_*|_{L^2}\leq C\alpha.\]
Since $F_*$ is infinitely smooth in $z$ we also have that
\[\alpha^2 |\partial_z^k\Big(z^2\partial_{zz}(\Phi_*+\sin(2\theta)G)\Big)|_{L^2}+ |\partial_z^k\partial_{\theta\theta}(\Phi_*+\sin(2\theta)G)|_{L^2}\leq C_k\alpha\] for any integer $k$. 
Consequently, we define \[
\bar\Phi=\Phi_*+\sin(2\theta)G.
\] and $\bar F_*=F_*-\frac{15}{2}\sin(2\theta)\frac{\alpha z}{(1+z)^2}$ (so that $\int_0^{\pi/2}\bar F_* K=0$) and we see:
\[-\partial_{\theta\theta}\bar\Phi+\partial_\theta\big(\tan(\theta)\bar\Phi\big)=\bar{F_*}+6\bar\Phi+\alpha^2z^2\partial_{zz}\bar\Phi+\alpha(5+\alpha)z\partial_z\bar\Phi.\] It is now easy to see (since we have arbitrary smoothness in $z$) that most of the terms on the right hand side can be neglected and that to establish the proposition it suffices to show that the solution $\tilde\Phi$ to 
\[\partial_{\theta\theta}\tilde\Phi-\partial_\theta (\tan(\theta)\tilde\Phi) =\Gamma(\theta)\] 
satisfies: 
\[|\partial_{\theta\theta}\tilde\Phi|_{\mathcal{W}^{4,\infty}}\leq C.\] We do this by directly solving the above equation. 
We see:
\[\partial_\theta\tilde\Phi-\tan(\theta)\tilde\Phi=\int_0^\theta\Gamma(\theta')d\theta'+C_1\]
\[\partial_\theta (\tilde\Phi\cos(\theta))=\cos(\theta)\int_0^\theta \Gamma(\theta')d\theta'+C_1\cos(\theta)\]
Thus, 
\[\tilde\Phi =\frac{1}{\cos(\theta)}\int_0^\theta\cos(\beta)\int_0^\beta\Gamma(\theta')d\theta'+C_1\frac{\sin(\theta)}{\cos(\theta)}\]
\[=\frac{1}{\cos(\theta)}\Big(\sin(\theta)\int_0^\theta\Gamma(\beta)d\beta+C_1\sin(\theta)-\int_0^{\theta}\sin(\beta)\Gamma(\beta)d\beta\Big).\] $C_1$ is now chosen to keep the boundary condition $\tilde\Phi(\pi/2)=0$. 
That is, we want:
\[\int_0^{\pi/2}(1-\sin(\beta))\Gamma(\beta)d\beta=-C_1.\]
Now we want to sketch why $\partial_{\theta\theta}\tilde\Phi\in \mathcal{W}^{4,\infty}.$ Clearly $\tilde\Phi$ is infinitely differentiable away from $\theta=0$ and $\theta=\pi/2$. 
Let us thus focus our attention near $\theta=\pi/2$ (near $\theta=0$ the situation is the same and even easier). Near $\pi/2$ we may replace the $\sin(\theta)$ by $1$ up to a vanishing perturbation which leads to a smoother term. 
Thus we get, taking into account the definition of $C_1$:
\[\frac{1}{\cos(\theta)}\int_\theta^{\pi/2}(1-\sin(\beta))\Gamma(\beta)d\beta\approx \frac{1}{\pi/2-\theta} \int_\theta^{\pi/2}(\pi/2-\beta)^{2+2\alpha/3}d\beta\approx (\pi/2-\beta)^{2+2\alpha/3}.\]
Thus, near $\pi/2$
\[\partial_{\theta\theta}\tilde\Phi\approx (\pi/2-\beta)^{2\alpha/3}.\] 
In the above, as before, we have to use the inequality:
\[|\sin(\theta)^\alpha-\theta^\alpha|\leq C |\theta|^2\] for $C$ a universal constant independent\footnote{In fact, it is $C\alpha$ but we do not need this extra smallness.} of $\alpha$. We leave further details to the reader. 

Next, we have the following Proposition.

\begin{proposition}
Assume that $f$ is smooth and that $f(0,\theta)=0$ for all $\theta\in [0,\pi/2]$. Then,
\[|\frac{z+1}{z}f|_{\mathcal{W}^{4,\infty}}\leq C|f|_{\mathcal{W}^{5,\infty}}.\]
\end{proposition}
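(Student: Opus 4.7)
The plan is to decompose $\frac{z+1}{z}f = f + \frac{f}{z}$ and note that the contribution of $f$ to the $\mathcal{W}^{4,\infty}$ norm is trivially controlled by $|f|_{\mathcal{W}^{5,\infty}}$. Thus the whole problem reduces to proving the estimate $|f/z|_{\mathcal{W}^{4,\infty}} \leq C|f|_{\mathcal{W}^{5,\infty}}$. The hypothesis $f(0,\theta)=0$ is the crucial input: it ensures that $f/z$ extends to a smooth function of $z$ down to $z=0$, and without it the desired bound is false.

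I would split the analysis into the two regions $\{z \leq 1\}$ and $\{z \geq 1\}$, handling each by a different identity. Near the origin, I would use Taylor's formula with integral remainder:
\[\frac{f(z,\theta)}{z} = \int_0^1 \partial_z f(sz,\theta)\,ds,\]
which after differentiating under the integral gives $\partial_z^k(f/z) = \int_0^1 s^k \partial_z^{k+1} f(sz,\theta)\,ds$. For $k \leq 4$ and $sz \leq 1$, the integrand is pointwise bounded by $|f|_{\mathcal{W}^{5,\infty}}$, and this is precisely where the paper's decision to ask for five $z$-derivatives on the right-hand side pays off. Away from the origin, $1/z$ is smooth, so I would just apply the Leibniz rule to expand $\partial_z^k(f/z)$ as a finite sum $\sum_{j=0}^k c_{k,j}\, z^{-(k-j+1)} \partial_z^j f$; the direct pointwise bound $|\partial_z^j f| \leq |f|_{\mathcal{W}^{5,\infty}}/(z+1)^j$ combined with $(z+1)^k / z^{k-j+1} \leq C(z+1)^{j-1}$ for $z \geq 1$ then gives the correct weighted bound.

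For the mixed-derivative terms in $\mathcal{W}^{4,\infty}$ involving $D_\theta^j$ with $j \geq 1$, the same two-region argument applies verbatim with $f$ replaced by $D_\theta^j f$. This is legitimate because $D_\theta$ depends only on $\theta$, so it commutes with $\partial_z$ and with multiplication by $1/z$, and crucially the vanishing condition is inherited: $(D_\theta^j f)(0,\theta) = D_\theta^j(f(0,\cdot))(\theta) = 0$. The angular weight $\sin(2\theta)^{-\alpha/5}/(\alpha + \sin(2\theta))$ is untouched by all of this, so no further work is needed on the $\theta$ side.

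The only mildly delicate point is making the two regional estimates match up at $z = 1$, but this is immediate because both give a uniform constant times $|f|_{\mathcal{W}^{5,\infty}}$. The genuine structural feature is the loss of exactly one $z$-derivative in the estimate (five on the right, four on the left); this is intrinsic to the operation $f \mapsto f/z$ under the single vanishing condition $f(0,\theta) = 0$ and cannot be avoided by any clever rearrangement.
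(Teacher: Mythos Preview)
Your argument is correct and is essentially a fleshed-out version of the paper's one-line proof, which simply says ``the proof consists of examining the Taylor expansion of $f$ in $z$.'' Your use of the integral-remainder formula $\frac{f}{z}=\int_0^1\partial_z f(sz,\theta)\,ds$ near $z=0$ is exactly this Taylor expansion, and your Leibniz-rule treatment for $z\geq 1$ together with the observation that $D_\theta^j$ commutes through and preserves the vanishing at $z=0$ are the natural supporting details.
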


\begin{proof}
The proof simply consists of examining the Taylor expansion of $f$ in $z$.  
\end{proof}

A corollary is the following
\begin{corollary}\label{PhiStarRegularity2} We have that $\frac{1}{z}F_*, \frac{1}{z}(\Phi_*-\frac{1}{4\alpha}\sin(2\theta)L_{12}(F_*))\in\mathcal{W}^{4,\infty}$ and \[|\frac{(z+1)^2}{z}F_*|_{\mathcal{W}^{4,\infty}}+ |\frac{z+1}{z}\partial_{\theta\theta}(\Phi_*-\frac{1}{4\alpha}\sin(2\theta)L_{12}(F_*))|_{\mathcal{W}^{4,\infty}}\leq C\alpha.\]
\end{corollary}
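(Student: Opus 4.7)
The plan is to prove the two pieces of the corollary separately, each one being an essentially immediate consequence of results already established in this section.

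For the first term, I would just substitute the explicit formula for $F_*$. Since $F_*(z,\theta) = \frac{2\alpha\Gamma(\theta)}{c}\frac{z}{(z+1)^2}$ by definition, the factor $\frac{(z+1)^2}{z}$ cancels the $z$-dependent piece exactly, leaving
\[
\frac{(z+1)^2}{z}F_*(z,\theta) = \frac{2\alpha}{c}\,\Gamma(\theta).
\]
This is a function of $\theta$ only, so all the $z$-weighted contributions in the $\mathcal{W}^{4,\infty}$ norm with a $z$-derivative vanish, and we are reduced to estimating the angular derivatives of $\Gamma$. By Proposition \ref{GammaRegularity}, $|\Gamma|_{\mathcal{W}^{5,\infty}}\le C$ with $C$ independent of $\alpha$, and since the $\mathcal{W}^{4,\infty}$ norm is majorized by the $\mathcal{W}^{5,\infty}$ norm, we conclude $\bigl|\tfrac{(z+1)^2}{z}F_*\bigr|_{\mathcal{W}^{4,\infty}} \le \tfrac{2\alpha}{c}|\Gamma|_{\mathcal{W}^{4,\infty}}\le C\alpha$, using also that $c$ is a normalization constant of order one.

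For the second term, I would simply invoke Proposition \ref{PhiStarRegularity}, which already provides the \emph{stronger} estimate
\[
\Bigl|\tfrac{z+1}{z}\partial_{\theta\theta}\bigl(\Phi_*-\tfrac{1}{4\alpha}\sin(2\theta)L_{12}(F_*)\bigr)\Bigr|_{\mathcal{W}^{5,\infty}} \le C\alpha.
\]
Comparing the definitions of the $\mathcal{W}^{l,\infty}$ norms, one has the continuous embedding $\mathcal{W}^{5,\infty}\hookrightarrow\mathcal{W}^{4,\infty}$ (the latter is a sum of strictly fewer terms with identical weights), so the $\mathcal{W}^{4,\infty}$ bound follows immediately with the same constant.

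Thus both pieces of the stated inequality hold, giving the corollary. There is no genuine obstacle: the work has already been carried out in Propositions \ref{GammaRegularity} and \ref{PhiStarRegularity}, and the corollary only repackages those estimates in the slightly weaker $\mathcal{W}^{4,\infty}$ setting that will be convenient in the nonlinear argument of Section \ref{Modulation}.
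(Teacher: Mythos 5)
Your proof is correct. The paper's own justification is a single sentence — ``this follows from the preceding Proposition and Proposition~\ref{PhiStarRegularity}'' — and the intended route for the $F_*$ term is evidently to factor $\tfrac{(z+1)^2}{z}F_*=\tfrac{z+1}{z}\cdot(z+1)F_*$, note that $(z+1)F_*=\tfrac{2\alpha\Gamma}{c}\tfrac{z}{1+z}\in\mathcal{W}^{5,\infty}$ with norm $O(\alpha)$ and vanishes at $z=0$, and then apply the (unlabeled) preceding Proposition $\bigl|\tfrac{z+1}{z}f\bigr|_{\mathcal{W}^{4,\infty}}\le C|f|_{\mathcal{W}^{5,\infty}}$. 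You instead observe that the cancellation is exact: $\tfrac{(z+1)^2}{z}F_*=\tfrac{2\alpha}{c}\Gamma(\theta)$ is independent of $z$, so the bound reduces directly to Proposition~\ref{GammaRegularity}. That is a cleaner and more elementary route (it avoids the Taylor-expansion lemma entirely), though it exploits the fact that the weight $\tfrac{(z+1)^2}{z}$ is precisely the reciprocal of the profile $\tfrac{z}{(1+z)^2}$. For the second term your argument coincides with the paper's: Proposition~\ref{PhiStarRegularity} already carries the factor $\tfrac{z+1}{z}$ inside the $\mathcal{W}^{5,\infty}$ bound, so the claim is just the trivial inclusion $\mathcal{W}^{5,\infty}\subset\mathcal{W}^{4,\infty}$. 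The only thing you leave implicit is the membership assertion $\tfrac{1}{z}F_*,\ \tfrac{1}{z}\bigl(\Phi_*-\tfrac{1}{4\alpha}\sin(2\theta)L_{12}(F_*)\bigr)\in\mathcal{W}^{4,\infty}$; it is a trivial consequence of your displayed bounds together with the product rule in $\mathcal{W}^{4,\infty}$ (Proposition~\ref{ProductRuleInW}) applied with the extra bounded factor $\tfrac{1}{(z+1)^2}$ or $\tfrac{1}{z+1}$, but a one-line remark to that effect would make the proof complete.
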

This follows from the preceding Proposition and Proposition \ref{PhiStarRegularity}.
\begin{remark}
It is true that $F_*$ is not smooth but that it is only smooth in $z$ (though it has some regularity in $\theta$). Nevertheless, it is easy to see that $F_*$ can be approximated by smooth functions in the $\mathcal{W}^{l,\infty}$ norm for any $l$ so that it easy to derive the corollary from the Proposition.  
\end{remark}

\subsection{From $\mathcal{W}^{4,\infty}$ to $\mathcal{H}^4$.}
We will also find the following proposition useful. 

\begin{proposition}\label{WtoH}
There exists a universal constant $C>0$ (independent of $\alpha$) so that if $\frac{(z+1)^3}{z^2}f\in \mathcal{W}^{4,\infty}$, then $f\in\mathcal{H}^4$ and \[|f|_{\mathcal{H}^4}\leq C\Big|\frac{(z+1)^3}{z^2} f\Big|_{\mathcal{W}^{4,\infty}}.\]

\end{proposition}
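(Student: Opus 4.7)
The plan is to factor $f(z,\theta) = h(z)\, g(z,\theta)$ with $h(z) = z^2/(1+z)^3$ and $g := ((1+z)^3/z^2)\, f \in \mathcal{W}^{4,\infty}$ by hypothesis. A direct computation shows that $h$ vanishes to second order at $z=0$ and decays like $1/z$ at infinity, yielding the pointwise bound $|D_z^m h(z)| \leq C_m\, z^2/(1+z)^3$ uniformly for $0 \leq m \leq 4$. Since $D_R$ and $D_z$ agree on functions of $z$ (they differ by a constant factor in $z$) and $h$ is independent of $\theta$, Leibniz gives
\[
D_R^j D_\theta^i f \;=\; \sum_{l=0}^{j} \binom{j}{l}\,(D_z^{j-l}h)\,(D_z^l D_\theta^i g).
\]
Expanding $D_z^l = (z\partial_z)^l = \sum_{m\leq l} c_{l,m} z^m \partial_z^m$ and using $|z^m \partial_z^m(\cdot)| \leq |(z+1)^m \partial_z^m(\cdot)|$, the definition of $\mathcal{W}^{4,\infty}$ then yields the pointwise bounds
\[
|D_z^l D_\theta^i g(z,\theta)| \leq C\,|g|_{\mathcal{W}^{4,\infty}}\,(\alpha+\sin(2\theta))\sin^{\alpha/5}(2\theta) \quad (i\geq 1),
\]
and $|D_z^l g|\leq C|g|_{\mathcal{W}^{4,\infty}}$ for $i=0$. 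Combining this with the bound on $D_z^{j-l}h$ produces the master estimate $|D_R^j D_\theta^i f(z,\theta)| \leq C|g|_{\mathcal{W}^{4,\infty}}\, \frac{z^2}{(1+z)^3}\,\beta_i(\theta)$, with $\beta_0(\theta)=1$ and $\beta_i(\theta)=(\alpha+\sin(2\theta))\sin^{\alpha/5}(2\theta)$ for $i\geq 1$.

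Inserting these into the $\mathcal{H}^4$-weights and integrating yields two separate calculations. For the pieces without $\theta$-derivatives the weight $w/\sin^{\eta/2}(2\theta)$ against $z^2/(1+z)^3$ collapses to $(1+z)^{-1}\sin^{-\eta/2}(2\theta)$; its square has finite double integral because $\eta=99/100<1$. For the mixed pieces ($i\geq 1$) the weight $W = w/\sin^{\gamma/2}(2\theta)$ against $z^2/(1+z)^3 \cdot \beta_i(\theta)$ produces integrand bounded by
\[
\frac{(\alpha+\sin(2\theta))^2\,\sin^{2\alpha/5-\gamma}(2\theta)}{(1+z)^2}.
\]
The $z$-integral equals $1$, and since $\gamma=1+\alpha/10$ the angular exponent is $-1+3\alpha/10$. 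Near $\theta=0$, setting $s=\sin(2\theta)\sim 2\theta$, the integrand splits as $O(\alpha^2 s^{-1+3\alpha/10}) + O(\alpha s^{3\alpha/10}) + O(s^{1+3\alpha/10})$; integrating gives contributions $O(\alpha^2/(3\alpha/10)) = O(\alpha)$, $O(\alpha)$, and $O(1)$ respectively, each uniformly bounded as $\alpha\to 0^+$. The same analysis holds near $\theta = \pi/2$ by symmetry. Summing over the finitely many indices $(i,j)$ entering $|f|_{\mathcal{H}^4}^2$ gives $|f|_{\mathcal{H}^4} \leq C |g|_{\mathcal{W}^{4,\infty}}$ with $C$ universal.

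The only real obstacle is this last uniformity in $\alpha$: as $\alpha\to 0^+$ the angular exponent $-1+3\alpha/10$ degenerates to the non-integrable $-1$, and correspondingly the antiderivative picks up a $1/\alpha$ divergence; this is rescued exactly by the $\alpha^2$ prefactor coming from $(\alpha+\sin(2\theta))^2$ at the two endpoints, leaving a bound $O(\alpha)$ in that term. The angular weight $\sin^{-\alpha/5}(2\theta)/(\alpha+\sin(2\theta))$ appearing in the definition of $\mathcal{W}^{4,\infty}$ is calibrated precisely to make this balance work against the $\mathcal{H}^4$-weight $\sin^{-\gamma/2}(2\theta)$; once this accounting is verified, the remaining steps reduce to standard Leibniz expansions and elementary integral bounds.
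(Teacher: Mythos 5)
Your proof is correct and follows the same essential strategy as the paper's: factor out the prefactor $z^2/(1+z)^3$, convert the $\mathcal{W}^{4,\infty}$ hypothesis into pointwise bounds via Leibniz, and integrate against the $\mathcal{H}^4$ weights. You are more explicit than the paper about the mixed $D_\theta$--$D_z$ terms, where the angular weight $\sin^{-\gamma/2}(2\theta)$ with $\gamma>1$ is non-integrable and must be compensated by the $(\alpha+\sin(2\theta))\sin^{\alpha/5}(2\theta)$ decay built into the $\mathcal{W}^{4,\infty}$ norm — your accounting of that balance (the $\alpha^2$ prefactor rescuing the $O(1/\alpha)$ angular integral) is exactly what the paper dismisses as ``treated similarly or in a simpler way.''
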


\begin{proof}
The main term to consider in $|f|_{\mathcal{H}^4}$ is $|D_z^4 f \frac{w}{\sqrt{\sin(2\theta)^\eta}}|_{L^2}.$ We see:
\[|D_z^4 f \frac{w}{\sqrt{\sin(2\theta)^\eta}}|_{L^2}^2\leq C\sum_{i=1}^4 |\partial_{z}^i f\frac{z^iw}{\sqrt{\sin(2\theta)^\eta}}|_{L^2}^2  \leq C\sum_{i=1}^{4}|(z+1)\partial_{z}^if z^i w|_{L^\infty}^2, \] where all we used is that $\frac{1}{\sin(2\theta)^\eta}$ is integrable. 
On the other hand, by definition,
\[|\frac{(z+1)^3}{z^2}f|_{\mathcal{W}^{4,\infty}}\geq \sum_{i=0}^4\Big|(z+1)^i \partial_z^i\Big(\frac{(z+1)^3}{z^2}f\Big)\Big|_{L^\infty}\]
Now let $f=z^2 k$. Then we see:
\[|\frac{(z+1)^3}{z^2}f|_{\mathcal{W}^{4,\infty}}\geq \sum_{i=0}^{4}|(z+1)^i\partial_{z}^i\Big((z+1)^3k\Big)|_{L^\infty}\geq c\sum_{i=0}^4|(z+1)^{i+3} \partial_{z}^ik|_{L^\infty},\] for some universal constant $c>0$. 
Now, we need to bound \[\sum_{i=1}^{4}|(z+1)\partial_{z}^if z^i w|_{L^\infty}^2=\sum_{i=1}^{4}|(z+1)z^i w\partial_{z}^i (z^2k) |_{L^\infty}^2\leq C\sum_{i=0}^4|(z+1)z^{i+2}w \partial_z^i k|_{L^\infty}^2=C\sum_{i=0}^4|(z+1)^3z^i\partial_z^i k|_{L^\infty}^2\]
\[\leq C|\frac{(z+1)^3}{z^2}f|_{\mathcal{W}^{4,\infty}}^2.\] 
The rest of the terms are treated similarly or in a simpler way. 
\end{proof}

We also have the following useful Lemma. 
\begin{lemma} \label{WtoW}
Assume that $\partial_\theta f\in\mathcal{W}^{4,\infty}$, $ g\in\mathcal{W}^{5,\infty},$ and that $f(z,0)=f(z,\pi/2)=0$ for all $z\in [0,\infty)$. Then, 
\[|f\partial_\theta g|_{\mathcal{W}^{4,\infty}}\leq C|\partial_\theta f|_{\mathcal{W}^{4,\infty}}|g|_{\mathcal{W}^{5,\infty}},\] for $C$ a universal constant. 
\end{lemma}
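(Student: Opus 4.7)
I would start by writing
\[f\partial_\theta g=F\cdot D_\theta g,\qquad F:=\frac{f}{\sin(2\theta)},\]
which trades the singular object $\partial_\theta g$ for $D_\theta g$; the division by $\sin(2\theta)$ is harmless precisely because $f$ vanishes at both endpoints $\theta=0$ and $\theta=\pi/2$. The norm $|f\partial_\theta g|_{\mathcal{W}^{4,\infty}}$ can then be controlled by expanding each $(z+1)^k\partial_z^k D_\theta^j$ via Leibniz into a sum of products $[(z+1)^{k_1}\partial_z^{k_1}D_\theta^{j_1}F]\cdot[(z+1)^{k_2}\partial_z^{k_2}D_\theta^{j_2+1}g]$ with $k_1+k_2=k$, $j_1+j_2=j$, reducing the problem to two separate pointwise estimates.

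The $g$-factor is immediate from the definition of $\mathcal{W}^{5,\infty}$: whenever $k_2+j_2\le 4$ (which always holds when $k+j\le 4$), the $j\ge 1$ part of that norm gives
\[\bigl|(z+1)^{k_2}\partial_z^{k_2}D_\theta^{j_2+1}g\bigr|\le(\alpha+\sin(2\theta))\sin(2\theta)^{\alpha/5}\,|g|_{\mathcal{W}^{5,\infty}}.\]
The prefactor $(\alpha+\sin(2\theta))\sin(2\theta)^{\alpha/5}$ is exactly the reciprocal of the angular weight $\tfrac{\sin(2\theta)^{-\alpha/5}}{\alpha+\sin(2\theta)}$ used in $\mathcal{W}^{4,\infty}$, so it cancels that weight perfectly; in the $j=0$ case where no weight is present it is simply bounded by $2$.

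The main work is the companion pointwise estimate
\[\bigl|(z+1)^{k_1}\partial_z^{k_1}D_\theta^{j_1}F\bigr|_{L^\infty}\le C\,|\partial_\theta f|_{\mathcal{W}^{4,\infty}},\qquad k_1+j_1\le 4,\]
with $C$ independent of $\alpha$. I would argue by induction on $j_1$. For $j_1=0$: since $\partial_z^{k_1}f$ inherits the boundary vanishing of $f$, the mean value theorem in $\theta$ yields $|\partial_z^{k_1}f|\le C\sin(2\theta)|\partial_z^{k_1}\partial_\theta f|_{L^\infty_\theta}$, which divided by $\sin(2\theta)$ gives the bound on $(z+1)^{k_1}\partial_z^{k_1}F$. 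For $j_1\ge 1$: differentiating the identity $f=\sin(2\theta)F$ produces the algebraic identity
\[D_\theta F=\partial_\theta f-2\cos(2\theta)F,\]
and iterating writes $D_\theta^{j_1}F$ as a linear combination with bounded trigonometric-polynomial coefficients of $D_\theta^i\partial_\theta f$ for $0\le i<j_1$ together with $F$ itself. Each summand is pointwise controlled by $C|\partial_\theta f|_{\mathcal{W}^{4,\infty}}$: the $i=0$ term comes from the $j=0$ part of that norm, the $i\ge 1$ terms from its $j\ge 1$ part combined with $(\alpha+\sin(2\theta))\sin(2\theta)^{\alpha/5}\le 2$, and the $F$ term from the induction base.

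The main obstacle is keeping all constants independent of $\alpha$. A naive approach---say, writing $\partial_\theta f(z,\theta)-\partial_\theta f(z,0)=\int_0^\theta D_\theta\partial_\theta f(z,\theta')/\sin(2\theta')\,d\theta'$ to estimate $D_\theta F$ near the boundary---yields only $|D_\theta F|\lesssim\sin(2\theta)^{\alpha/5}|\partial_\theta f|_{\mathcal{W}^{4,\infty}}$, which after cancellation against the weight leaves an unwanted factor $1/\alpha$. The point of retaining the exact identity $D_\theta F=\partial_\theta f-2\cos(2\theta)F$ is that $D_\theta F$ is then bounded uniformly by $C|\partial_\theta f|_{\mathcal{W}^{4,\infty}}$, while the weight-cancelling factor $(\alpha+\sin(2\theta))\sin(2\theta)^{\alpha/5}$ is supplied by the $D_\theta g$ side rather than by $D_\theta F$ itself. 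Combining the two pointwise bounds in Leibniz's formula then yields $|f\partial_\theta g|_{\mathcal{W}^{4,\infty}}\le C|\partial_\theta f|_{\mathcal{W}^{4,\infty}}|g|_{\mathcal{W}^{5,\infty}}$.
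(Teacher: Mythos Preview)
Your proof is correct and follows essentially the same approach as the paper: rewrite $f\partial_\theta g=\frac{f}{\sin(2\theta)}\,D_\theta g$, control $F=f/\sin(2\theta)$ and its derivatives via the boundary vanishing of $f$ and the mean value theorem, and let the angular weight $\frac{\sin(2\theta)^{-\alpha/5}}{\alpha+\sin(2\theta)}$ be absorbed by the $D_\theta g$ factor through the $\mathcal{W}^{5,\infty}$ norm. Your treatment is in fact more systematic than the paper's---you handle all mixed $\partial_z^k D_\theta^j$ terms uniformly and make explicit the identity $D_\theta F=\partial_\theta f-2\cos(2\theta)F$ (and its iteration) as the mechanism guaranteeing $\alpha$-independent bounds on $D_\theta^{j_1}F$, whereas the paper only sketches the two extreme cases $\partial_z^4$ and $D_\theta^4$.
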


\begin{remark}
If we knew that $\partial_\theta f\in\mathcal{W}^{4,\infty}$ implies that $\frac{f}{\sin(2\theta)}\in \mathcal{W}^{4,\infty}$, this result would follow directly from Proposition \ref{ProductRuleInW}. 
\end{remark}

\begin{proof}
Let us just consider the cases when the derivatives involved are $\partial_z^4$ and $D_{\theta}^4$. 
Observe that
\[|(z+1)^4\partial_z^4 (f\partial_\theta g)|_{L^\infty}\leq \sum_{j=0}^4|(z+1)^j \frac{\partial_z^jf}{\sin(2\theta)} (z+1)^{4-j}D_\theta \partial_{z}^{4-j}g|_{L^\infty}\]
\[\leq |g|_{\mathcal{W}^{5,\infty}}\sum_{j=0}^4|(z+1)^j \frac{\partial_z^jf}{\sin(2\theta)}|_{L^\infty}\leq C|g|_{\mathcal{W}^{5,\infty}}\sum_{j=0}^4 \sup_{z}\sup_{\theta} |(z+1)^j \partial_{z}^j\partial_\theta f(z,\theta)|\leq |\partial_\theta f|_{\mathcal{W}^{4,\infty}}|g|_{\mathcal{W}^{5,\infty}}.\]
Next, we observe that
\[|D_\theta^4(f\partial_\theta g) \frac{\sin(2\theta)^{-\alpha/5}}{\alpha+\sin(2\theta)}|_{L^\infty}\leq C\sum_{j=0}^4 |D_\theta^j\partial_\theta f|_{L^\infty} |g|_{\mathcal{W}^{5,\infty}}\leq C|\partial_\theta f|_{\mathcal{W}^{4,\infty}}|g|_{\mathcal{W}^{5,\infty}}.\]
\end{proof}

\subsection{Other Useful Facts}
In this subsection, we establish two more elementary facts. 
First we have the following
\begin{proposition}\label{L12H}
There exists a universal constant $C>0$ so that for all $g\in\mathcal{H}^4$ with $L_{12}(g)(0)=0$ we have that $L_{12}(g)\in\mathcal{H}^4$ and
\[|L_{12}(g)|_{\mathcal{H}^4}\leq C|g|_{\mathcal{H}^4}.\]
\end{proposition}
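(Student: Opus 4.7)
The plan is as follows. First, observe that $L_{12}(g)$ depends only on the radial variable $z$, since $L_{12}(g)(z) = \int_z^\infty \int_0^{\pi/2} g(r,\theta) K(\theta)/r\, d\theta dr$ is integrated in $\theta$. Hence $D_\theta^i L_{12}(g) \equiv 0$ for $i \geq 1$, so the second sum in the definition \eqref{HkNorm} of the $\mathcal{H}^4$ norm vanishes identically. Because $L_{12}(g)$ is $\theta$-independent, the remaining sum splits as
\[
|L_{12}(g)|_{\mathcal{H}^4}^2 \;=\; \sum_{i=0}^4 \Big|D_R^i L_{12}(g) \tfrac{w}{\sin^{\eta/2}(2\theta)}\Big|_{L^2_{z,\theta}}^2 \;=\; \Big(\int_0^{\pi/2}\!\tfrac{d\theta}{\sin^\eta(2\theta)}\Big)\sum_{i=0}^4 |D_R^i L_{12}(g)\, w|_{L^2_z}^2,
\]
and the $\theta$-integral is finite since $\eta = 99/100 < 1$. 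So everything reduces to bounding $|D_R^i L_{12}(g)\, w|_{L^2_z} \leq C|g|_{\mathcal{H}^4}$ for $0 \leq i \leq 4$.

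For the cases $i \geq 1$, I would differentiate under the integral sign. A direct computation gives $\partial_R L_{12}(g)(R) = -R^{-1}\int_0^{\pi/2} g(R,\theta)K(\theta)\,d\theta$, and hence $D_R L_{12}(g)(R) = -\int_0^{\pi/2} g(R,\theta) K(\theta) d\theta$; iterating and using that $D_R = R\partial_R$ commutes with integration in $\theta$ yields
\[
D_R^i L_{12}(g)(R) = -\int_0^{\pi/2} D_R^{i-1} g(R,\theta)\, K(\theta)\, d\theta, \qquad i \geq 1.
\]
Cauchy--Schwarz in $\theta$ gives $|D_R^i L_{12}(g)(R)|^2 \leq C \int_0^{\pi/2} |D_R^{i-1} g(R,\theta)|^2\, d\theta \leq C \int_0^{\pi/2} |D_R^{i-1} g(R,\theta)|^2 \sin^{-\eta}(2\theta)\, d\theta$, and multiplying by $w(R)^2$ and integrating in $R$ bounds $|D_R^i L_{12}(g)\, w|_{L^2_z}^2$ by $C\,|D_R^{i-1} g \tfrac{w}{\sqrt{\sin^\eta(2\theta)}}|_{L^2}^2 \leq C|g|_{\mathcal{H}^4}^2$. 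This handles $i = 1,2,3,4$ since $D_R^{i-1} g$ with $i-1 \leq 3$ is controlled by the first sum defining the $\mathcal{H}^4$ norm.

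The one remaining case $i=0$ is the main (and only nontrivial) obstacle, and it is exactly what requires the hypothesis $L_{12}(g)(0)=0$. Note that this hypothesis is in fact necessary: if $L_{12}(g)(0) \neq 0$ then $L_{12}(g)(z)$ stays bounded away from zero near $z=0$, which forces $L_{12}(g)\,w \notin L^2_z$ because $w \sim z^{-2}$ near the origin. Under the hypothesis, I would write $L_{12}(g)(z) = -\int_0^z G(r)/r\, dr$ with $G(r) = \int_0^{\pi/2} g(r,\theta) K(\theta)\, d\theta$, verify a priori that $L_{12}(g)\, w \in L^2_z$ via a direct Hardy-type argument (using Cauchy--Schwarz in $\theta$ to estimate $|G(r)|^2 \leq C|g(r,\cdot)|_{L^2_\theta}^2$ and the standard weighted Hardy inequality to pass from $\int |G|^2/z^4$ bounds to $\int |L_{12}(g)|^2/z^4$ bounds), and then invoke Lemma \ref{HardyWeight} to conclude $|L_{12}(g)\,w|_{L^2_z} \leq 4|g w|_{L^2_{z,\theta}} \leq C|g|_{\mathcal{H}^4}$. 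Summing the five estimates yields the required bound.
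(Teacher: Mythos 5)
Your proof is correct and follows essentially the same route as the paper's: reduce to the one-variable estimates $|D_R^i L_{12}(g)\,w|_{L^2_z}\leq C|g|_{\mathcal H^4}$ using $\theta$-independence, dispatch $i\geq 1$ via $D_R L_{12}(g)=-(g,K)_{L^2_\theta}$ and Cauchy--Schwarz, and for $i=0$ use $L_{12}(g)(0)=0$ to write $L_{12}(g)(z)=-\int_0^z G(r)/r\,dr$ and close with a weighted Hardy argument (the paper runs the integration-by-parts inline on $[0,1]$ and $[1,\infty)$ rather than citing Lemma~\ref{HardyWeight}, but this is the same calculation). Your added remark that the hypothesis $L_{12}(g)(0)=0$ is necessary for $L_{12}(g)\,w\in L^2_z$ is correct and a worthwhile observation.
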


\begin{proof}
Let $g\in\mathcal{H}^4$ be such that $L_{12}(g)(0)=0$. Observe that $L_{12}(g)$ is independent of $\theta$ so all we only need to show that \[|D_z^k L_{12}(g)w|_{L^2_z}\leq C|g|_{\mathcal{H}^4},\]  for $0\leq k\leq 4$. Since \[D_R L_{12}(g)=(g, K)_{L^2_\theta},\] the only non-trivial case is $k=0$. In the case $k=0$ observe that:
\[\int_{0}^1 \frac{(z+1)^4}{z^2}(L_{12}(g)(z))^2dz\leq C\int_{0}^1\frac{1}{z^4} \Big(\int_{0}^{\pi/2}\int_{0}^z\frac{g(r,\theta)}{r}dr d\theta\Big)^2 dz:=I\] where we used that $L_{12}(g)(0)=0$.
Now observe that, by integration by parts, \[I=\frac{3}{2}\int_{0}^1 \Big(\int_{0}^{\pi/2}\frac{g(z,\theta)}{z^2}\Big)\frac{1}{z^2}\Big(\int_{0}^{\pi/2}\int_{0}^z\frac{g(r,\theta)}{r}dr d\theta\Big)dz\leq \frac{3}{2}\sqrt{I} \int_{0}^1 \Big(\int_{0}^{\pi/2} \frac{g(z,\theta)}{z^2}d\theta\Big)^2\leq\] \[C\sqrt{I}\int_{0}^1\int_{0}^{\pi/2} \frac{g(z,\theta)^2}{z^4}dzd\theta\leq C\sqrt{I}|g|_{\mathcal{H}^4}.\] Thus, 
$I\leq C|g|_{\mathcal{H}^4}^2$ and
\[\int_{0}^1 \frac{(z+1)^4}{z^2}(L_{12}(g)(z))^2dz\leq C|g|_{\mathcal{H}^4}^2.\] 
Next, observe that
\[\int_{1}^\infty \frac{(z+1)^4}{z^4}(L_{12}(g)(z))^2\leq |g|_{\mathcal{H}^4}^2\] again using integration by parts as above and in the proof of Lemma \ref{HardyWeight}.

\end{proof}

We also have the following useful Proposition
\begin{proposition}\label{DivisionBySin}
Let $\Phi: [0,\infty)\times [0,\pi/2]\rightarrow\mathbb{R}$ be smooth and rapidly decaying in $z$. Assume also that $\Phi(z,0)=\Phi(z,\pi/2)=0$ for all $z\in [0,\infty)$. Then, $\frac{1}{\sin(2\theta)}\Phi\in \mathcal{H}^4$ and
\[|\frac{1}{\sin(2\theta)}\Phi|_{\mathcal{H}^4}\leq C|\partial_\theta\Phi|_{\mathcal{H}^4}.\]
\end{proposition}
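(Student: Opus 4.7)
The strategy is to set $g := \Phi/\sin(2\theta)$ and bound each term of $|g|_{\mathcal{H}^4}$ using Hardy-type inequalities that exploit the vanishing of $\Phi$ at $\theta = 0$ and $\theta = \pi/2$. Recalling the definition of $\mathcal{H}^4$, the norm splits into ``radial-only'' terms $|D_R^i g \cdot w/\sqrt{\sin^\eta(2\theta)}|_{L^2}^2$ for $0\leq i\leq 4$ and ``angular'' terms $|D_R^j D_\theta^i g \cdot W|_{L^2}^2$ for $i\geq 1$, $i+j\leq 4$; I would treat these two families separately.

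For the radial-only terms, $D_R$ commutes with division by $\sin(2\theta)$, so $D_R^i g = (D_R^i\Phi)/\sin(2\theta)$ and $D_R^i\Phi$ still vanishes at the angular boundary. Corollary \ref{SharpHardyInequality2}, applied with $\eta = 99/100 < 1$, gives
\[\int_0^{\pi/2} \frac{|D_R^i\Phi|^2}{\sin^{2+\eta}(2\theta)}\,d\theta \leq \frac{C}{(1+\eta)^2}\int_0^{\pi/2} \frac{|\partial_\theta D_R^i\Phi|^2}{\sin^\eta(2\theta)}\,d\theta + C\,|D_R^i\Phi(z,\cdot)|_{H^1([0,\pi/2])}^2.\]
After multiplying by $w^2$ and integrating in $z$, the leading term is exactly $|D_R^i\partial_\theta\Phi\cdot w/\sqrt{\sin^\eta(2\theta)}|_{L^2}^2\leq |\partial_\theta\Phi|_{\mathcal{H}^4}^2$; the $H^1$-remainder is absorbed using $\Phi(z,0)=0$, which lets one control $|\Phi|$ pointwise by $\int_0^{\pi/2}|\partial_\theta\Phi|$.

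For the angular terms, I would commute $D_R^j$ past $D_\theta^i$ and work with $\Phi_j := D_R^j\Phi$, $g_j := \Phi_j/\sin(2\theta)$. A short induction starting from $D_\theta g_j = (D_\theta\Phi_j - 2\cos(2\theta)\Phi_j)/\sin(2\theta)$ and using the recursion $H_{i+1} = D_\theta H_i - 2\cos(2\theta)H_i$ with $H_0 = \Phi_j$ yields the identity $\sin(2\theta)D_\theta^i g_j = H_i$, where $H_i$ is a linear combination of $\{D_\theta^k\Phi_j\}_{k=0}^i$ with smooth bounded coefficients. Since $D_\theta^k\Phi_j$ carries a factor of $\sin(2\theta)$ for $k\geq 1$ and $\Phi_j$ itself vanishes linearly at $\theta=0,\pi/2$, the function $H_i$ vanishes to order \emph{two} at both boundaries. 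A one-integration-by-parts Hardy estimate valid for this order-two vanishing and for $\gamma = 1+\alpha/10<3$ then gives
\[\int_0^{\pi/2}\frac{H_i^2}{\sin^{2+\gamma}(2\theta)}\,d\theta \leq C\int_0^{\pi/2}\frac{(\partial_\theta H_i)^2}{\sin^\gamma(2\theta)}\,d\theta + \text{l.o.t.}\]
(the boundary term at $\theta=0$ behaves like $H_i^2/\sin^{1+\gamma}(2\theta)\sim \theta^{3-\gamma}\to 0$). A direct computation using $H_i=\sin(2\theta)D_\theta^i g_j$ shows $\partial_\theta H_i = \sin(2\theta)\,Q_i$ with $Q_i$ a bounded combination of $\partial_\theta^m\Phi_j$ for $m\leq i+1$, so $(\partial_\theta H_i)^2/\sin^\gamma(2\theta) = \sin^{2-\gamma}(2\theta)\,Q_i^2$; the identity $\sin^2(2\theta)\partial_\theta^2 = D_\theta^2 - 2\cos(2\theta)D_\theta$ (and its iterates) then converts the $\partial_\theta^m$-derivatives of $\Phi_j$ appearing in $Q_i$ into $D_\theta^{m-1}$-derivatives of $\partial_\theta\Phi$, all of which are controlled by $|\partial_\theta\Phi|_{\mathcal{H}^4}$.

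The main technical obstacle is the bookkeeping just described: establishing the inductive form of $H_i$, verifying its order-two vanishing together with the $\sin(2\theta)$-factor in $\partial_\theta H_i$, and systematically rewriting the plain $\partial_\theta^m$-derivatives of $\Phi$ that arise as $D_\theta^{m-1}$-derivatives of $\partial_\theta\Phi$ modulo bounded multiplicative corrections. Since the number of $D_\theta$-derivatives is capped at $i\leq 4$ and $\gamma<3$, all the boundary and interior integrations by parts close with explicit constants, and assembling the radial and angular contributions yields the claimed bound $|\Phi/\sin(2\theta)|_{\mathcal{H}^4}\leq C|\partial_\theta\Phi|_{\mathcal{H}^4}$.
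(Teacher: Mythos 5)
Your overall strategy---splitting the $\mathcal{H}^4$ norm into radial-only and angular parts and driving both by Hardy-type inequalities that exploit the boundary vanishing of $\Phi$---is the right one, and the radial-only part via Corollary~\ref{SharpHardyInequality2} matches the paper's proof. The angular part, however, has a genuine gap in the last step. Having set $H_i=\sin(2\theta)D_\theta^ig_j$, used a Hardy argument to bound $\int\frac{H_i^2}{\sin^{2+\gamma}(2\theta)}w^2$ by $\int\frac{(\partial_\theta H_i)^2}{\sin^{\gamma}(2\theta)}w^2$, and written $\partial_\theta H_i=\sin(2\theta)Q_i$ with ``$Q_i$ a bounded combination of $\partial_\theta^m\Phi_j$ for $m\leq i+1$,'' you claim $\int\sin^{2-\gamma}(2\theta)Q_i^2w^2$ is controlled by $|\partial_\theta\Phi|_{\mathcal{H}^4}^2$. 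If the coefficients in $Q_i$ were merely bounded this would be false: the term $|D_\theta^{m-1}D_R^j\partial_\theta\Phi\,W|_{L^2}^2$ of $|\partial_\theta\Phi|_{\mathcal{H}^4}^2$ has leading part $\int\sin^{2(m-1)-\gamma}(2\theta)(\partial_\theta^m\Phi_j)^2w^2$, which for $m\geq 3$ carries a strictly \emph{less} singular angular weight than $\sin^{2-\gamma}(2\theta)$, and $\partial_\theta^m\Phi_j$ need not vanish at the boundary, so no further Hardy inequality bridges the gap. What actually closes the argument is the additional structural fact that the coefficient of $\partial_\theta^m\Phi_j$ in $Q_i$ vanishes to order $m-2$ at $\theta=0,\pi/2$, which supplies exactly the factor $\sin^{m-2}(2\theta)$ one needs. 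This is true---it follows inductively from the identity $\partial_\theta H_{i+1}=\sin(2\theta)\big(\partial_\theta^2 H_i+4H_i\big)$---but you neither state it nor prove it, and the identity $\sin^2(2\theta)\partial_\theta^2=D_\theta^2-2\cos(2\theta)D_\theta$ you invoke, applied to terms with merely bounded coefficients, goes the \emph{wrong} way, introducing negative powers of $\sin(2\theta)$ rather than absorbing them.

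Two smaller issues. Your justification for the order-two vanishing of $H_i$ does not work as stated: $D_\theta^k\Phi_j$ with a bounded coefficient, as well as $\Phi_j$ itself, vanishes only to first order, and a bounded combination of first-order-vanishing terms need not vanish to second order. The correct reason is that for $i\geq 1$, $D_\theta^ig_j=\sin(2\theta)\,\partial_\theta(D_\theta^{i-1}g_j)$ already carries a factor of $\sin(2\theta)$ (with $\partial_\theta(D_\theta^{i-1}g_j)$ bounded near the boundary by smoothness of $g_j$), so $H_i=\sin(2\theta)D_\theta^ig_j$ carries $\sin^2(2\theta)$. Also, the Hardy estimate with weight $\sin^{-(2+\gamma)}(2\theta)$ and $\gamma>1$ is not covered by Lemma~\ref{SharpHardyInequality1} or Corollary~\ref{SharpHardyInequality2}, both stated only for $0\leq\eta\leq1$, and needs its own short proof. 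For reference, the paper avoids the $H_i$ bookkeeping entirely by proving the single weighted inequality $\int_0^\infty x^{2k-\gamma}\big(\partial_x^k(f/x)\big)^2\,dx\leq\int_0^\infty x^{2k-\gamma}(\partial_x^{k+1}f)^2\,dx$, whose weight $x^{2k-\gamma}$ already matches the $\mathcal{H}^k$ weight at order $k$, via one integration by parts and Cauchy--Schwarz.
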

\begin{remark}
By density, this of course extends to all $\Phi$ with $\partial_\theta\Phi\in\mathcal{H}^4$ and $\Phi(z,0)=\Phi(z,\pi/2)=0$ for all $z$. 
\end{remark}
The proof directly follows from two lemmas. The first,  Lemma \ref{SharpHardyInequality1}, has already been established and the second is a variant on Lemma \ref{HardyInequality2}: 
\begin{lemma}Assume that $f\in C_c^\infty((0,\infty))$ and $k\in\mathbb{N}$. Then,
\[\int_{0}^{\infty}x^{2k-\gamma}\Big(\partial_x^{k} \big(\frac{f(x)}{x}\big)\Big)^2dx\leq \int_{0}^\infty x^{2k-\gamma}(\partial_{x}^{k+1}f)^2.\] 
\end{lemma}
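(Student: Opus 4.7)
The plan is to reduce the claim to a weighted Hardy inequality via the substitution $g := f/x$. Since $f \in C_c^\infty((0,\infty))$, so is $g$, and Leibniz applied to $f = xg$ gives $\partial_x^{k+1} f = (k+1)\,\partial_x^{k} g + x\,\partial_x^{k+1} g$. Setting $h := \partial_x^{k} g$, the target inequality becomes
\[
\int_0^\infty x^{2k-\gamma} h^2\,dx \;\leq\; \int_0^\infty x^{2k-\gamma}\bigl((k+1)h + x h'\bigr)^2\,dx .
\]

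Expanding the square on the right and integrating the cross-term by parts (all boundary terms vanish because $h \in C_c^\infty((0,\infty))$) would produce the identity
\[
\int_0^\infty x^{2k-\gamma}\bigl((k+1)h + xh'\bigr)^2\,dx = (k+1)(\gamma - k)\int_0^\infty x^{2k-\gamma} h^2\,dx + \int_0^\infty x^{2k+2-\gamma}(h')^2\,dx .
\]
For $k \geq 1$, I would then invoke the classical weighted Hardy inequality (obtained by integrating $\partial_x(x^{2k+1-\gamma}) h^2$ by parts and applying Cauchy--Schwarz, valid since $2k+1-\gamma \neq 0$):
\[
\int_0^\infty x^{2k+2-\gamma}(h')^2\,dx \;\geq\; \frac{(2k+1-\gamma)^2}{4}\int_0^\infty x^{2k-\gamma} h^2\,dx .
\]

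Plugging this bound into the preceding identity reduces matters to the purely algebraic inequality
\[
(k+1)(\gamma - k) + \frac{(2k+1-\gamma)^2}{4} \;\geq\; 1 ,
\]
and a direct expansion shows the left-hand side equals $(\gamma + 1)^2/4$, so the assertion is equivalent to $(\gamma - 1)(\gamma + 3) \geq 0$, which holds because $\gamma = 1 + \alpha/10 \geq 1$. The $k = 0$ case (if needed) is even simpler: the same integration by parts gives $\int x^{-\gamma}(f')^2\,dx = \gamma\int x^{-\gamma} g^2\,dx + \int x^{2-\gamma}(g')^2\,dx$, which already dominates $\int x^{-\gamma} g^2\,dx$. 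The main (essentially only) obstacle is the final algebraic identity; what is noteworthy is that the Hardy constant and the integration-by-parts remainder balance exactly at the scale-invariant value $\gamma = 1$, yielding a strict gain of order $\gamma - 1$ in the regime relevant to this paper.
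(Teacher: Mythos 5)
Your proof is correct and takes a genuinely different route from the paper. The paper's argument also substitutes $h=\partial_x^k(f/x)$ and uses $\partial_x^{k+1}f=(k+1)h+xh'$, but then proceeds by computing the single \emph{cross} inner product
\[
\int_0^\infty x^{2k-\gamma}\,h\,\partial_x^{k+1}f\,dx
=\Big[(k+1)-\tfrac{1}{2}(2k+1-\gamma)\Big]\int_0^\infty x^{2k-\gamma}h^2\,dx
=\tfrac{\gamma+1}{2}\int_0^\infty x^{2k-\gamma}h^2\,dx,
\]
(via a single integration by parts) and finishing with Cauchy--Schwarz on the left-hand side, which directly yields $\|x^{k-\gamma/2}h\|_{L^2}\le \frac{2}{\gamma+1}\|x^{k-\gamma/2}\partial_x^{k+1}f\|_{L^2}$. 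You instead expand $\|(k+1)h+xh'\|^2$, integrate the cross term by parts, and then bound the positive remainder $\int x^{2k+2-\gamma}(h')^2$ from below with the weighted Hardy inequality, arriving at the identity $(k+1)(\gamma-k)+\tfrac{1}{4}(2k+1-\gamma)^2=\tfrac{1}{4}(\gamma+1)^2$. Both arguments land on exactly the same constant $(\gamma+1)^2/4$ and the same threshold $\gamma\ge1$; the paper's cross-product-plus-Cauchy--Schwarz trick is shorter and avoids invoking Hardy as a black box, while your expansion-plus-Hardy route makes the source of the $(\gamma+1)^2/4$ coefficient and the exact balance at the scale-invariant value $\gamma=1$ more transparent. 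Your treatment of the $k=0$ case is in fact unnecessary as a separate case since $2k+1-\gamma=1-\gamma\ne0$ when $\gamma>1$, so the Hardy step applies uniformly, but including it does no harm.
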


\begin{proof}
Define $h=\frac{f(x)}{x}$. Then, $\partial_{x}^{k+1}f=\partial_{x}^{k+1}hx+(k+1)\partial_x^kh$. 
Now observe that
\[\int_{0}^\infty x^{2k-\gamma}\partial_{x}^k h \partial_{x}^{k+1}f=\int_{0}^\infty x^{2k-\gamma} ( (k+1)(\partial_{x}^k h)^2+x\partial_{x}^k h\partial_{x}^{k+1}h)=[(k+1)-\frac{1}{2}(2k+1-\gamma)] \int_{0}^{\infty}x^{2k-\gamma} (\partial_{x}^k h)^2\]
\[=(1+\frac{\gamma}{2})\int_{0}^{\infty}x^{2k-\gamma}(\partial_{x}^k h)^2.\] The result now follows from the Cauchy-Schwarz inequality. 

\end{proof}

\section{Self-similar variables and modulation}\label{Modulation}
Recall from Section \ref{Setup} the following equations for $\Omega$ and $\Psi$:
\begin{equation}\label{OmegaEvolutionFinal}\frac{1}{2}\partial_t\Omega+U(\Psi)\partial_\theta\Omega+V(\Psi)\alpha R\partial_R\Omega=\mathcal{R}(\Psi) \Omega,\end{equation}

\begin{equation}\label{Relations}
U(\Psi):=-3\Psi-\alpha R\partial_R\Psi\quad V(\Psi):=\partial_\theta\Psi-\tan(\theta)\Psi, \quad \R(\Psi):=\frac{1}{\cos(\theta)}\Big(2\sin(\theta)\Psi+\alpha\sin(\theta)R\partial_R\Psi+\cos(\theta)\partial_\theta\Psi\Big),
\end{equation}

\begin{equation}\label{PolarBSLFinal1}-\alpha^2R^2\partial_{RR}\Psi-\alpha(5+\alpha)R\partial_R\Psi-\partial_{\theta\theta}\Psi+\partial_\theta\big(\tan(\theta)\Psi\big)-6\Psi=\Omega.\end{equation} 
Note that the $\frac{1}{2}$ preceding the $\partial_t\Omega$ is there for convenience and can be viewed as a time-dilation. 
Let us search for a solution of the form \[\Omega=\frac{1}{1-(1+\mu)t}F\Big(\frac{R}{(1-(1+\mu)t)^{1+\lambda}},\theta\Big)\] where $\mu$ and $\lambda$ are small real numbers.  We introduce the self-similar variable \[z=\frac{R}{(1-(1+\mu)t)^{1+\lambda}}.\] It is easy to see that if $\Omega$ has the above form, then $\Psi$ should have the form: \[\Psi=\frac{1}{1-(1+\mu)t}\Phi(z,\theta).\] Now we write the equations for $F$ and $\Phi$:

\[(1+\mu)F+(1+\mu)(1+\lambda) z\partial_z F +2U(\Phi)\partial_\theta F+2\alpha V(\Phi) R\partial_RF=2\mathcal{R}(\Phi) F\]
\[U(\Phi):=-3\Phi-\alpha R\partial_R\Phi\quad V(\Phi):=\partial_\theta\Phi-\tan(\theta)\Phi, \quad \R(\Phi):=\frac{1}{\cos(\theta)}\Big(2\sin(\theta)\Phi+\alpha\sin(\theta)R\partial_R\Phi+\cos(\theta)\partial_\theta\Phi\Big),\]
\[-\alpha^2R^2\partial_{RR}\Phi-\alpha(5+\alpha)R\partial_R\Phi-\partial_{\theta\theta}\Phi+\partial_\theta\big(\tan(\theta)\Phi\big)-6\Phi=F.\]
Now recall from the elliptic estimates that $\Phi-\frac{1}{4\alpha}\sin(2\theta)L_{12}(F)$ satisfies much better estimates than $\Phi$ itself. Thus we write:

\[(1+\mu)F+(1+\mu)(1+\lambda) z\partial_z F +\frac{1}{2\alpha}U(\sin(2\theta)L_{12}(F))\partial_\theta F+\frac{1}{2}V(\sin(2\theta)L_{12}(F)) R\partial_RF-\frac{1}{2\alpha}\mathcal{R}(\sin(2\theta)L_{12}(F)) F\] \[=2\mathcal{R}(\Phi-\frac{1}{4\alpha}\sin(2\theta)L_{12}(F)) F-2U(\Phi-\frac{1}{4\alpha}\sin(2\theta)L_{12}(F))\partial_\theta F-2\alpha V(\Phi-\frac{1}{4\alpha}\sin(2\theta)L_{12}(F)) R\partial_RF.\]
Now let's compute:
\[U(\sin(2\theta)L_{12}(F))=-3\sin(2\theta)L_{12}(F)+\alpha\sin(2\theta)(F,K)_{L^2_\theta}.\]
\[V(\sin(2\theta)L_{12}(F))=2(\cos(2\theta)-\sin^2(\theta))L_{12}(F),\]
\[\mathcal{R}(\sin(2\theta)L_{12}(F))=2L_{12}(F)-2\alpha\sin^2(\theta) (F,K)_{L^2_\theta}.\]
Thus, 
\[(1+\mu)F+(1+\mu)(1+\lambda) z\partial_z F-\frac{1}{\alpha}L_{12}(F) F -\frac{3}{2\alpha}\sin(2\theta)L_{12}(F)\partial_\theta F+(\cos(2\theta)-\sin^2(\theta))L_{12}(F) z\partial_zF=\mathcal{N},\] where \[\mathcal{N}=2\mathcal{R}(\Phi-\frac{1}{4\alpha}\sin(2\theta)L_{12}(F)) F-2U(\Phi-\frac{1}{4\alpha}\sin(2\theta)L_{12}(F))\partial_\theta F\] \[-2\alpha V(\Phi-\frac{1}{4\alpha}\sin(2\theta)L_{12}(F)) R\partial_RF-\alpha\sin(2\theta)(F,K)_{L^2_\theta}\partial_\theta F-2\alpha\sin^2(\theta)(F,K)_{L^2_\theta}F.\]
Rewriting this once more we get:
\[F+z\partial_zF -\frac{1}{\alpha}L_{12}(F) F -\frac{3}{2\alpha}\sin(2\theta)L_{12}(F)\partial_\theta F+(\cos(2\theta)-\sin^2(\theta))L_{12}(F) z\partial_zF=-\mu F-(\mu+\lambda+\mu\lambda)z\partial_z F +\mathcal{N},\] 

Next, we write:
\[F=F_*+g,\] where 
\[F_*=\alpha\frac{\Gamma(\theta)}{c}\frac{2z}{(1+z)^2},\] with \begin{equation}\label{Gamma}\Gamma(\theta)=(\sin(\theta)\cos^2(\theta))^{\alpha/3}\end{equation} and $c=\int_{0}^{\pi/2}\Gamma(\theta)K(\theta)d\theta$. $\mu$ and $\lambda$ will be chosen to ensure that there exists a (small) $g\in\mathcal{H}^2$ with $L_{12}(g)(0)=0$ so that $F=F_*+g$ solves the above. Now we write the equation for $g$, noting that \[F_*+z\partial_z F_*-\frac{1}{\alpha}L_{12}(F_*)F_*=0,\]
we get:

\begin{equation}\label{intermediategeqn}\mathcal{L}_\Gamma(g)-\frac{3}{2\alpha}\sin(2\theta)L_{12}(F_*)\partial_\theta g=-\mu F_*-(\mu+\lambda+\mu \lambda)z\partial_z F_*+\mathcal{N}_0+\mathcal{N}+\mathcal{N}_*,\end{equation} where
\[\mathcal{N}_0=\frac{3}{2\alpha}\sin(2\theta)L_{12}(F_*)\partial_\theta F_*-(\cos(2\theta)-\sin^2(\theta))L_{12}(F_*)z\partial_z F_*+\frac{1}{\alpha}L_{12}(g) g+\frac{3}{2\alpha}\sin(2\theta)L_{12}(g)\partial_\theta F\]\[-(\cos(2\theta)-\sin^2(\theta))L_{12}(g)z\partial_z F-(\cos(2\theta)-\sin^2(\theta))L_{12}(F_*)z\partial_z g,\]
\[\mathcal{N}_*=-\mu g-(\mu+\lambda+\mu\lambda)g.\]
We now re-write \eqref{intermediategeqn} as:
\[\mathcal{L}_\Gamma^T(g)=-\frac{\Gamma(\theta)}{c}\frac{2z^2}{(1+z)^3}L_{12}(\frac{3}{1+z}\sin(2\theta)\partial_\theta g)(0)-\mu F_*-(\mu+\lambda+\mu\lambda)z\partial_z F_*+\mathcal{N}_0+\mathcal{N}+\mathcal{N}_*.\]
We now choose $\lambda$ so that $\mu+(\mu+\lambda+\mu\lambda)=0$. This will cancel all terms which vanish only linearly at $z=0$ in the above equation. 
That is, we take:
\[\lambda=-\frac{2\mu}{\mu+1}.\]
Thus we get:
\[\mathcal{L}_\Gamma^T(g)=-\frac{\Gamma(\theta)}{c}\frac{2z^2}{(1+z)^3}L_{12}(\frac{3}{1+z}\sin(2\theta)\partial_\theta g)(0)-\mu F_*+\mu z\partial_z F_*+\mathcal{N}_0+\mathcal{N}+\mathcal{N}_*,\] which becomes:
\[\mathcal{L}_\Gamma^T(g)=-\frac{\Gamma(\theta)}{c}\frac{2z^2}{(1+z)^3}\Big(L_{12}(\frac{3}{1+z}\sin(2\theta)\partial_\theta g)(0)+2\alpha\mu\Big)+\mathcal{N}_0+\mathcal{N}+\mathcal{N}_*.\]
Now call
\[\bar\mu=L_{12}(\frac{3}{1+z}\sin(2\theta)\partial_\theta g)(0)+2\alpha\mu.\]
Thus we arrive at:
\[\mathcal{L}_\Gamma^T(g)=-\bar\mu\frac{\Gamma(\theta)}{c}\frac{2z^2}{(1+z)^3}+\mathcal{N}_0+\mathcal{N}+\mathcal{N}_*.\]
Now we choose $\bar\mu$ so that $L_{12}(g)$ remains $0$. 
That is, we take:
\[\bar\mu:=L_{12}(\mathcal{N}_0)(0)+L_{12}(\mathcal{N})(0),\] where we note that $L_{12}(\mathcal{N}_0)(0)=0$ so long as $L_{12}(g)(0)=0$. 
Therefore, we have to solve:
\begin{equation} \label{gEquationFinal}\mathcal{L}_\Gamma^T(g)=-\Big(L_{12}(\mathcal{N}_0)(0)+L_{12}(\mathcal{N})(0)\Big)\frac{\Gamma(\theta)}{c}\frac{2z^2}{(1+z)^3}+\mathcal{N}_0+\mathcal{N}+\mathcal{N}_*.\end{equation}

\begin{equation}\label{N0}\mathcal{N}_0=\frac{3}{2\alpha}\sin(2\theta)L_{12}(F_*)\partial_\theta F_*-(\cos(2\theta)-\sin^2(\theta))L_{12}(F_*)z\partial_z F_*+\frac{1}{\alpha}L_{12}(g) g+\frac{3}{2\alpha}\sin(2\theta)L_{12}(g)\partial_\theta F \end{equation}\[-(\cos(2\theta)-\sin^2(\theta))L_{12}(g)z\partial_z F-(\cos(2\theta)-\sin^2(\theta))L_{12}(F_*)z\partial_z g\]

\begin{equation}\label{N} \mathcal{N}=2\mathcal{R}(\Phi-\frac{1}{4\alpha}\sin(2\theta)L_{12}(F)) F-2U(\Phi-\frac{1}{4\alpha}\sin(2\theta)L_{12}(F))\partial_\theta F\end{equation} \[-2\alpha V(\Phi-\frac{1}{4\alpha}\sin(2\theta)L_{12}(F)) z\partial_zF-\alpha\sin(2\theta)(F,K)_{L^2_\theta}\partial_\theta F-2\alpha\sin^2(\theta)(F,K)_{L^2_\theta}F.\]

\begin{equation}\label{N*}
\mathcal{N}_*=-\mu g-(\mu+\lambda+\mu\lambda)g
\end{equation}

\begin{equation} 
\mu=\frac{1}{2\alpha}(L_{12}(\mathcal{N}_0)(0)+L_{12}(\mathcal{N})(0))-\frac{1}{2\alpha}L_{12}(\frac{3}{1+z}D_\theta g)(0), \qquad \lambda=-\frac{2\mu}{\mu+1}.
\end{equation}

\begin{equation}\label{PhitoU} U(\Phi):=-3\Phi-\alpha z\partial_z\Phi\quad V(\Phi):=\partial_\theta\Phi-\tan(\theta)\Phi, \quad \R(\Phi):=\frac{1}{\cos(\theta)}\Big(2\sin(\theta)\Phi+\alpha\sin(\theta)z\partial_z\Phi+\cos(\theta)\partial_\theta\Phi\Big),\end{equation}

\begin{equation}\label{PolarBSLFinal}-\alpha^2z^2\partial_{zz}\Phi-\alpha(5+\alpha)z\partial_z\Phi-\partial_{\theta\theta}\Phi+\partial_\theta\big(\tan(\theta)\Phi\big)-6\Phi=F.\end{equation}

The remaining portion of the paper will be devoted to showing that the system \eqref{gEquationFinal}-\eqref{PolarBSLFinal} possesses an $\mathcal{H}^2$ solution of size at most $O(\alpha^2)$ if $\alpha$ is small enough. Toward this end, we will study the $\mathcal{H}^4$ inner product of \eqref{gEquationFinal} with $g$.

\subsection{Terms in $\mathcal{N}_0$}
The goal of this subsection is to estimate $(g,\mathcal{N}_0)_{\mathcal{H}^4}$ for $\mathcal{N}_0$ as in \eqref{N0}:
\[\mathcal{N}_0=\frac{3}{2\alpha}\sin(2\theta)L_{12}(F_*)\partial_\theta F_*-(\cos(2\theta)-\sin^2(\theta))L_{12}(F_*)z\partial_z F_*+\frac{1}{\alpha}L_{12}(g) g+\frac{3}{2\alpha}\sin(2\theta)L_{12}(g)\partial_\theta F \]\[-(\cos(2\theta)-\sin^2(\theta))L_{12}(g)z\partial_z F-(\cos(2\theta)-\sin^2(\theta))L_{12}(F_*)z\partial_z g:=\sum_{i=1}^6I^{\mathcal{N}_0}_i.\] The result of this subsection is Proposition \ref{N0Estimate}.

\subsubsection{$I^{\mathcal{N}_0}_1+I^{\mathcal{N}_0}_2$}

In these terms we see the importance of the exact form of $\Gamma$ as $\Gamma(\theta)=(\sin(\theta)\cos^2(\theta))^{\alpha/3}$. Indeed, each term of \[\frac{3}{2\alpha}\sin(2\theta)L_{12}(F_*)\partial_\theta F_*-(\cos(2\theta)-\sin^2(\theta))L_{12}(F_*)z\partial_z F_*\] only vanishes linearly at $z=0$ and thus does not belong to $\mathcal{H}^4$. However, because of the exact form of $\Gamma$ we see:
\[\frac{3}{2\alpha}\sin(2\theta)L_{12}(F_*)\partial_\theta F_*-(\cos(2\theta)-\sin^2(\theta))L_{12}(F_*)z\partial_z F_*\]\[=\frac{2\alpha}{c(1+z)}\Big(3\sin(2\theta)\frac{z}{(1+z)^2}\partial_\theta\Gamma-2\alpha(\cos(2\theta)-\sin^2(\theta))z\partial_z \frac{z}{(1+z)^2}\Gamma\Big).\]
\[=\frac{8\alpha^2}{c(1+z)}(\cos(2\theta)-\sin^2(\theta))\frac{z^2}{(1+z)^3}\Gamma.\]
A direct calculation then gives 
\[\Big|\frac{3}{2\alpha}\sin(2\theta)L_{12}(F_*)\partial_\theta F_*-(\cos(2\theta)-\sin^2(\theta))L_{12}(F_*)z\partial_z F_*\Big|_{\mathcal{H}^4}\leq C\alpha^2.\]

\subsubsection{$I^{\mathcal{N}_0}_3$}

By the product estimate in Proposition \ref{HProductRule}, we have:
\[|II^{\mathcal{N}_0}_3|_{\mathcal{H}^4}=\Big|\frac{1}{\alpha}gL_{12}(g)\Big|_{\mathcal{H}^4}\leq \frac{C}{\alpha^{3/2}}|g|_{\mathcal{H}^4}^2.\]

\subsubsection{$I^{\mathcal{N}_0}_{4}, I^{\mathcal{N}_0}_{5}, I^{\mathcal{N}_0}_{6}$.}
We now consider $I^{\mathcal{N}_0}_{4}, I^{\mathcal{N}_0}_{5},$ and $ I^{\mathcal{N}_0}_{6}$. First, \[I^{\mathcal{N}_0}_4=\frac{3}{2\alpha}L_{12}(g)\sin(2\theta)\partial_\theta F_*+\frac{3}{2\alpha}L_{12}(g)\sin(2\theta)\partial_\theta g.\]
It is easy to see that \[|\sin(2\theta)\partial_\theta F_*|_{\mathcal{W}^{4,\infty}}\leq C\alpha^2.\]
Thus, by Lemma \ref{WProductRule} and Proposition \ref{L12H}, we have that
\[|\frac{3}{2\alpha}L_{12}(g)\sin(2\theta)\partial_\theta F_*|_{\mathcal{H}^4}\leq C\sqrt{\alpha}|g|_{\mathcal{H}^4}.\]
Next, by Propositions \ref{Transport1} and \ref{L12H}, we have that 
\[|(\frac{3}{2\alpha}L_{12}(g)\sin(2\theta)\partial_\theta g, g)_{\mathcal{H}^4}|\leq \frac{C}{\alpha^{3/2}}|g|_{\mathcal{H}^4}^3.\]

Next, consider \[I^{\mathcal{N}_0}_{5}=-(\cos(2\theta)-\sin^2(\theta))L_{12}(g)z\partial_z F_*-(\cos(2\theta)-\sin^2(\theta))L_{12}(g)z\partial_z g.\] 
As above, it is easy to see that 
\[|z\partial_z F_*|_{\mathcal{W}^{4,\infty}}\leq C\alpha.\]
Hence, by Lemma \ref{WProductRule} and Proposition \ref{L12H}, we have that:
\[|(\cos(2\theta)-\sin^2(\theta))L_{12}(g)z\partial_z F_*|_{\mathcal{H}^4}\leq C\sqrt{\alpha}|g|_{\mathcal{H}^4}^2.\] Moreover, by Propositions \ref{Transport2} and \ref{L12H}, we have that 
\[|((\cos(2\theta)-\sin^2(\theta))L_{12}(g)z\partial_z g,g)_{\mathcal{H}^4}|\leq \frac{C}{\sqrt{\alpha}}|g|_{\mathcal{H}^4}^3.\]

Finally, we study
\[I^{\mathcal{N}_0}_6=-(\cos(2\theta)-\sin^2(\theta))L_{12}(F_*)z\partial_z g.\]
Observe that \[|(\cos(2\theta)-\sin^2(\theta))L_{12}(F_*)|_{\mathcal{W}^{4,\infty}}\leq C\alpha.\] This is clear since $L_{12}(F_*)=\frac{2\alpha}{1+z}$. Thus, using Proposition \ref{Transport3}, we have that 
\[|(I^{\mathcal{N}_0}_6, g)_{\mathcal{H}^4}|\leq C\sqrt{\alpha}|g|_{\mathcal{H}^4}^2.\]

\subsubsection{Estimate of $(\mathcal{N}_0, g)_{\mathcal{H}^4}$.}

We have established the following Proposition
\begin{proposition}\label{N0Estimate}
Assume that $g\in\mathcal{H}^4$ and $L_{12}(g)(0)=0$. Then, if $\mathcal{N}_0$ is defined as in \eqref{N0}, we have:
\[|(\mathcal{N}_0, g)|_{\mathcal{H}^4}\leq C(\alpha^2|g|_{\mathcal{H}^4}+\sqrt{\alpha}|g|_{\mathcal{H}^4}^2+\frac{1}{\alpha^{3/2}}|g|_{\mathcal{H}^4}^3).\]
\end{proposition}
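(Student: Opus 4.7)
The plan is to decompose $\mathcal{N}_0$ into its six constituent terms $I^{\mathcal{N}_0}_1,\ldots,I^{\mathcal{N}_0}_6$ as listed in \eqref{N0}, take the $\mathcal{H}^4$ inner product against $g$ of each, and sum the resulting estimates. Wherever $F$ appears I would split $F=F_*+g$ so that $F_*$-contributions can be controlled in the ``multiplier'' space $\mathcal{W}^{4,\infty}$ and $g$-contributions fit the transport estimates in $\mathcal{H}^4$. The main tools are Lemmas \ref{HProductRule}, \ref{WProductRule}, \ref{Transport1}, \ref{Transport2}, \ref{Transport3} together with Proposition \ref{L12H}.

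For $I^{\mathcal{N}_0}_1+I^{\mathcal{N}_0}_2$, my first move is an explicit computation. Using $F_*=\frac{2\alpha\Gamma(\theta)}{c}\frac{z}{(1+z)^2}$ and $L_{12}(F_*)=\frac{2\alpha}{1+z}$, the two summands individually carry a prefactor $\frac{1}{\alpha}$ and vanish only to first order at $z=0$, so neither lies in $\mathcal{H}^4$. However, their sum exhibits an exact cancellation of the linear-in-$z$ contributions, collapsing to a single expression proportional to $\frac{\alpha^2}{1+z}(\cos(2\theta)-\sin^2\theta)\frac{z^2}{(1+z)^3}\Gamma(\theta)$, which vanishes quadratically at $z=0$ and has $\mathcal{H}^4$ norm $O(\alpha^2)$ by Proposition \ref{GammaRegularity} combined with Proposition \ref{WtoH}; pairing with $g$ and applying Cauchy--Schwarz yields the $\alpha^2|g|_{\mathcal{H}^4}$ piece of the bound. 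The term $I^{\mathcal{N}_0}_3=\frac{1}{\alpha}gL_{12}(g)$ is handled by combining the product rule (Lemma \ref{HProductRule}) with boundedness of $L_{12}$ (Proposition \ref{L12H}): this yields $|gL_{12}(g)|_{\mathcal{H}^4}\lesssim\frac{1}{\sqrt{\gamma-1}}|g|_{\mathcal{H}^4}^2$, and since $\gamma-1=\alpha/10$, pairing with $g$ produces the $\frac{1}{\alpha^{3/2}}|g|_{\mathcal{H}^4}^3$ contribution.

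For $I^{\mathcal{N}_0}_4$ and $I^{\mathcal{N}_0}_5$ the split $F=F_*+g$ gives two kinds of contributions. The $F_*$-pieces are bounded using that $\sin(2\theta)\partial_\theta F_*$ and $z\partial_z F_*$ lie in $\mathcal{W}^{4,\infty}$ with norms $O(\alpha^2)$ and $O(\alpha)$ respectively (direct computation from the closed form of $F_*$), and Lemma \ref{WProductRule} plus Proposition \ref{L12H} then yield a $\sqrt{\alpha}|g|_{\mathcal{H}^4}^2$ bound after Cauchy--Schwarz. The $g$-pieces are genuine transport terms of the form $L_{12}(g)\sin(2\theta)\partial_\theta g$ and $L_{12}(g)z\partial_zg$; Lemmas \ref{Transport1} and \ref{Transport2} together with Proposition \ref{L12H} control their $\mathcal{H}^4$ inner product with $g$ by $\frac{1}{\alpha^{3/2}}|g|_{\mathcal{H}^4}^3$, accounting for the $1/\alpha$ prefactor and the $\frac{1}{\sqrt{\gamma-1}}$ loss in the transport estimate. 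Finally, $I^{\mathcal{N}_0}_6$ is a transport of $g$ by the $\mathcal{W}^{4,\infty}$-regular multiplier $L_{12}(F_*)=\frac{2\alpha}{1+z}$, so Lemma \ref{Transport3} supplies the last $\sqrt{\alpha}|g|_{\mathcal{H}^4}^2$ contribution.

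The one genuinely subtle step is the first: identifying the cancellation between $I^{\mathcal{N}_0}_1$ and $I^{\mathcal{N}_0}_2$. Neither term individually lies in the function space where we work, and the $1/\alpha$ prefactor combined with a first-order zero at the origin would be fatal to any bound of order $\alpha^2$. The particular angular profile $\Gamma=(\sin\theta\cos^2\theta)^{\alpha/3}$ has been engineered precisely so that the leading (linear in $z$) contributions of the $\theta$-transport and $z$-transport of $F_*$ annihilate in the combination $\frac{3}{2\alpha}\sin(2\theta)\partial_\theta F_*-(\cos(2\theta)-\sin^2\theta)z\partial_z F_*$, which is exactly what emerges after factoring out $L_{12}(F_*)$ in $I^{\mathcal{N}_0}_1+I^{\mathcal{N}_0}_2$. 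Once this algebraic identity is spotted, the rest of the proof is essentially bookkeeping built on the product, transport, and elliptic estimates of Sections \ref{EllipticEstimates} and \ref{H2}.
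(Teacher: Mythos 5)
Your proposal follows the paper's own proof essentially line by line: the same six-term decomposition of $\mathcal{N}_0$, the same explicit cancellation between $I^{\mathcal{N}_0}_1$ and $I^{\mathcal{N}_0}_2$ exploiting the exact profile $\Gamma=(\sin\theta\cos^2\theta)^{\alpha/3}$, the same $F=F_*+g$ split for the mixed terms, and the same reliance on Lemmas \ref{HProductRule}, \ref{WProductRule}, \ref{Transport1}--\ref{Transport3} and Proposition \ref{L12H} with the $\frac{1}{\sqrt{\gamma-1}}\sim\alpha^{-1/2}$ bookkeeping. This is correct and matches the paper's argument in both structure and content.
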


\subsection{Terms in $\mathcal{N}$}

The goal of this subsection is to estimate $(g, \mathcal{N})_{\mathcal{H}^4}$, for $\mathcal{N}$ as in \eqref{N}:

\[\mathcal{N}=2\mathcal{R}(\Phi-\frac{1}{4\alpha}\sin(2\theta)L_{12}(F)) F-2U(\Phi-\frac{1}{4\alpha}\sin(2\theta)L_{12}(F))\partial_\theta F\] \[-2\alpha V(\Phi-\frac{1}{4\alpha}\sin(2\theta)L_{12}(F)) z\partial_zF-\alpha\sin(2\theta)(F,K)_{L^2_\theta}\partial_\theta F-2\alpha\sin^2(\theta)(F,K)_{L^2_\theta}F:=\sum_{i=1}^5I^{\mathcal{N}}_i.\]
The result of this subsection is Proposition \ref{NEstimate}. 

\subsubsection{${I}^{\mathcal{N}}_1$}
We begin by studying
\[{I}^{\mathcal{N}}_1=2\mathcal{R}(\Phi-\frac{1}{4\alpha}\sin(2\theta)L_{12}(F)) F\]
\[=2\mathcal{R}(\Phi_g-\frac{1}{4\alpha}\sin(2\theta)L_{12}(g)) g+2\mathcal{R}(\Phi_g-\frac{1}{4\alpha}\sin(2\theta)L_{12}(g)) F_*\] \[+2\mathcal{R}(\Phi_{F_*}-\frac{1}{4\alpha}\sin(2\theta)L_{12}(F_*)) g+2\mathcal{R}(\Phi_{F_*}-\frac{1}{4\alpha}\sin(2\theta)L_{12}(F_*)) F_*.\] 
Let us study the $\mathcal{H}^4$ norm of each of these terms in order.
First, by Theorem \ref{RemovingL12} we have that
\[|\mathcal{R}(\Phi_g-\frac{1}{4\alpha}\sin(2\theta)L_{12}(g))|_{\mathcal{H}^4}\leq C|g|_{\mathcal{H}^4}.\] Thus, by Proposition \ref{HProductRule}, 

\[|\mathcal{R}(\Phi_g-\frac{1}{4\alpha}\sin(2\theta)L_{12}(g)) g|_{\mathcal{H}^4}\leq \frac{C}{\sqrt{\alpha}}|g|_{\mathcal{H}^4}^2.\] Next, using that $|F_*|_{\mathcal{W}^{4,\infty}}\leq C\alpha$ and Lemma \ref{WProductRule}, we have that
\[|\mathcal{R}(\Phi_g-\frac{1}{4\alpha}\sin(2\theta)L_{12}(g)) F_*|_{\mathcal{H}^4}\leq C\sqrt{\alpha}|g|_{\mathcal{H}^4}.\] Similarly, using Proposition \ref{PhiStarRegularity} and Lemma \ref{WProductRule}, we have that \[|\mathcal{R}(\Phi_{F_*}-\frac{1}{4\alpha}\sin(2\theta)L_{12}(F_*)) g|_{\mathcal{H}^4}\leq C\sqrt{\alpha}|g|_{\mathcal{H}^4}.\]
Finally, we come to \[\mathcal{R}(\Phi_{F_*}-\frac{1}{4\alpha}\sin(2\theta)L_{12}(F_*)) F_*.\] For this one, we make use of Corollary \ref{PhiStarRegularity2} and Proposition \ref{WtoH} to see that
\[|\mathcal{R}(\Phi_{F_*}-\frac{1}{4\alpha}\sin(2\theta)L_{12}(F_*))F_*|_{\mathcal{H}^4}\leq|\frac{z+1}{z}\mathcal{R}(\Phi_{F_*}-\frac{1}{4\alpha}\sin(2\theta)L_{12}(F_*))|_{\mathcal{W}^{4,\infty}} |\frac{(z+1)^2}{z}F_*|_{\mathcal{W}^{4,\infty}} \leq C\alpha^2.\] 

In conclusion, we see that
\[|I^{\mathcal{N}}_1|_{\mathcal{H}^4}\leq C(\alpha^2+\sqrt{\alpha}|g|_{\mathcal{H}^4}+\frac{1}{\sqrt{\alpha}}|g|_{\mathcal{H}^4}^2).\]

\subsubsection{$I^{\mathcal{N}}_i$ for $2\leq i\leq 5$.}
In this subsection, we will study the remaining four terms of $\mathcal{N}$:

\[I^{\mathcal{N}}_2=-2U(\Phi-\frac{1}{4\alpha}\sin(2\theta)L_{12}(F))\partial_\theta F,\]
\[I^{\mathcal{N}}_3=-2\alpha V(\Phi-\frac{1}{4\alpha}\sin(2\theta)L_{12}(F)) z\partial_zF,\]
\[I^{\mathcal{N}}_4=-\alpha\sin(2\theta)(F,K)_{L^2_\theta}\partial_\theta F,\]
\[I^{\mathcal{N}}_5=-2\alpha\sin^2(\theta)(F,K)_{L^2_\theta}F.\]
All of these terms are dealt in basically the same way: either with a transport estimate (like Proposition \ref{Transport1}), a product rule (like Proposition \ref{HProductRule}), or Proposition \ref{WtoH} when $F_*$ interacts with itself in conjunction with an elliptic estimate like Theorem \ref{RemovingL12} and Proposition \ref{PhiStarRegularity}. For the sake of avoiding repetition, we will only give the details for $I^{\mathcal{N}}_2$ and leave the rest to the reader. 

Now, to study $I^{\mathcal{N}}_2$, we expand $F=F_*+g$ and study each of the four terms individually as above: \[U(\Phi-\frac{1}{4\alpha}\sin(2\theta)L_{12}(F))\partial_\theta F=U(\Phi_g-\frac{1}{4\alpha}\sin(2\theta)L_{12}(g))\partial_\theta g+U(\Phi_g-\frac{1}{4\alpha}\sin(2\theta)L_{12}(g))\partial_\theta F_*\] \[+U(\Phi_{F_*}-\frac{1}{4\alpha}\sin(2\theta)L_{12}(F_*))\partial_\theta g+U(\Phi_{F_*}-\frac{1}{4\alpha}\sin(2\theta)L_{12}(F_*))\partial_\theta F_*.\]
First, by Proposition \ref{DivisionBySin}, we have that 
\[|\frac{1}{\sin(2\theta)}U(\Phi_g-\frac{1}{4\alpha}\sin(2\theta)L_{12}(g))|_{\mathcal{H}^4}\leq C|g|_{\mathcal{H}^4}.\] Thus, by Proposition \ref{Transport1}, we have that 
\[|(U(\Phi_g-\frac{1}{4\alpha}\sin(2\theta)L_{12}(g))\partial_\theta g,g)_{\mathcal{H}^4}|=|(\frac{1}{\sin(2\theta)}U(\Phi_g-\frac{1}{4\alpha}\sin(2\theta)L_{12}(g))D_\theta g,g)_{\mathcal{H}^4}|\leq \frac{C}{\sqrt{\alpha}}|g|_{\mathcal{H}^4}^3.\]
Next, \[|U(\Phi_g-\frac{1}{4\alpha}\sin(2\theta)L_{12}(g))\partial_\theta F_*|_{\mathcal{H}^4}\leq\frac{C}{\sqrt{\alpha}}|\frac{1}{\sin(2\theta)}U(\Phi_g-\frac{1}{4\alpha}\sin(2\theta)L_{12}(g))|_{\mathcal{H}^4}|D_\theta F_*|_{\mathcal{W}^{4,\infty}}\leq C\sqrt{\alpha}|g|_{\mathcal{H}^4},\] where we have used Proposition \ref{DivisionBySin} and Lemma \ref{WProductRule}.

Now we turn to \[(|U(\Phi_{F_*}-\frac{1}{4\alpha}\sin(2\theta)L_{12}(F_*))\partial_\theta g,g)_{\mathcal{H}^4}|\leq C|(\partial_\theta U(\Phi_{F_*}-\frac{1}{4\alpha}\sin(2\theta)L_{12}(F_*)))|_{\mathcal{W}^{4,\infty}} |g|_{\mathcal{H}^4}^2\leq C\alpha |g|_{\mathcal{H}^4}^2,\] by Proposition \ref{SpecialTransport} and Proposition \ref{PhiStarRegularity}. 

Finally, we observe that 
\[|U(\Phi_{F_*}-\frac{1}{4\alpha}\sin(2\theta)L_{12}(F_*))\partial_\theta F_*|_{\mathcal{H}^4}\leq C\alpha^2,\] which follows from Lemma \ref{WtoW}, Corollary \ref{PhiStarRegularity2}, and Proposition \ref{WtoH}. 

The estimates on $I_{3}^{\mathcal{N}}$ is similar and the estimates of $I^{\mathcal{N}}_4$ and $I^{\mathcal{N}}_5$ are even easier. 

In conclusion, we have established that 
\[\sum_{i=2}^5|(I^{\mathcal{N}}_i,g)_{\mathcal{H}^4}|\leq C( \frac{1}{\sqrt{\alpha}}|g|_{\mathcal{H}^4}^3+\sqrt{\alpha}|g|_{\mathcal{H}^4}^2+\alpha^2|g|_{\mathcal{H}^4}).\]

\subsubsection{Estimate of $(\mathcal{N}, g)_{\mathcal{H}^4}$.}

\begin{proposition}
\label{NEstimate}
Let $g\in\mathcal{H}^4$ satisfy $L_{12}(g)(0)=0$. Then we have that
\[|(g,\mathcal{N})_{\mathcal{H}^4}|\leq C(\alpha^2|g|_{\mathcal{H}^4}+\sqrt{\alpha}|g|_{\mathcal{H}^4}^2+\frac{1}{\alpha^{3/2}}|g|_{\mathcal{H}^4}^3).\]
\end{proposition}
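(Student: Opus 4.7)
The plan is to split $\mathcal{N}=\sum_{i=1}^{5} I_i^{\mathcal{N}}$ exactly as in \eqref{N} and estimate each piece separately, following the same template in each case: expand $F=F_*+g$ so that every term becomes a sum of four cross-terms of the form $(\Phi_*,F_*)$, $(\Phi_*,g)$, $(\Phi_g,F_*)$, $(\Phi_g,g)$ (where $\Phi_h$ denotes the solution of \eqref{PolarBSLFinal} with right-hand side $h$, and we always subtract off $\frac{1}{4\alpha}\sin(2\theta)L_{12}(h)$ to use the elliptic estimate). Each cross-term is then matched with the natural functional framework: if $F_*$ appears we use $\mathcal{W}^{4,\infty}$ bounds on $F_*$, $\Phi_{F_*}-\frac{1}{4\alpha}\sin(2\theta)L_{12}(F_*)$, $z\partial_zF_*$, $\partial_\theta F_*$, etc.\ (Proposition \ref{PhiStarRegularity}, Corollary \ref{PhiStarRegularity2}); if $g$ appears in the velocity we use Theorem \ref{RemovingL12} to bound the regular stream function in $\mathcal{H}^4$ by $|g|_{\mathcal{H}^4}$; and then we glue the factors together with the appropriate product or transport rule from Section \ref{H2}.

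For $I_1^{\mathcal{N}}$ (the stretching-type term), which contains no derivative on $F$, I would apply the product rules directly: Proposition \ref{HProductRule} for the $(g,g)$ piece giving $\frac{1}{\sqrt{\alpha}}|g|_{\mathcal{H}^4}^2$, Lemma \ref{WProductRule} for the mixed $(F_*,g)$ pieces giving $\sqrt{\alpha}|g|_{\mathcal{H}^4}$, and for the pure $(F_*,F_*)$ piece use Proposition \ref{WtoH} together with Corollary \ref{PhiStarRegularity2} and the bound $|\frac{(z+1)^2}{z}F_*|_{\mathcal{W}^{4,\infty}}\le C\alpha$ to extract the baseline $C\alpha^2$. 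For $I_2^{\mathcal{N}}$ and $I_3^{\mathcal{N}}$ (the transport-type terms), taking the $\mathcal{H}^4$ inner product with $g$ is essential since the velocity only has the same regularity as $g$: here Proposition \ref{Transport1} (or Proposition \ref{Transport2}) does the job on the $(g,g)$-piece after one uses Proposition \ref{DivisionBySin} to absorb the $\sin(2\theta)$ factor coming from $U$ and $V$; the $(\Phi_{F_*},g)$-piece needs Proposition \ref{SpecialTransport} because the velocity coefficient only vanishes on the axes and is only one derivative of a $\mathcal{W}^{4,\infty}$ function; and the remaining pieces are again pure product estimates, plus Lemma \ref{WtoW} and Proposition \ref{WtoH} for the $(F_*,F_*)$ self-interaction. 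The last two terms $I_4^{\mathcal{N}}$, $I_5^{\mathcal{N}}$, which carry an explicit prefactor of $\alpha$ in front, are routine: $(F,K)_{L^2_\theta}$ behaves like a smooth function of $z$ that is bounded in terms of $|F|_{\mathcal{H}^4}$, and then the product/transport rules give strictly better estimates than the ones we have already obtained.

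Putting everything together, each $(I_i^{\mathcal{N}},g)_{\mathcal{H}^4}$ is controlled by $C\bigl(\alpha^2|g|_{\mathcal{H}^4}+\sqrt{\alpha}|g|_{\mathcal{H}^4}^2+\alpha^{-3/2}|g|_{\mathcal{H}^4}^3\bigr)$, and the sum is of the stated form. The only place where real care is needed is the $(\Phi_{F_*},F_*)$ and $(\Phi_{F_*},\partial_\theta F_*)$, $(\Phi_{F_*},z\partial_zF_*)$ self-interactions, where a naive $\mathcal{H}^4$ product estimate between $\mathcal{H}^4$ functions would only give $O(\alpha^{3/2})$ (due to the $\frac{1}{\sqrt{\gamma-1}}\sim\alpha^{-1/2}$ loss in Lemma \ref{HProductRule}) instead of the sharp $O(\alpha^2)$; the trick, which is crucial for closing the whole bootstrap in Section \ref{Modulation}, is that both inputs are actually $\mathcal{W}^{4,\infty}$ objects and one can use Proposition \ref{WtoH} to pass from a pointwise estimate with the extra radial weight $\frac{(z+1)^3}{z^2}$ into $\mathcal{H}^4$ without paying a $\gamma-1$ factor. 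This is the main technical point I expect to have to verify carefully; the rest is bookkeeping of which norm controls which factor.
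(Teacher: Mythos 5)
Your proposal follows the paper's argument essentially verbatim: same decomposition $\mathcal{N}=\sum_{i=1}^5 I_i^{\mathcal{N}}$, same expansion $F=F_*+g$ into four cross-terms, and the same pairing of lemmas with each cross-term (Theorem \ref{RemovingL12} and Proposition \ref{HProductRule}/Lemma \ref{WProductRule} for $I_1^{\mathcal{N}}$; Propositions \ref{DivisionBySin}, \ref{Transport1}, \ref{SpecialTransport}, Lemma \ref{WtoW} and Proposition \ref{WtoH} for $I_2^{\mathcal{N}}$--$I_3^{\mathcal{N}}$; routine for $I_4^{\mathcal{N}}$, $I_5^{\mathcal{N}}$). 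One small correction to your closing remark: for the $(\Phi_{F_*},F_*)$ self-interaction, the reason Lemma \ref{HProductRule} cannot be used is not merely the $\frac{1}{\sqrt{\gamma-1}}$ loss---it is that $F_*$ and $\mathcal{R}(\Phi_{F_*}-\frac{1}{4\alpha}\sin(2\theta)L_{12}(F_*))$ vanish only linearly at $z=0$ and so do not belong to $\mathcal{H}^4$ at all (the weight $w\sim z^{-2}$ requires quadratic vanishing); it is only the \emph{product} that vanishes quadratically, which is exactly why one must go through the pointwise $\mathcal{W}^{4,\infty}$ weights $\frac{z+1}{z}$, $\frac{(z+1)^2}{z}$ and Proposition \ref{WtoH}, as you ultimately do.
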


\subsection{Terms in $\mathcal{N}_*$}

Finally, we move to $\mathcal{N}_*$:
\[\mathcal{N}_*=-\mu g-(\mu+\lambda+\mu\lambda)g,\] where 
\[\mu=\frac{1}{2\alpha}(L_{12}(\mathcal{N}_0)(0)+L_{12}(\mathcal{N})(0))-\frac{1}{2\alpha}L_{12}(\frac{3}{1+z}D_\theta g)(0), \qquad \lambda=-\frac{2\mu}{\mu+1}.
\]
It is easy to see that 
\[|(g,g)_{\mathcal{H}^4}|+ |(g,z\partial_zg)|_{\mathcal{H}^4}\leq C|g|_{\mathcal{H}^4}^2.\]
Thus, 
\[|(\mathcal{N}_*,g)_{\mathcal{H}^4}|\leq C|\mu| |g|_{\mathcal{H}^4}^2,\] so long as $|\mu|\leq \frac{1}{2}$ (it will, in fact, be of order $\alpha$ in the end).  

\subsubsection{Estimate on $\mu$}

We now establish the following proposition.
\begin{proposition}\label{muestimate} \begin{equation} |\mu|\leq C(\alpha+\frac{1}{\alpha^{3/2}}|g|_{\mathcal{H}^4}+\frac{1}{\alpha^{5/2}}|g|_{\mathcal{H}^4}^2).\end{equation}\end{proposition}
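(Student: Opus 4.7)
The plan is to bound each of the three pieces
\[\mu = \tfrac{1}{2\alpha}L_{12}(\mathcal{N}_0)(0) + \tfrac{1}{2\alpha}L_{12}(\mathcal{N})(0) - \tfrac{1}{2\alpha}L_{12}\!\left(\tfrac{3}{1+z}D_\theta g\right)\!(0)\]
separately. The universal tool is the Cauchy--Schwarz observation that
\[|L_{12}(h)(0)| = \left|\int_0^\infty\!\!\int_0^{\pi/2} h(r,\theta)\tfrac{K(\theta)}{r}\,d\theta\,dr\right| \leq C\,|h|_{\mathcal{H}},\]
since $K(\theta)\sqrt{\sin(2\theta)^\eta}/(rw)$ is square-integrable on $[0,\infty)\times[0,\pi/2]$. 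The third piece is the easiest: pairing $D_\theta g$ against $K/((1+r)r)$ reweighted by $w/\sqrt{\sin(2\theta)^\gamma}$ gives $|L_{12}(\tfrac{3}{1+z}D_\theta g)(0)|\leq C|g|_{\mathcal{H}^4}$, whence a contribution of at most $(C/\alpha)|g|_{\mathcal{H}^4}\leq (C/\alpha^{3/2})|g|_{\mathcal{H}^4}$.

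For $\tfrac{1}{2\alpha}L_{12}(\mathcal{N}_0)(0)$ I use the decomposition $\mathcal{N}_0=\sum_{i=1}^6 I_i^{\mathcal{N}_0}$. The sum $I_1^{\mathcal{N}_0}+I_2^{\mathcal{N}_0}$ was already computed to equal $\tfrac{8\alpha^2}{c(1+z)}(\cos 2\theta-\sin^2\theta)\Gamma\cdot z^2/(1+z)^3$, and an explicit integration of $L_{12}$ at $0$ yields an $O(\alpha^2)$ contribution, producing the $\alpha$ term in $\mu$. For $I_3^{\mathcal{N}_0}=\tfrac{1}{\alpha}L_{12}(g)g$ I combine the pointwise bound $\|L_{12}(g)\|_{L^\infty_z}\leq C\,\|g\|_{\mathcal{H}}$ (which follows from the definition of $L_{12}$ and $|K|\in L^1$) with $|L_{12}(h)(0)|\leq C|h|_{\mathcal{H}}$ to get $(C/\alpha^2)|g|_{\mathcal{H}^4}^2$, absorbed by $(C/\alpha^{5/2})|g|_{\mathcal{H}^4}^2$. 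The more delicate $I_4^{\mathcal{N}_0}, I_5^{\mathcal{N}_0}$ split into $F_*$- and $g$-pieces: the $F_*$-pieces exploit the pointwise bounds $|\sin(2\theta)\partial_\theta F_*|\leq C\alpha^2 z/(1+z)^2$ and $|z\partial_z F_*|\leq C\alpha\, z/(1+z)^2$ together with Lemma \ref{WProductRule}, while the $g$-pieces use $\|L_{12}(g)\|_{L^\infty}\leq C|g|_{\mathcal{H}^4}$ against a single derivative of $g$, again via Cauchy--Schwarz with the $D_\theta g$ or $D_z g$ weights of $\mathcal{H}^4$. The genuinely tricky term is $I_6^{\mathcal{N}_0} = -(\cos 2\theta-\sin^2\theta)L_{12}(F_*) z\partial_z g$ with $L_{12}(F_*)=2\alpha/(1+z)$: here I integrate by parts in $r$ inside $L_{12}(\cdot)(0)$, using $g(0,\theta)=0$ (a consequence of $g\in\mathcal{H}^4$ via the $w=(1+z)^2/z^2$ weight) to kill the boundary term, and then invoke Lemma \ref{infinity_separated} to bound $\sup_\theta|g(r,\theta)|$ by $(\gamma-1)^{-1/2}$ times a weighted $L^2_\theta$ norm of $\partial_\theta g$, losing only a factor $1/\sqrt{\gamma-1}\leq C/\sqrt{\alpha}$.

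The second piece $\tfrac{1}{2\alpha}L_{12}(\mathcal{N})(0)$ is handled in the same spirit using the decomposition $\mathcal{N}=\sum_{i=1}^5 I_i^{\mathcal{N}}$. The pure-$F_*$ terms (such as $\mathcal{R}(\Phi_{F_*}-\tfrac{1}{4\alpha}\sin(2\theta)L_{12}(F_*))F_*$) give $O(\alpha^2)$ contributions via Corollary \ref{PhiStarRegularity2}, Proposition \ref{WtoH}, and $|L_{12}(h)(0)|\leq C|h|_{\mathcal{H}^4}$; the linear-in-$g$ terms combine the elliptic $\mathcal{H}^4$ estimate of Theorem \ref{RemovingL12} for $\Phi_g-\tfrac{1}{4\alpha}\sin(2\theta)L_{12}(g)$ with the $\mathcal{W}^{4,\infty}$ bounds on $F_*$; and the quadratic-in-$g$ terms are treated as the $I_4$--$I_6$ terms above, integrating by parts when a derivative of $g$ meets a smooth multiplier. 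Collecting the estimates from all three pieces produces the claimed inequality.

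The main obstacle is the transport interaction in $I_6^{\mathcal{N}_0}$ (and the analogous pieces of $\mathcal{N}$) where a first derivative of $g$ is multiplied by a function of $r$ alone: here one cannot simply apply Cauchy--Schwarz in the $\mathcal{H}$ norm because the $w$ weight on $\partial_r g$ is not integrable against $K/r$. The resolution is the integration-by-parts trick outlined above, which exchanges the $r$-derivative for a mild loss $(\gamma-1)^{-1/2}$ absorbed into the $\alpha$-powers in the final bound.
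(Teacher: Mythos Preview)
Your proposal is correct and follows essentially the same approach as the paper: reduce $L_{12}(\cdot)(0)$ to a weighted $L^2$ pairing via Cauchy--Schwarz, reuse the $\mathcal{H}^4$/$\mathcal{W}^{4,\infty}$ bounds already established on the pieces of $\mathcal{N}_0$ and $\mathcal{N}$, and integrate by parts to handle the transport-type terms carrying a derivative of $g$ or $F$. The only cosmetic difference is the choice of representative term: the paper details $I^{\mathcal{N}}_2$ and integrates by parts in $\theta$ (moving $\partial_\theta$ off $F$ onto $K$ and $U$), whereas you detail $I_6^{\mathcal{N}_0}$ and integrate by parts in $r$; both are the same maneuver. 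Your invocation of Lemma~\ref{infinity_separated} after the $r$-integration by parts is unnecessary---a direct Cauchy--Schwarz against $\frac{z^4}{(1+z)^8}\in L^1_z$ already closes without the $(\gamma-1)^{-1/2}$ loss---but the extra factor you incur is harmless since, as the paper itself remarks, the stated bound is deliberately an over-estimate by a factor of $\alpha^{-1/2}$ in the last two terms.
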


\begin{remark}
This is actually an over-estimate in the second two terms where there is an extra factor of $\alpha^{-1/2}$ than what needs to be there. We do not care for this precision. 
\end{remark}

\begin{proof}
The proof is just based on the trivial observation that 
\[|L_{12}(f)(0)|\leq C|\frac{z+1}{z}f|_{L^2},\] whenever $f$ is such that the right hand side is finite. It then easy to see that the estimates we have already done above on $\mathcal{N}$ and $\mathcal{N}_0$ give us the result. Toward some completeness, we detail the argument for $I^{\mathcal{N}}_2$ that was studied above. 
In particular, we study:
\[L_{12}(I^{\mathcal{N}}_2)(0)=-2L_{12}\Big(U(\Phi-\frac{1}{4\alpha}\sin(2\theta)L_{12}(F))\partial_{\theta}F\Big)(0).\] Recall that $F=F_*+g$, but we do not treat $g$ and $F_*$ much differently below so we just keep them together. Observe that
\[L_{12}\Big(U(\Phi-\frac{1}{4\alpha}\sin(2\theta)L_{12}(F))\partial_{\theta}F\Big)(0)=(U(\Phi-\frac{1}{4\alpha}\sin(2\theta)L_{12}(F))\partial_{\theta}F, \frac{K(\theta)}{z})_{L^2}\]
\[=-(\partial_\theta U(\Phi-\frac{1}{4\alpha}\sin(2\theta)L_{12}(F))F, \frac{K(\theta)}{z})_{L^2}-(U(\Phi-\frac{1}{4\alpha}\sin(2\theta)L_{12}(F))F, \frac{K'(\theta)}{z})_{L^2},\] where we have just integrated by parts in $\theta$. Now, note that $K,K'$ are uniformly bounded and that \[|\partial_\theta U(\Phi-\frac{1}{4\alpha}\sin(2\theta)L_{12}(F))|_{L^\infty}+|U(\Phi-\frac{1}{4\alpha}\sin(2\theta)L_{12}(F))|_{L^\infty}\leq C(\alpha + \frac{1}{\sqrt{\alpha}}|g|_{\mathcal{H}^4}),\] using Theorem \ref{RemovingL12}, Proposition \ref{PhiStarRegularity}, and Corollary \ref{Linfty} (the $\alpha$ is coming from the $F_*$ and the $\frac{1}{\sqrt{\alpha}}|g|_{\mathcal{H}^4}$ is coming from the $g$ in $F=F_*+g$). We then see that:
\[|L_{12}(I^{\mathcal{N}}_2)(0)|\leq C(\alpha+\frac{1}{\sqrt{\alpha}}|g|_{\mathcal{H}^4}|(F,\frac{1}{z})_{L^2}|\leq C(\alpha+\frac{1}{\sqrt{\alpha}}|g|_{\mathcal{H}^4}) (\alpha+ |g|_{\mathcal{H}^4}). \] The rest of the estimates are of a similar nature and are much easier than what we have already done. We leave them to the interested reader. 

\end{proof}

\subsubsection{Estimate on $(\mathcal{N}_*,g)_{\mathcal{H}^4}$.}

From Proposition \ref{muestimate} and the preceding calculation, we see that we have the following
\begin{proposition}\label{N*Estimate}
Let $g\in\mathcal{H}^4$ satisfy $L_{12}(g)(0)=0$. Then we have that
\[|(g,\mathcal{N}_*)_{\mathcal{H}^4}|\leq C(\alpha |g|_{\mathcal{H}^4}^2+\frac{1}{\alpha^{3/2}}|g|_{\mathcal{H}^4}^3+\frac{1}{\alpha^{5/2}}|g|_{\mathcal{H}^4}^4).\]
\end{proposition}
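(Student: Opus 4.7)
The plan is to reduce $\mathcal{N}_*$ to an expression in which the smallness is transparently carried by the parameter $|\mu|$, and then combine two essentially independent inputs: an elementary bound on the relevant quadratic forms of $g$, and the a priori control of $|\mu|$ furnished by Proposition \ref{muestimate}.

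First I would exploit the defining choice $\lambda=-2\mu/(1+\mu)$, which was made precisely so that $\mu+(\mu+\lambda+\mu\lambda)=0$, i.e., $\mu+\lambda+\mu\lambda=-\mu$. Tracing back the derivation of the equation for $g$, the $g$-part of the term $-\mu F-(\mu+\lambda+\mu\lambda)z\partial_z F$ on the right-hand side (with $F=F_*+g$) becomes
\[
\mathcal{N}_* \;=\; -\mu\,g-(\mu+\lambda+\mu\lambda)\,z\partial_z g \;=\; -\mu\,(g-z\partial_z g).
\]
Pairing against $g$ in the $\mathcal{H}^4$ inner product therefore yields
\[
(\mathcal{N}_*,g)_{\mathcal{H}^4} \;=\; -\mu\,\big[(g,g)_{\mathcal{H}^4}-(z\partial_z g,g)_{\mathcal{H}^4}\big].
\]

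Second, I would bound both of these pairings by $C|g|_{\mathcal{H}^4}^2$. The first is simply norm-equivalence: the $\mathcal{H}^4$ inner product defined inductively in Section \ref{LinearAngularTransport} is comparable to $|\cdot|_{\mathcal{H}^4}^2$, as noted in the remark following Proposition \ref{prop:HkCoercivity}. The second is term-by-term Cauchy--Schwarz: in each weighted $L^2$ piece of $(z\partial_z g,g)_{\mathcal{H}^4}$, applying $D_z$ to one factor shifts one weighted $L^2$ norm up by one order of $D_z$, and both factors are then controlled by $|g|_{\mathcal{H}^4}$ since $\mathcal{H}^4$ contains four $D_z$ derivatives. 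Consequently, provided $|\mu|\le \tfrac{1}{2}$ (so that $\lambda$ is well-defined and comparable to $\mu$), we have
\[
|(\mathcal{N}_*,g)_{\mathcal{H}^4}| \;\leq\; C|\mu|\,|g|_{\mathcal{H}^4}^2.
\]

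Finally, I would invoke Proposition \ref{muestimate}, namely $|\mu|\le C(\alpha+\alpha^{-3/2}|g|_{\mathcal{H}^4}+\alpha^{-5/2}|g|_{\mathcal{H}^4}^2)$, and multiply by $|g|_{\mathcal{H}^4}^2$ to obtain the stated bound. The smallness assumption $|\mu|\le \tfrac{1}{2}$ is self-consistent in the intended regime $|g|_{\mathcal{H}^4}=O(\alpha^2)$ with $\alpha$ sufficiently small, where Proposition \ref{muestimate} yields $|\mu|=O(\alpha)$. There is essentially no real obstacle here: the substantive work has already been done in Proposition \ref{muestimate} (the bound on $\mu$), and the remainder is the algebraic cancellation $\mu+\lambda+\mu\lambda=-\mu$ together with the elementary structure of the $\mathcal{H}^4$ inner product.
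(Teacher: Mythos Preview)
Your approach is essentially identical to the paper's: reduce to $|(\mathcal{N}_*,g)_{\mathcal{H}^4}|\leq C|\mu|\,|g|_{\mathcal{H}^4}^2$ and then invoke Proposition~\ref{muestimate}. One small correction: your justification for $|(z\partial_z g,g)_{\mathcal{H}^4}|\leq C|g|_{\mathcal{H}^4}^2$ via ``term-by-term Cauchy--Schwarz'' fails at top order, since $D_z^4(z\partial_z g)=D_z^5 g$ is not controlled in $\mathcal{H}^4$; the correct argument integrates by parts in $z$ (as in Lemma~\ref{Transport2} or~\ref{Transport3}), using that $z\partial_z$ is a transport term and $|\partial_z(zW)|\leq CW$ for the weight.
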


\subsection{Final a-priori estimate on $g$}
By combining the estimates \eqref{N0Estimate}, \eqref{NEstimate},\eqref{N*Estimate}, we have shown the following proposition. 

\begin{proposition}\label{innerproductestimate}
There exists a universal constant $C>0$ so that the following holds for all given $\alpha>0$ and $g\in\mathcal{H}^4$ with $L_{12}(g)(0)=0$:  \[|(\mathcal{N}_0, g)_{\mathcal{H}^4}|+|(\mathcal{N}, g)_{\mathcal{H}^4}|+|(\mathcal{N}_*,g)_{\mathcal{H}^4}|+\Big|\Big(L_{12}(\mathcal{N}_0)(0)+L_{12}(\mathcal{N})(0)\Big)\frac{\Gamma(\theta)}{c}\frac{2z^2}{(1+z)^3}\Big|_{\mathcal{H}^4}|g|_{\mathcal{H}^4}\] \[\leq C\Big(\alpha^2|g|_{\mathcal{H}^4}+\sqrt{\alpha}|g|_{\mathcal{H}^4}^2+\frac{1}{\alpha^{3/2}}|g|_{\mathcal{H}^4}^3+\frac{1}{\alpha^{5/2}}|g|_{\mathcal{H}^4}^4\Big), \] with $\mathcal{N}$, $\mathcal{N}_*$, and $\mathcal{N}_0$ given as in \eqref{N0}-\eqref{PolarBSLFinal}.
\end{proposition}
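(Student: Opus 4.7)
The plan is to assemble the bound directly from results already at hand. The first three quantities $|(\mathcal{N}_0,g)_{\mathcal{H}^4}|$, $|(\mathcal{N},g)_{\mathcal{H}^4}|$, and $|(\mathcal{N}_*,g)_{\mathcal{H}^4}|$ are already controlled by Propositions \ref{N0Estimate}, \ref{NEstimate}, and \ref{N*Estimate} respectively; summing those three estimates gives a bound of exactly the form $C(\alpha^2|g|_{\mathcal{H}^4}+\sqrt{\alpha}|g|_{\mathcal{H}^4}^2+\tfrac{1}{\alpha^{3/2}}|g|_{\mathcal{H}^4}^3+\tfrac{1}{\alpha^{5/2}}|g|_{\mathcal{H}^4}^4)$. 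So the only remaining work is to estimate the fourth quantity.

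For that fourth quantity I would first observe that $\bigl|\tfrac{\Gamma(\theta)}{c}\tfrac{2z^2}{(1+z)^3}\bigr|_{\mathcal{H}^4}\leq C$. This follows from Proposition \ref{WtoH} applied with weight $(1+z)^3/z^2$: the product collapses to $2\Gamma/c$, which lies in $\mathcal{W}^{4,\infty}$ uniformly in $\alpha$ by Proposition \ref{GammaRegularity} (using also the uniform positive lower bound on $c$, which follows from the definition of $\Gamma$ since $\Gamma\to 1$ pointwise as $\alpha\to 0$). It therefore suffices to control the scalar $|L_{12}(\mathcal{N}_0)(0)+L_{12}(\mathcal{N})(0)|$ and then multiply by $|g|_{\mathcal{H}^4}$.

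To bound that scalar I plan to revisit, term by term, the same decomposition already used in the proofs of Propositions \ref{N0Estimate} and \ref{NEstimate}, but now pairing each contribution against the weight $K(\theta)/z$ instead of against $g$ in the full $\mathcal{H}^4$ inner product. For each individual term the recipe is uniform: integrate by parts in $\theta$ to move any derivative off $F=F_*+g$ onto the smooth bounded kernel $K(\theta)$; pull out $L^\infty$ norms of the velocity factors using Theorem \ref{RemovingL12}, Proposition \ref{PhiStarRegularity}, and Corollary \ref{Linfty}; and close with Cauchy--Schwarz in $(z,\theta)$ against $|g|_{\mathcal{H}^4}$ or $|F_*|_{\mathcal{W}^{4,\infty}}$. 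This is precisely the calculation already carried out for $I^{\mathcal{N}}_2$ inside the proof of Proposition \ref{muestimate}, and executing it for the remaining ten pieces of $\mathcal{N}_0\cup\mathcal{N}$ will produce
\[|L_{12}(\mathcal{N}_0)(0)+L_{12}(\mathcal{N})(0)|\leq C\bigl(\alpha^2+\sqrt{\alpha}|g|_{\mathcal{H}^4}+\tfrac{1}{\alpha^{3/2}}|g|_{\mathcal{H}^4}^2+\tfrac{1}{\alpha^{5/2}}|g|_{\mathcal{H}^4}^3\bigr).\]
Multiplying by $|g|_{\mathcal{H}^4}$ and using the already-established $\mathcal{H}^4$ bound on $\Gamma z^2/(c(1+z)^3)$ yields exactly the right hand side claimed.

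The only real obstacle here is bookkeeping: each of the eleven pieces of $\mathcal{N}_0\cup\mathcal{N}$ must be checked individually to confirm that the passage from an $(\,\cdot\,,g)_{\mathcal{H}^4}$ pairing to an $L_{12}(\cdot)(0)$ pairing does not introduce an unwanted $\alpha^{-1/2}$. Since $L_{12}(\cdot)(0)$ carries strictly lighter $(z,\theta)$-weights than the $\mathcal{H}^4$ inner product—no angular singularity, and only a $1/z$ rather than the heavier $w/\sin^{\eta/2}(2\theta)$—every such check should succeed without new ideas, and the scalar estimates are genuinely easier than the $\mathcal{H}^4$ inner-product estimates already completed in the previous two subsections.
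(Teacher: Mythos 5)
Your proposal is correct and follows the same route as the paper: the first three quantities are exactly Propositions \ref{N0Estimate}, \ref{NEstimate}, and \ref{N*Estimate}, while the fourth reduces, after bounding $|\Gamma(\theta)z^2/(c(1+z)^3)|_{\mathcal{H}^4}$ via Propositions \ref{WtoH} and \ref{GammaRegularity}, to a scalar estimate on $L_{12}(\mathcal{N}_0)(0)+L_{12}(\mathcal{N})(0)$ of the type already carried out in the proof of Proposition \ref{muestimate} (where $I^{\mathcal N}_2$ is treated in detail and the remaining pieces declared analogous). The one thing you do not use, which the paper records during the derivation of \eqref{gEquationFinal}, is that $L_{12}(\mathcal{N}_0)(0)=0$ whenever $L_{12}(g)(0)=0$; that observation would let you restrict the term-by-term scalar estimate to the five pieces of $\mathcal{N}$ alone, but since the direct $L_{12}(\cdot)(0)$ bounds on the $\mathcal{N}_0$ pieces also land inside the claimed right-hand side (the worst, $\tfrac{1}{\alpha}L_{12}(g)g$, gives at most $C\alpha^{-1}|g|_{\mathcal{H}^4}^2\le C\alpha^{-3/2}|g|_{\mathcal{H}^4}^2$), omitting it costs only extra bookkeeping, not correctness.
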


We now have the following corollary which follows using Proposition \ref{prop:HkCoercivity}, Proposition \ref{innerproductestimate}, and equation \eqref{gEquationFinal}.

\begin{corollary}
There exists a universal constant $C>0$ so that if $g\in\mathcal{H}^4$, $L_{12}(g)(0)=0$, and $g$ solves \eqref{gEquationFinal}-\eqref{PolarBSLFinal} for some $\alpha>0$, then \[|g|_{\mathcal{H}^2}^2\leq (\mathcal{L}_\Gamma^T g,g)_{\mathcal{H}^4}\leq C\Big(\alpha^2|g|_{\mathcal{H}^4}+\sqrt{\alpha}|g|_{\mathcal{H}^4}^2+\frac{1}{\alpha^{3/2}}|g|_{\mathcal{H}^4}^3+\frac{1}{\alpha^{5/2}}|g|_{\mathcal{H}^4}^4\Big).\]
In particular, if we assume that $|g|_{\mathcal{H}^4}\leq \alpha^{7/4}$ with $\alpha$ is small enough, then we actually have:
\[|g|_{\mathcal{H}^4}\leq 2C\alpha^2.\]
\end{corollary}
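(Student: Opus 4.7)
The plan is to combine the coercivity result of Proposition \ref{prop:HkCoercivity} (applied at $k=4$) with the inner product bound of Proposition \ref{innerproductestimate}, by taking the $\mathcal{H}^4$ pairing of both sides of the equation \eqref{gEquationFinal} with $g$. Since $\mathcal{H}^2 \hookrightarrow \mathcal{H}^4$ and coercivity on $\mathcal{H}^k$ actually gives the full $\mathcal{H}^k$ control (as stated in the proof of Proposition \ref{prop:HkCoercivity}, even if the proposition is written down as $|f|_{\mathcal{H}^2}^2$), this gives the lower bound in the corollary immediately, and indeed a stronger lower bound by $c_4|g|_{\mathcal{H}^4}^2$ which is what is needed for the final conclusion.

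For the upper bound, I would pair $\mathcal{L}_\Gamma^T(g)$ with $g$ in $\mathcal{H}^4$ and substitute using \eqref{gEquationFinal}:
\[
(\mathcal{L}_\Gamma^T g, g)_{\mathcal{H}^4} = \Big(-\big(L_{12}(\mathcal{N}_0)(0)+L_{12}(\mathcal{N})(0)\big)\tfrac{\Gamma(\theta)}{c}\tfrac{2z^2}{(1+z)^3} + \mathcal{N}_0 + \mathcal{N} + \mathcal{N}_*, \, g\Big)_{\mathcal{H}^4}.
\]
Each of the four terms on the right has already been estimated: Propositions \ref{N0Estimate}, \ref{NEstimate}, \ref{N*Estimate}, together with the trivial bound on the projected constant term in terms of $L_{12}(\mathcal{N}_0)(0) + L_{12}(\mathcal{N})(0)$ from Proposition \ref{muestimate}. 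These are combined cleanly in Proposition \ref{innerproductestimate}, yielding the stated polynomial bound in $|g|_{\mathcal{H}^4}$ with the coefficients $\alpha^2,\sqrt{\alpha}, \alpha^{-3/2}, \alpha^{-5/2}$.

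For the second assertion, I would impose the bootstrap assumption $|g|_{\mathcal{H}^4} \leq \alpha^{7/4}$ and absorb the higher-order terms into the coercive left-hand side. With this assumption,
\[
\frac{1}{\alpha^{3/2}}|g|_{\mathcal{H}^4}^3 \leq \alpha^{1/4}|g|_{\mathcal{H}^4}^2, \qquad \frac{1}{\alpha^{5/2}}|g|_{\mathcal{H}^4}^4 \leq \alpha \, |g|_{\mathcal{H}^4}^2, \qquad \sqrt{\alpha}\,|g|_{\mathcal{H}^4}^2 \ll |g|_{\mathcal{H}^4}^2,
\]
so all non-linear contributions are controlled by a small multiple of $|g|_{\mathcal{H}^4}^2$ when $\alpha$ is small. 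The inequality then reduces to
\[
c_4\,|g|_{\mathcal{H}^4}^2 \leq C\alpha^2|g|_{\mathcal{H}^4} + o(1)\,|g|_{\mathcal{H}^4}^2,
\]
and dividing through by $|g|_{\mathcal{H}^4}$ gives the stated bound $|g|_{\mathcal{H}^4} \leq 2C\alpha^2$ (for a slightly redefined universal constant $C$).

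There is essentially no hard step here, as all the heavy lifting was done in the earlier sections: the coercivity of $\mathcal{L}_\Gamma^T$ in $\mathcal{H}^k$, the elliptic estimates of Theorem \ref{RemovingL12}, the product and transport rules in $\mathcal{H}^4$ and $\mathcal{W}^{l,\infty}$, and the term-by-term bounds on $\mathcal{N}_0,\mathcal{N},\mathcal{N}_*$. The only mildly delicate point is recognizing that under the bootstrap assumption $|g|_{\mathcal{H}^4}\leq\alpha^{7/4}$ the exponents $-3/2$ and $-5/2$ in the cubic/quartic terms are exactly compensated (with a little room to spare), so the non-linear terms genuinely become subcritical; this is consistent with the heuristic $|g|\sim\alpha^2$ that motivates the whole construction.
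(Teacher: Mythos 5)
Your proof is correct and follows exactly the route the paper takes: pair \eqref{gEquationFinal} with $g$ in $\mathcal{H}^4$, use Proposition \ref{prop:HkCoercivity} on the left and Proposition \ref{innerproductestimate} on the right, then run the bootstrap under $|g|_{\mathcal{H}^4}\leq\alpha^{7/4}$. You also correctly spotted the important point that although Proposition \ref{prop:HkCoercivity} is stated with $|f|_{\mathcal{H}^2}^2$ on the right, its proof in fact delivers $c_k|f|_{\mathcal{H}^k}^2$, and it is this stronger coercivity (with $k=4$) that is needed to divide out a full power of $|g|_{\mathcal{H}^4}$ and close the argument; the only cosmetic slip is that the embedding goes $\mathcal{H}^4\hookrightarrow\mathcal{H}^2$, not the other way around, but this does not affect anything.
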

Passing from the a-priori estimate to existence will now follow using a compactness method which we explain in the next section.

\subsection{Constructing the solution}\label{Construction}

One can view the estimates of the previous subsection as merely formal. Indeed, we do not know that a solution $g$ to \eqref{gEquationFinal} exists. We now eliminate this shortcoming by introducing a "fake" time variable $\tau$ and viewing the solution of \eqref{gEquationFinal} as the limit of solutions to the $\tau$-dependent equation when $\tau\rightarrow\infty$. 
That is, we solve the following evolution equation for the function $g(R,\theta,\tau)$:
\begin{equation}\label{gequationtime}\partial_\tau g+\mathcal{L}_\Gamma^T(g)=-\Big(L_{12}(\mathcal{N}_0)(0,\tau)+L_{12}(\mathcal{N})(0,\tau)\Big)\frac{\Gamma(\theta)}{c}\frac{2z^2}{(1+z)^3}+\mathcal{N}_0+\mathcal{N}+\mathcal{N}_*,\end{equation}
\[g(z,\theta,0)=0,\] with $\mathcal{N}$, $\mathcal{N}_0$, and $\mathcal{N}_*$ as in \eqref{N0}-\eqref{PolarBSLFinal}. The reader should take note that the $\tau$-independent $g$ of \eqref{gEquationFinal} is not the same as the $g$ of \eqref{gequationtime} but $\lim_{\tau\rightarrow\infty} g(z,\theta,\tau)$ will be shown to solve \eqref{gEquationFinal}. 

\begin{remark}
Another way of "regularizing" \eqref{gEquationFinal} would have been to add a small viscous term $\nu\Delta g$ to the equation and then argue using the Schauder fixed point theorem, for example. While this would generally be a natural avenue to solving such a problem, we found that the viscous term would not interact very well with the weighted norms in the definition of $\mathcal{H}^2$ and getting viscosity independent bounds seems to be difficult, but it is possible that there are ways around this. 
\end{remark}

\subsubsection{\emph{a-priori} estimates on $g$ and $\partial_\tau g$}
First, let us note that $L_{12}(g)(0,\tau)$ satisfies:
\[\partial_\tau L_{12}(g)(0,\tau)=-L_{12}(\mathcal{L}_\Gamma^T(g))(0,\tau)=-L_{12}(\mathcal{L}_\Gamma(g))(0,\tau)=-\mathcal{L}(L_{12}(g))(0)=L_{12}(g)(0,\tau).\]
Thus, since $L_{12}(g)(0,0)=0$, we have that $L_{12}(g)(0,\tau)=0$ for all $\tau\geq 0$. Now that we know this, as a consequence of Proposition \ref{innerproductestimate}, we have that \[\frac{d}{d\tau}|g|_{\mathcal{H}^4}\leq -2c_*|g|_{\mathcal{H}^4}+C(\alpha^2+\sqrt{\alpha} |g|_{\mathcal{H}^4}+\frac{1}{\alpha^{3/2}}|g|_{\mathcal{H}^4}^2+\frac{1}{\alpha^{5/2}}|g|_{\mathcal{H}^4}^3)\]
\[=|g|_{\mathcal{H}^4}(-c_*+C\sqrt{\alpha}+\frac{C}{\alpha^{3/2}}|g|_{\mathcal{H}^4}+\frac{C}{\alpha^{5/2}}|g|_{\mathcal{H}^4}^2)+C\alpha^2,\] for some fixed universal constants $c_*,C>0$. In particular, since $g|_{\tau=0}=0$, if $\alpha$ is sufficiently small (depending only on $c_*$ and $C$) we have that \begin{equation} |g|_{\mathcal{H}^4}\leq C\alpha^2\end{equation}for all $\tau\geq 0$. This already means that $g$ has a (subsequential) limit as $\tau\rightarrow\infty$. Let's show this limit is actually unique. 
Toward this end, we study the equation for $\partial_\tau g.$ 

Note that $F_*$ is independent of $\tau$. Consequently, the equation for $\partial_\tau g$ becomes:
\[\partial_{\tau\tau} g+\mathcal{L}_\Gamma(\partial_\tau g)=-\partial_\tau\Big(L_{12}(\mathcal{N}_0)(0)+L_{12}(\mathcal{N})(0)\Big)\frac{\Gamma(\theta)}{c}\frac{2z^2}{(1+z)^3}+\partial_\tau\mathcal{N}_0+\partial_\tau\mathcal{N}+\partial_\tau{\mathcal{N}_*}.\]
The key point is that all terms which are quadratic in $F_*$ are independent of $\tau$. Thus, using arguments identical to those which led to Proposition \ref{innerproductestimate} along with Proposition \ref{FirstDerivativeCoercivity} we see:
\[\frac{d}{d\tau} |\partial_\tau g|_{\mathcal{H}^3}\leq -2c_*|\partial_\tau g|_{\mathcal{H}^3}+\frac{C}{\alpha^{3/2}}|g|_{\mathcal{H}^4}|\partial_\tau g|_{\mathcal{H}^3}.\]
Thus, so long as $\alpha$ is small enough, we have
\[|\partial_\tau g|_{\mathcal{H}^3}\leq C\alpha^2 \exp(-c_* \tau)\] for all $\tau\geq 0$. It now follows that $g$ has a (unique) limit as $\tau\rightarrow\infty$. Now it is easy to see that \eqref{gEquationFinal}-\eqref{PolarBSLFinal} has a unique $\mathcal{H}^4$ solution in $B_{C\alpha^2}(0)$ and vanishing on $\theta=0$ and $\theta=\pi/2$ when $\alpha$ is sufficiently small. From there we also see that $\mu$ and $\lambda$ are of order $\alpha$. This gives a self similar solution to \eqref{OmegaEvolutionFinal}-\eqref{PolarBSLFinal1} and, in particular, implies Theorem \ref{MainTheorem}. 

\section{Conclusion}
We have established finite-time singularity formation for classical $C^{1,\alpha}$ solutions to the 3D Euler system when $\alpha>0$ is small. This was done by establishing a link between no-swirl axi-symmetric solutions to the 3D Euler system and a simple model which we have called the fundamental model in Section \ref{FundamentalModel}. To make this rigorous, we took advantage of a small parameter $\alpha$ which is related to the degree of vanishing of the vorticity at the origin and on the axis of symmetry. Localizing the self-similar solutions we have constructed to finite-energy solutions with no force is done in a work with Ghoul and Masmoudi \cite{EGM3dE}. 

Several related questions remain open after this work. Here, the data was $C^\infty$ except on the whole $x_3$-axis and the $x_3=0$ plane; the latter is most likely a technical artifact of the proof above and getting solutions which are $C^{1,\alpha}$ everywhere and $C^{\infty}$ except on the $x_3$ axis is likely within reach. In fact, using methods perhaps closer to \cite{KS} or \cite{KRYZ}, one might try to show that no-swirl $C^{1,\frac{1}{3}-}$ solutions could become singular in finite time. For other equations, such as the SQG system, it is possible that the kind of ideas used here could lead to blow-up of $C^{1,\alpha}$ solutions.  In the presence of spatial boundaries, I believe that several advances can be made. It would be very interesting to construct solutions which are $C^{1,\alpha}$ on $\mathbb{R}^3$ and $C^\infty$ except at a single point and become singular in finite time. Specifically, the solution constructed in this paper is non-smooth in the angular and radial variables. I believe that the non-smoothness in the radial variable is not essential but that the non-smoothness in the angular variable is essential for this construction. Constructing a blow-up that is $C^\infty$ in the angular variable would require using the swirl or a different geometry and seems to be a challenging problem, though many of the ideas here may be helpful toward that goal. 

\section{Acknowledgements}
The author thanks T. Buckmaster, P. Constantin, T. Ghoul,  I. Jeong, N. Masmoudi, J. Shatah, V. \v{S}ver\'ak, E.S. Titi, V. Vicol, and A. Zlato\v{s} for helpful comments on earlier versions of this work. The author also thanks T. Tao for observing that the term $\mathcal{N}_*$ in the nonlinear estimates was absent from the first version of this work. The author also acknowledges several important remarks of the referees that greatly improved the paper. He acknowledges funding from the NSF DMS-1817134. 

\bibliographystyle{plain}
\bibliography{3dEuler_away_final}

\end{document}